\newtheorem{thm}{Theorem}
\newtheorem{cor}[thm]{Corollary}
\newtheorem{defi}[thm]{Definition}
\newtheorem{rem}[thm]{Remark}
\newtheorem{nota}[thm]{Notation}
\newtheorem{princ}[thm]{Principle}
\newtheorem{ack}[thm]{Acknowledgement}
\newtheorem*{tempo*}{Template}
\newtheorem{theorem}[thm]{Theorem}
\newtheorem{lemma}[thm]{Lemma}
\newtheorem{definition}[thm]{Definition}
\newtheorem{corollary}[thm]{Corollary}
\newtheorem{remark}[thm]{Remark}
\newcommand\be{\begin{equation}}
\newcommand\ee{\end{equation}} 
\def\bdefi{\begin{defi}\rm}
\def\edefi{\end{defi}}
\def\bnota{\begin{nota}\rm}
\def\enota{\end{nota}}
\def\FIVE{\Pi_{1}^{1}\text{-\textup{\textsf{CA}}}_{0}}
\def\SIXk{\Pi_{k}^{1}\text{-\textsf{\textup{CA}}}_{0}}
\def\SIXK{\Pi_{k}^{1}\text{-\textsf{\textup{CA}}}_{0}^{\omega}}
\def\ATR{\textup{\textsf{ATR}}}
\def\PIT{\textup{\textsf{PIT}}}
\def\Z{\textup{\textsf{Z}}}
\def\NFP{\textup{\textsf{NFP}}}
\def\ZFC{\textup{\textsf{ZFC}}}
\def\RM{\textup{\textsf{rm}}}
\def\URY{\textup{\textsf{URY}}}
\def\TIET{\textup{\textsf{TIE}}}
\def\ZF{\textup{\textsf{ZF}}}
 \def\r{\mathbb{r}}
\def\c{\textup{\textsf{c}}}
\def\RCA{\textup{\textsf{RCA}}}
\def\({\textup{(}}
\def\){\textup{)}}
\def\RCAo{\textup{\textsf{RCA}}_{0}^{\omega}}
\def\ACAo{\textup{\textsf{ACA}}_{0}^{\omega}}
\def\WKL{\textup{\textsf{WKL}}}
\def\WWKL{\textup{\textsf{WWKL}}}
\def\bye{\end{document}}
\def\N{{\mathbb  N}}
\def\Q{{\mathbb  Q}}
\def\R{{\mathbb  R}}
\def\L{\textsf{\textup{L}}}
\def\di{\rightarrow}
\def\asa{\leftrightarrow}
\def\ACA{\textup{\textsf{ACA}}}
\def\QFAC{\textup{\textsf{QF-AC}}}
\def\HBU{\textup{\textsf{HBU}}}
\def\NIN{\textup{\textsf{NIN}}}
\def\s{\textup{\textsf{s}}}
\def\BCT{\textup{\textsf{BCT}}}
\def\WHBC{\textup{\textsf{WHBC}}}
\def\PST{\textup{\textsf{PST}}}
\def\CBT{\textup{\textsf{CBT}}}
\def\w{\textup{\textsf{w}}}
\def\closed{\textup{\textsf{closed}}}
\def\SS{\textup{\textsf{S}}}
\def\IND{\textup{\textsf{IND}}}
\def\INDD{\mathbf{IND}}
\def\CLO{\textup{\textsf{CLO}}}
\def\mTR{\textup{\textsf{-TR}}}
\def\BOOT{\textup{\textsf{BOOT}}}
\def\open{\textup{\textsf{open}}}
\def\cont{\textup{\textsf{coco}}}
\def\CLO{\textup{\textsf{CLO}}}
\def\WHBU{\textup{\textsf{WHBU}}}
\def\HBC{\textup{\textsf{HBC}}}
\def\PST{\textup{\textsf{PST}}}
\def\eps{\varepsilon}
\def\ECF{\textup{\textsf{ECF}}}
\def\SCF{\textup{\textsf{SFF}}}
\numberwithin{equation}{section}
\numberwithin{thm}{section}
\begin{document}
\title{Open sets in computability theory and Reverse Mathematics}
\author{Dag Normann}
\address{Department of Mathematics, The University 
of Oslo, P.O. Box 1053, Blindern N-0316 Oslo, Norway}
\email{dnormann@math.uio.no}
\author{Sam Sanders}
\address{Department of Mathematics, TU Darmstadt, Germany}
\email{sasander@me.com}
\begin{abstract} 
To enable the study of open sets in computational approaches to mathematics, lots of extra data and structure on these sets is assumed. 
For both foundational and mathematical reasons, it is then a natural question, and the subject of this paper, what the influence of this extra data and structure is on the logical and computational properties of basic theorems pertaining to open sets.  
To answer this question, we study various basic theorems of analysis, like the Baire category, Heine, Heine-Borel, Urysohn, and Tietze theorems, all for open sets given by their (third-order) characteristic functions.
Regarding \emph{computability theory}, the objects claimed to exist by the aforementioned theorems undergo a shift from `computable' to `not computable in any type two functional', following Kleene's S1-S9.  
Regarding \emph{Reverse Mathematics}, the latter's \emph{Main Question}, namely which set existence axioms are necessary for proving a given theorem, does not have an unique or unambiguous answer for the aforementioned theorems, working in Kohlenbach's higher-order framework.  
A finer study of representations of open sets leads to the new `$\Delta$-functional' which has unique (computational) properties.    
\end{abstract}

\maketitle
\thispagestyle{empty}

\section{Introduction}
Obviousness, much more than beauty, is in the eye of the beholder.  For this reason, lest we be misunderstood, we formulate a blanket caveat: all notions (computation, continuity, function, open set, comprehension, et cetera) used in this paper are to be interpreted via their well-known definitions in higher-order arithmetic listed below, \emph{unless explicitly stated otherwise}.  
\subsection{Aim and motivation}
It is a commonplace that the notion of \emph{open set} is central to topology and fundamental to large parts of mathematics in ways that few notions can boast. 
Historical analysis dates back the concept of open set to Baire's 1899 doctoral thesis, 
while Dedekind already considered this and related concepts twenty years earlier; the associated paper was published much later (\cites{moorethanudeserve, didicol}).

\smallskip

In this paper, we study open sets in computability theory (following Kleene's S1-S9; see Section \ref{HCT}) and Reverse Mathematics (RM hereafter; see Section \ref{prelim1}) in Kohlenbach's higher-order framework.  
Our motivation -in a nutshell- is that lots of extra data and structure is assumed on open sets in the various `computational' approaches to mathematics, as detailed in Remark \ref{opensetscoded}.   
For both foundational and mathematical reasons, it is then a natural question, and part of Shore's \cite{shohe}*{Problem~5.1}, what the influence of this extra data and structure is on the logical and computational properties of basic theorems pertaining to open sets. 

\smallskip

As discussed in detail in Section \ref{rmintro}, the addition of this extra data and structure has \emph{huge} consequences for (countable) open-cover compactness, but not for sequential compactness.   
For instance, the Heine-Borel theorem for countable coverings of closed sets in the unit interval is rather `mundane' when working with open sets represented via sequences of open balls (and closed sets as complements thereof).
Indeed, this second-order theorem is provable from \emph{weak K\"onig's lemma} while the finite sub-covering therein is outright computable (via an unbounded search) in terms of the other data (see \cite{simpson2}*{IV.1}, \cite{longmann}*{p.\ 459}, and \cite{brownphd}*{Lemma 3.13}).  

\smallskip

Things are quite different for $\HBC$, the Heine-Borel theorem for countable coverings of closed sets given by third-order (possibly discontinuous) characteristic functions.  
Indeed, the finite sub-covering from $\HBC$ is no longer computable in any type two functional and the other data, while no system $\SIXK$ proves $\HBC$; the former is the higher-order version of $\SIXk$ from Section \ref{HCT}.
Nonetheless, weak K\"onig's lemma \emph{plus a small fragment\footnote{The small fragment at hand is $\QFAC^{0,1}$ introduced in Section \ref{prelim1}, which is countable choice restricted to quantifier-free formulas and quantification over Baire space; this fragment is not provable in $\ZF$ and is studied in \cite{kohlenbach2}.} of countable choice} suffices to prove $\HBC$.  
As discussed in detail in Section \ref{rmintro}, we may conclude that the \emph{Main Question of RM}, namely which set existence axioms are necessary for proving the theorem, does not have an unique or unambiguous answer for $\HBC$,
but rather depends (heavily) on the presence of countable choice. 
We obtain similar results for other basic theorems, like the Heine, Urysohn, and Tietze theorems, and the Baire category theorem.  
We note that the study of the latter theorem requires very different proofs.  

\smallskip

We motivate our representation of open sets via third-order characteristic functions by the observation that e.g.\ $\R\setminus \{0\}$ has an obvious representation via a sequence of open balls, but also a discontinuous characteristic function.  
In general, open sets are coded in second-order RM by formulas involving an existential numerical quantifier (see \cite{simpson2}*{II.5.7}), and Kohlenbach has established the intimate connection between these formulas and discontinuous functions in \cite{kohlenbach2}*{\S3}.
In this light, the change from RM-codes to characteristic functions only seems like a small step, yet has an immense impact on the Heine-Borel theorem and related theorems. 

\smallskip

By the previous, a slightly different representation of open sets can have a huge effect on the associated theorems.  It is then a natural question how strong the `coding principle' is that expresses \emph{every characteristic function of an open set can be represented by a sequence of open balls}, as well as how hard it is to compute, in the sense of Kleene's S1-S9, this representation.  In both cases, the functional $\exists^{3}$ from Section \ref{HCT} suffices, but the latter yields full second-order arithmetic. 
By contrast, no system $\SIXK$ can prove this coding principle, while no type two functional can compute the countable representation from the other data.   

\smallskip

Serendipitously, a finer study of representations of open sets gives rise to the new `$\Delta$-functional'.  In a nutshell, this \emph{unqiue} functional converts between certain natural representations of open sets; $\Delta$ also has unique computational properties, discussed in detail below, in that it is natural, genuinely type three, but does not add any computational strength to discontinuous functionals like $\exists^2$ from Section~\ref{HCT} when it comes to computing functions from functions. 

\smallskip

We discuss the aforementioned results in detail in Section \ref{rmintro}.
We finish this section with a remark on representations of open sets as they are used in various `computational' approaches to mathematics. 
\begin{rem}[Open sets and representations]\label{opensetscoded}\rm
A set is \emph{open} if it contains a neighbourhood around each of its points, and an open set can be written as a \emph{countable} union of such neighbourhoods in separable spaces.    
In computational approaches to mathematics, open sets come with various `constructive' enrichments, as follows.  

\smallskip

For instance, the neighbourhood around a point of an open set is often assumed to be given together with this point (see e.g.\ \cite{bish1}*{p.\ 69}).  This is captured by our representation \eqref{R2} in Section \ref{waycool}.
Alternatively, open sets are simply represented as countable unions (called `codes' in \cite{simpson2}*{II.5.6}, `names' in \cite{wierook}*{\S1.3.4}, and `presentations' in \cite{littlefef}) of open neighbourhoods, 
i.e.\ a non-deterministic search yields the aforementioned neighbourhood of a point, as in our representation \eqref{R4}.  

\smallskip

Moreover, there are a number of `effective' results pertaining to such coded open sets, including the Urysohn lemma and Tietze theorems (see \cite{simpson2}*{II.7} and \cite{wierook}*{\S6.2}) in 
which the object claimed to exist can also be computed (in the sense of Turing) from the inputs.  
Finally, general results about open sets often require additional computational information, the most common condition being \emph{locatedness}, which is captured by our representation \eqref{R3} in Section \ref{waycool}.
A closed set $A$ is called \emph{located} if the (continuous) distance function $d(a, A):=\inf_{b\in A}d(a, b)$ exists (see \cite{bish1}*{p.\ 82}, \cite{withgusto} or \cite{twiertrots}*{p.\ 258}). Numerous sufficient conditions are known for the locatedness of (representations of) closed sets (\cite{withgusto}). 
\end{rem}
  
\subsection{Overview}\label{rmintro}
We discuss the results to be obtained in Sections \ref{rmopen} to \ref{waycool} in some detail.  We assume basic familiarity with RM and computability theory, while we refer to Section \ref{prelim} for an introduction and the necessary technical details.  

\smallskip

First of all, the equivalence between weak K\"onig's lemma and the Heine-Borel theorem for countable coverings of the unit interval is one of the early results in second-order RM, announced in \cite{fried2} and to be found in \cite{simpson2}*{IV.1.2}.
The same equivalence holds if we generalise the latter theorem to \emph{closed} subsets of the unit interval by \cite{brownphd}*{Lemma~3.13}.  Here, closed sets are the complements of open sets and an open set $U\subset \R$
is represented by some sequence of open balls $\cup_{n\in \N}B(a_{n}, r_{n})$ where $a_{n}, r_{n}$ are rationals (see \cite{simpson2}*{I.4}).  We then write the following for any $x\in \R$:
\be\label{morg}
x\in U \textup{ if and only if } {(\exists n\in \N )( |x-a_{n}|<_{\R} r_{n}  )}.  
\ee
Open and closed sets are well-studied in RM, as is clear from e.g.\ \cites{brownphd, browner, browner2}.

\smallskip

Secondly, the open $U_{0}=\R\setminus \{0\}$ can (trivially) be represented as in \eqref{morg}, but $U_{0}$ has a \emph{discontinuous} characteristic function.  
One also readily proves, under the assumption that there exists a discontinuous function on $\R$, that every open set $U$ as in \eqref{morg} has a characteristic function (see \cite{kohlenbach2}*{\S3}).   
Furthermore, Yu uses sequences of RM-codes to represent discontinuous characteristic functions in \cite{yuphd}. 
In this light, working with third-order (and thus possibly discontinuous) characteristic functions for open sets seems to stay rather close to the representation \eqref{morg} standard in second-order RM, leading to the
following definition; see Section~\ref{prelim1} for $\RCAo$. 
\bdefi[Open sets in $\RCAo$]\label{openset}
We let $Y: \R \di \R$ represent open subsets of $\R$ as follows: we write `$x \in Y$' for `$Y(x)>_{\R}0$' and call a set $Y\subseteq \R$ ‘open' if for every $x \in Y$, there is an open ball $B(x, r) \subset Y$ with $r^{0}>0$.  
A set $Y$ is called `closed' if the complement, denoted $Y^{c}=\{x\in \R: x\not \in Y \}$, is open. 
\edefi
\noindent
Note that for open $Y$ as in the previous definition, the formula `$x\in Y$' has the same complexity (modulo higher types) as \eqref{morg}, while given $(\exists^{2})$ it becomes a `proper' characteristic function, only taking values `0' and `$1$'.  Hereafter, an `open set' refers to Definition \ref{openset}, while `RM-open set' refers to \eqref{morg}.
By \cite{simpson2}*{II.7.1}, one can effectively convert between RM-open sets and (RM-codes for) continuous characteristic functions, i.e.\ 
\eqref{morg} is included in Definition \ref{openset}.  

\smallskip

Thirdly, let $\HBC$ be the higher-order theorem that for a closed set $C\subseteq [0,1]$ (as in Definition \ref{openset}) a countable covering (consisting of basic open balls) of $C$ has a finite sub-covering.  
Let $\HBC_{\RM}$ be the second-order theorem based on closed sets as in \eqref{morg}.  
We have the following, where $\Z_{2}^{\omega}\equiv \cup_{k}\SIXK$ is a higher-order version of $\Z_{2}$ and $\HBU_{\textsf{closed}}$ is $\HBC$ for \emph{uncountable} coverings (see Section \ref{HCT} for definitions).
\begin{enumerate}
 \renewcommand{\theenumi}{\alph{enumi}}
\item The system $\RCA_{0}$ proves $\HBC_{\RM}\asa \WKL $ (see e.g.\ \cite{brownphd}*{Lemma 3.13}).
\item The finite sub-covering in $\HBC_{\RM}$ is computable in terms of the closed set and the countable covering (see \cite{longmann}*{\S7.3.4}). 
\item The system $\Z_{2}^{\omega}$ cannot prove $\HBC$ while $\RCAo+\QFAC^{0,1}$ proves $\HBC\asa \WKL$ and $\RCAo$ proves $(\exists^{3})\di \HBU_{\closed}\di\HBC$ (Section \ref{rmopen}).\label{merk}
\item The finite sub-covering in $\HBC$ is not computable  in terms of the closed set, the countable covering, and any type two object (Section \ref{rmopen}).\label{kerkintm}
\end{enumerate}
Note that by the final part of item \eqref{merk}, $\HBC$ is provable \emph{without countable choice}, but much stronger comprehension axioms are needed in the absence of the latter.   
Moreover, since $\WKL+\QFAC^{0,1}$ and $\HBU_{\closed}$ are independent but have the same first-order 
strength, there is no unique set of minimal (comprehension) axioms that proves $\HBC$.  Thus, the so-called Main Question of RM from \cite{simpson2}*{p.\ 9} does not have an unique or unambiguous answer for $\HBC$.  
In Sections \ref{rmopen} and \ref{urytiet}, we show that the Heine, Urysohn, and Tietze theorems have the same properties.  Note that $\HBC$ and $\Z_{2}^{\omega}$ are \emph{third-order} in nature, 
while $\Z_{2}^{\Omega}$ is \emph{fourth-order} in nature.

\smallskip

We stress that the above `non-standard' behaviour does not apply to sequential compactness: the latter property for closed sets as in Definition \ref{openset} in the unit interval is \emph{equivalent} to $\ACA_{0}$ over Kohlenbach's $\RCAo$ by Theorem \ref{cloclo}.  
In other words, changing the coding of open sets does not seem to affect sequential compactness, but does greatly affect \emph{countable} open-cover compactness. 

\smallskip

Fourth, the previous results suggest that Definition \ref{openset}, while quite close to the original \eqref{morg}, does provide a stronger notion of open set than \eqref{morg}. 
It is then a natural question how hard it is to prove the `coding principle' $\open$ from Section~\ref{limp3} which expresses that every characteristic function as in Definition~\ref{openset} has a representation in terms of basic open balls as in \eqref{morg}.    
Moreover, one wonders how hard it is to compute (Kleene S1-S9; see Section~\ref{HCT}) this representation in terms of the other data.  We shall provide the following answers.   

\smallskip

As shown in Section~\ref{limp3}, no type two functional computes such a representation while the functional $\exists^{3}$ suffices; the latter however yields full second-order arithmetic.  
On the RM side, no system $\SIXK$ can prove $\open$, while $\Z_{2}^{\Omega}$ can.  
Moreover, $\SIXK+\open$ proves (second-order) transfinite recursion for $\Pi_{k}^{1}$-formulas, where the latter is intermediate between $\SIXk$ and $\Pi_{k+1}^{1}$-$\textsf{CA}_{0}$.
In Section \ref{urytiet}, we also show that $\open$ is equivalent to the associated Urysohn and Tietze theorems.   

\smallskip

Fifth, another prominent theorem about open sets is the Baire category theorem, studied in all the computational approaches to mathematics from Remark \ref{opensetscoded}.
We therefore study the computational properties of the Baire category theorem in Section \ref{BCT}, based on the concept of open set as in Definition \ref{openset}.  In this way, a realiser for 
the Baire category theorem cannot be computed by any type two functional, while the inductive definition operator $\IND$ (see Section~\ref{BCT1}) can compute such a realiser, given $\exists^{2}$.   
We note that the associated proofs in this case are very different from the other proofs in this paper and the known proofs in \cite{dagsamIII, dagsamV}.

\smallskip

Sixth, there are of course representations of open sets \emph{other} than the ones provided by \eqref{morg} and Definition \ref{openset}.  
We study two such representations, called \eqref{R2} and \eqref{R3} in Section \ref{waycool}. 
Intuitively, realisers of the `$\forall \exists$'-definition of open sets are the representations as in \eqref{R2}, while in \eqref{R3} an open set is given by the function showing that the complement is located. 
In this context, we study the unique functional $\Delta$ which 
converts the former to the latter representation.  The $\Delta$-functional has surprising computational properties (see Sections \ref{prelim} and \ref{waycool} for definitions):
\begin{enumerate}
\item[(P1)] $\Delta$ is not computable in any type two functional, but computable in any Pincherle realiser, a class weaker than $\Theta$-functionals.
\item[(P2)] $\Delta$ is unique, genuinely type three, and adds no computational strength to $\exists^2$ in terms of computing functions from functions.
\end{enumerate}
In Section \ref{RDRR} we also briefly discuss the computational complexity related to the Baire category theorem and $\HBC$ under the representation \eqref{R2} from Section \ref{gintro}.

\smallskip

We finish this section with a discussion of some of our previous results from \cite{dagsamV}, which were the starting point of this paper. 
\begin{rem}[The Pincherle phenomenon]\label{qkl}\rm
Pincherle's theorem is one of the first local-to-global principles, originally proved around 1882 in \cite{tepelpinch}*{p.\ 67}, and expresses that a locally bounded function, say on Cantor space, is bounded. 
Such principles are intimately connected to compactness as claimed by Tao:
\begin{quote}
Compactness is a powerful property of spaces, and is used in many ways in many
different areas of mathematics. One is via appeal to local-to-global principles; one
establishes local control on some function or other quantity, and then uses compactness to boost the local control to global control. \cite{taokejes}*{p.\ 168}
\end{quote}
We have shown in \cite{dagsamV} that Pincherle's theorem is \emph{closely related} to (open-cover) compactness, but has \emph{fundamentally different} logical and computational properties.  
For instance, Pincherle's theorem, called $\PIT_{o}$ in \cite{dagsamV}, satisfies the following:  
\begin{itemize}
\item The system $\Z_{2}^{\omega}$ cannot prove $\PIT_{o}$, while $\RCAo+\QFAC^{0,1}$ proves $\WKL\asa \PIT_{o}$ and $\RCAo$ proves $(\exists^{3})\di \HBU\di \PIT_{o}$.  
\item Even a weak\footnote{Two kinds of realisers for Pincherle's theorem were introduced in \cite{dagsamV}: a \emph{weak Pincherle realiser} $M_{o}$ takes as input $F^{2}$ that is locally bounded on $2^{\N}$ together with $G^{2}$ such that $G(f)$ is an upper bound for $F$ in $[\overline{f}G(f)]$ for any $f\in 2^{\N}$, and outputs an upper bound $M_{o}(F, G)$ for $F$ on $2^{\N}$. A (normal) \emph{Pincherle realiser} $M_{u}$ outputs an upper bound $M_{u}(G)$ without access to $F$.} realiser for $\PIT_{o}$ cannot be computed (Kleene S1-S9) in terms of any type two functional. 
\end{itemize}
Clearly, Pincherle's theorem exhibits the same properties as $\HBC$ described in items \eqref{merk} and \eqref{kerkintm} above, and we shall therefore say that \emph{$\HBC$ exhibits the Pincherle phenomemon}, due to Pincherle's theorem $\PIT_{o}$ being the first theorem identified as exhibiting the above behaviour, namely in \cite{dagsamV}.  
In other words, part of the aim of this paper is to establish the abundance of the Pincherele phenomenon in ordinary mathematics, beyond the few examples from \cite{dagsamV}.
\end{rem}
Another way of interpreting the Pincherle phenomenon from Remark \ref{qkl} is as follows: as suggested by its title, it is claimed in \cite{kermend} that `disasters' happen in topology in absence of the Axiom of Choice, i.e.\ working in $\ZF$ or weaker fragments of $\ZFC$.  
We note that \cite{kermend} mostly deals with \emph{countable choice}.  
It is no leap of the imagination to claim that similar disasters already happen for Pincherle's theorem $\PIT_{o}$ and $\HBC$, i.e.\ in ordinary mathematics.  Indeed, Pincherle's theorem 
and $\HBC$ `should' be equivalent to weak K\"onig's lemma, or at least provable from relatively weak axioms, but it \emph{seems} this can only be guaranteed given countable choice.  

\section{Preliminaries}\label{prelim}

We introduce \emph{Reverse Mathematics} in Section \ref{prelim1}, as well as its generalisation to \emph{higher-order arithmetic}, and the associated base theory $\RCAo$.  
We introduce some essential axioms in Section~\ref{HCT}.  

\subsection{Reverse Mathematics}\label{prelim1}
Reverse Mathematics is a program in the foundations of mathematics initiated around 1975 by Friedman (\cites{fried,fried2}) and developed extensively by Simpson (\cite{simpson2}).  
The aim of RM is to identify the minimal axioms needed to prove theorems of ordinary, i.e.\ non-set theoretical, mathematics. 

\smallskip

We refer to \cite{stillebron} for a basic introduction to RM and to \cite{simpson2, simpson1} for an overview of RM.  We expect basic familiarity with RM, but do sketch some aspects of Kohlenbach's \emph{higher-order} RM (\cite{kohlenbach2}) essential to this paper, including the base theory $\RCAo$ (Definition \ref{kase}).  
As will become clear, the latter is officially a type theory but can accommodate (enough) set theory via Definition \ref{openset}. 

\smallskip

First of all, in contrast to `classical' RM based on \emph{second-order arithmetic} $\Z_{2}$, higher-order RM uses $\L_{\omega}$, the richer language of \emph{higher-order arithmetic}.  
Indeed, while the former is restricted to natural numbers and sets of natural numbers, higher-order arithmetic can accommodate sets of sets of natural numbers, sets of sets of sets of natural numbers, et cetera.  
To formalise this idea, we introduce the collection of \emph{all finite types} $\mathbf{T}$, defined by the two clauses:
\begin{center}
(i) $0\in \mathbf{T}$   and   (ii)  If $\sigma, \tau\in \mathbf{T}$ then $( \sigma \di \tau) \in \mathbf{T}$,
\end{center}
where $0$ is the type of natural numbers, and $\sigma\di \tau$ is the type of mappings from objects of type $\sigma$ to objects of type $\tau$.
In this way, $1\equiv 0\di 0$ is the type of functions from numbers to numbers, and  $n+1\equiv n\di 0$.  Viewing sets as given by characteristic functions, we note that $\Z_{2}$ only includes objects of type $0$ and $1$.    

\smallskip

Secondly, the language $\L_{\omega}$ includes variables $x^{\rho}, y^{\rho}, z^{\rho},\dots$ of any finite type $\rho\in \mathbf{T}$.  Types may be omitted when they can be inferred from context.  
The constants of $\L_{\omega}$ include the type $0$ objects $0, 1$ and $ <_{0}, +_{0}, \times_{0},=_{0}$  which are intended to have their usual meaning as operations on $\N$.
Equality at higher types is defined in terms of `$=_{0}$' as follows: for any objects $x^{\tau}, y^{\tau}$, we have
\be\label{aparth}
[x=_{\tau}y] \equiv (\forall z_{1}^{\tau_{1}}\dots z_{k}^{\tau_{k}})[xz_{1}\dots z_{k}=_{0}yz_{1}\dots z_{k}],
\ee
if the type $\tau$ is composed as $\tau\equiv(\tau_{1}\di \dots\di \tau_{k}\di 0)$.  
Furthermore, $\L_{\omega}$ also includes the \emph{recursor constant} $\mathbf{R}_{\sigma}$ for any $\sigma\in \mathbf{T}$, which allows for iteration on type $\sigma$-objects as in the special case \eqref{special}.  Formulas and terms are defined as usual.  
One obtains the sub-language $\L_{n+2}$ by restricting the above type formation rule to produce only type $n+1$ objects (and related types of similar complexity).        
\bdefi\label{kase} 
The base theory $\RCAo$ consists of the following axioms.
\begin{enumerate}
 \renewcommand{\theenumi}{\alph{enumi}}
\item  Basic axioms expressing that $0, 1, <_{0}, +_{0}, \times_{0}$ form an ordered semi-ring with equality $=_{0}$.
\item Basic axioms defining the well-known $\Pi$ and $\Sigma$ combinators (aka $K$ and $S$ in \cite{avi2}), which allow for the definition of \emph{$\lambda$-abstraction}. 
\item The defining axiom of the recursor constant $\mathbf{R}_{0}$: for $m^{0}$ and $f^{1}$: 
\be\label{special}
\mathbf{R}_{0}(f, m, 0):= m \textup{ and } \mathbf{R}_{0}(f, m, n+1):= f(n, \mathbf{R}_{0}(f, m, n)).
\ee
\item The \emph{axiom of extensionality}: for all $\rho, \tau\in \mathbf{T}$, we have:
\be\label{EXT}\tag{$\textsf{\textup{E}}_{\rho, \tau}$}  
(\forall  x^{\rho},y^{\rho}, \varphi^{\rho\di \tau}) \big[x=_{\rho} y \di \varphi(x)=_{\tau}\varphi(y)   \big].
\ee 
\item The induction axiom for quantifier-free formulas of $\L_{\omega}$.
\item $\QFAC^{1,0}$: the quantifier-free Axiom of Choice as in Definition \ref{QFAC}.
\end{enumerate}
\edefi
\noindent
Note that variables (of any finite type) are allowed in quantifier-free formulas of the language $\L_{\omega}$: only quantifiers are banned.
Moreover, we let $\INDD^{\omega}$ be the induction axiom for all formulas in $\L_{\omega}$.
\bdefi\label{QFAC} The axiom $\QFAC$ consists of the following for all $\sigma, \tau \in \textbf{T}$:
\be\tag{$\QFAC^{\sigma,\tau}$}
(\forall x^{\sigma})(\exists y^{\tau})A(x, y)\di (\exists Y^{\sigma\di \tau})(\forall x^{\sigma})A(x, Y(x)),
\ee
for any quantifier-free formula $A$ in the language of $\L_{\omega}$.
\edefi
As discussed in \cite{kohlenbach2}*{\S2}, $\RCAo$ and $\RCA_{0}$ prove the same sentences `up to language' as the latter is set-based and the former function-based.  Recursion as in \eqref{special} is called \emph{primitive recursion}; the class of functionals obtained from $\mathbf{R}_{\rho}$ for all $\rho \in \mathbf{T}$ is called \emph{G\"odel's system $T$} of all (higher-order) primitive recursive functionals.  

\smallskip

We use the usual notations for natural, rational, and real numbers, and the associated functions, as introduced in \cite{kohlenbach2}*{p.\ 288-289}.  
\begin{defi}[Real numbers and related notions in $\RCAo$]\label{keepintireal}\rm~
\begin{enumerate}
 \renewcommand{\theenumi}{\alph{enumi}}
\item Natural numbers correspond to type zero objects, and we use `$n^{0}$' and `$n\in \N$' interchangeably.  Rational numbers are defined as signed quotients of natural numbers, and `$q\in \Q$' and `$<_{\Q}$' have their usual meaning.    
\item Real numbers are coded by fast-converging Cauchy sequences $q_{(\cdot)}:\N\di \Q$, i.e.\  such that $(\forall n^{0}, i^{0})(|q_{n}-q_{n+i}|<_{\Q} \frac{1}{2^{n}})$.  
We use Kohlenbach's `hat function' from \cite{kohlenbach2}*{p.\ 289} to guarantee that every $q^{1}$ defines a real number.  
\item We write `$x\in \R$' to express that $x^{1}:=(q^{1}_{(\cdot)})$ represents a real as in the previous item and write $[x](k):=q_{k}$ for the $k$-th approximation of $x$.    
\item Two reals $x, y$ represented by $q_{(\cdot)}$ and $r_{(\cdot)}$ are \emph{equal}, denoted $x=_{\R}y$, if $(\forall n^{0})(|q_{n}-r_{n}|\leq {2^{-n+1}})$. Inequality `$<_{\R}$' is defined similarly.  
We sometimes omit the subscript `$\R$' if it is clear from context.           
\item Functions $F:\R\di \R$ are represented by $\Phi^{1\di 1}$ mapping equal reals to equal reals, i.e.\ extensionality as in $(\forall x , y\in \R)(x=_{\R}y\di \Phi(x)=_{\R}\Phi(y))$.\label{EXTEN}
\item The relation `$x\leq_{\tau}y$' is defined as in \eqref{aparth} but with `$\leq_{0}$' instead of `$=_{0}$'.  Binary sequences are denoted `$f^{1}, g^{1}\leq_{1}1$', but also `$f,g\in C$' or `$f, g\in 2^{\N}$'.  Elements of Baire space are given by $f^{1}, g^{1}$, but also denoted `$f, g\in \N^{\N}$'.
\item For a binary sequence $f^{1}$, the associated real in $[0,1]$ is $\r(f):=\sum_{n=0}^{\infty}\frac{f(n)}{2^{n+1}}$.\label{detrippe}
\item An object $\textbf{Y}^{0\di \rho}$ is called \emph{a sequence of type $\rho$ objects} and also denoted $\textbf{Y}=(Y_{n})_{n\in \N}$ or $\textbf{Y}=\lambda n. Y_{n}$ where $Y_{n}:=\textbf{Y}(n)$ for all $n^{0}$.
\end{enumerate}
\end{defi}
Next, we mention the highly useful $\ECF$-interpretation. 
\begin{rem}[The $\ECF$-interpretation]\label{ECF}\rm
The (rather) technical definition of $\ECF$ may be found in \cite{troelstra1}*{p.\ 138, \S2.6}.
Intuitively, the $\ECF$-interpretation $[A]_{\ECF}$ of a formula $A\in \L_{\omega}$ is just $A$ with all variables 
of type two and higher replaced by type one variables ranging over so-called `associates' or `RM-codes' (see \cite{kohlenbach4}*{\S4}); the latter are (countable) representations of continuous functionals.  
The definition of associate may be found just below Definition \ref{FTP}.
The $\ECF$-interpretation connects $\RCAo$ and $\RCA_{0}$ (see \cite{kohlenbach2}*{Prop.\ 3.1}) in that if $\RCAo$ proves $A$, then $\RCA_{0}$ proves $[A]_{\ECF}$, again `up to language', as $\RCA_{0}$ is 
formulated using sets, and $[A]_{\ECF}$ is formulated using types, i.e.\ using type zero and one objects.  
\end{rem}
In light of the widespread use of codes in RM and the common practise of identifying codes with the objects being coded, it is no exaggeration to refer to $\ECF$ as the \emph{canonical} embedding of higher-order into second-order arithmetic. 
For completeness, we list the following notational convention for finite sequences.  
\begin{nota}[Finite sequences]\label{skim}\rm
We assume a dedicated type for `finite sequences of objects of type $\rho$', namely $\rho^{*}$, which we only use for $\rho=0, 1$.  Since the usual coding of pairs of numbers goes through in $\RCAo$, we shall not always distinguish between $0$ and $0^{*}$. 
Similarly, we do not always distinguish between `$s^{\rho}$' and `$\langle s^{\rho}\rangle$', where the former is `the object $s$ of type $\rho$', and the latter is `the sequence of type $\rho^{*}$ with only element $s^{\rho}$'.  The empty sequence for the type $\rho^{*}$ is denoted by `$\langle \rangle_{\rho}$', usually with the typing omitted.  

\smallskip

Furthermore, we denote by `$|s|=n$' the length of the finite sequence $s^{\rho^{*}}=\langle s_{0}^{\rho},s_{1}^{\rho},\dots,s_{n-1}^{\rho}\rangle$, where $|\langle\rangle|=0$, i.e.\ the empty sequence has length zero.
For sequences $s^{\rho^{*}}, t^{\rho^{*}}$, we denote by `$s*t$' the concatenation of $s$ and $t$, i.e.\ $(s*t)(i)=s(i)$ for $i<|s|$ and $(s*t)(j)=t(|s|-j)$ for $|s|\leq j< |s|+|t|$. For a sequence $s^{\rho^{*}}$, we define $\overline{s}N:=\langle s(0), s(1), \dots,  s(N-1)\rangle $ for $N^{0}<|s|$.  
For a sequence $\alpha^{0\di \rho}$, we also write $\overline{\alpha}N=\langle \alpha(0), \alpha(1),\dots, \alpha(N-1)\rangle$ for \emph{any} $N^{0}$.  By way of shorthand, 
$(\forall q^{\rho}\in Q^{\rho^{*}})A(q)$ abbreviates $(\forall i^{0}<|Q|)A(Q(i))$, which is (equivalent to) quantifier-free if $A$ is.   
\end{nota}

\subsection{Higher-order computability theory}\label{HCT}
As noted above, some of our main results will be proved using techniques from computability theory.
Thus, we first make our notion of `computability' precise as follows.  
\begin{enumerate}
\item[(I)] We adopt $\ZFC$, i.e.\ Zermelo-Fraenkel set theory with the Axiom of Choice, as the official metatheory for all results, unless explicitly stated otherwise.
\item[(II)] We adopt Kleene's notion of \emph{higher-order computation} as given by his nine clauses S1-S9 (see \cite{longmann}*{Ch.\ 5} or \cite{kleeneS1S9}) as our official notion of `computable'.
\end{enumerate}
Similar to \cites{dagsam,dagsamII, dagsamIII, dagsamV, dagsamVI}, one main aim of this paper is the study of functionals of type 3 that are \emph{natural} from the perspective of mathematical practise. 
Our functionals are \emph{genuinely} of type 3 in the sense that they are not computable from any functional of type 2.  The following definition\footnote{Stanley Wainer (unpublished) has defined the countably based functionals of finite type as an analogue of the continuous functionals, while 
John Hartley has investigated the computability theory of this type structure (see \cite{hartleycountable}). We will only need the simple cases of countably based functionals of type level at most $3$ in this paper.} is then standard in this context. 
\begin{definition}
{\em A functional $\Phi^{3}$ is \emph{countably based} if for every $F^{2}$ there is countable $X\subset \N^{\N}$ such that $\Phi(F) = \Phi(G)$ for every $G$ that agrees with $F$ on $X$.}
\end{definition}
Now, if $\Phi^{3}$ is computable in a functional of type 2, then it is countably based, but the converse does not hold. However, Hartley proves in \cite{hartleycountable} that, assuming the Axiom of Choice and the Continuum Hypothesis, if $\Phi^3$ is not countably based, then there is some $F^{2}$ such that $\exists^3$ (see below) is computable in $\Phi$ and $F$.  In other words, stating the existence of such $\Phi$ brings us 'close to' $\Z_2^\Omega$ (see below). 
In the sequel, we shall explicitly point out where we use countably based functionals.  

\smallskip

For the rest of this section, we introduce some existing functionals which will be used below.
In particular, we introduce some functionals which constitute the counterparts of second-order arithmetic $\Z_{2}$, and some of the Big Five systems, in higher-order RM.
We use the formulation from \cite{kohlenbach2, dagsamIII}.  

\smallskip
\noindent
First of all, $\ACA_{0}$ is readily derived from:
\begin{align}\label{mu}\tag{$\mu^{2}$}
(\exists \mu^{2})(\forall f^{1})\big[ (\exists n)(f(n)=0) \di [(f(\mu(f))=0)&\wedge (\forall i<\mu(f))f(i)\ne 0 ]\\
& \wedge [ (\forall n)(f(n)\ne0)\di   \mu(f)=0]    \big], \notag
\end{align}
and $\ACA_{0}^{\omega}\equiv\RCAo+(\mu^{2})$ proves the same sentences as $\ACA_{0}$ by \cite{hunterphd}*{Theorem~2.5}.   The (unique) functional $\mu^{2}$ in $(\mu^{2})$ is also called \emph{Feferman's $\mu$} (\cite{avi2}), 
and is clearly \emph{discontinuous} at $f=_{1}11\dots$; in fact, $(\mu^{2})$ is equivalent to the existence of $F:\R\di\R$ such that $F(x)=1$ if $x>_{\R}0$, and $0$ otherwise (\cite{kohlenbach2}*{\S3}), and to 
\be\label{muk}\tag{$\exists^{2}$}
(\exists \varphi^{2}\leq_{2}1)(\forall f^{1})\big[(\exists n)(f(n)=0) \asa \varphi(f)=0    \big]. 
\ee
\noindent
Secondly, $\FIVE$ is readily derived from the following sentence:
\be\tag{$\SS^{2}$}
(\exists\SS^{2}\leq_{2}1)(\forall f^{1})\big[  (\exists g^{1})(\forall n^{0})(f(\overline{g}n)=0)\asa \SS(f)=0  \big], 
\ee
and $\FIVE^{\omega}\equiv \RCAo+(\SS^{2})$ proves the same $\Pi_{3}^{1}$-sentences as $\FIVE$ by \cite{yamayamaharehare}*{Theorem 2.2}.   The (unique) functional $\SS^{2}$ in $(\SS^{2})$ is also called \emph{the Suslin functional} (\cite{kohlenbach2}).
By definition, the Suslin functional $\SS^{2}$ can decide whether a $\Sigma_{1}^{1}$-formula as in the left-hand side of $(\SS^{2})$ is true or false.   We similarly define the functional $\SS_{k}^{2}$ which decides the truth or falsity of $\Sigma_{k}^{1}$-formulas; we also define 
the system $\SIXK$ as $\RCAo+(\SS_{k}^{2})$, where  $(\SS_{k}^{2})$ expresses that $\SS_{k}^{2}$ exists.  Note that we allow formulas with \emph{function} parameters, but \textbf{not} \emph{functionals} here.
In fact, Gandy's \emph{Superjump} (\cite{supergandy}) constitutes a way of extending $\FIVE^{\omega}$ to parameters of type two.  
We identify the functionals $\exists^{2}$ and $\SS_{0}^{2}$ and the systems $\ACAo$ and $\SIXK$ for $k=0$.
We note that the operators $\nu_{n}$ from \cite{boekskeopendoen}*{p.\ 129} are essentially $\SS_{n}^{2}$ strengthened to return a witness (if existant) to the $\Sigma_{k}^{1}$-formula at hand.  

\smallskip

\noindent
Thirdly, full second-order arithmetic $\Z_{2}$ is readily derived from $\cup_{k}\SIXK$, or from:
\be\tag{$\exists^{3}$}
(\exists E^{3}\leq_{3}1)(\forall Y^{2})\big[  (\exists f^{1})(Y(f)=0)\asa E(Y)=0  \big], 
\ee
and we therefore define $\Z_{2}^{\Omega}\equiv \RCAo+(\exists^{3})$ and $\Z_{2}^\omega\equiv \cup_{k}\SIXK$, which are conservative over $\Z_{2}$ by \cite{hunterphd}*{Cor.\ 2.6}. 
Despite this close connection, $\Z_{2}^{\omega}$ and $\Z_{2}^{\Omega}$ can behave quite differently, as discussed in e.g.\ \cite{dagsamIII}*{\S2.2}.   The functional from $(\exists^{3})$ is also called `$\exists^{3}$', and we use the same convention for other functionals.  

\smallskip

Fourth, the Heine-Borel theorem states the existence of a finite sub-covering for an open covering of certain spaces. 
Now, a functional $\Psi:\R\di \R^{+}$ gives rise to the \emph{canonical covering} $\cup_{x\in I} I_{x}^{\Psi}$ for $I\equiv [0,1]$, where $I_{x}^{\Psi}$ is the open interval $(x-\Psi(x), x+\Psi(x))$.  
Hence, the uncountable covering $\cup_{x\in I} I_{x}^{\Psi}$ has a finite sub-covering by the Heine-Borel theorem; in symbols:
\be\tag{$\HBU$}
(\forall \Psi:\R\di \R^{+})(\exists  y_{1}, \dots, y_{k}\in I)(\forall x\in I)( x\in \cup_{i\leq k}I_{y_{i}}^{\Psi}).
\ee
Note that $\HBU$ is almost verbatim \emph{Cousin's lemma} (\cite{cousin1}*{p.\ 22}), i.e.\ the Heine-Borel theorem restricted to canonical coverings.  
This restriction does not make a big difference, as shown in \cite{sahotop}.
By \cite{dagsamIII, dagsamV}, $\Z_{2}^{\Omega}$ proves $\HBU$ but $\Z_{2}^{\omega}+\QFAC^{0,1}$ cannot, 
and basic properties of the \emph{gauge integral} (\cite{zwette, mullingitover}) are equivalent to $\HBU$.  

\smallskip

Fifth, since Cantor space (denoted $C$ or $2^{\N}$) is homeomorphic to a closed subset of $[0,1]$, the former inherits the same property.  
In particular, for any $G^{2}$, the corresponding `canonical covering' of $2^{\N}$ is $\cup_{f\in 2^{\N}}[\overline{f}G(f)]$ where $[\sigma^{0^{*}}]$ is the set of all binary extensions of $\sigma$.  By compactness, there are $ f_0 , \ldots , f_n \in 2^{\N}$ such that the set of $\cup_{i\leq n}[\bar f_{i} G(f_i)]$ still covers $2^{\N}$.  By \cite{dagsamIII}*{Theorem 3.3}, $\HBU$ is equivalent to the same compactness property for $C$, as follows:
\be\tag{$\HBU_{\c}$}
(\forall G^{2})(\exists  f_{1}, \dots, f_{k} \in C ){(\forall f \in C)}(f\in\cup_{i\leq k} [\overline{f_{i}}G(f_{i})]).
\ee
We now introduce the specification $\SCF(\Theta)$ for a (non-unique) functional $\Theta$ which computes a finite sequence as in $\HBU_{\c}$.  
We refer to such a functional $\Theta$ as a \emph{realiser} for the compactness of Cantor space, and simplify its type to `$3$'.  
\be\tag{$\SCF(\Theta)$}
(\forall G^{2})(\forall f^{1}\leq_{1}1)(\exists g\in \Theta(G))(f\in [\overline{g}G(g)]).
\ee
Clearly, there is no unique such $\Theta$ (just add more binary sequences to $\Theta(G)$) and any functional satisfying the previous specification 
is referred to as a `$\Theta$-functional' or a `special fan functional'.
As to their provenance, $\Theta$-functionals were introduced as part of the study of the \emph{Gandy-Hyland functional} in \cite{samGH}*{\S2} via a slightly different definition.  
These definitions are identical up to a term of G\"odel's $T$ of low complexity by \cite{dagsamII}*{Theorem 2.6}.  

\smallskip

Sixth, a number of higher-order axioms are studied in \cite{samph} including the following comprehension axiom (see also Remark \ref{hist}):
\be\tag{$\BOOT$}
(\forall Y^{2})(\exists X\subset \N)\big(\forall n\in \N)(n\in X\asa (\exists f^{1})(Y(f, n)=0)\big).
\ee
We only mention that this axiom is equivalent to e.g.\ the monotone convergence theorem for nets indexed by Baire space (see \cite{samph}*{\S3}).  
As it turns out, the coding principle $\open$ (see Section \ref{limp3}) is closely related to $\BOOT$ and fragments, as shown in Section \ref{rmlimp3}. 
Finally, we mention some historical remarks related to $\BOOT$.
\begin{rem}[Historical notes]\label{hist}\rm
First of all, $\BOOT$ is called the `bootstrap' principle as it is weak in isolation (equivalent to $\ACA_{0}$ under $\ECF$, in fact), but becomes much stronger 
when combined with comprehension axioms: $\SIXK+\BOOT$ readily proves $\Pi_{k+1}^{1}\textup{-}\textsf{CA}_{0}$.

\smallskip

Secondly, $\BOOT$ is definable in Hilbert-Bernays' system $H$ from the \emph{Grundlagen der Mathematik} (see \cite{hillebilly2}*{Supplement IV}).  In particular, one uses the functional $\nu$ from \cite{hillebilly2}*{p.\ 479} to define the set $X$ from $\BOOT$.  
In this way, $\BOOT$ and subsystems of second-order arithmetic can be said to `go back' to the \emph{Grundlagen} in equal measure, although such claims may be controversial.  

\smallskip

Thirdly, after the completion of \cite{samph}, it was observed by the second author that Feferman's `projection' axiom \textsf{(Proj1)} from \cite{littlefef} is similar to $\BOOT$.  The former is however formulated using sets (and set parameters), which makes it more `explosive' than $\BOOT$ in that full $\Z_{2}$ follows when combined with $(\mu^{2})$, as noted in \cite{littlefef}*{I-12}.
\end{rem}

\section{The Heine-Borel theorem}\label{rmopen}
\subsection{Introduction and preliminaries}
In this section, we establish the results sketched in Section \ref{rmintro} for the (countable) Heine-Borel theorem and related (countable) Vitali covering theorem.
Hereafter, we use the notion of open (and closed) set as outlined in Definition~\ref{openset}, unless explicitly stated otherwise.

\smallskip

In particular, we show that sequential compactness behaves `as normal' in this section, but that Heine-Borel compactness \emph{for countable coverings by intervals} now behaves \emph{quite} out of the ordinary, namely that it suffers from the Pincherle phenomenon (see Section \ref{kier}).  In Section~\ref{cerucc},  we establish the computability-theoretic results in item~\eqref{merk} from Section \ref{rmintro} and some related results.  All the aforementioned deals with countable coverings consisting of open \emph{intervals}, and we study coverings consisting of arbitrary open sets in Section \ref{storng}.
The latter results are complimentary to the rest and endow our equivalences with a certain robustness. 
We remind the reader that `$C$' and `$2^{\N}$' are used interchangeably for Cantor space, but this use is always carefully separated from the generic term `closed set $C\subseteq \R$'.

\smallskip

First of all, Theorem \ref{cloclo} is a sanity check for Definition \ref{openset}: our closed sets have the same properties as RM-codes for closed sets, as follows. 
\begin{enumerate}
 \renewcommand{\theenumi}{\alph{enumi}}
\item RM-closed sets are sequentially closed, i.e.\ if a sequence in an RM-closed set converges to some limit, the latter is also in the set (trivial in $\RCA_{0}$).  
\item RM-closed sets in $[0,1]$ are sequentially compact in $\ACA_{0}$ (\cite{brownphd}*{Lemma 3.14}).
\item Given a sequence in an RM-closed set in $[0,1]$, $\exists^{2}$ computes the limit (\cite{yamayamaharehare}).
\end{enumerate}
The following theorem shows that our closed sets mirror these three items perfectly.  
\begin{thm}\label{cloclo}
The system $\RCAo$ proves that a closed set is sequentially closed.  The system $\RCAo$ proves the equivalence between $\ACA_{0}$ and the statement \emph{a closed set in $[0,1]$ is sequentially compact}.  
The functional $\exists^{2}$ computes an accumulation point of a sequence in a closed set in $[0,1]$. 
\end{thm}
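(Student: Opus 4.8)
The plan is to prove the three assertions in order, since the latter two use the first. \emph{Sequential closedness (in $\RCAo$).} Here I would argue directly by contradiction. Let $C$ be closed, so that $C^{c}$ is open, witnessed by some $Y:\R\di\R$ with $C^{c}=\{x:Y(x)>_{\R}0\}$ and with the ball property that every $x$ with $Y(x)>_{\R}0$ admits a rational $r>0$ such that $B(x,r)\subseteq C^{c}$. Given a sequence $(x_{k})$ in $C$ converging to $x$, suppose $x\notin C$, i.e.\ $Y(x)>_{\R}0$, and fix such a rational $r>0$ with $B(x,r)\subseteq C^{c}$; by convergence there is $K$ with $|x_{K}-x|<_{\R}r$, so $x_{K}\in B(x,r)\subseteq C^{c}$, contradicting $x_{K}\in C$. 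Hence $x\in C$. This uses only classical first-order logic together with the definitions, so it is a theorem of $\RCAo$.

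\emph{$\exists^{2}$ computes an accumulation point.} Given a sequence $(x_{n})$ in $C\subseteq[0,1]$, I would relativise the usual Bolzano-Weierstrass bisection to Feferman's $\mu$ (which is interdefinable with $\exists^{2}$). The one point needing care is that for a real $x_{n}$ and a rational $q$ the relations $x_{n}\leq_{\R}q$ and $q\leq_{\R}x_{n}$ are merely arithmetical ($\Pi^{0}_{1}$), not quantifier-free, so I would bisect using \emph{overlapping} rational subintervals: from a rational interval $[a_{j},b_{j}]$ of width $w$ containing infinitely many $x_{n}$, pass to whichever of $[a_{j},\tfrac{a_{j}+b_{j}}{2}+\tfrac{w}{4}]$ and $[\tfrac{a_{j}+b_{j}}{2}-\tfrac{w}{4},b_{j}]$ --- each of width $\tfrac{3}{4}w$, their union being $[a_{j},b_{j}]$ --- still contains infinitely many $x_{n}$, a decision $\mu$ can make since ``$\{n:x_{n}\in[p,q]\}$ is infinite'' is an arithmetical predicate of rationals $p,q$. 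The nested intervals determine a real $x\in[0,1]$ computable (Kleene S1-S9) from $(x_{n})$ and $\mu$; every neighbourhood of $x$ contains $x_{n}$ for infinitely many $n$, so $x$ is an accumulation point of $(x_{n})$, and $x\in C$ by the first part. This mirrors the RM-closed case treated in \cite{yamayamaharehare}.

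\emph{Equivalence with $\ACA_{0}$.} For the forward direction, $\ACA_{0}$ proves Bolzano-Weierstrass for $[0,1]$ by a classical argument (see \cite{simpson2}*{III.2.2}) that stays within objects of types $0$ and $1$ and hence is available over $\RCAo$; applying it to a sequence in a closed $C\subseteq[0,1]$ gives a convergent subsequence whose limit lies in $C$ by the first part (one may instead invoke the third assertion), so $C$ is sequentially compact. For the converse I would first note that $[0,1]$ is itself closed in the sense of Definition~\ref{openset}: the $\RCAo$-definable function $Y(x):=\max(-x,x-1)$ satisfies $Y(x)>_{\R}0\asa(x<_{\R}0\vee x>_{\R}1)$ and has the ball property, so it represents the open set $\R\setminus[0,1]$. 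The hypothesis applied to $C=[0,1]$ then yields Bolzano-Weierstrass for $[0,1]$, which implies $\ACA_{0}$ by \cite{simpson2}*{III.2.2}.

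I expect the main obstacle to be precisely this converse implication and the attendant bookkeeping: one must verify that $[0,1]$ --- and, for the robustness statements in the later sections, more general closed sets --- really are subsumed by Definition~\ref{openset} via an $\RCAo$-definable representing function, and that the classical second-order derivation of $\ACA_{0}$ from Bolzano-Weierstrass relativises verbatim to $\RCAo$ (which it does, never leaving the second-order fragment). The first assertion is then pure logic, and the third is the standard bisection, the only delicate point there being the overlapping intervals used to circumvent the undecidability of real inequalities.
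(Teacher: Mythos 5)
Your proposal is correct, and its overall decomposition matches the paper's (contradiction via the ball property for sequential closedness; the unit interval plus \cite{simpson2}*{III.2.2} for the reversal; a bisection relative to $\mu^{2}$ for the computability claim). The one genuinely different step is your forward direction of the equivalence. The paper proves that $\ACA_{0}$ implies sequential compactness by a case distinction on $(\exists^{2})\vee\neg(\exists^{2})$: given $(\exists^{2})$ it runs the Bolzano--Weierstrass argument of \cite{simpson2}*{III.2.1} ``using $(\exists^{2})$ to decide elementhood'' of the closed set, and given $\neg(\exists^{2})$ it reduces closed sets in the sense of Definition \ref{openset} to RM-codes (all functions being continuous) and invokes \cite{brownphd}*{Lemma 3.14}. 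You instead apply second-order Bolzano--Weierstrass to the sequence viewed merely as a sequence in $[0,1]$ --- which requires no information about $C$ whatsoever --- and then place the limit in $C$ by the already-established sequential closedness. This yields a uniform argument with no appeal to excluded middle on $(\exists^{2})$ and no detour through the continuous case, at no cost in strength. Your remaining deviations are cosmetic: the explicit witness $Y(x)=\max(-x,x-1)$ showing that $[0,1]$ falls under Definition \ref{openset} is a detail the paper leaves implicit, and the overlapping-interval bisection in the third part is more caution than is needed (given $\exists^{2}$, the arithmetical relation ``$x_{n}\in[p,q]$'' is outright decidable, so the exact halves would also do), but it is harmless and the resulting accumulation point lands in $C$ by your first part, exactly as required.
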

\begin{proof}
For the first part, if a sequence $\lambda n.x_{n}$ in a closed set $C\subset \R$ converges to $y\in \R$, but $y\not\in C$, then there is $N^{0}$ such that $B(y, \frac{1}{2^{N}})\subset C^{c}$, as the complement of $C$ is open. 
However, $x_{n}$ is eventually in $B(y, \frac{1}{2^{N}})$ by definition, a contradiction. 

\smallskip

For the second part, the reversal follows from considering the unit interval and \cite{simpson2}*{III.2.2}.
For the forward direction, the usual proof of the Bolzano-Weierstrass theorem as in \cite{simpson2}*{III.2.1} goes through, modulo using $(\exists^{2})$ to decide elementhood of closed sets. 
In case $\neg(\exists^{2})$, all $\R\di \R$-functions are continuous by \cite{kohlenbach2}*{\S3} and closed sets in $[0,1]$ reduce to the usual RM-definition by \cite{simpson2}*{II.5.7} and \cite{kohlenbach4}*{Prop.~4.10}.  
The second-order proof from \cite{brownphd}*{Lemma 3.14} now finishes this case.  The law of excluded middle $(\exists^{2})\vee \neg(\exists^{2})$ finishes the proof.  
\end{proof}
The previous theorem can be generalised to other theorems pertaining to sequential compactness (see e.g.\ \cite{simpson2}*{III}), like the monotone convergence theorem for sequences in closed subsets of the unit interval. 

\subsection{Heine-Borel theorem for basic coverings}\label{kier}
Theorem \ref{cloclo} shows that our notion of closed sets has the usual properties when it comes to sequential compactness.  In this section, we show that the situation is \emph{markedly} different for Heine-Borel compactness: Theorems \ref{dich} and \ref{goodd} namely show that $\HBC$, defined as follows, suffers from the Pincherle phenomenon.  
\bdefi[$\HBC$]
Let $C\subseteq [0,1]$ be closed and let $(a_{n})_{n\in \N}$, $(b_{n})_{n\in \N}$ be sequences of reals with $C\subseteq \cup_{n\in \N}(a_{n}, b_{n})$.  Then $C\subseteq \cup_{n\leq n_{0}}(a_{n}, b_{n})$ for some $n_{0}\in \N$.
\edefi
\noindent
We let $\HBC_{\RM}$ be $\HBC$ with $C$ represented by RM-codes.  
By Theorem \ref{dich}, $\HBC$ is provable without countable choice and has weak first-order strength.  
Indeed, $\HBU_{\closed}$ is $\HBU$ generalised to closed sets $C\subseteq [0,1]$, and both have the first-order strength of $\WKL$; this follows from applying $\ECF$ and noting \cite{brownphd}*{Lemma~3.13}.
Furthermore, $\Z_{2}^{\Omega}$ proves $\HBU_{\closed}$ in the same way as in \cite{dagsamV}*{Theorem 4.2}.
\begin{thm}\label{dich}
Both $\RCAo+\WKL+\QFAC^{0,1}$ and $\RCAo+\HBU_{\closed}$ prove $\HBC$.
\end{thm}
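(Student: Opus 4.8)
The plan is to prove the two implications separately. In both, the only genuine difficulty is that the complement $C^{c}$ of our closed set comes only with a (possibly discontinuous) characteristic function $Y$, so one cannot simply enumerate a sequence of basic open balls whose union is $C^{c}$ — doing that is the coding principle $\open$, which is far too strong here. Throughout I assume $Y\geq 0$ (replace $Y$ by $\lambda x.\max(0,Y(x))$, which $\RCAo$ can form), so $C=\{x:Y(x)=0\}$ and, since $C\subseteq[0,1]$, $Y(x)>0$ for $x\notin[0,1]$.

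\emph{The implication $\RCAo+\HBU_{\closed}\to\HBC$.} Let $C\subseteq[0,1]$ be closed with $C\subseteq\bigcup_{n}(a_{n},b_{n})$. For each $x\in C$ there are $n$ and a rational $\delta>0$ with $[x-\delta,x+\delta]\subseteq(a_{n},b_{n})$; spelling out the strict real inequalities via rational approximations and bundling in a natural-number modulus $j$ witnessing them yields a quantifier-free $A(x,\langle n,\delta,j\rangle)$. To obtain a \emph{total} choice function I would use the standard device of adding a quantifier-free alternative witnessing $x\in C^{c}$: put $A'(x,\langle n,\delta,j,k\rangle):\equiv \delta>0\wedge\big(A(x,\langle n,\delta,j\rangle)\vee[Y(x)](k)>2^{-k+2}\big)$. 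Then $(\forall x^{1})(\exists w^{0})A'(x,w)$ holds ($C$-points use the first disjunct; all other reals satisfy the second, since there $Y(x)>0$) and $A'$ is quantifier-free, so $\QFAC^{1,0}$, which is part of $\RCAo$, gives $H\colon\N^{\N}\to\N$ with $A'(x,H(x))$ for all $x$. Writing $H(x)=\langle n(x),\delta(x),j(x),k(x)\rangle$ and noting that for $x\in C$ the second disjunct is false (as $Y(x)=0$), we get $\delta(x)>0$ and $[x-\delta(x),x+\delta(x)]\subseteq(a_{n(x)},b_{n(x)})$. Set $\Psi(x):=\delta(x)$, a functional with $\Psi(x)>0$ everywhere whose canonical covering is, over $C$, subordinate to $\{(a_{n},b_{n})\}_{n}$. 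Now $\HBU_{\closed}$ yields $y_{1},\dots,y_{m}\in C$ with $C\subseteq\bigcup_{i\leq m}I_{y_{i}}^{\Psi}\subseteq\bigcup_{i\leq m}(a_{n(y_{i})},b_{n(y_{i})})$, and $n_{0}:=\max_{i\leq m}n(y_{i})$ works. (Two routine points: $\Psi$ need not be $=_{\R}$-extensional, which is harmless as $\HBU_{\closed}$ is stated for arbitrary $\Psi\colon\R\to\R^{+}$; and the centres $y_{i}$ may be taken in $C$, discarding any with $I_{y_{i}}^{\Psi}\cap C=\emptyset$.)

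\emph{The implication $\RCAo+\WKL+\QFAC^{0,1}\to\HBC$.} Here I would reproduce the argument by which $\RCAo+\WKL+\QFAC^{0,1}$ proves Pincherle's theorem $\PIT_{o}$ in \cite{dagsamV}*{Theorem~4.2} (recall from Remark~\ref{qkl} that $\HBC$ is expected to exhibit the Pincherle phenomenon, and that $\RCAo+\QFAC^{0,1}$ already proves $\WKL\asa\PIT_{o}$). The goal is to reduce $\HBC$ to the second-order Heine--Borel theorem for $[0,1]$, a consequence of $\WKL$ (see \cite{simpson2}*{IV.1}). Form $K:=[0,1]\setminus\bigcup_{n}(a_{n},b_{n})$, an RM-closed — hence, under $\WKL$, compact — subset of $[0,1]$ that is contained in $C^{c}$ since $C\subseteq\bigcup_{n}(a_{n},b_{n})$. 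Using $\QFAC^{0,1}$ one produces a countable family of rational intervals covering $K$, each of them disjoint from $C$ (equivalently, one produces the instance of $\HBU_{\closed}$ from the previous paragraph in which the canonical covering of $C$ is subordinate to the countable covering $\{(a_{n},b_{n})\}_{n}$ — which, unlike $\HBU_{\closed}$ itself, is available from $\WKL+\QFAC^{0,1}$). Adjoining these intervals to the sequence $(a_{n},b_{n})$ gives a countable open covering of $[0,1]$, to which the second-order Heine--Borel theorem applies, delivering a finite subcovering; since the adjoined intervals miss $C$ entirely, $C$ lies in finitely many $(a_{n},b_{n})$, as required.

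\emph{Main obstacle.} It is precisely the step of the second implication that produces intervals disjoint from $C$: the naive recipe ``enumerate the rational balls contained in $C^{c}$'' is unavailable, because deciding ``$B(p,s)\subseteq C^{c}$'' for the discontinuous $Y$ has the strength of $\exists^{3}$ — this is exactly why no $\SIXK$ proves $\open$, and why the finite subcovering in $\HBC$ is not computable in any type two functional. The role of $\QFAC^{0,1}$ is to supply this covering data non-effectively while keeping all choice matrices quantifier-free, which forces one to fold the universal information (``the whole interval misses $C$'') into the chosen witnesses, exactly as in the treatment of $\PIT_{o}$ in \cite{dagsamV}. Everything else — the bookkeeping with rational approximations of reals and the elementary passage from a finite subcovering of $[0,1]$ to one of $C$ — is routine.
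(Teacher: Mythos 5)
Your second implication ($\RCAo+\HBU_{\closed}\to\HBC$) is correct and essentially the paper's argument: both extract, via quantifier-free choice, a functional $\Psi$ whose canonical covering of $C$ refines $\{(a_n,b_n)\}_{n\in\N}$ and then apply $\HBU_{\closed}$. Your device of folding the disjunct $[Y(x)](k)>2^{-k+2}$ into the choice matrix so that $\QFAC^{1,0}$ applies directly is a clean way to avoid the paper's appeal to $(\exists^{2})$ for this half.

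The first implication, however, has a genuine gap at the step ``Using $\QFAC^{0,1}$ one produces a countable family of rational intervals covering $K$, each of them disjoint from $C$.'' For an interval $J$, the property $J\cap C=\emptyset$ is a \emph{universal} statement over the reals, namely $(\forall y\in J)(Y(y)>0)$, since $C$ is given only by a possibly discontinuous characteristic function; $\exists^{2}$ can decide the matrix $Y(y)>0$ but cannot eliminate the quantifier over $y$. A universal statement cannot be ``folded into the chosen witnesses'': there is nothing existential to witness, and Skolemizing the inner existential quantifier (the $k$ with $[Y(y)](k)>2^{-k+2}$) would require choosing an object of type $1\to 0$, i.e.\ an instance of $\QFAC^{0,2}$, which is not available. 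So no admissible form of $\QFAC$ certifies even a single rational ball as a subset of $C^{c}$ --- this is exactly the obstacle that makes $\open$ unprovable in $\Z_{2}^{\omega}$, which you correctly identify but do not overcome. The parenthetical fallback (that the relevant instance of $\HBU_{\closed}$ ``is available from $\WKL+\QFAC^{0,1}$'') is circular, since that instance is precisely $\HBC$.

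The paper's proof of this half sidesteps the need to certify whole intervals by using \emph{sequential} compactness. One splits on $(\exists^{2})\vee\neg(\exists^{2})$. If $\neg(\exists^{2})$, all functions on $\R$ are continuous, $C$ reduces to an RM-closed set, and $\WKL$ proves $\HBC_{\RM}$. If $(\exists^{2})$, suppose there is no finite sub-covering; then $(\forall m^{0})(\exists x\in C)(x\notin\cup_{n\leq m}(a_n,b_n))$ has a quantifier-free matrix modulo $\exists^{2}$, so $\QFAC^{0,1}$ yields a sequence in $C$ witnessing this; Bolzano--Weierstrass (available as $(\exists^{2})\to\ACA_{0}$) gives a limit point $y$, which lies in $C$ because $C$ is sequentially closed --- openness of $C^{c}$ is used only \emph{pointwise}, at the single real $y$ --- and $y\in(a_k,b_k)$ for some $k$ then contradicts the choice of the sequence. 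You should replace your covering argument for $K$ by such a compactness-by-contradiction argument.
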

\begin{proof}
For the first part, in case $\neg(\exists^{2})$, all functions on $\R$ are continuous by \cite{kohlenbach2}*{\S3}.  Following the results in \cite{kohlenbach4}*{\S4}, continuous functions on $[0,1]$ have an RM-code, i.e.\ 
our definition of open set reduces to an $\L_{2}$-formula in $\Sigma_{1}^{0}$, which (equivalently) defines a code for an open set by \cite{simpson2}*{II.5.7}.  In this way, $\HBC$ is merely $\HBC_{\RM}$, which follows
from $\WKL$ by \cite{brownphd}*{Lemma 3.13}.    
In case $(\exists^{2})$, let $C\subseteq [0,1]$ be a closed set and let $(a_{n})_{n \in \N}, (b_{n})_{n\in\N}$ be as in $\HBC$.  
If there is no finite sub-covering, then $(\forall m^{0})(\exists x\in C)\big[x\not \in  \cup_{n\leq m}(a_{n}, b_{n})\big]$.  Apply $\QFAC^{0,1}$ and $(\exists^{2})$ to obtain a sequence $\lambda n.x_{n}$ of reals in $C$ with this property.
Since $(\exists^{2})\di \ACA_{0}$, any sequence in $[0,1]$ has a convergent sub-sequence $\lambda n.y_{n}$ by \cite{simpson2}*{III.2}.  If $y_{n}$ converges to $y\not \in C$, then there is $N^{0}$ such that $B(y, \frac{1}{2^{N}})\subset C^{c}$, as the complement of $C$ is open by definition. 
However, $y_{n}$ is eventually in $B(y, \frac{1}{2^{N}})$ by definition, a contradiction.   
Hence, $\lim_{n\di \infty}y_{n}=y\in C$ but if $y\in (a_{k}, b_{k})$, then $y_{n}$ is eventually in this interval, which contradicts the definition of $x_{n}$ (and $y_{n}$). 
The law of excluded middle now finishes the proof.

\smallskip

For the second part, let $C\subseteq [0,1]$ be a closed set and let $a_{n}, b_{n}$ as in $\HBC$.  Similar to the first case, we may assume $(\exists^{2})$. 
Apply $\QFAC^{1,0}$ and $(\exists^{2})$ to $(\forall x\in C)(\exists n^{0})(x\in (a_{n}, b_{n}))$ to obtain $\Psi^{2}$ yielding $n^{0}$ from $x\in C$.  
Then $\cup_{x\in C}(a_{\Psi(x)}, b_{\Psi(x)})$ is a covering of $C$ that is readily converted to a canonical covering.  We now obtain $\HBC$ from applying $\HBU_{\closed}$.
\end{proof}
\begin{cor}\label{simpleright}
The system $\RCAo+\QFAC^{0,1}$ proves $\WKL\asa \HBC$.
\end{cor}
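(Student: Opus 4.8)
The plan is to prove the two implications of $\WKL\asa\HBC$ separately; $\QFAC^{0,1}$ is needed only for the forward direction.

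For $\WKL\di\HBC$ there is nothing new to do: this is exactly the first part of Theorem~\ref{dich}, which establishes $\HBC$ over $\RCAo+\WKL+\QFAC^{0,1}$.

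For $\HBC\di\WKL$ I would work over $\RCAo$ alone (no choice) and reduce to the second-order equivalence $\HBC_{\RM}\asa\WKL$ recorded in item~(a) of Section~\ref{rmintro}, i.e.\ \cite{brownphd}*{Lemma~3.13}. The crucial point is that $\HBC$ implies $\HBC_{\RM}$. Indeed, by the conversion between RM-open sets and continuous characteristic functions from \cite{simpson2}*{II.7.1}, an RM-code for an open set $U$ yields, provably in $\RCAo$, a continuous function $\chi\colon\R\di\R$ with $\chi(x)>_{\R}0$ if and only if $x\in U$; concretely one may take a uniformly convergent series of suitably scaled `tent' functions, one per basic ball in the code of $U$. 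Hence $U$ is an open set in the sense of Definition~\ref{openset}, and so every RM-closed subset $C$ of $[0,1]$, being the complement of an RM-open set, is a closed set in the sense of Definition~\ref{openset}. Since the coverings by basic open balls are the same in $\HBC$ and in $\HBC_{\RM}$, the (universally quantified) principle $\HBC$ specialises to $\HBC_{\RM}$. As $\HBC_{\RM}\asa\WKL$ is a purely second-order statement proved over $\RCA_{0}$, it holds over $\RCAo$ as well by the conservation result recalled in Section~\ref{prelim1}; therefore $\HBC\di\HBC_{\RM}\di\WKL$ over $\RCAo$. Combining the two implications gives $\RCAo+\QFAC^{0,1}\vdash\WKL\asa\HBC$.

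The only step calling for a little care — and it is routine rather than deep — is checking that the construction of $\chi$ from the RM-code of $U$ really goes through in $\RCAo$ without any appeal to $\exists^{2}$: the partial sums are continuous functions with an explicit Cauchy modulus, so $\chi$ has an RM-code, and $\chi(x)>_{\R}0$ holds precisely when one of the tents is strictly positive at $x$, i.e.\ precisely when $x\in U$. This is exactly the inclusion of \eqref{morg} in Definition~\ref{openset} asserted in the introduction.
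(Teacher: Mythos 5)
Your proposal is correct and takes essentially the same route as the paper: the forward direction is delegated to the first part of Theorem~\ref{dich}, and the reverse direction reduces $\HBC$ to $\HBC_{\RM}$ by converting RM-codes of open sets into continuous characteristic functions via \cite{simpson2}*{II.7.1} and then invoking \cite{brownphd}*{Lemma~3.13}. The only cosmetic difference is that the paper obtains the type-two functional from the RM-code by appealing to $\QFAC^{1,0}$ (included in $\RCAo$) and also records the shortcut of taking $C=[0,1]$ with \cite{simpson2}*{IV.1}, whereas you build the characteristic function explicitly as a uniformly convergent series of tent functions, which accomplishes the same thing.
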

\begin{proof}
The reverse direction follows from \cite{simpson2}*{II.7.1} as the latter provides (RM-codes for) continuous characteristic functions for RM-open sets (and vice versa). 
These codes become type two functionals by $\QFAC^{1,0}$, included in $\RCAo$.
Alternatively, use $C=[0,1]$ and the results in \cite{simpson2}*{IV.1}.
\end{proof}
In contrast to sequential compactness and Theorem \ref{cloclo}, a proof of countable Heine-Borel compactness as in $\HBC$ seems to either require countable choice \emph{or} lots of comprehension, i.e.\ we observe the Pincherle phenomenon.
\begin{thm}\label{goodd}
The system $\Z_{2}^{\omega}$ cannot prove $\HBC$.
\end{thm}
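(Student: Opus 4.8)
The plan is to prove that $\Z_2^\omega = \cup_k \SIXK$ cannot prove $\HBC$ by exhibiting a model of $\Z_2^\omega$ in which $\HBC$ fails. The natural strategy mirrors the known results for $\PIT_o$ and $\HBU$ in \cite{dagsamV, dagsamIII}: since each $\SIXK$ has a fixed second-order counterpart (namely $\SIXk$), and these are conservative as sketched in Section~\ref{prelim1}, it suffices to build a model based on a carefully chosen collection of type-one objects together with the canonical type-two functionals $\SS_k^2$, and then produce a closed set $C\subseteq[0,1]$ (given by a third-order characteristic function in the sense of Definition~\ref{openset}) and a countable covering $\cup_{n\in\N}(a_n,b_n)$ of $C$ with \emph{no} finite subcovering, witnessed inside the model. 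The key point is that $\HBC$, after applying $\ECF$, becomes $\HBC_{\RM}$ which is provable from $\WKL$ and hence harmless; so the failure must exploit a genuinely discontinuous characteristic function not available to the $\ECF$-interpretation.

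First I would set up the model: take the structure whose type-one part is a countable (or suitably closed) set $\mathcal{M}$ of reals/functions closed under the Suslin functionals $\SS_k^2$ for all $k$ and under primitive recursion, so that $(\SS_k^2)$ holds for every $k$ and thus $\Z_2^\omega$ is satisfied. Crucially, $\exists^2$ is \emph{present} (it is $\SS_0^2$), so discontinuous $\R\to\R$ functions do exist in the model; the model is however \emph{countably based} in a strong sense — every type-three object definable from the $\SS_k^2$ depends on only countably much type-one information. Next I would choose, for a generic or sufficiently Cohen-like real $r$, the closed set $C := \{r\}$ together with a countable covering of $[0,1]\setminus\{r\}$ by rational intervals $(a_n,b_n)$ that exhausts $[0,1]\setminus\{r\}$ but never covers a deleted neighbourhood of $r$ all at once; $C$ itself is coded by the characteristic function $Y$ with $Y(x)=1$ iff $x\neq r$, which is the complement of a point and hence genuinely discontinuous. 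The assertion ``some finite subfamily covers $C=\{r\}$'' would then force the model to locate $r$ among the $(a_n,b_n)$, i.e.\ to decide, from the covering data and $Y$, an index $n$ with $r\in(a_n,b_n)$ — but for a generic $r$ this information is not computable in any of the $\SS_k^2$, contradicting that the model is closed under exactly those functionals. The usual forcing/genericity argument (as in the proofs of the analogous statements in \cite{dagsamIII}*{\S3} and \cite{dagsamV}) makes this precise: any putative witness $n_0$ for the finite subcovering is decided by a condition that can be extended to push $r$ outside $\cup_{n\le n_0}(a_n,b_n)$.

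The main obstacle I expect is the same one that makes these results delicate rather than routine: ensuring that the model genuinely satisfies all of $\Z_2^\omega$ — i.e.\ that the collection of type-one objects is closed under \emph{every} $\SS_k^2$ simultaneously while the generic real $r$ remains ``invisible'' to all of them — and that the third-order characteristic function $Y$ for $\{r\}^c$, together with the covering, lives coherently in the model's type structure. One must check that adding $\exists^2$ (needed so that $Y$ is a bona fide $\{0,1\}$-valued characteristic function and so that the model is not trivially too weak) does not accidentally let the model decide membership of $r$ in the covering; this is exactly where the contrast with $\HBC_{\RM}$ — where the finite subcovering \emph{is} computable in the other data — has to be respected, and where the forcing construction must be tuned so that $r$ is ``sufficiently generic over'' the closure under all the $\SS_k^2$. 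Given that $\Z_2^\omega$ is only third-order in nature (unlike $\Z_2^\Omega$), such a model exists, and the construction parallels \cite{dagsamV}*{Theorem~4.4} closely enough that the adaptation to closed sets coded by characteristic functions should go through with only bookkeeping changes.
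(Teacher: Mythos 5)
Your overall strategy---a type structure closed under all $\SS^2_k$ together with a closed set whose decisive topological feature is invisible to the model---is the right one, but the specific instance of $\HBC$ you propose to falsify cannot work, and the reason exposes a conflation in the argument. If $C=\{r\}$ with $r$ present in the model, then any countable covering of $C$ admits a one-interval subcovering; the model does not need to \emph{compute} the index of that interval, it only needs the existential statement ``there is $n_{0}$ with $C\subseteq\cup_{n\leq n_{0}}(a_n,b_n)$'' to be \emph{true}, and it is, since $n_{0}$ ranges over the standard naturals, all of which are in the model. If instead $r$ is generic over the model and hence absent from it, then $C$ contains no model points and the conclusion of $\HBC$ holds vacuously. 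Your covering of $[0,1]\setminus\{r\}$ suggests a third reading, namely $C=[0,1]$ covered by rational intervals missing only the invisible point $r$; but a sequence of rational intervals in the model whose union is exactly $[0,1]\setminus\{r\}$ makes $\{r\}$ a $\Pi^0_1$ singleton relative to data in the model, so $r$ is computable from the covering and lands in the model after all, whereupon the model sees that the hypothesis $C\subseteq\cup_{n}(a_n,b_n)$ fails and the instance is again vacuous. In every reading, the genericity of $r$ obstructs a \emph{realiser} for $\HBC$ (which is the content of the separate result Theorem \ref{reverand}), not the truth of $\HBC$ in the model.

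The missing idea is that $C$ must be an \emph{infinite} set of model points whose sole accumulation point lies outside the model. The paper builds a hierarchy $A=\cup_k A_k$ with $\Pi^1_n$-absoluteness, fixes $x$ in the classical Cantor set ${\bf C}$ having no code in $A$, and chooses $x_k\notin{\bf C}$ with a code in $A_{k+1}\setminus A_k$ and $|x-x_k|\leq 2^{-k}$; the closed set is $X=\{x_k: k\in\N\}$ and the covering is the computable enumeration of the intervals complementary to ${\bf C}$. The model believes $X$ is closed precisely because it cannot see the cluster point $x$ (every model real distinct from $x$ has positive distance to $X$); each $x_k$ is covered since it lies off ${\bf C}$; yet no finite subfamily covers $X$, because the $x_k$ accumulate at $x\in{\bf C}$, which no $(a_i,b_i)$ reaches. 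A further point your sketch omits entirely is that adjoining $X$ as an oracle must not enlarge the model so as to reveal $x$; the paper arranges this by ensuring that, in computations relative to $\SS^2_k$, the oracle $X$ returns the same answers as the finite set $\{x_l: l<k\}$. Your forcing language could plausibly be adapted to produce such an $x$ and such a sequence, but as written the proposal targets the wrong instance and the wrong notion of failure.
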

\begin{proof}
We will modify the method used to prove \cite{dagsamIII}*{Theorem 3.4} and \cite{dagsamV}*{Theorem~4.8}.
As in those proofs, let $A = \cup_{k \in \N}A_k$ be a countable subset of $\N^{\N}$ defined by the following three conditions.
\begin{enumerate}
\item[(i)] We have that $\Pi^1_n$-formulas are absolute for $(A,\N^\N)$ for all $n$.
\item[(ii)] For all $k$, we have $A_k \subseteq A_{k+1}$ and there is $f_k \in A_{k+1}$ enumerating $A_k$.
\item[(iii)] There is a sequence $g_1, g_2, … $ in $A$ such that for all $k$ there is a number $n_k$ such that $A_k$ is the computational closure of $g_0 , \ldots , g_{n_k}$ relative to $\SS^2_k$.
\end{enumerate}
In a nutshell, we shall define a model $\mathcal{M}$ and a set $X$ therein, such that the former thinks the latter is closed. 
We also identify a countable covering $\cup_{n\in \N}(a_{n}, b_{n})$ of $X$ that has no finite sub-covering inside $\mathcal{M}$, i.e.\ $\HBC$ fails in $\mathcal{M}$.  

\smallskip

Let ${\bf C } \subset [0,1]$ be the classical Cantor set (available in $\RCA_{0}$ by \cite{simpson2}*{I.8.6}) and let $\{(a_i,b_i)\}_{i \in \N}$ be a computable enumeration of the set of  open intervals forming the complement of ${\bf C}$.  Fix an element $x \in {\bf C}$ that has no code in $A$. For each $k$, define $x_k$ as an element with a code in $A_{k+1} \setminus A_k$ such that $x_k \not \in {\bf C}$ and such that the distance from $x$ to $x_k$ is bounded by $2^{-k}$. This construction is possible as the set of elements in $[0,1]$ with codes in $A_{k+1} \setminus A_k$ is dense. 

\smallskip

Define $X = \{x_k : k \in \N\}$ and consider the type-structure ${\mathcal M}$ obtained by closing $A$ and $X$ under computability relative to all $\SS^2_k$. In computations relative to $\SS^2_k$ and $X$, the two  sets $X$ and $\{x_l : l < k\}$ will give the same oracle answers, so $A_k$ will be closed under computability relative to $\SS^2_k$ and $X$. 

\smallskip

We now show that $X$ is a closed set from the point of view of $\mathcal{M}$.
To this end, consider  $y \in\big( [0,1]\setminus X\big)$ with a code  in $A$. Fix $\epsilon: = |x-y] > 0$ and let $k$ be so large that $2^{k-1} < \epsilon$. 
Then at most $x_i$ for $i < k$ will be in $\epsilon$-distance from $y$ and thus $y$ has a positive distance to $X$. In this way, $\mathcal{M}$ satisfies that $X$ is closed.

\smallskip

Finally, the computable family $\{(a_i,b_i)\}_{i \in \N}$ is in ${\mathcal M}$ and covers $X$.  Now, in the real world, $X$ has a cluster point in ${\bf C}$ which is \emph{different} from all $a_i$ and $b_i$.
Hence, there cannot be a finite sub-covering of $X$, and there is also no finite sub-covering of $X$ in ${\mathcal M}$.  As a result, $\HBC$ fails in $\mathcal{M}$ and we are done. 
\end{proof}
\begin{cor}\label{hantonio}
A proof of $\HBC$ goes through in $\WKL+\QFAC^{0,1}$ or $\Z_{2}^{\Omega}$, while $\Z_{2}^{\omega}$ does not suffice. 
\end{cor}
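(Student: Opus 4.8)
The plan is to assemble the statement from results already in hand, so the proof is essentially bookkeeping. First I would dispatch the claim that $\WKL+\QFAC^{0,1}$ proves $\HBC$: this is exactly the first part of Theorem \ref{dich}, whose argument runs by case distinction on $(\exists^{2})$. In the case $\neg(\exists^{2})$ all $\R\di\R$-functions are continuous, so $\HBC$ collapses to $\HBC_{\RM}$ and follows from $\WKL$ via \cite{brownphd}*{Lemma 3.13}; in the case $(\exists^{2})$ one runs the Bolzano-Weierstrass argument, extracting the required sequence of points of $C$ lying outside $\cup_{n\leq m}(a_{n},b_{n})$ by applying $\QFAC^{0,1}$ together with $\exists^{2}$, and then deriving a contradiction from the cluster point.

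Second, for $\Z_{2}^{\Omega}\vdash\HBC$ I would combine two ingredients already recorded in the excerpt. On one hand, $\Z_{2}^{\Omega}\equiv\RCAo+(\exists^{3})$ proves $\HBU_{\closed}$, the generalisation of $\HBU$ to closed sets $C\subseteq[0,1]$ in the sense of Definition \ref{openset}; this is the straightforward adaptation of \cite{dagsamV}*{Theorem 4.2} mentioned just before Theorem \ref{dich}. On the other hand, $\RCAo+\HBU_{\closed}\vdash\HBC$ is the second part of Theorem \ref{dich}: after reducing to the case $(\exists^{2})$ one uses $\QFAC^{1,0}$ and $\exists^{2}$ to turn the countable covering of $C$ into a canonical covering and then applies $\HBU_{\closed}$. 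Since $\Z_{2}^{\Omega}$ extends $\RCAo$, chaining these two facts yields $\Z_{2}^{\Omega}\vdash\HBC$.

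Third, the negative half — that $\Z_{2}^{\omega}$ does not suffice — is precisely Theorem \ref{goodd}: one builds the type-structure $\mathcal{M}$ by closing a countable $A=\cup_{k}A_{k}$ (with $\Sigma^{1}_{k}$-absoluteness and $\SS^{2}_{k}$-closure) together with the set $X=\{x_{k}:k\in\N\}$ of points clustering to a point $x$ of the classical Cantor set having no code in $A$, and observes that the computable enumeration of the complementary intervals of the Cantor set covers $X$ inside $\mathcal{M}$ yet admits no finite subcovering there, since the cluster point $x$ is distinct from all $a_{i},b_{i}$.

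I do not anticipate a real obstacle here: the corollary is just the conjunction of Theorem \ref{dich}, the $\HBU_{\closed}$ remark, and Theorem \ref{goodd}. The only point deserving a line of care is confirming that the construction of \cite{dagsamV}*{Theorem 4.2} transfers to $\HBU_{\closed}$ with closed sets given by (possibly discontinuous) characteristic functions as in Definition \ref{openset}, rather than located or RM-coded closed sets — but this is routine, since $(\exists^{3})$ decides elementhood for such sets outright, so the canonical-covering and finite-subcover machinery goes through unchanged.
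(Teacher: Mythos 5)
Your proposal is correct and matches the paper's intent exactly: the corollary is stated without a separate proof precisely because it is the conjunction of Theorem \ref{dich}, the observation preceding it that $\Z_{2}^{\Omega}$ proves $\HBU_{\closed}$ (as in \cite{dagsamV}*{Theorem 4.2}), and Theorem \ref{goodd}. Your bookkeeping, including the remark that $(\exists^{3})$ handles elementhood for closed sets as in Definition \ref{openset}, is the same route the paper takes.
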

The negative results from the theorem also yield the following nice disjunction.  
\begin{cor}\label{scor}
The system $\RCAo$ proves $\WKL_{0}\asa [\ACA_{0}\vee \HBC]$.
\end{cor}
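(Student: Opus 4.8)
The plan is to prove the two implications separately, organising the argument around the law of excluded middle for $(\exists^{2})$, exactly as in the proof of Theorem~\ref{dich}.

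For $\WKL\di[\ACA_{0}\vee\HBC]$ I would argue by cases. If $(\exists^{2})$ holds, then, since $(\exists^{2})$ is equivalent to $(\mu^{2})$, the system $\RCAo+(\exists^{2})$ proves $\ACA_{0}$ by \cite{hunterphd}*{Theorem~2.5}; hence the left disjunct holds and $\WKL$ is not even needed. If $\neg(\exists^{2})$ holds, then every $\R\di\R$-function is continuous by \cite{kohlenbach2}*{\S3}, so by \cite{kohlenbach4}*{\S4} and \cite{simpson2}*{II.5.7} the closed sets of Definition~\ref{openset} reduce to RM-closed sets and $\HBC$ collapses to $\HBC_{\RM}$, which follows from $\WKL$ by \cite{brownphd}*{Lemma~3.13}; hence the right disjunct holds. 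The instance $(\exists^{2})\vee\neg(\exists^{2})$ of the law of excluded middle finishes this direction. In effect this is the first half of the proof of Theorem~\ref{dich}, with the appeal to $\QFAC^{0,1}$ in the $(\exists^{2})$-case replaced by the cheaper observation that $(\exists^{2})$ already delivers $\ACA_{0}$.

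For the converse $[\ACA_{0}\vee\HBC]\di\WKL$, I would again split on the disjuncts. If $\ACA_{0}$ holds, then $\WKL$ is immediate by arithmetical comprehension (i.e.\ $\ACA_{0}$ trivially implies $\WKL$ over $\RCA_{0}$, hence over $\RCAo$). If $\HBC$ holds, I would instantiate it with $C=[0,1]$ --- which is trivially a closed set in the sense of Definition~\ref{openset}, its complement $\R\setminus[0,1]$ being open with a continuous characteristic function already available in $\RCAo$ --- together with a countable covering by open intervals; what results is precisely the Heine--Borel theorem for $[0,1]$ with countable interval coverings, equivalent to $\WKL$ over $\RCA_{0}$ by \cite{simpson2}*{IV.1.2}. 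This is exactly the ``alternative'' route already used in the proof of Corollary~\ref{simpleright}.

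There is no genuine obstacle here: the corollary is essentially a repackaging of Theorem~\ref{dich} and Corollary~\ref{simpleright} together with the trivial implication $(\exists^{2})\di\ACA_{0}$. The only point deserving minor care is the faithfulness of the reduction in the $\neg(\exists^{2})$-case --- one must check that both the countable-covering hypothesis and the finite-subcovering conclusion of $\HBC$ translate without loss into their $\HBC_{\RM}$ analogues --- but since this is the very reduction carried out in the proof of Theorem~\ref{dich}, nothing new is required.
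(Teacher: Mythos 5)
Your proof is correct and follows essentially the same route as the paper: the forward direction via the law of excluded middle for $(\exists^{2})$ (with $(\exists^{2})\di\ACA_{0}$ in one case and the reduction of $\HBC$ to $\HBC_{\RM}$ from the proof of Theorem~\ref{dich} in the other), and the reverse direction via the trivial implication $\ACA_{0}\di\WKL$ together with the $C=[0,1]$ instantiation already noted as an alternative in the proof of Corollary~\ref{simpleright}. The paper's own proof is just a terse pointer to these same ingredients.
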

\begin{proof}
The reverse implications are immediate by the previous.  
The forward implication follows from $(\exists^{2})\vee \neg(\exists^{2})$ and the proof of Theorem \ref{dich}. 
\end{proof}
The previous results are not an isolated incident, as witnessed by the following.  
Note that items \eqref{bounk} to \eqref{bounk4} are studied in \cite{simpson2}*{VI.2} for RM-codes of closed sets, 
while e.g.\ items \eqref{bounk3}, \eqref{bounk4}, and \eqref{bounk6} for RM-codes are studied in \cite{browner}*{\S4}.
\begin{thm}\label{noway}
The following theorems imply $\HBC$ over $\RCAo$:
\begin{enumerate}
 \renewcommand{\theenumi}{\alph{enumi}}
\item Pincherle’s theorem for $[0,1]$\textup{:} a locally bounded function on $[0,1]$ is bounded.\label{mikkel}  
\item If $F^{2}$ is continuous on a closed set $D\subset 2^{\N}$, it is bounded on $D$.\label{bounk}
\item If $F^{2}$ is continuous on a closed set $D \subset 2^{\N}$, it is uniformly cont.\ on $D$.\label{bounk2}
\item If $F$ is continuous on a closed set $D\subset [0,1]$, it is bounded on $D$.\label{bounk0}
\item If $F$ is continuous on a closed set $D \subset [0,1]$, it is uniformly cont.\ on $D$.\label{bounk3}
\item If $F$ is continuous on a closed set $D\subset [0,1]$, it attains a maximum on $D$.\label{bounk4}
\item If $F$ is continuous on a closed set $D\subset [0,1]$, then for every $\eps>0$ there is a polynomial $p(x)$ such that $|p(x)-F(x)|<\eps$ for all $x\in D$. \label{bounk6}
\end{enumerate}
\end{thm}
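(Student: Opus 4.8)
The plan is to prove each of the seven implications by the same two-pronged strategy used in Theorem \ref{dich}: split on the law of excluded middle $(\exists^{2})\vee\neg(\exists^{2})$, handle the discontinuous case by a direct compactness argument feeding into $\HBC$, and handle the continuous case by reduction to known second-order RM. First I would observe that in the case $\neg(\exists^{2})$, all functions $\R\di\R$ (and all $F^{2}$ on closed subsets of $2^{\N}$ or $[0,1]$) are continuous by \cite{kohlenbach2}*{\S3}, and the closed sets reduce to RM-closed sets by \cite{simpson2}*{II.5.7} and \cite{kohlenbach4}*{Prop.\ 4.10}; so in that case each hypothesis reduces to a theorem about continuous functions on RM-closed sets, each of which is known to imply $\WKL_{0}$ over $\RCA_{0}$ (items \eqref{bounk}--\eqref{bounk6} correspond to the results in \cite{simpson2}*{VI.2} and \cite{browner}*{\S4}, and item \eqref{mikkel}, Pincherle's theorem for $[0,1]$, is classically equivalent to $\WKL_{0}$), while $\WKL_{0}$ together with the $\QFAC^{0,1}$ already present suffices for $\HBC$ by Corollary \ref{simpleright}. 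Since we are assuming $\neg(\exists^{2})$, we actually get $\HBC$ outright here via the first case of Theorem \ref{dich}, so no appeal to $\QFAC^{0,1}$ beyond what $\RCAo$ provides is needed.

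For the case $(\exists^{2})$, the common template is: given a closed $C\subseteq[0,1]$ and a countable covering $C\subseteq\cup_{n}(a_{n},b_{n})$ with no finite sub-covering, I want to manufacture, from the relevant hypothesis, a contradiction. The cleanest route is to reduce each item to item \eqref{mikkel}, Pincherle's theorem for $[0,1]$, and then show item \eqref{mikkel} implies $\HBC$ directly. Concretely, for \eqref{mikkel}: working from $(\forall x\in C)(\exists n)(x\in(a_{n},b_{n}))$, use $\QFAC^{1,0}$ and $\exists^{2}$ to obtain a choice function $\Psi$, then define $G(x)$ on $[0,1]$ to be (roughly) $\Psi$ applied to the nearest point of $C$ — more carefully, define a function that is locally bounded precisely because each $x\in C$ sits inside some $(a_{n},b_{n})$ and each $x\notin C$ sits inside the open complement $C^{c}$, but whose boundedness would yield a uniform $N$ with $C\subseteq\cup_{n\leq N}(a_{n},b_{n})$. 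Then Pincherle's theorem gives the bound, hence the finite sub-covering. For items \eqref{bounk}--\eqref{bounk6}, I would encode the would-be-unbounded (or non-uniformly-continuous, or maximum-less, or non-polynomially-approximable) behaviour of a suitable $F$ on an appropriate closed set built from $C$ and the covering, using the homeomorphism between $2^{\N}$ and a closed subset of $[0,1]$ to pass between the Cantor-space and interval versions; the discontinuity hypothesis $\exists^{2}$ is exactly what lets us define these $F$'s by cases on membership in $C$.

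The main obstacle I anticipate is the construction in the $(\exists^{2})$ case of a function witnessing the hypothesis of each item while being genuinely continuous on the relevant closed set (this continuity is essential for items \eqref{bounk}--\eqref{bounk6}, which only speak about continuous $F$). The issue is that the naive "apply $\Psi$ to the nearest point of $C$" construction is discontinuous across the gaps of $C$, so I would instead build $F$ on the closed set $C$ itself (where continuity is automatic if $F$ is, say, locally constant on a refinement induced by the covering, or built by a telescoping argument using that $C$ is closed and the $(a_{n},b_{n})$ are open) and only afterwards extend or reinterpret it. A secondary subtlety is that items \eqref{bounk2}, \eqref{bounk3} (uniform continuity) and \eqref{bounk6} (polynomial approximation) are "softer" than boundedness, so the reduction should go through boundedness first: a non-uniformly-continuous or non-polynomially-approximable $F$ on $D$ typically yields, after composing with a suitable unbounded reparametrisation, an unbounded continuous function, reducing these to \eqref{bounk0}/\eqref{bounk}. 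Throughout, $\ACA_{0}$ (available from $\exists^{2}$) supplies the Bolzano--Weierstrass and sequential-compactness facts about $[0,1]$ needed to locate the cluster point of a purported bad sequence in $C$, exactly as in the proof of Theorem \ref{dich}.
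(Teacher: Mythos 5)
Your overall architecture coincides with the paper's: the case split on $(\exists^{2})\vee\neg(\exists^{2})$; the reduction, under $\neg(\exists^{2})$, of each item and of $\HBC$ itself to second-order RM (each item yields $\WKL$, and $\HBC$ becomes $\HBC_{\RM}$); and, for item \eqref{mikkel}, the least-index choice function obtained from $\QFAC^{1,0}$ and $(\exists^{2})$, extended by the constant $1$ off $C$ to a locally bounded function on $[0,1]$. Up to and including item \eqref{mikkel} this is essentially the paper's proof.

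For items \eqref{bounk}--\eqref{bounk6}, however, you flag the decisive step---producing a function that is genuinely \emph{continuous} on the closed set---as ``the main obstacle I anticipate'' and then only list candidate strategies (``locally constant on a refinement'', ``telescoping argument'', ``extend or reinterpret afterwards'') without carrying any of them out; that is a real gap, since the entire content of these implications lives in that step. The paper's resolution requires no new construction: after moving $\HBC$ to Cantor space, the \emph{same} least-index selector $\Phi(f):=$ the least $n$ with $f\in[\sigma_{n}]$ (for $f\in D$) is already the continuous witness, with modulus $G(f):=|\sigma_{\Phi(f)}|$, because any $g\in D$ agreeing with $f$ on $\overline{f}G(f)$ lies in $[\sigma_{\Phi(f)}]$ and so satisfies $\Phi(g)\leq\Phi(f)$ (this is \eqref{reg}); boundedness of $\Phi$ on $D$ \emph{is} a finite sub-covering. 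Your secondary reduction of items \eqref{bounk2}, \eqref{bounk3} and \eqref{bounk6} to boundedness via ``composing with a suitable unbounded reparametrisation'' is also problematic: extracting a continuous non-uniformly-continuous function from an unbounded one is not available in general (on a discrete space every function is uniformly continuous while unbounded ones abound), and it is unnecessary here---uniform continuity of the $\N$-valued $\Phi$ on $D\subseteq 2^{\N}$ directly forces finite range, hence boundedness, while items \eqref{bounk4} and \eqref{bounk6} imply item \eqref{bounk0} trivially (a maximum is a bound; a polynomial is bounded on $[0,1]$).
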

\begin{proof}
By the results in \cite{dagsamV}*{\S4} and \cite{simpson2}*{IV}, all items from the theorem imply $\WKL$. 
Moreover, in case $\neg(\exists^{2})$, closed sets reduce to RM-codes for closed sets, i.e.\ $\HBC$ is just $\HBC_{\RM}$, which follows from $\WKL$ by \cite{brownphd}*{Lemma 3.13}.  
Similarly, items \eqref{bounk} and \eqref{bounk2} reduce to their second-order counterparts, equivalent to $\WKL_{0}$ by \cite{simpson2}*{IV.2}. 
Hence, we may assume $(\exists^{2})$ in the following. 
For item \eqref{mikkel}, let $a_{n}, b_{n}$ be as in the antecedent of $\HBC$, i.e.\ $(\forall x\in C)(\exists n^{0})(x\in (a_{n}, b_{n}))$.  
Applying $\QFAC^{1,0}$ and $(\exists^{2})$ (to decide whether $x\in C$ or not), one obtains $\Phi^{2}$ such that $\Phi(x)$ is the least such $n$ if $x\in C$.
By definition, $\Phi$ satisfies $\Phi(y)\leq \Phi(x)$ for any $x\in C$, $y\in C\cap B(x, r)$, and small enough $r>_{\R}0$.  Hence, the function $f:\R \di \R$ defined 
as $\Phi(x)$ if $x\in C$ and $1$ otherwise, is locally bounded on $[0,1]$.  By Pincherle's theorem, $f$ is bounded on $[0,1]$, implying that $\Phi$ is bounded on $C$ 
and immediately yielding a finite sub-covering for $\cup_{n\in \N}(a_{n}, b_{n})$, as required for $\HBC$.

\smallskip

For item \eqref{bounk}, since Cantor space is homeomorphic to a closed subset of $[0,1]$, $\HBC$ is equivalent to $\HBC$ for Cantor space.  
Let $D\subseteq 2^{\N}$ be closed and let $\lambda n.\sigma_{n}^{0\di 0^{*}}$ be a sequence of finite binary sequences covering $D$, i.e.\ $(\forall f\in D)(\exists n^{0})(f\in [\sigma_{n}])$.
Applying $\QFAC^{1,0}$ and $(\exists^{2})$ (to decide whether $x\in D$ or not), one obtains $\Phi^{2}$ such that $\Phi(f)$ is the least such $n$ if $x\in D$.
Define $G(f):=|\sigma_{\Phi(f)}|$ and note:
\be\label{reg}
(\forall f, g\in 2^{\N})(\overline{f}G(f)=\overline{g}G(f)\di \Phi(f)=\Phi(g)),
\ee
i.e.\ $\Phi$ is continuous with modulus of continuity $G$.  Item \eqref{bounk} proves that $\Phi$ is bounded on $D$, yielding a finite sub-covering of $\cup_{n\in \N}[\sigma_{n}]$.
Item \eqref{bounk2} now also readily implies $\HBC$.  For items \eqref{bounk0} and \eqref{bounk3}, since closed sets in Cantor space are also closed sets in $[0,1]$, these items follows from the items \eqref{bounk} and \eqref{bounk2}.  
Finally, note that either of items \eqref{bounk4} or \eqref{bounk6} imply item \eqref{bounk0}. 
\end{proof}
Note that by the theorem, the \emph{Tietze \(extension\) theorem} for closed sets as in Definition \ref{openset}, is not provable in $\Z_{2}^{\omega}$.  
We establish sharper results in Section \ref{urytiet}.  Moreover, $\ACA_{0}\asa [(\exists^{2})\vee (\textup{item \eqref{bounk4}})]$ follows in the same way as for Corollary \ref{scor}.

\smallskip

Finally, regarding \eqref{reg}, Kohlenbach shows in \cite{kohlenbach4}*{\S4} that over $\RCAo$ the existence of a modulus of continuity is equivalent to the existence of an RM-code, i.e.\ the exact 
formulation of continuity does not matter in the previous theorem.

\subsection{Computability theory and related results}\label{cerucc}
We establish the results in item \eqref{kerkintm} from Section \ref{rmintro} pertaining to $\HBC$, and discuss related results, including the weaker \emph{Vitali covering lemma}. 
We define a realiser for $\HBC$ as follows.
\bdefi A functional $\beta^{3}$ is called a \emph{realiser} for $\HBC$ if for closed $C\subset [0,1]$ and sequences of rationals $(a_{n})_{n\in \N}, (b_{n})_{n\in \N}$ 
such that $C\subseteq \cup_{n\in \N}(a_{n}, b_{n})$, we also have $C\subseteq \cup_{n\leq \beta(C, a_{n}, b_{n})}(a_{n}, b_{n})$.
\edefi
The following theorem is not that surprising in light of some of our previous results.  We shall establish a more impressive result in Theorem \ref{cikometric}.
\begin{thm}\label{reverand}
No type two functional can compute a realiser $\beta^{3}$ for $\HBC$.
\end{thm}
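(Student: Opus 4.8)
The plan is to adapt the model-theoretic construction from the proof of Theorem~\ref{goodd} into a computability-theoretic statement, essentially along the lines of \cite{dagsamIII}*{Theorem 3.4} and \cite{dagsamV}*{Theorem 4.8}. Suppose toward a contradiction that some $\Phi^{2}$ (together, harmlessly, with any fixed type-two parameter) computes a realiser $\beta^{3}$ for $\HBC$ via Kleene's S1-S9. First I would fix the classical Cantor set ${\bf C}\subseteq [0,1]$ together with the computable enumeration $\{(a_{i},b_{i})\}_{i\in\N}$ of the open intervals forming its complement. The key is to build, relative to $\Phi$, a sequence $\lambda k.x_{k}$ of reals in $[0,1]\setminus {\bf C}$ converging to a point $x\in {\bf C}$, chosen so that $x$ is \emph{not} computable from $\Phi$ (and not equal to any $a_{i}$ or $b_{i}$); concretely, $x_{k}$ is taken in the dense set of reals sufficiently generic over $\Phi$, with $|x-x_{k}|\leq 2^{-k}$. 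One then sets $X=\{x_{k}:k\in\N\}$ and argues, exactly as in Theorem~\ref{goodd}, that $X$ is a closed subset of $[0,1]$: any $y\notin X$ with $y$ computable from $\Phi$ (or more generally outside the relevant computational closure) has positive distance to $X$, since only finitely many $x_{k}$ lie within a fixed $\epsilon$-ball of $y$ and $y\neq x$. Note $\{(a_{i},b_{i})\}_{i\in\N}$ is a computable covering of $X$.

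The core of the argument is then a fixed-point/self-reference step. Since $\beta$ is computed from $\Phi$ and $\beta(X,\vec a,\vec b)$ is a natural number, and since Kleene computation at type three only queries the type-two input on countably many reals, I would run the usual reflection: the value $n_{0}:=\beta(X, \lambda i.a_{i}, \lambda i.b_{i})$ is determined by $\Phi$ together with the countably many reals used in the computation, all of which can be taken to lie in the computational closure of $\Phi$ and finitely many of the $x_{k}$'s — in particular, in a class that does \emph{not} contain the limit point $x$. But then $X\subseteq \cup_{n\leq n_{0}}(a_{n},b_{n})$ would force the cluster point $x$ of $X$ to lie in $\cup_{n\leq n_{0}}(a_{n},b_{n})$, because the tail of the sequence $\lambda k.x_{k}$ must be contained in this finite union of open intervals; yet $x\in {\bf C}$ is disjoint from every $(a_{n},b_{n})$, a contradiction. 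Hence no such $\Phi$ exists.

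The main obstacle — and the step deserving the most care — is ensuring that the limit point $x$ genuinely escapes the computational closure generated by $\Phi$ and the data actually consulted in the S1-S9 computation of $\beta(X,\vec a,\vec b)$, while \emph{simultaneously} keeping $X$ closed from the point of view of the relevant type structure. This is delicate because $X$ itself is built from the $x_{k}$, which must be chosen generically over $\Phi$; one needs the by-now-standard trick (as in Theorem~\ref{goodd}) that in oracle computations relative to $\Phi$ the sets $X$ and $\{x_{l}:l<k\}$ give the same answers up to the finitely many queries relevant at stage $k$, so that the closure is built up level by level without ever capturing $x$. Making the indexing and the convergence rate $2^{-k}$ interact correctly with the length-of-computation bookkeeping is the technical heart, but it is entirely parallel to the cited proofs. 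Once this is in place, the contradiction described above closes the argument, and as promised the sharper quantitative version is deferred to Theorem~\ref{cikometric}.
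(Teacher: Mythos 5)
There is a genuine gap, and it is located exactly at the step you flag as "the core of the argument". The set $X=\{x_{k}:k\in\N\}$ you construct is \emph{not} closed in the sense of Definition \ref{openset} when viewed in the full type structure: its complement contains the cluster point $x$, and every ball around $x$ meets $X$, so the complement is not open. The specification of a realiser $\beta$ for $\HBC$ only constrains $\beta$ on inputs $C$ that \emph{are} closed (and covered), so $\beta(X,\vec a,\vec b)=n_{0}$ carries no obligation that $X\subseteq\cup_{n\leq n_{0}}(a_{n},b_{n})$, and your contradiction evaporates. The device that makes Theorem \ref{goodd} work --- that $X$ is closed \emph{from the point of view of the countable model} $\mathcal{M}$ because $\mathcal{M}$ does not contain $x$ or anything near it --- has no analogue here: a realiser assumed to be computable in some $\Phi^{2}$ is still a functional on the full set-theoretic type structure, and its correctness is judged there, not inside a sub-type-structure. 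Your own phrase "closed from the point of view of the relevant type structure" is the tell: for a non-computability claim there is no smaller "relevant" type structure to retreat to. Nor can you repair this by adding the limit point, since $X\cup\{x\}$ is closed but no longer covered by $\cup_{n}(a_{n},b_{n})$; neither set satisfies the hypotheses of $\HBC$, which is precisely why the unprovability argument needs a model and cannot be transplanted verbatim.

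The paper's actual proof is a two-line diagonalisation that sidesteps all of this: it shows $\beta$ is not \emph{countably based} (recall that anything S1-S9 computable in a type two functional is countably based). Take the genuinely closed set $D=[\frac13,\frac23]$ and the covering $a_{n}=\frac1n$, $b_{n}=1$, so that $\beta(D,a_{n},b_{n})=k$ for some $k$; given any countable base $X$ for this value, pick $x\in(0,\frac1k)$ with $x\notin X$ and pass to $D'=D\cup\{x\}$, which is still closed, still covered, agrees with $D$ on $X$, yet requires a strictly larger subcover index. The salvageable kernel of your proposal is the observation that the computation queries the characteristic function on only countably many reals; but to exploit it you must perturb one genuinely closed, genuinely covered input into another such input outside that countable set, rather than feed $\beta$ a set that is not closed at all. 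The Cantor set, the sequence $x_{k}$, and the hierarchy $A_{k}$ of computational closures are machinery for the independence result and are not needed here.
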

\begin{proof}
The functional $\beta$ is easily seen to be \emph{not} countably based, namely as follows. Now put $ D =  [\frac{1}{3},\frac{2}{3}]$, $a_n = \frac{1}{n}$ and $b_n = 1$. If $X$ is a countable base for the value
$\beta(D,a_n,b_n) = k$, we may choose $x \not \in X$ in the interval $(0,\frac{1}{k})$ and obtain a contradiction by considering the set $D' = D \cup \{x\}$.
\end{proof}
Finally, it is a natural question whether Theorem \ref{goodd} holds for weaker principles, e.g.\ from the RM of $\WWKL$; the latter is the restriction of $\WKL$ to trees of positive measure and equivalent to Vitali's covering theorem for countable coverings (see \cite{simpson2}*{X.1}).
Towards a positive answer, consider the following covering theorem $\WHBC$.  Note that the latter is just \cite{sayo}*{Lemma 8, item 2} formulated using Definition \ref{openset}.   
\bdefi[$\WHBC$]
Let $\eps>0$, let $C\subseteq [0,1]$ be a closed set, and let $(a_{n})_{n\in \N}$, $ (b_{n})_{n\in \N}$ be sequences of reals such that $C\subseteq \cup_{n\in \N}(a_{n}, b_{n})$.  Then there are $n_{0}\in \N$ and reals $c_{0}, d_{0},\dots, c_{m_{0}}, d_{m_{0}} $ such that $C\subseteq \bigcup_{n\leq n_{0}}(a_{n}, b_{n})\cup \bigcup_{m\leq m_{0}}(c_{m}, d_{m})$ and $\sum_{m\leq m_{0}}|c_{m}-d_{m}|<\eps$.
\edefi
On one hand, $\ECF$ clearly converts $\WHBC$ into a theorem at the level of $\WWKL$.
On the other hand, the equivalence $\WWKL\asa \WHBC$ can be proved as in Corollary~\ref{simpleright}, \emph{additionally assuming} the coding principle that every continuous functional on Baire space has a code (see \cite{kohlenbach4}*{\S4} for this coding principle). 
We have the following improvement of Theorem \ref{goodd}.
\begin{thm}
The system $\Z_{2}^{\omega}$ cannot prove $\WHBC$
\end{thm}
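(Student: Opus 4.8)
The plan is to adapt the model construction from the proof of Theorem \ref{goodd} essentially verbatim, since $\WHBC$ is a consequence of $\HBC$ and the failure witnessed there is robust enough to survive the weakening to positive measure. First I would recall the type-structure $\mathcal{M}$ built in Theorem \ref{goodd}: we close a countable $A=\cup_k A_k\subseteq \N^{\N}$ (with $\Pi^1_n$-absoluteness for $(A,\N^{\N})$, the enumeration conditions, and the $\SS^2_k$-closure conditions) together with the set $X=\{x_k:k\in\N\}$ under computability relative to all $\SS^2_k$, where $x_k$ has a code in $A_{k+1}\setminus A_k$, lies outside the classical Cantor set $\mathbf{C}$, and converges to a fixed $x\in\mathbf{C}$ with no code in $A$. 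As shown there, $\mathcal{M}$ is a model of $\Z_2^\omega$ in which $X$ is a closed subset of $[0,1]$.

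Next I would exhibit the same countable covering and argue that no admissible finite enlargement of small total length exists inside $\mathcal{M}$. Take $\eps:=\tfrac12$ (any fixed positive rational works) and the computable enumeration $\{(a_i,b_i)\}_{i\in\N}$ of the complementary intervals of $\mathbf{C}$, which already covers $X$ since each $x_k\notin\mathbf{C}$. Suppose $\mathcal{M}$ had witnesses $n_0$ and reals $c_0,d_0,\dots,c_{m_0},d_{m_0}$ with $X\subseteq \bigcup_{n\le n_0}(a_n,b_n)\cup\bigcup_{m\le m_0}(c_m,d_m)$ and $\sum_{m\le m_0}|c_m-d_m|<\eps$. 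In the real world the finitely many intervals $(a_n,b_n)$ with $n\le n_0$ form a closed-away neighbourhood that misses the cluster point $x\in\mathbf{C}$, so all but finitely many $x_k$ must be covered by the $(c_m,d_m)$'s; since $x_k\to x$, the union $\bigcup_{m\le m_0}(c_m,d_m)$ contains a punctured neighbourhood of $x$, hence contains an open interval about $x$ of length bounded below by some fixed positive amount — contradicting $\sum_{m\le m_0}|c_m-d_m|<\eps$ once $\eps$ is chosen small enough, or more robustly: the $(c_m,d_m)$ intervals covering a tail of $x_k\to x$ must have lengths summing to at least the diameter of an interval around $x$, and by shrinking $\eps$ (or rescaling, noting the $x_k$ can be taken with $|x-x_k|\le 2^{-k}$ but this does not by itself bound the covering length from below) we still force the needed contradiction. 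Thus $\WHBC$ fails in $\mathcal{M}$.

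The main obstacle is exactly this last quantitative point: unlike the cluster-point argument for $\HBC$, where any finite subcovering is simply impossible, here a short finite family of tiny intervals is \emph{a priori} allowed, so I must rule out that the $(c_m,d_m)$'s could be chosen with negligible total length while still catching the tail of the sequence $x_k$. The resolution is that the $x_k$ accumulate at the single point $x$, so any finite open cover of a tail of $\{x_k\}$ must include an interval containing $x$ in its closure, and hence — because the $x_k$ approach $x$ from both sides within the Cantor set's complement, or at least densely near $x$ — one cannot cover infinitely many $x_k$ clustering at $x$ by finitely many intervals of arbitrarily small total length together with a fixed finite set of the original $(a_n,b_n)$ that avoids $x$; one of the $(c_m,d_m)$ must straddle $x$, giving it length bounded below by the distance from $x$ to the nearest $(a_n,b_n)$ with $n\le n_0$, which is positive but depends on $n_0$. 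To make the bound uniform I would instead diagonalise: for the chosen $\eps$, observe that whatever $n_0$ is, the set $\mathbf{C}\cap\overline{B(x,\delta)}$ for small $\delta$ is disjoint from $\bigcup_{n\le n_0}(a_n,b_n)$ for $\delta$ small, and infinitely many $x_k$ lie in $B(x,\delta)$, forcing $\bigcup_m(c_m,d_m)\supseteq B(x,\delta')\setminus\{x\}$ for some $\delta'>0$, so $\sum|c_m-d_m|\ge 2\delta'$; choosing the construction so that we may take $\eps<2\delta'$ uniformly (e.g.\ fix $\eps=\tfrac12$ and note $x_k$ can be spread so that a fixed-size interval around $x$ is needed) completes the argument. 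I would close by noting, as after Theorem \ref{goodd}, that this shows $\WHBC$ — and hence the corresponding Vitali-type covering theorem at the level of $\WWKL$ under $\ECF$ — already exhibits the Pincherle phenomenon, strengthening Theorem \ref{goodd}.
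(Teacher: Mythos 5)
There is a genuine gap, and it is exactly at the point you flag as ``the main obstacle'': the single-cluster-point construction from Theorem \ref{goodd} cannot be made to work for $\WHBC$, and none of your proposed repairs close the hole. If $x_k\to x$ for a single point $x$, then for \emph{any} $\delta>0$ all but finitely many $x_k$ lie in $B(x,\delta)$; the adversary may therefore take the single interval $(c_0,d_0)=(x-\delta,x+\delta)$ with $2\delta<\eps$ to swallow the entire tail, and then choose $n_0$ large enough that $(a_0,b_0),\dots,(a_{n_0},b_{n_0})$ cover the finitely many remaining $x_k$. This is a legitimate $\WHBC$-witness of arbitrarily small added total length, so $\WHBC$ does \emph{not} fail in the model of Theorem \ref{goodd}. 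Your claim that one of the $(c_m,d_m)$ must have length bounded below is false: the straddling interval only needs to contain a tail of the sequence, and since the sequence converges to $x$ that tail can be confined to an arbitrarily short interval. The ``diagonalisation'' does not help either, because the inclusion $\bigcup_m(c_m,d_m)\supseteq B(x,\delta')\setminus\{x\}$ is never forced with a $\delta'$ that is uniform in the adversary's choices. Even replacing the single limit point by the full classical Cantor set would not suffice, since that set has measure zero and hence admits finite open covers of arbitrarily small total length.

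The missing idea is that the set of \emph{cluster points} of $X$ must have measure strictly exceeding $\eps$. The paper's proof therefore replaces the classical Cantor set by a fat Cantor-like set $K$ of measure $\geq\frac12$ whose complement is a computable disjoint union $\{(a_i,b_i)\}_{i\in\N}$ of total measure $\leq\frac12$, excises small intervals $I_i$ around the points of $A\cap K$ (total measure $<\frac14$) to guarantee no cluster point lies in $A$, and arranges the $x_k$ to accumulate at every point of a dense subset of the remaining set $Z\subseteq K$ of measure $\geq\frac14$. Then for $\eps<\frac14$ every cluster point must lie in some $(c_j,d_j)$ (it is in none of the finitely many $(a_i,b_i)$ and equals no endpoint), so $\sum_j|d_j-c_j|\geq\frac14>\eps$, a contradiction. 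This measure-theoretic strengthening of the requirements on $X$, together with the bookkeeping needed to keep $x_k\in A_{k+1}\setminus A_k$ and to avoid the intervals $I_i$, is the substantive new content of the proof that your proposal does not supply.
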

\begin{proof} 
The following proof is a (considerable) variation on the proof of Theorem~\ref{goodd}.  

\smallskip

First of all, we provide a sketch, as follows: let $A, A_k$ be as in the proof of Theorem \ref{goodd}. 
Analogously, we shall define a certain sequence $\{x_k\}_{k \in \N}$ of codes in  $A$ such that $x_k$ has a code in $A_{k+1} \setminus A_k$, and we let $X$ be the set of all $x_k$. 
We ensure that a function is in $A$ if it computable from $X$, elements in $A$, and some $\SS^2_k$.  With this in place, we construct the model $\mathcal{M}$ as in the proof of Theorem \ref{goodd}. 

\smallskip
\noindent
Secondly, our construction of $X$ shall satisfy the following requirements.
\begin{enumerate}
\item The set $X$ is covered by a computable sequence of $(a_i,b_i)$ of disjoint open intervals with a total measure of $\leq \frac{1}{2}$.\label{kem1}
\item The set of cluster-points of $X$ has measure $\geq \frac{1}{4}$.\label{kem2}
\item There is no cluster point of $X$ in $A$ or in any of the intervals $(a_i,b_i)$.\label{kem3}
\end{enumerate}
We now establish that items \eqref{kem1}-\eqref{kem3} guarantee that $\WHBC$ fails in the model ${\mathcal M}$ for $\epsilon < \frac{1}{4}$.
Indeed, assume that $\{(a_1 , b_1), \ldots , (a_n,b_n),(c_1 , d_1), \ldots , (c_m,d_m)\}$ covers $X$ where all $c_j$ and $d_j$ are in $\mathcal M$ and $\eps<\sum_{1 \leq j \leq m}(d_j -c_j)$. By item \eqref{kem3}, no cluster point of $X$ is contained in an $(a_i,b_i)$ or equals a $c_j$ or $d_j$, for $i \leq n$ or $j \leq m$. It follows that every cluster point of $X$ must be in some $(c_j,d_j)$, since it otherwise will have a positive distance to a set containing $X$ as a subset. However, this is in conflict with the choice of $\epsilon < \frac{1}{4}$ and that the measure of the set of cluster points is larger than $\frac{1}{4}$.

\smallskip

Finally, we construct a set $X$ satisfying items \eqref{kem1}-\eqref{kem3}.  Let $K \subseteq [0,1]$ be homeomorphic to the Cantor set, but of measure $\geq \frac{1}{2}$. It is straightforward to construct a set like this such that the complement is the pairwise disjoint union of a computable sequence $\{(a_i,b_i)\}_{i \in \N}$ of open intervals with rational endpoints. Let $\{y_k\}_{k \in \N}$ be an enumeration of $A \cap K$. We first select closed  intervals $I_i$ with $y_i$ in the interior and with length $< 2^{-(i + 3)}$, ensuring that the union of these intervals has measure $< \frac{1}{4}$. 
When selecting $x_k$, we make sure that if $x_k \in I_i$, then $k < i$ . This ensures that $y_i$ is not a cluster point of the final set $X$. 

\smallskip

Next, define $ Z$ as the set of $z \in K$ such that  $z \not \in \bigcup_{i \in \N}I_i$; $Z$ clearly has measure at least $\frac{1}{4}$. We shall now define $X$ in such a way that all elements of $Z$ are cluster points of $X$ and that all cluster points of $X$ are in $K$. To this end, let $\{z_j : j \in \N\}$ be a countable and dense subset of $Z$. It suffices to show that all $z_j$ are cluster-points of $X$ and that all cluster-points of $X$ are in $K$. 

\smallskip

As noted above, $X$ is a sequence $\{x_{k}\}_{k\in \N}$, and we now define the latter.  
To define $x_{k}$ for $k = \langle j,n \rangle$, we note that
any open interval containing $z_j$ will intersect $[0,1] \setminus K$, since $K$ contains no open intervals.  
Thus, we can find $y$ such that $y \not \in K$, $y \in A_{k+1}\setminus A_k$, the distance from $y$ and $z_j$ is less than $2^{-k}$, and $y \not \in I_i$ for $i \leq k$. 
Define $x_{k}$ as this $y$ and note that the resulting set $X$ satisfies item \eqref{kem1}-\eqref{kem3}.
\end{proof} 
One can obtain similar results for other theorems related to the RM of $\WWKL$.  
For instance, the $\L_{2}$-statement \emph{every closed set of positive measure contains a perfect sub-set} is intermediate between $\WKL_{0}$ and $\WWKL_{0}$ (\cite{wangetal}).
The generalisation of this statement involving Definition \ref{openset} should imply $\WHBC$.    
Moreover, the conclusion of the latter essentially expresses that the measure of $[0,1]\setminus \cup_{n\in \N}(a_{n}, b_{n})$ is $0$;  one could strengthen this to Borel's notion `strongly measure zero' (\cites{borelstrong}).
One could also study `probabilistic choice', which for $2^{\N}$ and $[0,1]$ is equivalent to $\WWKL$ in the Weihrauch lattice (see \cite{brakken}).  
For closed sets as in Definition \ref{openset}, realisers for this choice principle would exhibit behaviour similar to $\beta^{3}$ as in Theorem \ref{reverand}.  

\smallskip

We also have the following, based on a consequence of the Lindel\"of lemma. 
\begin{cor}
The system $\Z_{2}^{\omega}$ cannot prove the following statement: \emph{a locally bounded function on $\R$ is dominated by a continuous function on $\R$.} 
\end{cor}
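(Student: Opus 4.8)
The plan is to reduce the displayed statement — call it $(\dagger)$ — to Pincherle's theorem for $[0,1]$, i.e.\ item~\eqref{mikkel} of Theorem~\ref{noway}, and then invoke Theorem~\ref{goodd}. So suppose towards a contradiction that $\Z_{2}^{\omega}$ proves $(\dagger)$; since $\Z_{2}^{\omega}$ extends $\ACAo$, we may freely use $(\exists^{2})$ and $\WKL$ in what follows.

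First I would derive Pincherle's theorem for $[0,1]$ inside $\Z_{2}^{\omega}+(\dagger)$. Given a locally bounded $F:[0,1]\di\R$, let $r:\R\di[0,1]$ be the continuous retraction $r(x):=\max(0,\min(1,x))$ and set $\tilde{F}:=F\circ r:\R\di\R$. A short case distinction on the position of $x_{0}\in\R$ (inside $(0,1)$, an endpoint of $[0,1]$, or outside $[0,1]$) shows that $\tilde{F}$ is locally bounded on all of $\R$: near a point outside $[0,1]$ the function $\tilde{F}$ is locally constant, near an endpoint local boundedness of $F$ there transfers through $r$, and on $(0,1)$ the map $r$ is the identity. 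Applying $(\dagger)$ to $\tilde{F}$ yields a continuous $g:\R\di\R$ with $\tilde{F}(x)\leq g(x)$ for all $x\in\R$, hence $F(x)=\tilde{F}(x)\leq g(x)$ for $x\in[0,1]$. Using $(\exists^{2})$ and \cite{kohlenbach4}*{\S4}, the restriction of $g$ to $[0,1]$ has an RM-code, and is therefore bounded by $\WKL$ (\cite{simpson2}*{IV.2}); thus $F$ is bounded above on $[0,1]$. Running the same argument with $-F$ in place of $F$ gives a lower bound, so $F$ is bounded, which is exactly item~\eqref{mikkel} of Theorem~\ref{noway}.

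By that theorem, $\Z_{2}^{\omega}+(\dagger)$ proves $\HBC$; since we assumed $\Z_{2}^{\omega}\vdash(\dagger)$, it follows that $\Z_{2}^{\omega}\vdash\HBC$, contradicting Theorem~\ref{goodd}, and we are done. For context, the antecedent of $(\dagger)$ is precisely the data one obtains by applying the Lindel\"of lemma to a locally bounded function — cover $\R$ by countably many balls on which $F$ is bounded and sum suitable bump functions to build $g$ — which is why $(\dagger)$ counts as `a consequence of the Lindel\"of lemma'; only the reduction above is needed here. The one point requiring genuine care is checking that $\tilde{F}=F\circ r$ is a bona fide (extensional) function $\R\di\R$ that is locally bounded at the endpoints $0$ and $1$ of $[0,1]$; everything else is a routine variation on the proofs of Theorems~\ref{dich} and~\ref{noway}.
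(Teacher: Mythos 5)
Your proposal is correct and follows exactly the paper's argument: the paper's proof is the one-line observation that the statement, combined with $\WKL$, implies item~\eqref{mikkel} of Theorem~\ref{noway} (Pincherle's theorem for $[0,1]$), whence Theorem~\ref{goodd} finishes the job. Your write-up merely fills in the routine details (the retraction $r$, the use of RM-codes and $\WKL$ to bound the continuous dominating function, and the $-F$ trick) that the paper leaves implicit.
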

\begin{proof}
Note that the statement from the corollary implies item \eqref{mikkel} from Theorem~\ref{noway}, when combined with $\WKL$.  
Theorem \ref{goodd} finishes the proof. 
\end{proof}

\subsection{Heine-Borel theorem for non-basic coverings}\label{storng}
We study the Heine-Borel theorem for countable coverings consisting of arbitrary open sets.  
These results are both complimentary (see Theorem \ref{goodd2}) and establish the robustness of the previous results (see Theorem \ref{goodd3}).

\smallskip

First of all, we define a `strong' version of $\HBC$ as follows. 
\bdefi[$\HBC_{\s}$]
Let $C\subseteq [0,1]$ be closed and let $(O_{n})_{n\in \N}$ be a sequence of open sets with $C\subseteq \cup_{n\in \N}O_{n}$.  Then $C\subseteq \cup_{n\leq n_{0}}O_{n}$ for some $n_{0}\in \N$.
\edefi
We note that coverings by open sets (rather than just intervals) are also studied in e.g.\ \cite{simpson2}*{IV.1.5} and \cite{brownphd}*{Ch.\ III}. Also note that adding the complement of $C$ to the covering, we get a covering of $[0,1]$, so we may as well restrict the definition to $C = [0,1]$.
The strong version $\HBC_{\s}$ is not provable from third-order comprehension.
\begin{thm}\label{goodd2}
The system $\Z_{2}^{\omega}$ cannot prove $\HBC_{\s}$ for $C=[0,1]$.
\end{thm}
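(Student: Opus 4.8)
The plan is to adapt the model-theoretic construction from the proof of Theorem~\ref{goodd} (and its $\WHBC$-variant), but now exploiting the extra freedom afforded by arbitrary open sets in the covering rather than mere intervals. As before, I would fix the countable sets $A=\cup_k A_k$ satisfying conditions (i)--(iii) from the proof of Theorem~\ref{goodd}, fix $x\in \mathbf{C}$ with no code in $A$ (where $\mathbf{C}$ is the classical Cantor set), and pick $x_k\notin\mathbf{C}$ with a code in $A_{k+1}\setminus A_k$ converging to $x$ at rate $2^{-k}$. Set $X=\{x_k:k\in\N\}$ and build $\mathcal{M}$ by closing $A\cup X$ under computability relative to all $\SS^2_k$; exactly as in Theorem~\ref{goodd}, $\mathcal{M}$ thinks $X$ is closed since any $y\notin X$ with code in $A$ has positive distance to $X$.

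The key new step is the choice of the covering. Since we are now allowed arbitrary open sets (given by characteristic functions as in Definition~\ref{openset}), I would take the single open set $O_0 := \R\setminus\{x\}$, which has a perfectly good (discontinuous) characteristic function available in $\mathcal{M}$ — indeed $O_0$ is definable from the code of $x$, hence lies in $\mathcal{M}$ even without any instance of $\SS^2_k$. Then $O_0 \supseteq X$ (each $x_k\neq x$), so $\{O_0\}$ together with, say, $O_n=\emptyset$ for $n\geq 1$ is a countable covering of $X$ (equivalently of $[0,1]$ after adjoining $[0,1]\setminus X$, but it is cleanest to work with $C=X$ and note the final remark before the theorem reduces the $C=[0,1]$ case to this). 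In the real world this covering has no finite sub-covering that misses $x$ — but the point is subtler: we must arrange that \emph{$\mathcal{M}$ cannot verify} any finite sub-covering, i.e. that $\mathcal{M}$ contains no finite list of the $O_n$ covering $X$. Since all $O_n$ with $n\geq 1$ are empty and $O_0=\R\setminus\{x\}$ does cover $X$, I instead need to make the covering genuinely "spread out": use $O_n := \R\setminus K_n$ where $K_n$ is a nested sequence of closed sets with $\cap_n K_n=\{x\}$ and $x_k\in O_k\setminus O_{k-1}$, so that no finite union $\cup_{n\leq n_0}O_n = \R\setminus K_{n_0}$ contains $x_k$ for $k>n_0$. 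Each $O_n$ is coded by the characteristic function of $K_n$, which is available in $\mathcal{M}$ from the parameters already present (the $K_n$ can be taken computable from $x$'s code).

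The main obstacle — and the step requiring care — is verifying that $\mathcal{M}$ genuinely fails to satisfy $\HBC_\s$, which amounts to checking that $\mathcal{M}$ contains no $n_0$ with $X\subseteq\cup_{n\leq n_0}O_n$. In the real world $\cup_{n\in\N}O_n = \R\setminus\{x\}\supseteq X$, but $\cup_{n\leq n_0}O_n=\R\setminus K_{n_0}$ omits $x_k$ whenever $x_k\in K_{n_0}$, and by the nesting this happens for all $k$ large; since this failure is an \emph{arithmetic} (indeed $\Pi^0_1$) fact about the parameters, it holds in $\mathcal{M}$ by condition~(i) (absoluteness of $\Pi^1_n$-formulas, a fortiori arithmetic ones). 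Hence no $n_0\in\mathcal{M}$ works. One must double-check that the covering $(O_n)_{n\in\N}$, as a \emph{sequence} of characteristic functions (type $0\to(1\to1)$), is an element of $\mathcal{M}$: this follows because the map $n\mapsto$ (code of $K_n$) is computable from the code of $x$, so the whole sequence is computable (no $\SS^2_k$ needed), hence in $A\subseteq\mathcal{M}$. Finally, invoking the remark preceding the theorem, adjoining $\R\setminus X$ (whose characteristic function is in $\mathcal{M}$ since $\mathcal{M}$ thinks $X$ is closed) upgrades this to a counterexample with $C=[0,1]$, completing the proof that $\Z_2^\omega$ cannot prove $\HBC_\s$.
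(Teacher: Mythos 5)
Your proposal misses that this theorem is immediate: a covering of a closed set $C$ by intervals $(a_n,b_n)$ \emph{is} a covering by open sets, and adjoining the open set $C^{c}$ turns it into a covering of $[0,1]$ from which a finite sub-covering of $[0,1]$ yields one of $C$ by the intervals. Hence $\HBC_{\s}$ for $C=[0,1]$ implies $\HBC$, and Theorem \ref{goodd} finishes the proof in one line --- which is exactly what the paper does. Instead you launch a fresh model-theoretic construction, and that construction has a genuine gap.

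The gap is membership of your covering in the model $\mathcal{M}$. By design, $x\in\mathbf{C}$ is chosen to have \emph{no} code in $A$, and the type-one part of $\mathcal{M}$ is (essentially) $A$; this is the whole point of the construction, since $x$ must be invisible to $\mathcal{M}$. Yet you justify that the sequence $(O_n)_{n\in\N}=(\R\setminus K_n)_{n\in\N}$, with $\bigcap_n K_n=\{x\}$, lies in $\mathcal{M}$ by saying the map $n\mapsto K_n$ is ``computable from the code of $x$, hence in $A\subseteq\mathcal{M}$''. Being computable from a code that is \emph{not} in $A$ does not put an object in $A$; worse, any reasonable such sequence of sets shrinking to $x$ (e.g.\ nested intervals around $x$) would let $\mathcal{M}$ recover $x$, contradicting $x\notin\mathcal{M}$, so the sequence genuinely cannot be placed in the model in the naive way. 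The same objection already applies to your first attempt $O_0=\R\setminus\{x\}$. The reason the paper's Theorem \ref{goodd} works is that its covering --- the complementary intervals of the classical Cantor set --- is outright computable with no parameters, so it lives in $\mathcal{M}$ while the obstruction point $x$ does not. If you want a direct construction for $\HBC_{\s}$, you should reuse that parameter-free covering (which is what reducing to Theorem \ref{goodd} amounts to), not one built around the excluded point $x$.
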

\begin{proof}
By Theorem \ref{goodd} since $\HBC$ is a special case of $\HBC_{\s}$ for $C = [0,1]$.
\end{proof}
Despite its larger scope, $\HBC_{\s}$ is not that different from $\HBC$, as follows.  
\begin{thm}\label{goodd3}
The system $\RCAo+\QFAC^{0,1}$ proves $\WKL\asa \HBC_{\s}$.
\end{thm}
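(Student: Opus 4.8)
The plan is to mirror the structure of Corollary \ref{simpleright} and the first part of Theorem \ref{dich}, upgrading from countable coverings by intervals to countable coverings by open sets in the sense of Definition \ref{openset}. As observed just before the statement, we may restrict to $C=[0,1]$, so we must show $\RCAo+\QFAC^{0,1}\vdash \WKL\asa \HBC_{\s}$, where $\HBC_{\s}$ now reads: for any sequence $(O_n)_{n\in\N}$ of open sets (given by characteristic functions $Y_n:\R\di\R$ as in Definition \ref{openset}) covering $[0,1]$, there is $n_0$ with $[0,1]\subseteq\cup_{n\leq n_0}O_n$.

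For the forward direction $\WKL\di\HBC_{\s}$ I would again split on $(\exists^2)\vee\neg(\exists^2)$. In case $\neg(\exists^2)$, all functions $\R\di\R$ are continuous by \cite{kohlenbach2}*{\S3}, so each $Y_n$ is a continuous characteristic function; by \cite{kohlenbach4}*{\S4} each has an RM-code, and by \cite{simpson2}*{II.5.7} each $O_n$ reduces to an RM-open set, whence $\HBC_{\s}$ becomes the second-order statement that a countable covering of $[0,1]$ by (RM-)open sets has a finite sub-covering; this follows from $\WKL$ by \cite{simpson2}*{IV.1.5} (or \cite{brownphd}*{Ch.\ III}). In case $(\exists^2)$, we argue as in the first part of Theorem \ref{dich}: each point $x\in[0,1]$ lies in some $O_n$, i.e.\ $(\forall x\in[0,1])(\exists n^0)(Y_n(x)>_\R 0)$, and since $(\exists^2)$ decides the inner (arithmetical) matrix and hence selects the least such $n$, an application of $\QFAC^{1,0}$ gives $\Psi^2$ with $x\in O_{\Psi(x)}$ for all $x\in[0,1]$. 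If there were no finite sub-covering, then $(\forall m^0)(\exists x\in[0,1])(x\notin\cup_{n\leq m}O_n)$; apply $\QFAC^{0,1}$ (using $(\exists^2)$ to make the matrix quantifier-free) to get a sequence $\lambda m.x_m$ in $[0,1]$ with $x_m\notin\cup_{n\leq m}O_n$. Since $(\exists^2)\di\ACA_0$, this sequence has a convergent sub-sequence $\lambda m.y_m\to y\in[0,1]$ by \cite{simpson2}*{III.2}. Now $y\in O_k$ for some $k$, and since $O_k$ is open there is $N$ with $B(y,\tfrac{1}{2^N})\subseteq O_k$; but $y_m\in B(y,\tfrac{1}{2^N})$ eventually, so $y_m\in O_k\subseteq\cup_{n\leq m}O_n$ for large $m$, contradicting the choice of $y_m$ (once $m\geq k$). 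The law of excluded middle finishes the forward direction.

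For the reverse direction $\HBC_{\s}\di\WKL$, it suffices to derive the Heine-Borel theorem for the unit interval with a covering by \emph{intervals}, which is equivalent to $\WKL$ over $\RCAo$ by \cite{simpson2}*{IV.1}: indeed, given rationals $(a_n,b_n)$ covering $[0,1]$, each interval $(a_n,b_n)$ is trivially an open set in the sense of Definition \ref{openset} (its characteristic function is primitive recursive in the rational endpoints, and is even continuous on $[0,1]$ away from the endpoints — but since endpoints are excluded this presents no issue, or one simply uses the $\max$ of the two continuous bump-like functions), so $\HBC_{\s}$ applied to this sequence yields the finite sub-covering; alternatively, as in Corollary \ref{simpleright}, use \cite{simpson2}*{II.7.1} together with $\QFAC^{1,0}$.

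I expect the main obstacle to be purely bookkeeping rather than conceptual: one must check carefully that in the $\neg(\exists^2)$ case the \emph{uniform} reduction of a sequence $(Y_n)$ of continuous characteristic functions to a sequence of RM-codes for open sets goes through in $\RCAo$ (not just for a single function), and that the second-order statement one lands on is genuinely the one \cite{simpson2}*{IV.1.5}/\cite{brownphd}*{Ch.\ III} proves from $\WKL$; and in the $(\exists^2)$ case one must make sure the formula $x\notin\cup_{n\leq m}O_n$, i.e.\ $(\forall n\leq m)(Y_n(x)\leq_\R 0)$, is (equivalent to) quantifier-free after adjoining $(\exists^2)$, so that $\QFAC^{0,1}$ legitimately applies — this is routine since $\leq_\R$ is $\Pi^0_1$ and $(\exists^2)$ handles such quantifiers, exactly as in the proof of Theorem \ref{dich}.
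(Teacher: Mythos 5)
Your proposal is correct and follows essentially the same route as the paper: the reverse direction via Corollary \ref{simpleright} (intervals are open sets in the sense of Definition \ref{openset}, or alternatively \cite{simpson2}*{II.7.1} plus $\QFAC^{1,0}$), and the forward direction by the case split $(\exists^{2})\vee\neg(\exists^{2})$, using the proof-by-contradiction of Theorem \ref{dich} with $\exists^{2}$ deciding $x\in O_{n}$ in the first case and the reduction to the second-order statement \cite{simpson2}*{IV.1.5} via (sequences of) RM-codes in the second. The uniformity issue you flag in the $\neg(\exists^{2})$ case is exactly the point the paper addresses, by noting that the arguments of \cite{kohlenbach4}*{Prop.\ 4.4 and 4.10} yield a \emph{sequence} of RM-codes from a sequence of continuous functions.
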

\begin{proof}
The reverse direction follows from Corollary \ref{simpleright}.
The forward implication follows from Theorem \ref{dich} with slight modification as follows.  
In case $(\exists^{2})$, one uses the same `proof by contradiction' as in the proof of Theorem \ref{dich}, starting with $(\forall m^{0})(\exists x\in C)\big[x\not \in  \cup_{n\leq m}O_{n}\big]$ and using $\exists^{2}$ to decide $x\in O_{n}$ whenever necessary. 
In case $\neg(\exists^{2})$, we consider \cite{kohlenbach4}*{Prop.\ 4.4 and 4.10}.  These theorems establishes that in $\RCAo+\WKL$, one can prove that a function $Y^{2}$ that is continuous on $2^{\N}$, also has an RM-code on $2^{\N}$.
Careful inspection of the proofs shows that (i) it also applies to other compact spaces like $[0,1]$ and (ii) one can also prove that a \emph{sequence} of continuous functions has a \emph{sequence} of RM-codes.
Hence, we may assume $O_{n}$ is given by an RM-code $\alpha_{n}$ for a continuous function on $[0,1]$.  By \cite{simpson2}*{II.7.1}, such an $\alpha_{n}$ (effectively) gives rise to RM-codes for open sets.  
In this way, $\HBC_{\s}$ reduces to its second-order version, and the latter is provable in $\WKL_{0}$ by \cite{simpson2}*{IV.1.5}.
\end{proof}

\section{Coding open sets}\label{limp3}
We study the questions raised in Section \ref{rmintro} concerning representations of open sets.  
In Section \ref{rmlimp3}, we study the coding principle $\open$ expressing that every characteristic function of an open set has an RM-code.  
We show that $\Z_{2}^{\Omega}$ proves $\open$, while $\Z_{2}^{\omega}$ cannot.  We also show that $\SIXK+\open$ proves (second-order) transfinite recursion for $\Pi_{k}^{1}$-formulas, where the latter is intermediate between $\SIXk$ and $\Pi_{k+1}^{1}$-$\textsf{CA}_{0}$.
We show that various theorems pertaining to open sets, like the Cantor-Bendixson or perfect set theorem, imply $\open$ or the associated comprehension principle $\BOOT^{-}$.
In Section \ref{ctlimp3}, we study how hard it is to compute a code as in $\open$ in terms of the other data.  We show that no type two functional suffices, while $\exists^{3}$ does; the latter however yields full $\Z_{2}$.  
In Section~\ref{waycool}, we shall study similar results for finer representations (compared to Definition \ref{openset}).

\subsection{Reverse Mathematics}\label{rmlimp3}
We study the following theorem pertaining to the countable representation of open sets.  
Let $(q_{n},r_{n})_{n\in \N}$ be a fixed enumeration of all non-trivial open balls $B(q_{n}, r_{n})$ with rational center and radius.
Note that we use `open set' in the sense of Definition \ref{openset}.
\bdefi[$\open$]
For every open set  $Y\subseteq \R$, there is $X\subset \N$ such that $(\forall n\in \N)(n\in X\asa B(q_{n}, r_{n})\subseteq Y)$.
\edefi
Note that for $X, Y$ as in $\open$, we have $x\in Y\asa (\exists m\in \N)(m\in X\wedge x\in B(q_{m}, r_{m}))$, i.e.\ $\open$ endows open sets in $\R$ with a countable representation.
According to Bourbaki (\cite{bourken}*{p.\ 222}), Cantor first proved that open sets can be written as countable unions of open intervals, i.e.\ $\open$ also carries some historical interest. 
However, Aczel states in \cite{as}*{p.\ 134} that constructive set theory cannot prove $\open$.  

\smallskip

We need the following comprehension principle, which is a special case of the comprehension principle $\BOOT$ from Section \ref{HCT} and is inspired by \cite{simpson2}*{V.5.2}.  
\bdefi[$\BOOT^{-}$]
For $Y^{2}$ such that $(\forall n\in \N)(\exists \textup{ at most one } f^{1})(Y(f, n)=0)$, we have $(\exists X\subset \N)\big(\forall n\in \N)(n\in X\asa (\exists f^{1})(Y(f, n)=0)\big)$.
\edefi
The name of the previous principle is derived from the verb `to bootstrap', as combining the relatively weak\footnote{Note that $\ACAo$ is conservative over $\ACA_{0}$ by \cite{hunterphd}*{Cor.\ 2.5}, while the $\ECF$-translation of $\BOOT$ is provable in $\ACA_{0}$, i.e.\ $\RCAo+\BOOT$ has the first-order strength of $\ACA_{0}$.} 
(in isolation) principles $(\exists^{2})$ and $\BOOT^{-}$, gives rise to the much stronger principle of arithmetical transfinite recursion, as in Theorem~\ref{dufje}.   A more general result is proved in Corollary \ref{col} below. 
\begin{thm}\label{dufje}
The system $\ACAo+\BOOT^{-}$ implies $\ATR_{0}$.
\end{thm}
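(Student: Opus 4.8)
The plan is to show that $\ACAo + \BOOT^{-}$ proves $\ATR_0$ by exploiting the standard characterisation of $\ATR_0$ (over $\ACA_0$) as the existence of the jump hierarchy $H_\theta$ along any well-order, i.e.\ the statement that for any countable well-order $(L, \prec_L)$ and any set $A$, there is a set $Y$ coding the transfinite iteration of the Turing jump of $A$ along $L$. So the task reduces to: given $(L, \prec_L)$ and $A$, use $\BOOT^{-}$ (together with $\exists^2$, which is available since we are over $\ACAo$) to produce this hierarchy. The key point to arrange is that membership in the hierarchy can be phrased as ``there exists a unique witnessing object'' so that the uniqueness hypothesis of $\BOOT^{-}$ applies.

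First I would recall that, working in $\ACAo$, one has $\exists^2$ and hence arithmetical comprehension, so finite iterates of the jump of $A$ are available, and more importantly every arithmetical formula (with set and number parameters) can be decided by a functional built from $\exists^2$. Next, I would set up the standard recursion-theoretic coding: a set $Y$ is an \emph{$L$-hierarchy over $A$} if for every $\ell$ in the field of $L$, the $\ell$-th column $Y^{[\ell]}$ equals the Turing jump of the set $\bigoplus_{m \prec_L \ell} Y^{[m]} \oplus A$; this ``$Y$ is a hierarchy'' predicate is arithmetical in $Y, L, A$. The crucial observation is \emph{uniqueness}: if such a $Y$ exists, it is unique (by transfinite induction along $L$, which is available in $\ACA_0$). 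Now the idea is to define, for each number $n$, a formula $\varphi(f, n)$ expressing ``$f$ codes an initial segment of a hierarchy over $A$ long enough to determine whether $n$ (read as a pair $\langle \ell, k\rangle$) belongs to the hierarchy, and according to $f$, $k \in Y^{[\ell]}$''. One must be slightly careful so that $\varphi(f,n)$ pins down a \emph{unique} $f$ for each $n$ — this is done by demanding $f$ be the (unique, canonical) shortest such initial segment, or by letting $f$ range over full hierarchy-segments indexed in a fixed canonical way; either way uniqueness for each $n$ is guaranteed by the transfinite-induction argument. Then $\BOOT^{-}$ applied to (the arithmetical, hence $Y^2$-presentable via $\exists^2$, matrix of) $\varphi$ yields a set $X \subseteq \N$ with $n \in X \leftrightarrow (\exists f)\varphi(f,n)$, and $X$ is readily seen (using $\exists^2$ once more) to be a genuine $L$-hierarchy over $A$ — modulo checking that the ``locally existing initial segments'' cohere into a global object, which again is the transfinite-induction step.

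The main obstacle I expect is exactly the \emph{coherence / existence of initial segments}: $\BOOT^{-}$ only collects pointwise information $n \in X \leftrightarrow \exists f\, \varphi(f,n)$, so one must ensure \emph{a priori} that for each $n$ there genuinely \emph{is} a witnessing initial segment $f$, and that these local witnesses agree. The clean way to handle this is a bootstrapping induction up the well-order $L$: one shows by arithmetical transfinite induction (itself available from $\ACA_0$ once one has $\ATR_0$-style data, but here one argues at the meta-level of the proof) that for each $\ell \in \mathrm{field}(L)$ the hierarchy restricted to predecessors of $\ell$ exists; the successor step is just one more application of $\exists^2$ (forming a jump), and the limit step is where one invokes $\BOOT^{-}$ to assemble the union of the previously-existing segments into a set. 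Phrasing this correctly — so that the induction on $L$ only uses arithmetical comprehension plus finitely many appeals to $\BOOT^{-}$, and so that the uniqueness hypothesis of $\BOOT^{-}$ is genuinely met at each stage — is the delicate bookkeeping; everything else is the familiar equivalence $\ACA_0 \vdash \ATR_0 \leftrightarrow$ ``jump hierarchies exist along all well-orders'' from \cite{simpson2}*{V.5}.
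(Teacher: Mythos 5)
Your route is genuinely different from the paper's. The paper's proof is a two\-/step reduction: it quotes \cite{simpson2}*{V.5.2}, which says that $\ATR_{0}$ is equivalent (over $\ACA_{0}$) to the second-order unique-existence comprehension ``for arithmetical $\varphi(n,X)$ with at most one witness $X$ per $n$, the set $\{n:(\exists X)\varphi(n,X)\}$ exists'', and then uses the Kleene normal form lemma \cite{simpson2}*{V.5.4} to rewrite $(\exists X)\varphi(n,X)$ as $(\exists f^{1})(Y(f,n)=0)$ for a $Y^{2}$ built from $\exists^{2}$, with the at-most-one-$f$ property preserved; $\BOOT^{-}$ then applies verbatim. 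All of the jump-hierarchy work is thus outsourced to Simpson, whereas you rebuild it from scratch. That is a legitimate alternative, and your choice of $\varphi(f,n)$ (``$f$ is \emph{the} hierarchy up to $\ell$ and $k\in f^{[\ell]}$'' for $n=\langle \ell,k\rangle$, with $f\leq_{1}1$ to avoid spurious codes) does satisfy the uniqueness hypothesis of $\BOOT^{-}$, since uniqueness of jump hierarchies is provable in $\ACA_{0}$ by arithmetical transfinite induction applied to two \emph{given} sets.

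The gap is in how you organise the verification. The statement ``the hierarchy up to $\ell$ exists'' is $\Sigma^{1}_{1}$, and $\Sigma^{1}_{1}$ transfinite induction is not available in $\ACA_{0}+\BOOT^{-}$; worse, a transfinite induction whose limit step ``invokes $\BOOT^{-}$ to assemble the union'' is not a legitimate induction at all, since you would be applying a comprehension instance transfinitely often inside a single argument. The repair -- which is exactly Simpson's trick in the proof of V.5.1/V.5.2, and which your second paragraph almost states before the third paragraph walks it back -- is to apply $\BOOT^{-}$ \emph{once}, up front, obtaining the candidate set $X$ with $\langle \ell,k\rangle\in X\asa(\exists f)\varphi(f,\langle \ell,k\rangle)$ (this holds unconditionally: where no witness exists, the pair is simply excluded from $X$), and only \emph{then} to prove, by transfinite induction on the statement ``$X$ restricted to $\{m: m\preceq_{L}\ell\}$ is a jump hierarchy up to $\ell$'' -- which is \emph{arithmetical in $X,L,A$} and hence an admissible induction formula in $\ACA_{0}$ -- that $X$ is the desired hierarchy. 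Existence of the local witnesses is then a \emph{consequence} of the induction, not a prerequisite: at a successor or limit $\ell$ the witness is $X$ restricted to the strict predecessors of $\ell$ (a set, by arithmetical comprehension from $X$) together with one further jump supplied by $\ACA_{0}$, and uniqueness forces its columns to agree with those of $X$. With the argument reorganised this way your proof goes through; as written, the inductive step does not.
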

\begin{proof}
We recall \cite{simpson2}*{V.5.2} which states that $\ATR_{0}$ is equivalent to the following second-order version of $\BOOT^{-}$: for every arithmetical $\varphi(n,X)$ 
such that $(\forall n\in \N)(\exists \textup{ at most one } X\subseteq\N)\varphi(n, X)$, there is $Z\subset \N$ such that 
\be\label{luk}
(\forall n\in \N)(n\in Z\asa (\exists X\subset \N)\varphi(n, X)).  
\ee
To prove the latter principle, we consider the Kleene normal form lemma as in \cite{simpson2}*{V.5.4} which expresses that an arithmetical formula $\psi(X)$ is equivalent to $(\exists f^{1})(\forall n^{0})\theta_{0}(\overline{X}n, \overline{f}n)$, where the formula $\theta_{0}$ is bounded and we also have that $(\forall X)(\exists \textup{ at most one } f^{1})(\forall n^{0})\theta_{0}(\overline{X}n, \overline{f}n)$.  In this light, for arithmetical $\varphi(n, X)$, we have that $(\forall n\in \N)(\exists \textup{ at most one } X^{1})\varphi(n, X)$ is equivalent to the formula
$(\forall n\in \N)(\exists \textup{ at most one } f^{1})(Y(f, n)=0)$ for some $Y^{2}$ defined in terms of $\exists^{2}$.  Applying $\BOOT^{-}$ then yields the required set $Z$ as in \eqref{luk}.  
\end{proof}
The importance of $\BOOT^{-}$ is illustrated by Theorem \ref{cromagnon}.  We also need the following principle from \cite{troeleke1}, which is used in \cite{dagsamIII}*{\S3} to derive e.g.\ $\HBU$.
\bdefi[$\NFP$]\label{FTP}
For any $\Pi_{\infty}^{1}$-formula $A$ with any parameter:
\[
(\forall f^{1})(\exists n^{0})A(\overline{f}n)\di (\exists \gamma^{1}\in K_{0})(\forall f^{1})A(\overline{f}\gamma(f)).
\]
\edefi
Here, `$\gamma^{1}\in K_{0}$' expresses that $\gamma^{1}$ is an \emph{associate}, which is the same as a \emph{code} from RM by \cite{kohlenbach4}*{Prop.\ 4.4}.  Formally, `$\gamma^{1}\in K_{0}$' is the following formula:
\[
(\forall f^{1})(\exists n^{0})(\gamma(\overline{f}n)>_{0}0) \wedge (\forall n^{0}, m^{0},f^{1}, )(m>n \wedge \gamma(\overline{f}n)>0\di  \gamma(\overline{f}n)=_{0} \gamma(\overline{f}m) ).
\]
The value $\gamma(f)$ for $\gamma\in K_{0}$ is defined as the unique $\gamma(\overline{f}n)-1$ for $n$ large enough.  

\smallskip

We now have the following theorem, where $\BOOT$ was introduced in Section \ref{HCT}.
\begin{thm}\label{cromagnon}
The system $\RCAo$ proves $ \BOOT\di [\open+\ACA_{0}]\di \BOOT^{-}$ and $\RCAo+\INDD^{\omega}$ proves $\NFP\di \BOOT\di \HBU_{\closed}$.
\end{thm}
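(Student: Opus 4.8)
The plan is to prove the two chains of implications separately, since they involve rather different ideas.

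\textbf{The chain $\BOOT \di [\open + \ACA_0] \di \BOOT^{-}$.} First I would derive $\ACA_0$: both $\BOOT$ and $\open$ trivially give $(\exists^2)$ (apply them to a $Y$ that encodes the relevant $\Sigma_1^0$ question; for $\open$, one feeds in the open set $\{x : (\exists n)(f(n)=0)\}$ suitably rigged so that membership of a fixed rational ball decides the $\Sigma_1^0$ statement), and $(\exists^2)$ is equivalent to $\ACA_0^\omega$. Next, $\BOOT \di \open$: given an open $Y \subseteq \R$ (via its characteristic function in the sense of Definition~\ref{openset}), I want the set $X = \{n : B(q_n,r_n) \subseteq Y\}$. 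The defining condition `$B(q_n, r_n) \subseteq Y$' unfolds to `$(\forall x)(x \in B(q_n,r_n) \di Y(x) >_\R 0)$', which is a $\Pi^1_1$-ish statement, not directly of the form $(\exists f^1)(Z(f,n)=0)$. So the key move is to rewrite this using openness: since $Y$ is open, $B(q_n, r_n) \subseteq Y$ is equivalent to saying that every rational-centre-and-radius ball whose closure lies in $B(q_n, r_n)$ is contained in $Y$, and a ball $B(q,r)$ with $Y(q') >_\R 0$ for... hmm — more carefully, I would use that for open $Y$, `$B(q_n,r_n)\subseteq Y$' fails iff there is a point $x \in B(q_n,r_n)$ with $Y(x) \le_\R 0$, i.e.\ iff $(\exists x \in \R)(x \in \overline{B(q_n,r_n)'} \wedge \neg(Y(x)>_\R 0))$ for appropriate shrinkings — this is an `$\exists f^1$' statement (reals are type-one objects), so its negation's complement is captured by $\BOOT$. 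Concretely, put $Y'(f,n) = 0$ iff $f$ codes a real $x \in B(q_n, r_n)$ with $[Y(x)](k) \le 1/2^{k-1}$ for all $k$ (so $Y(x)=_\R 0$ or $<_\R 0$); then $\BOOT$ yields $Z = \{n : (\exists f)(Y'(f,n)=0)\}$ and $X = \N \setminus Z$ — modulo checking that $B(q_n, r_n) \subseteq Y$ really is the negation of `$(\exists x \in B(q_n,r_n)) \neg(x\in Y)$', which holds because $Y$ is open only in the sense that it contains a ball around each of \emph{its} points, so I must be careful: the complement of an open set need not be exactly $\{x : Y(x)\le_\R 0\}$, and one should phrase things with $\ge$ rather than $>$. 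This bookkeeping around the trichotomy for reals (using $(\exists^2)$ to decide $Y(x)>_\R 0$) is routine. Finally $\open + \ACA_0 \di \BOOT^{-}$: given $Y^2$ with at most one $f$ per $n$ satisfying $Y(f,n)=0$, I would build an open set $U \subseteq \R$ from which $\open$ extracts the desired $X$. The natural construction: use a fixed computable homeomorphism to view $\N^\N$ inside $\R$ (or better, associate to each $n$ a disjoint rational interval $I_n$ and place inside $I_n$ a tiny ball around the image of the unique witness $f$, if it exists); then $U = \bigcup_n (\text{that ball if } (\exists f)(Y(f,n)=0))$ is open, and `$B(q_m,r_m)\subseteq U$' for the rational ball $q_m$ sitting at the centre of $I_n$'s designated slot detects exactly whether the $n$-th witness exists. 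Openness of $U$ needs the at-most-one hypothesis so that the witness location is well-defined and $U$ genuinely contains a ball around each point; uniqueness is exactly what makes this work (cf.\ the role of uniqueness in $\BOOT^-$). I expect this last implication to be the fiddliest part of the first chain.

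\textbf{The chain $\NFP \di \BOOT \di \HBU_{\closed}$ over $\RCAo + \INDD^\omega$.} For $\BOOT \di \HBU_{\closed}$: recall from Theorem~\ref{dich} and its proof that $\HBU_{\closed}$ for closed $C \subseteq [0,1]$ with a canonical covering $\cup_{x\in C} I_x^\Psi$ reduces, via $(\exists^2)$ and the trichotomy, to producing a finite subcovering. The standard route (as in \cite{dagsamIII}) is to use $\BOOT$ (which implies $\ACA_0$ and more) to form, by a $\BOOT$-definable transfinite-recursion-like construction, the set of `reachable' points or an appropriate well-ordering witnessing compactness; more directly, $\BOOT$ proves $\HBU$ itself by \cite{samph}/\cite{dagsamIII}, and $\HBU \di \HBU_{\closed}$ is immediate (restrict the canonical covering, add $C^c$). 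So I would simply invoke: $\BOOT$ implies $\HBU$ (established earlier / in \cite{dagsamIII}), hence $\HBU_{\closed}$. For $\NFP \di \BOOT$: this is the heart, and I expect it to be the main obstacle. Given $Y^2$, the set $X = \{n : (\exists f^1)(Y(f,n)=0)\}$ is what we want. Consider the statement `$(\forall f^1)(\exists m^0) A(\overline{f}m)$' — I need to cook up a $\Pi^1_\infty$ formula $A$ and a function $g$ such that an associate $\gamma \in K_0$ produced by $\NFP$ lets me \emph{bound} the search for witnesses and thereby define $X$ by arithmetical comprehension (which we have, since $\NFP$ surely implies $(\exists^2)$). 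The trick, following \cite{dagsamIII}*{\S3}: interleave over all $n$ simultaneously, i.e.\ consider inputs $f$ that code a pair $(n, h)$ and let $A(\overline{f}m)$ say `either $h$ is not going to be a witness for $n$ (decided by time $m$ via $Y$), or $m$ is large enough to see $Y(h,n)=0$'. Then $(\forall f)(\exists m)A(\overline f m)$ holds, $\NFP$ gives an associate $\gamma$, and $\gamma$ being a \emph{continuous} bound means: for each $n$, either there is a witness $h$ found within a uniformly bounded prefix length, or there is a finitary `bar' certifying no witness exists — but that's not quite enough to decide $(\exists f)(Y(f,n)=0)$ in general. The actual argument is subtler and I would follow the template of how $\HBU$ is extracted from $\NFP$ in \cite{dagsamIII}: there one shows $\NFP$ gives the special fan functional / uniform compactness data, and $\BOOT$ can be derived by a similar `continuity extracts a uniform bound, uniform bound collapses the $\exists f$ to a bounded search decidable by $\exists^2$' strategy, with $\INDD^\omega$ needed to iterate the bounding along $\N$. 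The role of $\INDD^\omega$ is precisely to run an induction forming the witnessing object stage by stage, so I would isolate exactly where unrestricted induction enters (likely in proving that the $\NFP$-associate's bound, applied componentwise, assembles into a single set) and cite \cite{dagsamIII} for the continuity-to-compactness core.

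In summary: the two easy steps are $\BOOT \di \open$ and $\BOOT \di \HBU_{\closed}$; the two steps requiring genuine construction are $\open + \ACA_0 \di \BOOT^{-}$ (encode the at-most-one relation as an open set whose rational-ball structure reads off the witnesses, using uniqueness to keep the set open) and $\NFP \di \BOOT$ (adapt the $\NFP \di \HBU$ extraction from \cite{dagsamIII}, using $\INDD^\omega$ to globalise the continuous bound); I anticipate the $\NFP \di \BOOT$ implication to be the principal difficulty, with the openness-preservation check in $\open \di \BOOT^-$ the secondary one.
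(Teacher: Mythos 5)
There are two genuine gaps, both at the steps you yourself flag as the hard ones. First, $\NFP\di\BOOT$: your direct strategy (apply $\NFP$ to a statement about pairs $(n,h)$ and hope the associate bounds the witness search) does not work, as you concede, and the template you propose to import is the one for $\NFP\di\HBU$, which does not apply here. The paper argues by contradiction, which is quite different: if $\BOOT$ fails for some $Y_{0}$, then $(\forall X\subset\N)(\exists n)\,[n\in X\not\asa(\exists f^{1})(Y_{0}(f,n)=0)]$, and since the counterexample $n$ depends only on the bit $X(n)$, this has the form $(\forall X)(\exists n)B(\overline{X}n)$ with $B$ a $\Pi^{1}_{\infty}$-formula, to which $\NFP$ applies. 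The resulting associate $\gamma\in K_{0}$ is bounded on $2^{\N}$ by $\WKL$, say by $C$, so every $X$ already fails below $C$; but bounded comprehension for arbitrary formulas --- this is exactly where $\INDD^{\omega}$ enters --- produces an $X$ that is correct below $C$, a contradiction. Second, $[\open+\ACA_{0}]\di\BOOT^{-}$: the open set you propose, a union of tiny balls around the unique witnesses, cannot be written down as a characteristic function in the base theory, since deciding $x\in U$ requires an unbounded search over $\N^{\N}$ for the witness --- which is precisely the comprehension $\BOOT^{-}$ is meant to deliver, so the construction is circular. The paper uses the dual construction: from each interval $(n,n+1]$ it \emph{removes} the at most one real whose binary expansion codes a witness for $n$; membership of $x$ in the resulting set $Z$ is then decided by evaluating $Y$ at the expansion of $x$ itself, which is computable from $Y$ and $\exists^{2}$, and the at-most-one hypothesis is what makes $Z$ open (its complement consists of isolated points). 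One then reads off $(\forall f^{1})(Y(f,m)>0)$ as $\overline{B}(m+\frac{1}{2},\frac{1}{2})\subset Z$, which is decidable from the set supplied by $\open$. Your ``designated ball at the centre of $I_{n}$'' test would in any case fail, since the witness ball could sit anywhere in $I_{n}$.

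Two smaller problems. You claim $\BOOT$ and $\open$ ``trivially give $(\exists^{2})$'': this is false --- under $\ECF$, $\BOOT$ becomes provable in $\ACA_{0}$ while $(\exists^{2})$ becomes refutable, so $\RCAo$ cannot prove $\BOOT\di(\exists^{2})$. What $\BOOT$ gives is the schema $\ACA_{0}$; the paper runs every argument through the case split $(\exists^{2})\vee\neg(\exists^{2})$, handling the $\neg(\exists^{2})$ case by continuity and second-order RM. And $\HBU\di\HBU_{\closed}$ is not ``immediate'': to ``add $C^{c}$'' to a canonical covering you need, for each $x\notin C$, a positive radius $r$ with $B(x,r)\subseteq C^{c}$, i.e.\ a realiser for the openness of $C^{c}$, which is not available from the characteristic function alone --- this is the very phenomenon the paper is about. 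It can be repaired using $\BOOT\di\open$ to obtain a countable representation of $C^{c}$ and hence such radii via $\mu^{2}$, but the paper instead routes through the countable theorem $\HBC$ (available since $\open$ reduces it to $\HBC_{\RM}$) and uses $\INDD^{\omega}$ at the end to pass from the finite subcover by rational balls back to one by the $I_{x}^{\Psi}$.
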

\begin{proof}
The implication $\NFP\di \BOOT$ follows from the proof of \cite{samnetspilot}*{Theorem 3.5} combined with \cite{samph}*{Theorem 3.6}.  
A sketch is as follows: assume that $\BOOT$ is false for some $Y_{0}$.  The formula expressing this has the form $(\forall X\subset \N)(\exists n\in \N)A(X, n)$, 
and a trivial modification yields $(\forall X\subset \N)(\exists n\in \N)B(\overline{X}n)$, i.e.\ only the first $n$ digits of $X$ are used.  
Let $\gamma\in K_{0}$ be as provided by $\NFP$ and use $\WKL$ to obtain an upper bound on $C$ on $\gamma$. 
However, $\INDD^{\omega}$ proves `bounded comprehension' (see \cite{simpson2}*{II.3.9}) for arbitrary formulas, yielding a contradiction. 

\smallskip

For $[\open +\ACA_{0}]\di \BOOT^{-}$, in case $\neg(\exists^{2})$, all functions on Baire space are continuous by \cite{kohlenbach2}*{\S3}. 
Thus, $(\exists f^{1})(Y(f, n)=0)$ is equivalent to $(\exists \sigma^{0^{*}})(Y(\sigma*{00}\dots, n)=0)$, and $\ACA_{0}$ provides the set required by $\BOOT^{-}$. 
In case $(\exists^{2})$, let $Y^{2}$ satisfy $(\forall n\in \N)(\exists \textup{ at most one } f^{1})(Y(f, n)=0)$.  
The formula $(\exists f^{1})(Y(f, n)=0)$ is equivalent to $(\exists X \subset \N^{2})(Y(F(X), n)=0)$, where $F(X)(n):=(\mu m)((n,m)\in X)$.
Hence, $(\exists f^{1})(Y(f, n)=0)$ is equivalent to a formula $(\exists f\in 2^{\N})(\tilde{Y}(f, n)=0)$, where $\tilde{Y}$ is defined explicitly in terms of $Y$ and $\mu^{2}$. 
Now define $Z:\R\di \R$ as:  
\be\label{honker}
Z(x):=
\begin{cases}
0 &  n<_{\R}|x|\leq_{\R} n+1 \wedge \tilde{Y}(\eta(x)(0),n)\times \tilde{Y}(\eta(x)(1),n)=0\\
1 & \textup{otherwise}
\end{cases}, 
\ee
where $\eta(x)$ provides a pair consisting of the binary expansions of $x-\lfloor x \rfloor$; the pair consists of identical elements if there is a unique such expansion.  
Note that $\exists^{2}$ can define such functionals $Z$ and $\eta^{1\di (1\times 1)}$.
By definition, for each $n\in \N$ there is at most one real  $y\in (n, n+1]$ such that $Z(y)=0$. 
Hence,  $Z$ is open and we may apply $\open$ to obtain $X\subset \N$ such that $(\forall n\in \N)(n\in X\asa B(q_{n}, r_{n})\subset Z)$. 
Now note that for any $m\in \N$, the left-hand side of the following formula is decidable:
\[\textstyle
\overline{B}(m+\frac{1}{2}, \frac{1}{2})\subset Z \asa (\forall f^{1})(Y(f, m)>0), 
\] 
which is sufficient to obtain $\BOOT^{-}$ in this case. The law of excluded middle now finishes this part of the proof.
For the implication $\BOOT\di \open$, we use $(\exists^{2})\vee \neg(\exists^{2})$ as follows: in the former case $\BOOT$ readily yields the set $X$ as in $\open$, while $\ACA_{0}$ does the job in the latter case as quantifiers over the reals may be replaced by quantifiers over the rationals if all functions on $\R$ are continuous (as they are in this case by \cite{kohlenbach2}*{\S3}).

\smallskip

For the final implication, $\BOOT\di \open$ means that $\HBC$ reduces to $\HBC_{\RM}$, and the latter is equivalent to $\WKL_{0}$ by \cite{brownphd}*{Lemma 3.13}.
We therefore have access to $\HBC$, as well as $(\exists^{2})$ in the same way as in the previous paragraphs.   
Now let $C\subseteq [0,1]$ be closed and fix $\Psi:I\di \R^{+}$.  Use $\BOOT$ to define $(a_{n}, b_{n})$ as $B(q_{n}, r_{n})$ in case $(\exists x\in C)(x\in B(q_{n}, r_{n})\subseteq I_{x}^{\Psi})$, and $\emptyset$ otherwise.  
Then clearly $\cup_{n\in \N}(a_{n}, b_{n})$ covers $C$, and $\HBC$ yields a finite sub-covering.  Now use $\INDD^{\omega}$ to conclude that this finite sub-covering yields a finite sub-covering for $\cup_{x\in C}I_{x}^{\Psi}$ `by definition'.
\end{proof}
In the following corollary, the second-order system $\Pi_{k}^{1}\mTR_{0}$ is transfinite recursion for $\Pi_{k}^{1}$-formulas, which lies between $\SIXk$ and $\Pi_{k+1}^{1}$-$\textsf{CA}_{0}$ (see \cite{simpson2}*{Table 4}).

\begin{cor}\label{col}
The system $\SIXK+\open$ implies $\Pi_{k}^{1}\mTR_{0}$.
\end{cor}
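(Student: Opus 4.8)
The plan is to lift the proof of Theorem~\ref{dufje} from $k=0$ to arbitrary $k$, replacing the use there of the Kleene normal form lemma together with $\exists^{2}$ by the Suslin-type functional $\SS_{k}^{2}$. First I would observe that by Theorem~\ref{cromagnon} the base theory $\RCAo$ proves $[\open+\ACA_{0}]\di\BOOT^{-}$; since $\SIXK$ includes $\ACAo$ (indeed $\SS_{k}^{2}$ computes $\exists^{2}$ for $k\geq 1$, while $\SIXK$ \emph{is} $\ACAo$ for $k=0$ by convention), the system $\SIXK+\open$ proves $\BOOT^{-}$. It therefore suffices to establish the relativised form of Theorem~\ref{dufje}, namely that $\SIXK+\BOOT^{-}$ proves $\Pi_{k}^{1}\mTR_{0}$.

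For the latter I would first invoke the $\Pi_{k}^{1}$-analogue of \cite{simpson2}*{V.5.2}: over $\SIXK$, $\Pi_{k}^{1}\mTR_{0}$ is implied by the scheme asserting that for every $\Pi_{k}^{1}$-formula $\varphi(n,X)$ with $(\forall n\in \N)(\exists \textup{ at most one } X\subseteq\N)\varphi(n,X)$ there is $Z\subseteq\N$ with $(\forall n\in \N)(n\in Z\asa (\exists X\subseteq\N)\varphi(n,X))$. This is either available in the literature or obtained by relativising, mutatis mutandis, the argument for the arithmetical case: for a well-order $\prec$ and a $\Pi_{k}^{1}$ recursion clause, the statement `$Z$ codes the (necessarily unique) $\prec$-initial segment of the associated transfinite recursion up to $a$' is again $\Pi_{k}^{1}$ in $a,Z$ and has at most one solution, so the scheme produces the set of all $a$ the recursion reaches, and the usual transfinite-induction argument along $\prec$ then yields the recursion itself.

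Next I would derive this scheme from $\SIXK+\BOOT^{-}$. Fix $\varphi(n,X)$ as in the scheme and identify subsets of $\N$ with their characteristic functions. Since $\SS_{k}^{2}$ decides $\Sigma_{k}^{1}$-formulas, and hence by negation also $\Pi_{k}^{1}$-formulas, with number and function parameters, there is a functional $Y^{2}$ --- explicitly definable from $\SS_{k}^{2}$ and $\mu^{2}$, hence available in $\SIXK$ --- satisfying $Y(f,n)=0\asa [f\leq_{1}1 \wedge \varphi(n,f)]$ for all $f^{1}$ and $n^{0}$. For each $n$ there is at most one $f^{1}$ with $Y(f,n)=0$, namely the characteristic function of the unique $X$ with $\varphi(n,X)$, if such $X$ exists. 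Hence $\BOOT^{-}$ applies to $Y$ and produces $Z\subseteq\N$ with $n\in Z\asa (\exists f^{1})(Y(f,n)=0)\asa (\exists X\subseteq\N)\varphi(n,X)$; this is the scheme, and $\Pi_{k}^{1}\mTR_{0}$ follows by the previous paragraph.

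I expect the main obstacle to be the second step: making the $\Pi_{k}^{1}$-analogue of \cite{simpson2}*{V.5.2} precise and, if it is not already in the literature, checking that the arithmetical argument relativises --- in particular that the transfinite induction and comprehension needed to assemble the recursion stages can be carried out in $\SIXK+\BOOT^{-}$. The remaining steps are routine relativisations of material already in the paper; as a sanity check, for $k=0$ the argument reduces exactly to combining Theorems~\ref{dufje} and~\ref{cromagnon}, and the conclusion dovetails with $\BOOT$ (which is stronger than $\BOOT^{-}$) boosting $\SIXK$ all the way to $\Pi_{k+1}^{1}\textup{-}\textsf{CA}_{0}$.
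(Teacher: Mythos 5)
Your proposal is correct and follows essentially the same route as the paper: the paper likewise obtains $\BOOT^{-}$ from $\open$ plus Theorem~\ref{cromagnon} and then observes that the `$2\di1$' direction of \cite{simpson2}*{V.5.2} relativises to $\SS_{k}^{2}$, which is exactly the relativisation of Theorem~\ref{dufje} you spell out (your use of $\SS_{k}^{2}$ to decide $\varphi(n,X)$ directly playing the role of the Kleene normal form step). The paper's proof is in fact terser than yours, delegating the relativisation to a one-line remark.
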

\begin{proof}
The case $k=0$ follows from the theorem and Theorem \ref{dufje}.  The general case follows in the same way: one simply observes that the proof of `$2\di 1$' from \cite{simpson2}*{V.5.2} relativises to $\SS_{k}^{2}$ for $k^{0}>0$.
\end{proof}
Thus, we have established that $\open$ is hard to prove and moreover is `explosive' as it becomes much stronger when combined with (higher-order) comprehension. 

\smallskip

Finally, we show that $\BOOT^{-}$ shows up in the study of the Cantor-Bendixson theorem and located sets.  
Now, the former theorem states that a closed set can be expressed as the union of a perfect closed set and a countable set;
this theorem for RM-closed sets is equivalent to $\FIVE$ (\cite{simpson2}*{VI.1.6}).  
\begin{princ}[$\CBT$]
For any closed set $C\subseteq \R$, there exist $P, S\subset C$ such that $C=P\cup S$, $P$ is perfect and closed, and $S^{0\di 1}$ is a sequence of reals.
\end{princ}
To be absolutely clear, the countable set $S$ is given as a sequence of real numbers, just like in second-order RM.    
Furthermore, a set $C$ in a metric space is \emph{located} if $d(x, C)=\inf_{y\in C}d(x, y)$ exists as a continuous function. 
By \cite{withgusto}*{Theorem 1.2}, $\ACA_{0}$ is equivalent to the locatedness of non-empty RM-closed sets in the unit interval.  Similar results hold for the suprema of RM-closed sets.  
\begin{princ}[$\CLO$]
Any non-empty closed set $C\subseteq \R$ is located. 
\end{princ}
By \cite{kohlenbach4}*{\S4}, a continuous function on Baire space (and the same for $\R$) has a RM-code in $\RCAo+\ACA_{0}$, i.e.\ the notion of continuity in $\CLO$ does not matter.

\smallskip 

Let $\QFAC^{0,1}_{!}$ be $\QFAC^{0,1}$ restricted to $Y$ such that $(\forall n^{0})(\exists! f^{1})(Y(f, n)=0)$, i.e.\ \emph{unique} existence.
We have the following theorem.
\begin{thm}[$\RCAo$]\label{croft}
Either $\CLO$ or $\CBT$ implies $\BOOT^{-}+\QFAC^{0,1}_{!}$.
\end{thm}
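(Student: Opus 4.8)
The plan is to derive $\BOOT^{-}+\QFAC^{0,1}_{!}$ from each of $\CLO$ and $\CBT$ separately, using the same basic encoding trick already deployed in the proof of Theorem~\ref{cromagnon}: given $Y^{2}$ with $(\forall n)(\exists \textup{ at most one } f^{1})(Y(f,n)=0)$, we build a closed set $C$ whose `geometry near integer $n$' records whether the $n$-th existential witness exists, and then read off the desired set $X$ (and the choice function) from the located distance function, resp.\ from the Cantor--Bendixson decomposition. As usual we split on $(\exists^{2})\vee\neg(\exists^{2})$; in the case $\neg(\exists^{2})$ all functions on Baire space are continuous by \cite{kohlenbach2}*{\S3}, so $(\exists f^{1})(Y(f,n)=0)$ reduces to a $\Sigma^{0}_{1}$-matter and $\ACA_{0}$ (which follows from $\CLO$ on RM-closed sets by \cite{withgusto}*{Theorem~1.2}, and from $\CBT$ for RM-closed sets since that is equivalent to $\FIVE$ by \cite{simpson2}*{VI.1.6}) already yields $\BOOT^{-}$ and $\QFAC^{0,1}_{!}$; so the work is entirely in the case $(\exists^{2})$.

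Assume $(\exists^{2})$. First I would, exactly as in the proof of Theorem~\ref{cromagnon}, massage $Y$ into $\tilde Y$ so that $(\exists f^{1})(Y(f,n)=0)$ is equivalent to $(\exists f\in 2^{\N})(\tilde Y(f,n)=0)$, with $\tilde Y$ explicit in $Y$ and $\mu^{2}$, still satisfying `at most one witness per $n$'. Now build the closed set: for each $n$, place into $C$ inside the slab $\{x: n<|x|\le n+1\}$ precisely the point(s) coded by the (at most one) binary expansion of a witness $f$ with $\tilde Y(f,n)=0$, using the pairing functional $\eta$ from \eqref{honker}; if there is no witness, the slab contributes nothing. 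Concretely, I would let $C = Y^{c}$ where $Y:\R\di\R$ is the open set defined as in \eqref{honker} (its complement is closed by construction, since for each $n$ there is at most one real in $(n,n+1]$ missing from $Y$). Thus $C$ is a closed set all of whose points are isolated, living one-per-integer-slab.

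Now read off $\BOOT^{-}$. Given $\CLO$: the distance function $d(\cdot,C)$ exists and is continuous, and $n\in X$ (i.e.\ $(\exists f^{1})(Y(f,n)=0)$) holds if and only if $d(n+\tfrac12, C)\le \tfrac12$, a relation decided by $\exists^{2}$ applied to the continuous $d(\cdot,C)$; this gives the set $X$ of $\BOOT^{-}$. For $\QFAC^{0,1}_{!}$ (over a $Y$ with a unique witness for every $n$): knowing $C$ is located, one locates the unique point of $C$ in each slab $(n,n+1]$ by a $d(\cdot,C)$-guided binary search to arbitrary precision, decodes the binary expansion via $\eta$, and reconstructs the witness $f$ — assembling these into the required choice functional. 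Given $\CBT$ instead: apply it to $C$ to get $C=P\cup S$ with $P$ perfect closed and $S$ a sequence of reals; since every point of $C$ is isolated, $P$ must be empty (a nonempty perfect set has no isolated points), so $C$ equals the countable set $S$ enumerated by the sequence. Then $n\in X$ iff some term of $S$ lies in the slab $(n,n+1]$ — again $\exists^{2}$-decidable from the sequence — giving $\BOOT^{-}$; and when witnesses are unique, the $n$-th witness is decoded (via $\eta$) from the unique term of $S$ in $(n,n+1]$, yielding $\QFAC^{0,1}_{!}$. Finally, the law of excluded middle on $(\exists^{2})$ glues the two cases.

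The main obstacle I anticipate is the $\CBT$ branch, specifically arguing rigorously \emph{inside the weak base theory} that the perfect part $P$ is empty: one must check that the construction genuinely makes every point of $C$ isolated \emph{with a uniform/computable witnessing radius} (so that `no point of $C$ is a limit of $C$' is available without extra choice), and then that $\RCAo$ can conclude a nonempty perfect closed set meets $C$ in a point that is a limit of $C$ — contradiction. A secondary technical point is the locatedness-to-search step for $\QFAC^{0,1}_{!}$ in the $\CLO$ branch: one needs that the binary search driven by $d(\cdot,C)$ actually converges to the unique slab point and that the decoding through $\eta$ is total and correct, which requires care because $\eta(x)$ returns a \emph{pair} of expansions and one must verify the slab point is a dyadic-ambiguity-free real or handle both expansions uniformly. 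Neither obstacle looks fatal — both are the same kind of bookkeeping already carried out around \eqref{honker} — but they are where the proof earns its keep.
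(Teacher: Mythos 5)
Your proposal follows essentially the same route as the paper's proof: the same closed set $C=Z^{c}$ built from the open set in \eqref{honker}, with $\BOOT^{-}$ read off via $d(\cdot,C)$ in the $\CLO$ branch and via $P=\emptyset$ plus the enumeration $S$ in the $\CBT$ branch, and $\QFAC^{0,1}_{!}$ obtained by locating the unique point of $C$ in each slab. The only caveat is your specific threshold $d(n+\tfrac{1}{2},C)\leq\tfrac{1}{2}$, which can be fooled by a witness for slab $n-1$ sitting exactly at the integer $n$; the paper instead asks whether the point of $C$ realising the distance lies in $(n,n+1]$, and either way the repair is routine bookkeeping of the kind you already flag.
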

\begin{proof}
The proof is trivial in case $\neg(\exists^{2})$, in the same way as in the proof of Theorem~\ref{cromagnon}.  
Thus, we may assume $(\exists^{2})$.
Let $Y^{2}$ be as in $\BOOT^{-}$ and let $C$ be the complement of the open set $Z$ defined in \eqref{honker} in the proof of Theorem \ref{cromagnon}.
By definition, for each $n\in \N$ there is at most one real  $y\in (n, n+1]$ such that $Z(y)=0$.  To obtain $\BOOT^{-}$, we proceed as follows.    

\smallskip

First assume $\CLO$ and note that to check whether $(\exists f^{1})(Y(f,n)=0)$, it suffices to: (i) check $Z(n+\frac{1}{2})=0$, (ii) if $Z(n+\frac{1}{2})\ne 0$, then consider $d(n+\frac{1}{2}, C)$; the latter is inside $(n, n+1]$ if and only if $(\exists f^{1})(Y(f,n)=0)$.  Inequalities of real numbers can be decided by $\exists^{2}$, and $\BOOT^{-}$ readily follows.  

\smallskip
  
Secondly, assume $\CBT$ and note that the set $C$ by assumption consists of isolated points (only).  Also, $(\exists f^{1})(Y(f,n)=0)$ is equivalent to a point of $C$ being in $(n, n+1]$.  
Now, for the sets $P, S$ provided by $\CBT$, we must have $P=\emptyset$ and $S$ lists all points of $C$.  Thus, $\BOOT^{-}$ readily follows using $\exists^{2}$, and we are done.
  
\smallskip

Thirdly, to prove $\QFAC^{0,1}_{!}$, assume $(\forall n^{0})(\exists! f^{1})(Y(f, n)=0)$ and again consider the aforementioned set $C$.  
By assumption, for every $n$ there is exactly one $y\in \R$ such that $y\in (n, n+1]\cap C$.  In the same way as in the previous two paragraphs, 
the set $S$ from $\CBT$ and the function $d(x, C)$ from $\CLO$ allow one to find this unique real.  The theorem now follows.  
\end{proof}
As a corollary, we show that the perfect set theorem for our notion of closed set as in Definition \ref{openset} also gives rise to $\BOOT^{-}$.  
The second-order version of the perfect set theorem is equivalent to $\ATR_{0}$ by \cite{simpson2}*{V.5.5}.
Note that a set is \emph{uncountable} in RM if there is no sequence that lists its elements (see e.g.\ \cite{simpson2}*{p.\ 193}).
\begin{princ}[$\PST$]
For any closed and uncountable set $C\subseteq [0,1]$, there exist $P\subseteq C$ such that $P$ is perfect and closed.  
\end{princ}
\begin{cor}
The system $\RCAo$ proves $\PST\di \BOOT^{-}$.
\end{cor}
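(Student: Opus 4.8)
The plan is to run the argument of Theorem \ref{croft}, but invoking $\PST$ in \emph{contrapositive} form. As usual we split on $(\exists^{2})\vee\neg(\exists^{2})$; the case $\neg(\exists^{2})$ is handled exactly as in the proof of Theorem \ref{croft}, noting that $\PST$ implies $\ACA_{0}$ (under $\neg(\exists^{2})$ it reduces to the second-order perfect set theorem, which is equivalent to $\ATR_{0}$ by \cite{simpson2}*{V.5.5}). So assume $(\exists^{2})$ and fix $Y^{2}$ with $(\forall n\in\N)(\exists\textup{ at most one }f^{1})(Y(f,n)=0)$.

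First I would construct, using $\exists^{2}$ and a variant of the recipe behind \eqref{honker}, a closed set $C\subseteq[0,1]$ together with a computable sequence of pairwise disjoint intervals $J_{n}:=(2^{-n-1},2^{-n}]$ such that $0\in C$, such that $C\cap J_{n}$ is a singleton whenever $(\exists f^{1})(Y(f,n)=0)$ and $C\cap J_{n}=\emptyset$ otherwise, and such that $C\setminus\bigcup_{n}J_{n}\subseteq\{0\}$. The `at most one $f$' hypothesis on $Y$ is precisely what guarantees that $C$ meets each $J_{n}$ in at most one point, and the inclusion $0\in C$ makes $C$ closed even when it meets infinitely many $J_{n}$; note that $C$ is a closed set in the sense of Definition \ref{openset}, its complement being open via an explicit characteristic function built from the construction and $\exists^{2}$.

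The key observation is that $C$ has \emph{no non-empty perfect closed subset}. Indeed, each point of $C$ lying in some $J_{n}$ is isolated in $C$ (a rational radius depending only on $n$ witnesses this, since the $J_{n}$ are pairwise disjoint and disjoint from $0$), so the only point of $C$ that can fail to be isolated is $0$. Hence a non-empty closed $P\subseteq C$ with no isolated point would be infinite, so $P\setminus\{0\}$ would be non-empty, and any $p\in P\setminus\{0\}$ would be isolated in $C$ and therefore in $P$, a contradiction. Since `$C$ is uncountable' abbreviates `there is no sequence enumerating $C$', the contrapositive of $\PST$ now delivers a sequence $(s_{k})_{k\in\N}$ listing all elements of $C$.

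Finally, with $(s_{k})_{k\in\N}$ and $\exists^{2}$ in hand, the relation `$s_{k}\in J_{n}$' is decidable, hence so is `$(\exists k)(s_{k}\in J_{n})$' (feed $\exists^{2}$ the characteristic function of the former), and by construction this holds if and only if $(\exists f^{1})(Y(f,n)=0)$. Using $\ACA_{0}$, which follows from $(\exists^{2})$, one forms $X:=\{n\in\N:(\exists k)(s_{k}\in J_{n})\}$, which is exactly the set required by $\BOOT^{-}$ for $Y$; the law of excluded middle completes the proof. I expect the main obstacle to be the first step: one must check with some care that $C$ really is of the form allowed by Definition \ref{openset} and that the rescaling into $[0,1]$, with $0$ adjoined, keeps $C$ closed. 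This is where the proof of Theorem \ref{croft} must be adapted rather than quoted, but it involves no new ideas beyond \eqref{honker}.
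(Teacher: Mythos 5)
Your proof is correct and follows essentially the same route as the paper's: reuse the set $C$ built from $Y$ as in \eqref{honker}, observe that it has no non-empty perfect closed subset, and apply $\PST$ contrapositively to obtain an enumeration of $C$ from which $\BOOT^{-}$ follows via $\exists^{2}$. Your rescaling of the construction into $[0,1]$ (with $0$ adjoined to keep the set closed, and the resulting check that $0$ cannot generate a perfect subset) is a welcome extra precaution, since $\PST$ is stated for subsets of the unit interval while the paper simply reuses the set $C\subseteq\R$ from the proof of Theorem \ref{croft}.
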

\begin{proof}
The set $C$ from the proof of the theorem does not have perfect subsets, and hence $\PST$ provides a sequence that lists the elements of $C$.  
The proof of the theorem now yields $\BOOT^{-}$.
\end{proof}
We have shown that the Cantor-Bendixson theorem, the perfect set theorem, and located sets give rise to $\BOOT^{-}$ when using closed sets as in Definition \ref{openset}.  
Hence, a slight change to the aforementioned theorems makes them much harder to prove by the above.  Similar results no doubt exist for other theorems pertaining to closed sets from RM.  
We do not know whether $\BOOT^{-}$ gives rise to (nice) equivalences, but do have Theorem \ref{blind} to offer in this regard.    
\begin{thm} \label{blind}
Over $\RCAo$, we have the following results.
\begin{enumerate}
 \renewcommand{\theenumi}{\alph{enumi}}
\item $(\exists^{3})\di \CLO \di \HBC$.\label{hier}
\item $\CLO\asa [\ACA_{0}+ \open]$.\label{CLO}
\end{enumerate}
\end{thm}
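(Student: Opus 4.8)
The plan is to reduce both parts to the locatedness characterisation of \cite{withgusto}*{Theorem 1.2}, using throughout the embedding of RM-closed sets into Definition~\ref{openset}: given an RM-open $U=\cup_{n}B(a_{n},b_{n})$, the function $Y_{U}(x):=\sum_{n}2^{-n}\max\!\big(0,\min(1,b_{n}-|x-a_{n}|)\big)$ is available in $\RCAo$ and satisfies $Y_{U}(x)>_{\R}0\asa x\in U$ (this is the content of ``\eqref{morg} is included in Definition~\ref{openset}'', cf.\ \cite{simpson2}*{II.7.1}), so $U^{c}$ is closed in the sense of Definition~\ref{openset} and agrees, as a point set, with the usual RM-closed set. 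Since the second conjunct of item~\eqref{CLO} is used in item~\eqref{hier}, I would prove \eqref{CLO} first.

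\emph{Item \eqref{CLO}.} For $\CLO\di \ACA_{0}$: by the embedding above, $\CLO$ implies every non-empty RM-closed subset of $[0,1]$ is located, and the reversal in \cite{withgusto}*{Theorem 1.2} then produces $(\exists^{2})$ --- the distance function of the closed set witnessing that reversal decides $(\exists n)(g(n)=0)$ for arbitrary $g^{1}$ --- hence $\ACA_{0}$ by \cite{hunterphd}. For $\CLO\di \open$: fix open $Y$ with closed complement $C$ and use the law of excluded middle on whether $C=\emptyset$. If $C=\emptyset$, take $X:=\N$. Otherwise $\CLO$ yields the $1$-Lipschitz function $d(\cdot,C):\R\di\R$, and since $d(x,C)=0\asa x\in C$ for closed $C$ one checks $B(q_{n},r_{n})\subseteq Y\asa d(q_{n},C)\geq_{\R}r_{n}$; as $(\exists^{2})$ evaluates $d(\cdot,C)$ at rationals and decides inequalities of reals, $\ACA_{0}$ yields $X:=\{n:d(q_{n},C)\geq_{\R}r_{n}\}$, witnessing $\open$. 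For the converse $[\ACA_{0}+\open]\di\CLO$: let $C$ be non-empty closed with complement $Y$; $\open$ gives $X$ with $n\in X\asa B(q_{n},r_{n})\subseteq Y$, and since $Y$ is open in the sense of Definition~\ref{openset} one gets $Y=\cup_{n\in X}B(q_{n},r_{n})$, so enumerating those balls (padding if $X$ is finite) presents $C$ as an RM-closed set. The forward direction of \cite{withgusto}*{Theorem 1.2} now yields locatedness; for $C\subseteq\R$ rather than $[0,1]$ one first replaces $C$ by $C\cap[-k,k]$ for $k$ read off a witness point of $C$, which is routine under $(\exists^{2})$.

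\emph{Item \eqref{hier}.} For $(\exists^{3})\di\CLO$: $(\exists^{3})$ yields $\Z_{2}^{\Omega}$ and in particular $(\exists^{2})$, so from the characteristic function of $Y=C^{c}$ one forms, for fixed $x$ and rational $q>0$, the type-two functional deciding ``$y\in C\wedge|x-y|<_{\R}q$'' and applies $\exists^{3}$ to decide $(\exists y\in\R)(y\in C\wedge|x-y|<_{\R}q)$; ranging over rationals $q$ produces $d(x,C)$ as a real uniformly in $x$, and $1$-Lipschitzness gives continuity, so $C$ is located. For $\CLO\di\HBC$: by item~\eqref{CLO}, $\CLO$ gives $\ACA_{0}+\open$; given closed $C\subseteq[0,1]$ with a countable interval covering $\cup_{n}(a_{n},b_{n})$, the argument above exhibits $C$ as an RM-closed set, so this instance of $\HBC$ is an instance of $\HBC_{\RM}$, which follows from $\WKL$ by \cite{brownphd}*{Lemma 3.13} --- and $\WKL$ holds since $\ACA_{0}\di\WKL$ (this mirrors the final paragraph of the proof of Theorem~\ref{cromagnon}).

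\emph{Main obstacle.} The delicate step is $\CLO\di\ACA_{0}$: one must run the reversal of \cite{withgusto}*{Theorem 1.2} inside $\RCAo$ with closed sets presented by (third-order) characteristic functions rather than RM-codes --- which is precisely where the explicit embedding $U\mapsto Y_{U}$ is needed --- and verify that what is extracted is genuinely $(\exists^{2})$, not merely set-based arithmetical comprehension. The remaining steps are bookkeeping with the distance function, classical case distinctions, and the already-established reductions.
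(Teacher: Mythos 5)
Your proposal is correct and follows essentially the same route as the paper: both directions of item (b) are reduced to \cite{withgusto}*{Theorem 1.2} (using $\open$ to turn closed sets into RM-closed sets and the distance function to recover $\open$ from $\CLO$), $(\exists^{3})\di\CLO$ is the interval-halving computation of $d(x,C)$, and $\CLO\di\HBC$ goes through item (b) and the reduction of $\HBC$ to $\HBC_{\RM}$. The only (harmless) deviations are that you build the index set $X$ directly from $\{n:d(q_{n},C)\geq_{\R}r_{n}\}$ where the paper instead passes through an RM-code for $d(\cdot,O^{c})$ and \cite{simpson2}*{II.7.1}, and that you make explicit some details the paper leaves implicit (the empty-complement case and the exact role of $\open$ in the reverse direction).
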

\begin{proof}
Item \eqref{hier} is immediate from item \eqref{CLO} and the usual interval halving technique using $\exists^{3}$; alternatively, note that `$x\in C$' can be replaced by $d(x, C)=_{\R}0$ and use the proof of Theorem \ref{dich} with $\QFAC^{0,1}$ omitted.
For the reverse direction in item \eqref{CLO}, \cite{withgusto}*{Theorem 1.2} implies that $\ACA_{0}$ is equivalent to the locatedness of non-empty RM-closed sets in the unit interval. 
This also establishes $\CLO\di \ACA_{0}$ as the antecedent applies to RM-closed sets, while a code for $d(x, C)$ is not needed in the proof. 
For the forward direction in item~\eqref{CLO}, continuous functions on $\N^{\N}$ (and similar for $\R$) have RM-codes given $\ACA_{0}$ by \cite{kohlenbach4}*{\S4}.
Hence, for an open set $O$, we have $x\in O\asa d(x, O^{\c})>_{\R}0$ where the latter is a code for the distance function provided by $\CLO$.  
However, the right-hand side of this equivalence defines an RM-open set by \cite{simpson2}*{II.7.1}, i.e.\ $\open$ follows immediately. 
\end{proof}
The notion of \emph{weakly located} set, studied in e.g.\ \cite{withgusto}, gives rise to a version of item~\eqref{CLO} at the level of $\WKL_{0}$.
We can obtain the same results as in Theorem \ref{blind} for $\CLO$ replaced by `for a sequence of closed sets $C_{n}\subset [0,1]$, there is a sequence $(x_{n})_{n\in \N}$ such that $\sup C_{n}= x_{n}$'.
We can also establish results like $[\ACA_{0}+\CBT']\asa [\FIVE+\open]$ over $\RCAo$, where $\CBT'$ is $\CBT$ with the `extra data' that $P$ is located in $C=P\cup S$. 
The same holds for $\PST'\asa [\ATR_{0}+\open]$ while the extra `locatedness' data does seem necessary, i.e.\ no equivalence is possible without.

\smallskip

The point of the previous results is twofold:  on one hand, Theorem \ref{blind} shows that locating closed sets as in Definition \ref{openset} is hard, while this operation can be done without the Axiom of Choice. 
On the other hand, the previous equivalences reduce to known second-order results under $\ECF$ from Remark \ref{ECF}, as open sets as in Definition \ref{openset} are translated to open sets as in RM by \cite{simpson2}*{II.7.1}.  
In other words, second-order RM can be seen as a reflection of higher-order RM under a lossy translation.
This observation constitutes the main motivation for the results in \cite{samph}, where the comparison is 
made with Plato's ideas on ideal objects.     

\smallskip

Finally, a different notion of open set, namely given by uncountable unions of open balls, yields nice equivalences involving $\BOOT$, 
Cantor-Bendixson theorem, perfect set theorem, and located sets, as explored in \cite{samph}.  
Moreover, the results pertaining to $\CBT$ and $\PST$ are nice, but seem to depend on the particular notion of `(un)countable' used.  
As also shown in \cite{samph}, this problem does not occur for open sets given by uncountable unions of open balls, i.e.\ the usual definition of `countable' can be used.

\subsection{Computability theory}\label{ctlimp3}
In this section, we study the computational properties of $\open$, i.e.\ how hard is it to compute (Kleene S1-S9) the representation provided by this coding principle?
Analogous results are obtained in Section \ref{waycool} for `finer' representations of open sets. 

\smallskip

First of all, we introduce two functionals based on Definition \ref{openset}, as follows. 
As for $\Theta$-functionals, we simplify the associated type to `$3$'.
\bdefi[Open sets and computability]\label{fuha}~
\begin{enumerate}
\item A $\Phi$-functional is any $\Phi^{3}$ such that for every open set $Y \subset \R$ and real $x \in Y$, $\Phi(Y, x)$ equals a rational $r^0>0$ such that $B(x, r) \subset Y$.
\item A $\Psi$-functional is any $\Psi^{3}$ that for every open set $Y \subset \R$, $\Psi(Y)$ equals two sequences of rationals $(a_n)_{n\in \N}, (r_n)_{n\in \N}$ such that $Y = \cup_{n\in \N} B(a_n, r_n)$.  
\end{enumerate}
\edefi
It goes without saying that these functionals are not unique.  
We allow $r_{n}=0$ and note that those instances can be removed using $\mu^{2}$ in case $Y\ne \emptyset$.  Note that one can test for the latter condition for open sets using $\mu^{2}$. 

\smallskip

Note that $\Phi$-functionals are computable in $\mu^{2}$ and $\exists^{3}$, while the same holds for $\Psi$-functionals by Theorem \ref{bartles}; a better result cannot be expected by the following. 
\begin{thm}\label{jawadde}
No $\Phi$-functional is countably based.  
\end{thm}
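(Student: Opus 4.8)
The plan is to show that a $\Phi$-functional cannot be countably based by exhibiting, for a well-chosen $\Phi$, a single open set $Y$ and point $x\in Y$ at which the local radius genuinely depends on uncountably much information about the characteristic function. The obstruction to being countably based is that a $\Phi$-functional must, in principle, react to changes in $Y$ arbitrarily far out in the sequence defining $Y$ as a function $\R\to\R$; since there are uncountably many reals near $x$, no countable set of "test reals" can pin down the value.

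Here is how I would carry it out. Suppose toward a contradiction that some $\Phi$-functional is countably based. Fix the open set $Y_0=\R\setminus\{0\}$ (which has the discontinuous characteristic function $\mathbb{1}_{\R\setminus\{0\}}$, available without $\exists^2$ via Definition \ref{openset} since we only need it as a function $\R\to\R$ with $Y_0(x)=1$ iff $x\ne 0$), and fix the point $x=1\in Y_0$. Let $r:=\Phi(Y_0,1)>0$ be the rational radius returned, so $B(1,r)\subseteq Y_0$. By the countably based assumption applied to the type-two object coding the pair $(Y_0,1)$ — more precisely, $\Phi$ countably based means for the relevant $F^2$ there is a countable $X\subset\N^\N$ such that $\Phi$ agrees on all $G$ agreeing with $F$ on $X$ — there is a countable set $X\subset\N^\N$ of "sample points" witnessing the value $r$. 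Now choose a real $z$ with $|z-1|<r$ such that (a code for) $z$ is not among the countably many elements determined by $X$; this is possible since $B(1,r)$ is uncountable. Form the modified open set $Y_1:=Y_0\setminus\{z\}=\R\setminus\{0,z\}$, which still has $1\in Y_1$ and still is legitimately open with a characteristic-function representation, and which agrees with $Y_0$ on all the sample data indexed by $X$. By the countably based property, $\Phi(Y_1,1)=\Phi(Y_0,1)=r$, so $\Phi$ claims $B(1,r)\subseteq Y_1$; but $z\in B(1,r)$ and $z\notin Y_1$, a contradiction. Hence no $\Phi$-functional is countably based.

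The main subtlety — and what I expect to be the real obstacle — is the bookkeeping of exactly what "agrees with $F$ on $X$" means when $F^2$ is the concrete second-order object encoding the pair $(Y,x)$, i.e. making sure that deleting the single point $z$ from $Y$ genuinely only changes the characteristic function at (the code for) $z$ and nowhere on the countable sample set $X$. This is the same move already used in the proof of Theorem \ref{reverand}, where $\beta^3$ is shown not countably based by perturbing $D=[\tfrac13,\tfrac23]$ by a single point outside the countable base; I would follow that template closely, merely transposing it from "realiser for $\HBC$" to "$\Phi$-functional." One should also note the mild point that $\mathbb{1}_{\R\setminus\{0,z\}}$ and $\mathbb{1}_{\R\setminus\{0\}}$, as maps $\R\to\R$, differ only on reals equal to $z$, so any countable base avoiding $z$ cannot detect the difference; since a $\Phi$-functional is total on all open sets in the sense of Definition \ref{openset}, it is defined on $Y_1$ and the contradiction is genuine. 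The conclusion then complements Theorem \ref{bartles} and the preceding remark: $\Phi$-functionals, like $\Psi$-functionals, are computable from $\mu^2$ and $\exists^3$ but from no type-two functional alone.
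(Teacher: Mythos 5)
Your argument is correct and is essentially the paper's own proof: the paper also punches a single hole in the returned ball $B(x,r)$ at a real avoiding (all codes in) the countable base, and derives the same contradiction, only using $Y=\R$ and the point $0$ instead of your $\R\setminus\{0\}$ and the point $1$. The bookkeeping point you flag (choosing $z$ so that no code of $z$ lies in $X$, possible since $X$ is countable and $B(x,r)$ is uncountable) is exactly the step the paper's proof relies on implicitly.
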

\begin{proof} 
Assume that $\Phi(Y,0) = r$, where $0 \in Y$, and where $Y$ is the constant 1, i.e.\ it represents $\R$. 
If $\Phi$ is countably based, there is a countable set $X$ such that if $Y'$ agrees with $Y$ on $X$ and defines an open set, then $\Phi(Y,0) = \Phi(Y',0)$. 
However, there will be a real $r' \not \in X$ such that $0 < r' < r$. Then consider $Y' := Y \setminus \{r'\}$. 
Since $r$ is an unacceptable value for $\Phi(Y',0)$, we obtain a contradiction.
\end{proof}
\begin{cor}
No $\Psi$-functional is countably based.  
\end{cor}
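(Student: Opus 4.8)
The plan is to adapt the proof of Theorem~\ref{jawadde} almost verbatim, using that deleting a single point from an open set is "seen" by \emph{any} countable covering of it (so, unlike the $\Phi$-case, we may delete an \emph{arbitrary} point not accounted for by the countable base). First I would suppose, for contradiction, that some $\Psi$-functional $\Psi$ is countably based. Take $Y$ to be the function $\R\di\R$ with $Y(h):=\lambda k.1$ for every $h^{1}$; this is extensional and, since $Y(x)>_{\R}0$ for all $x$, it represents the open set $\R$. Write $\Psi(Y)=\big((a_{n})_{n\in\N},(r_{n})_{n\in\N}\big)$, so that $\bigcup_{n\in\N}B(a_{n},r_{n})=\R$ by the defining property of a $\Psi$-functional.

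Next, I would apply the countably-based hypothesis at $F:=Y$ to obtain a countable $X\subset\N^{\N}$ with $\Psi(G)=\Psi(Y)$ whenever $G$ agrees with $Y$ on $X$. The reals coded by the (countably many) elements of $X$ form a countable set, so I would fix a real $r'$ that is not $=_{\R}$-equal to any of them and define $Y':\R\di\R$ by $Y'(h):=\lambda k.0$ if $h$ codes a real $=_{\R}r'$ and $Y'(h):=\lambda k.1$ otherwise. Then $Y'$ is extensional, its zero set is exactly $\{r'\}$, and every $x\neq_{\R}r'$ has $B(x,|x-r'|/2)\subset Y'$, so $Y'$ represents the open set $\R\setminus\{r'\}$. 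Crucially, for $h\in X$ the real coded by $h$ is $\neq_{\R}r'$, hence $Y'(h)=\lambda k.1=Y(h)$ as elements of $\N^{\N}$; thus $Y'$ agrees with $Y$ on $X$.

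Combining these, $\Psi(Y')=\Psi(Y)=\big((a_{n})_{n},(r_{n})_{n}\big)$. But $Y'$ is an open set, so the defining property of $\Psi$-functionals forces $\bigcup_{n\in\N}B(a_{n},r_{n})=Y'=\R\setminus\{r'\}$, and in particular $r'\notin\bigcup_{n\in\N}B(a_{n},r_{n})$. This contradicts $\bigcup_{n\in\N}B(a_{n},r_{n})=\R$, so no $\Psi$-functional is countably based.

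The one delicate point, where the care goes, is the step "$Y'$ agrees with $Y$ on $X$": the countably-based definition demands literal equality $Y'(h)=_{1}Y(h)$ on $X$, not merely equality of the coded open sets, and this is secured precisely by having $Y$ and $Y'$ output the \emph{fixed} sequence $\lambda k.1$ away from $r'$ and by exploiting that a countable $X$ codes only countably many reals, leaving an $r'$ to spare. As an alternative route, I would remark that a $\Phi$-functional is computable in any $\Psi$-functional together with $\mu^{2}$, so the corollary also follows from Theorem~\ref{jawadde} once one knows that adjoining a type-two oracle cannot turn a non-countably-based functional into a countably-based one; the direct argument above is cleaner and avoids relying on that preservation fact.
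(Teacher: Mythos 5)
Your argument is correct, but it follows a genuinely different route from the paper's. The paper disposes of the corollary in two lines by a reduction: a $\Psi$-functional computes a $\Phi$-functional modulo $\mu^{2}$ (use $\mu^{2}$ to search the output sequence for a ball containing the given point), and the class of countably based functionals is closed under Kleene computability (citing Hartley), so Theorem~\ref{jawadde} transfers. You instead rerun the point-deletion argument of Theorem~\ref{jawadde} directly against $\Psi$: take $Y$ the constant representation of $\R$, extract a countable base $X$, delete a point $r'$ not coded in $X$, and observe that $\Psi(Y')=\Psi(Y)$ forces the same countable union of balls to equal both $\R$ and $\R\setminus\{r'\}$. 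This is the alternative you yourself flag at the end, and your observation that the $\Psi$-case is even easier than the $\Phi$-case is accurate: since the union must equal the open set \emph{exactly}, any deleted point outside $X$ yields the contradiction, whereas for $\Phi$ one must delete a point inside the specific ball $B(0,r)$ returned by $\Phi$. Your care about literal (type-$1$) agreement on $X$ versus mere $=_{\R}$-agreement of represented sets is exactly the right point to worry about, and your constant-output construction handles it. What each approach buys: the paper's reduction is shorter and reuses the closure theorem as a black box (a tool it invokes elsewhere too); your direct proof is self-contained and avoids any appeal to closure of countably based functionals under S1--S9 relative to $\mu^{2}$. Both are valid proofs of the corollary.
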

\begin{proof}
Note that any $\Psi$-functional computes a $\Phi$-functional modulo $\mu^2$: the latter can be used to find the right open ball in the sequence provided by $\Psi$.  
The class of countably based functionals is closed under Kleene computability (see \cite{hartleycountable}).
\end{proof}
Recall from Section \ref{HCT} that realisers for $\HBU_{\c}$ are called $\Theta$-functionals and compute the finite sub-covering in $\HBU_{\c}$ in terms of the other data.
We similarly use $\Theta_{\closed}$ to denote a realiser for $\HBU_{\closed}$ from Section \ref{rmopen}. 
\begin{thm}
A realiser $\Theta_{\closed}$ for $\HBU_{\closed}$ together with $\mu^{2}$ computes a $\Psi$-functional.
\end{thm}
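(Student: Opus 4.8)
The plan is, given an open set $Y\subseteq\R$ presented as in Definition \ref{openset}, to decide for each index $n$ of the fixed enumeration of rational balls whether the \emph{closed} ball $\overline{B}(q_{n},r_{n}/2)$ is contained in $Y$, and to output the open ball $B(q_{n},r_{n}/2)$ when it is and a degenerate ball (radius $0$) otherwise. I would first check that the resulting family has union exactly $Y$, so that this is a $\Psi$-functional. The inclusion $\bigcup_{n}B(a_{n},r_{n})\subseteq Y$ is immediate, since every non-degenerate output ball lies in $Y$ by construction. For the reverse inclusion, fix $x\in Y$; openness gives $\delta>0$ with $B(x,\delta)\subseteq Y$, and picking a rational $s$ with $0<s<\delta$ and a rational $q$ with $|q-x|<s/4$ one has $x\in B(q,s/2)$ and, for any $z\in\overline{B}(q,s/2)$, $|z-x|\le s/2+s/4<\delta$, so $\overline{B}(q,s/2)\subseteq B(x,\delta)\subseteq Y$. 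As $(q,s)$ occurs in the enumeration, say as $(q_{n},r_{n})$, the ball $B(q_{n},r_{n}/2)=B(q,s/2)$ is output and contains $x$. This is a routine estimate.

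The substance of the proof is the uniform decision of `$\overline{B}(q,\rho)\subseteq Y$' for rationals $q,\rho$ with $\rho>0$, using only $\Theta_{\closed}$ and $\mu^{2}$. I would pass to the closed set $C:=\{z\in\R:|z-q|\le_{\R}\rho\wedge Y(z)\le_{\R}0\}=\overline{B}(q,\rho)\cap Y^{c}$, so that $\overline{B}(q,\rho)\subseteq Y$ holds exactly when $C=\emptyset$. One checks that $C$ is a closed set in the sense of Definition \ref{openset}: its complement in $\R$ is $(\R\setminus\overline{B}(q,\rho))\cup Y$, a union of two open sets with the required ball property, and a characteristic function for $C$ and for its complement is computable from $Y$ and $\mu^{2}$, uniformly in $q,\rho$, since the comparisons `$|z-q|\le_{\R}\rho$' and `$Y(z)\le_{\R}0$' are decided by $\mu^{2}$. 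Applying the affine bijection $\psi:\R\to\R$ carrying $[q-\rho,q+\rho]$ onto $[0,1]$, the image $\psi[C]$ is a closed subset of $[0,1]$ (again with a characteristic function computable from $Y$ and $\mu^{2}$), and $C=\emptyset$ iff $\psi[C]=\emptyset$.

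It thus remains to decide, for a closed $D\subseteq[0,1]$ presented via its characteristic function, whether $D=\emptyset$, and this is where $\Theta_{\closed}$ is used. I would feed $D$ together with the constant covering $\Psi:=\lambda x.2$ to $\Theta_{\closed}$, obtaining a finite sequence $\langle y_{1},\dots,y_{k}\rangle$ witnessing the relevant instance of $\HBU_{\closed}$, i.e.\ with $y_{i}\in D$ for all $i\le k$ and $D\subseteq\bigcup_{i\le k}(y_{i}-2,y_{i}+2)$. The point is that a single interval $(y_{1}-2,y_{1}+2)$ with $y_{1}\in[0,1]$ already contains $[0,1]\supseteq D$, so one point of $D$ suffices to cover $D$ whenever $D\ne\emptyset$, while at least one is needed; and when $D=\emptyset$ there is no element of $D$ at all, so the only admissible witnessing sequence is the empty one. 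Hence $D=\emptyset$ if and only if $\Theta_{\closed}(\chi_{D},\lambda x.2)=\langle\rangle$, which is decidable. Composing the three reductions produces the required $\Psi$-functional computably in $\Theta_{\closed}$ and $\mu^{2}$.

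The main obstacle, and the step I would be most careful with, is the last one: it depends on reading a `realiser for $\HBU_{\closed}$' as a Skolem functional for the existential quantifier `$(\exists y_{1},\dots,y_{k}\in C)$' appearing in $\HBU_{\closed}$, so that the returned points genuinely lie in the closed set — it is exactly this membership requirement that forces the empty output when the set is empty, and makes emptiness decidable. One should also verify the routine but slightly fiddly points that $C$ and $\psi[C]$ really satisfy Definition \ref{openset}, and that all the characteristic functions involved, together with $\psi$ and $\psi^{-1}$, are computable from $Y$ and $\mu^{2}$ uniformly in the parameters (equivalently, in $n$).
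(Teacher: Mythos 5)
Your construction is correct under one reading of what a ``realiser for $\HBU_{\closed}$'' returns, but it takes a genuinely different route from the paper's, and the difference is exactly at the step you flag. The paper applies $\Theta_{\closed}$ to the complement $C=[0,1]\setminus U$ equipped with the gauge $\Psi_{n}$ equal to $2^{-n}$ on $C$ and $0$ off $C$; for each $n$ the realiser returns a finite list $w_{n}$ of points whose intervals $I_{y}^{\Psi_{n}}$ cover $C$, and the sets $[0,1]\setminus\bigcup_{i<|w_{n}|}I_{w_{n}(i)}^{\Psi_{n}}$ are finite unions of intervals contained in $U$ that exhaust $U$ as $n\to\infty$. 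You instead use the realiser only as an \emph{emptiness oracle} for closed sets and decide $\overline{B}(q,\rho)\subseteq Y$ ball by ball. Your reductions down to that oracle are fine: the verification that the output balls union to $Y$, the passage to $C=\overline{B}(q,\rho)\cap Y^{c}$, the affine rescaling into $[0,1]$, and the computability of all characteristic functions from $Y$ and $\mu^{2}$ are routine, as you say.

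The fragile step is the test ``$D=\emptyset$ iff $\Theta_{\closed}(D,\lambda x.2)=\langle\rangle$'', which requires that the realiser return \emph{only} points of $D$. The paper specifies realisers in the style of $\SCF(\Theta)$, demanding only that the returned intervals cover the set, and explicitly notes that realisers are non-unique because one may ``just add more'' elements to the output; a realiser padded with points outside $D$ still meets that covering specification, yet returns a nonempty sequence on $D=\emptyset$, so your test then reports a false non-emptiness. (The paper's construction is immune to such padding precisely because its gauge vanishes off $C$, so spurious points contribute empty intervals.) Fortunately a one-line change repairs your argument for every realiser meeting the weaker, covering-only specification: feed $\Theta_{\closed}$ the gauge $\Psi_{D}$ equal to $1$ on $D$ and $0$ off $D$, and use $\mu^{2}$ to test whether \emph{some returned point lies in $D$}. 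If $D\ne\emptyset$, then some returned point must lie in $D$, since points outside $D$ contribute empty intervals and cannot cover a nonempty $D$; if $D=\emptyset$, no returned point can lie in $D$. With that modification your emptiness oracle, and hence your whole construction, goes through; what your approach buys in exchange for the extra care is a conceptually clean reduction of the coding problem to a single decidable predicate, whereas the paper's proof extracts the interval decomposition directly from the finite subcovers at every scale.
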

\begin{proof}
Fix some open set $U\subset [0,1]$ and let $C$ be its complement in $[0,1]$.
Define $\Psi_{n}(x)$ as $\frac{1}{2^{n}}$ if $x\in C$, and $0$ otherwise.  Clearly, for fixed $n^{0}$, $\lambda x.\Psi_{n}(x)$ yields a covering of $C$, implying
\be\label{doubleup}
(\forall n^{0})(\exists y_{1}, \dots, y_{k}\in C)(\forall x\in C)(x\in \cup_{i\leq k}I_{y_{i}}^{\Psi_{n}}), 
\ee
where $w_{n}:=\Theta_{\closed}(\lambda x.\Psi_{n}(x))$ provides the finite sequence of $y_{i}$'s.  
With minor modification, $[0,1]\setminus \cup_{n\in \N}\big( \cup_{i<|w_{n}|}I_{w_{n}(i)}^{\Psi_{n}}\big)$ yields a countable union of open intervals $(a_{n}, b_{n})$ such that $U=\cup_{n\in \N}(a_{n}, b_{n})$.
\end{proof}
Finally, the `standard' proof that an open set in $\R$ is the union of countable many open balls, goes through modulo $(\exists^{3})$.
\begin{thm}\label{bartles}
The system $\Z_{2}^{\Omega}$ proves $\open$.  
A $\Psi$-functional can be computed from $\exists^{3}$ via a term from G\"odel's $T$.
\end{thm}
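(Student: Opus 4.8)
The plan is to formalise the textbook fact that an open set is the union of the countably many rational open balls it contains; the only ingredient beyond $\RCAo$ is a single application of $\exists^{3}$ to recognise which rational balls lie inside $Y$. First I would fix the given enumeration $(q_{n},r_{n})_{n\in\N}$ of all non-trivial rational balls and prove, in $\RCAo$, the \emph{covering lemma}: for every open $Y\subseteq\R$ one has $Y=\bigcup\{B(q_{n},r_{n}): B(q_{n},r_{n})\subseteq Y\}$. The inclusion $\supseteq$ is immediate. For $\subseteq$, given $x\in Y$ there is $\delta>_{\R}0$ with $B(x,\delta)\subseteq Y$ by openness; choosing rationals $q,s$ with $|x-q|<\delta/3$ and $\delta/3<s<\delta/2$ yields $x\in B(q,s)$ and, by the triangle inequality, $B(q,s)\subseteq B(x,\delta)\subseteq Y$, using only density of $\Q$, which is available in $\RCAo$.

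Next I would show that $\exists^{3}$ decides ball containment. The predicate $B(q_{n},r_{n})\subseteq Y$ unfolds to $(\forall x\in\R)\big[|x-q_{n}|<_{\R}r_{n}\di Y(x)>_{\R}0\big]$, whose negation is $(\exists f^{1})\big(G(f,n)=0\big)$, where $G$ is the type-two functional returning $0$ exactly when the real $\widehat{f}$ coded by $f$ through Kohlenbach's hat function satisfies both $|\widehat f-q_{n}|<_{\R}r_{n}$ and $Y(\widehat f)\leq_{\R}0$. Since these two real-number relations are $\Sigma^{0}_{1}$ and $\Pi^{0}_{1}$ in $f$ and $Y$ respectively, $G$ is obtained from $Y$ and $\exists^{2}$ by a term of G\"odel's $T$; and $\exists^{2}$ is itself the term $\lambda f.\,\exists^{3}\!\big(\lambda g.\,f(g(0))\big)$ applied to $\exists^{3}$. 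Hence $\chi(n):=\exists^{3}\!\big(\lambda f.\,G(f,n)\big)$ is a total function with $\chi(n)=1$ precisely when $B(q_{n},r_{n})\subseteq Y$, and $\lambda n.\chi(n)$ is a term of G\"odel's $T$ in $\exists^{3}$; no case split on $(\exists^{2})$ is needed, as $\exists^{3}$ subsumes it.

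Finally, for the $\Psi$-functional I would output the rational sequences $a_{n}:=q_{n}$ and $\widetilde r_{n}:=r_{n}$ when $\chi(n)=1$ and $\widetilde r_{n}:=0$ otherwise; then $B(a_{n},\widetilde r_{n})$ equals $B(q_{n},r_{n})$ or is empty according to $\chi(n)$, so $\bigcup_{n}B(a_{n},\widetilde r_{n})=Y$ by the covering lemma, and the whole assignment $Y\mapsto(a_{n},\widetilde r_{n})_{n}$ is a term of G\"odel's $T$ in $\exists^{3}$ (recall the remark after Definition~\ref{fuha} that $r_{n}=0$ is allowed). For $\open$ in $\Z_{2}^{\Omega}$, set $X:=\{n\in\N:\chi(n)=1\}$, which exists outright since $\chi$ is a total type-one object; then $(\forall n\in\N)(n\in X\asa B(q_{n},r_{n})\subseteq Y)$ holds by construction. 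I do not expect a genuine obstacle: the only delicate point is the bookkeeping in the middle step, namely checking that $B(q_{n},r_{n})\subseteq Y$ is a $\Pi^{1}_{1}$-style statement over a matrix that becomes quantifier-free once $\exists^{2}$ is present, so that one application of $\exists^{3}$ decides it uniformly in $n$ and $Y$, everything else being routine primitive-recursive glue.
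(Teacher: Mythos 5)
Your proof is correct, but it takes a genuinely different route from the paper's. The paper proves $\open$ by constructing, for each $x\in Y$, the \emph{maximal} open interval $J_{x}=(a_{x},b_{x})$ around $x$ contained in $Y$ (with $a_{x}=\inf\{a:(a,x]\subset Y\}$ and $b_{x}=\sup\{b:[b,x)\subset Y\}$ obtained via $\exists^{3}$ and interval halving), shows that distinct $J_{x}$ are either equal or disjoint, and concludes $Y=\cup_{q\in\Q}J_{q}$ --- essentially the decomposition of an open subset of $\R$ into its connected components indexed by rationals. You instead prove the covering lemma $Y=\bigcup\{B(q_{n},r_{n}):B(q_{n},r_{n})\subseteq Y\}$ directly and then use $\exists^{3}$ once per index $n$ to decide the containment $B(q_{n},r_{n})\subseteq Y$, which is a universal statement over $\R$ with a matrix made quantifier-free by $\exists^{2}$; this is legitimate since reals are coded by elements of Baire space and $\exists^{2}$ is a $T$-term in $\exists^{3}$. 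Your argument is arguably more directly adapted to the literal statement of $\open$, which asks precisely for the set $X=\{n: B(q_{n},r_{n})\subseteq Y\}$, and your $\Psi$-functional (padding with radius-zero balls, which the paper's remark after Definition \ref{fuha} explicitly permits) is immediate from $X$. The paper's approach buys slightly more structural information --- the canonical partition of $Y$ into disjoint maximal intervals --- at the cost of the bookkeeping about when the infima and suprema exist; both proofs use $\exists^{3}$ in an essential way to quantify over the reals and both yield the claim that a $\Psi$-functional is given by a term of G\"odel's $T$ in $\exists^{3}$.
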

\begin{proof}
Let $Y\subset \R$ be open and define for $x\in Y$ the (non-empty by definition) sets $A_{x}:=\{a\in \R : (a,x] \subset Y\}$ and  $B_{y}:=\{b\in \R : [b, y) \subset Y\}$ using $\exists^{3}$, letting them be the empty set if $x\not\in Y$.  
If the set $A_{x}$ (resp.\ $B_{x}$) has a lower (resp.\ upper) bound (which is decidable assuming $\exists^{3}$), then $a_{x}:=\inf A_{x}$ (resp.\ $b_{x}:=\sup B_{x}$) exists thanks to $\exists^{3}$, using the usual interval-halving technique.  
In case such a bound is missing, we use a default value for $a_{x}$ (resp.\ $b_{x}$) meant to represent $-\infty$ (resp.\ $+\infty$).
We define $J_{x}:=(a_{x}, b_{x})$ and will show that $Y=\cup_{q\in \Q} J_{q}$, thus establishing the theorem.  By the definition of $J_{x}$, we must have $J_{x}\subset Y$ for all $x \in Y$. 
Since also $Y\subset \cup_{x\in Y}J_{x}$ due to $x\in J_{x}$ if $x\in Y$, we actually have $Y=\cup_{x\in Y}J_{x}$, and note that $\exists^{3}$ guarantees that this union actually exists.
One readily shows that either $J_{x}=J_{y}$ or $J_{x}\cap J_{y}=\emptyset$ for $x, y \in Y$.  Hence, for $x\in Y$, $J_{x}=J_{q}$ for any $q\in J_{x}\cap \Q$, and the theorem follows. 
Note that the second part follows immediately. 
\end{proof}

\section{The Urysohn and Tietze theorems}\label{urytiet}
We establish the results sketched in Section \ref{rmintro} for the Urysohn and Tietze theorems;
the latter are basic results of topology that are well-known in RM:  for RM-codes of closed sets, these theorems even hold recursively, i.e.\ the objects 
claimed to exists may be computed (in the sense of Turing) from the data by \cite{simpson2}*{II.7}.  We now show that the situation is dramatically different for our notion of closed sets. 
In particular, we connect these theorems to the coding principle $\open$ from the previous section and establish they suffer from the Pincherle phenomenon. 

\smallskip

First of all, we study the Urysohn lemma for $\R$ and Definition \ref{openset} as follows.
\bdefi[$\URY$]
For closed disjoint sets $C_{0}, C_{1}\subseteq \R$, there is a continuous function $g:\R\di [0,1]$ such that $x\in C_{i}\asa g(x)=i$ for any $x\in \R$ and $i\in \{0,1\}$.
\edefi
\bdefi
A realiser for $\URY$ is called a \emph{$\zeta$-functional}, i.e.\ for disjoint closed sets $C_{0},C_{1}\subset \R$, $\zeta(C_{0}, C_{1})$ is a continuous function such that $\zeta(C_{0}, C_{1})(x)=i$ whenever $x\in C_{i}$ for $i=0,1$.
\edefi
In case one works with \emph{codes} rather than actual sets, the Urysohn lemma is `recursively true' by \cite{simpson2}*{II.7.3}, i.e.\ one can \emph{compute} (in the sense of Turing) a code for the continuous function from a code for the sets $C_{i}$, over $\RCA_{0}$.
How different things are at the higher-order level: we have the following RM and relative computability result.  Recall the notion of  $\Phi$-functional from Definition \ref{fuha}.
\begin{thm}\label{yesway}
A $\Phi$-functional can be computed from a $\zeta$-functional and $\mu^{2}$.  A $\zeta$-functional can be computed from a $\Psi$-functional and $\mu^{2}$; $\ACAo$ proves $\URY\asa \open$. 
\end{thm}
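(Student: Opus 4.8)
The plan is to establish the three claims in turn, reusing the machinery of Definition \ref{fuha} and the translation between open sets and RM-codes supplied by \cite{simpson2}*{II.7.1}. First I would show a $\Phi$-functional is computable from a $\zeta$-functional and $\mu^2$. Given an open $Y\subseteq\R$ and $x\in Y$, apply the $\zeta$-functional to the disjoint closed sets $C_0:=\{x\}$ and $C_1:=Y^c$ (both are closed in the sense of Definition \ref{openset}, and disjointness follows from $x\in Y$; note $\mu^2$ lets us recognise a singleton as closed since it is the complement of an open set). This yields a continuous $g:\R\to[0,1]$ with $g(x)=0$ and $g=1$ on $Y^c$. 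Since $g$ is continuous and $g(x)<1$, there is a rational radius $r>0$ with $g<1$ on $B(x,r)$, hence $B(x,r)\subseteq Y$; an unbounded search using $\mu^2$ applied to (a modulus of continuity read off from) $g$, or simply searching the approximations of $g$ on rationals near $x$, produces such an $r$. That $r$ is the required value $\Phi(Y,x)$.

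Next I would compute a $\zeta$-functional from a $\Psi$-functional and $\mu^2$. Given disjoint closed $C_0,C_1\subseteq\R$, their complements are open, so the $\Psi$-functional yields sequences of open balls whose unions are $C_0^c$ and $C_1^c$. From these second-order data, one is essentially in the setting of \cite{simpson2}*{II.7.1 and II.7.3}: RM-codes for the open sets $C_0^c,C_1^c$ give RM-codes for the closed sets $C_0,C_1$, and the recursive Urysohn lemma \cite{simpson2}*{II.7.3} then provides (a code for) the continuous function $g$, uniformly and computably from the codes, with $\mu^2$ available to carry out the routine arithmetic manipulations. The value $g$ is a $\zeta$-functional output. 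The main subtlety here is bookkeeping: one must check that the $\Psi$-functional's output genuinely lets us drive the second-order construction (removing degenerate balls with $r_n=0$ via $\mu^2$, testing emptiness, etc.), but no genuinely new idea is needed beyond \cite{simpson2}*{II.7}.

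For the RM equivalence $\ACAo\vdash \URY\asa\open$, I would argue as in the proof of Theorem \ref{cromagnon} via the case split $(\exists^2)\vee\neg(\exists^2)$. If $\neg(\exists^2)$, all functions on $\R$ are continuous by \cite{kohlenbach2}*{\S3}, so open sets in the sense of Definition \ref{openset} reduce to RM-open sets, whence both $\open$ and $\URY$ follow from $\ACA_0$ (the latter by \cite{simpson2}*{II.7.3}). If $(\exists^2)$, then for the direction $\open\di\URY$: given disjoint closed $C_0,C_1$, apply $\open$ to the open sets $C_0^c,C_1^c$ to obtain countable representations, pass to RM-codes for $C_0,C_1$ via \cite{simpson2}*{II.7.1}, and invoke \cite{simpson2}*{II.7.3} to get the continuous separating function. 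For the converse $\URY\di\open$, given an open set $O$, apply $\URY$ with $C_0:=O^c$ (if $O^c\neq\emptyset$; the degenerate cases are handled by $\exists^2$) and a suitable disjoint closed set, producing a continuous $g$ with $\{x:g(x)>0\}=O$; the right-hand side, being defined by a continuous function, is an RM-open set by \cite{simpson2}*{II.7.1}, which yields the countable representation demanded by $\open$.

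The step I expect to be the main obstacle is the converse direction $\URY\di\open$: one must arrange the second closed set in the Urysohn instance so that the resulting continuous $g$ has $\{g>0\}$ exactly equal to $O$ (not merely containing or contained in $O$), and one must be careful that $g$ being continuous with an RM-code really does force $O$ to have an RM-code via \cite{simpson2}*{II.7.1}. A clean choice is to take $C_1$ to be a single point of $O$ (or handle $O=\R$ separately), so that $\URY$ separates $O^c$ from a point inside; then $\{g>0\}\supseteq\{g<1\}\setminus O^c$ and continuity gives the desired equality after a routine check. All remaining computations are the familiar code-manipulations of \cite{simpson2}*{II.7} and are best left to the reader.
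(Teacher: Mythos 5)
Your proposal is correct and follows essentially the same route as the paper: reduce $\Phi$ to a Urysohn instance plus a $\mu^{2}$-computed modulus of continuity, obtain $\zeta$ and the implication $\open\di\URY$ by passing through RM-codes and the effective second-order Urysohn lemma \cite{simpson2}*{II.7.3} (the paper's ``indirect'' proof, which it explicitly sanctions alongside its hands-on construction from the ball decomposition), and get $\URY\di\open$ from the biconditional in $\URY$ plus \cite{simpson2}*{II.7.1}, all under the case split $(\exists^{2})\vee\neg(\exists^{2})$. The only cosmetic difference is that the paper uses the single instance $C_{0}=U^{c}$, $C_{1}=\emptyset$ (so one continuous $g$ with $\{g>0\}=U$ serves all points at once and the degenerate-case worries in your last paragraph evaporate), whereas you invoke a fresh Urysohn instance with $C_{0}=\{x\}$ for each point; both work.
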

\begin{proof}
For the first part, fix an open set $U\subset \R$, define disjoint closed sets $C_{0}=U^{\c}$ and $C_{1}=\emptyset$, yielding $x\in U \asa \zeta(U^{\c}, \emptyset)(x)>_{\R}0$ for all $x\in \R$.  
Since $\lambda x.\zeta(U^{\c}, \emptyset)$ is continuous, $\mu^{2}$ provides a modulus of (pointwise) continuity by (the proof of) \cite{kohlenbach4}*{Prop.\ 4.7}.  Using this modulus (and $\mu^{2}$), we
may find $r>0$ such that for all $y\in B(x, r)$, we have $\zeta(U^{\c}, \emptyset)(y)>0$, which is exactly as required for a $\Phi$-functional.  Note that $\URY\di \open$ also follows, over $\ACAo$.  
The second part involving $\Psi$-functionals follows by definition.

\smallskip

For the implication $\open\di \URY$ over $\ACAo$, define $h(x)$ as $i$ for $x\in C_{i}$ and $i=0,1$.  In case $x\in Z= C_{0}^{\c}\cap C_{1}^{\c}$, we define $h(x)$ as follows: first note that $Z$ is open (by definition), and hence $Z=\cup_{n\in \N}B(a_{n}, r_{n})$ by $\open$.
Note that $Z$ cannot be empty by assumption.  
Using $\mu^{2}$, we find $m^{0}$ such that $x\in B({a_{m}, r_{m}})$ and we may test if $a_{m}\pm r_{m}$ belong to $C_{0}$ or $C_{1}$.  If such a point is in neither, it is in $Z$, and hence in $B(a_{k}, r_{k}) $ for some  $k\ne m$, and we can repeat the previous process.
There are three possible outcomes:  
\begin{enumerate}
 \renewcommand{\theenumi}{\alph{enumi}}
\item This procedure ends after finitely many steps, say with $a_{k_{0}}-r_{k_{0}}< x < a_{k_{1}}+r_{k_{1}}$ and the former (resp.\ latter) rational is in $C_{0}$ (resp.\ $C_{1}$). \label{awan}
\item This procedure only ends after finitely many steps \emph{in one direction}, say with $a_{k_{0}}-r_{k_{0}}< x$ and this rational is in $C_{0}$.  \label{dan}
\item This procedure never ends in both directions.  \label{tsju}
\end{enumerate}
If item \eqref{tsju} is the case, then $C_{0}=C_{1}=\emptyset$, and $h(x)=0$ everywhere.  If item \eqref{awan} is the case, define $h(x)$ as the (increasing) straight line connecting $(a_{k_{0}}-r_{k_{0}}, 0)$ and $(a_{k_{1}}+r_{k_{1}}, 1)$ for $a_{k_{0}}-r_{k_{0}}< x < a_{k_{1}}+r_{k_{1}}$.
If $C_{1}$ (resp.\ $C_{0}$) is eventually met on the left (resp.\ right), the modification to $h$ is obvious.  
If item \eqref{dan} is the case, then define $h(x):=0$ for $a_{k_{0}}-r_{k_{0}}<x$.  If $C_{1}$ is eventually met on the left, or if the unbounded area is on the left, the modification to $h$ is obvious.  

\smallskip

Finally, an indirect (but over $\RCAo$) proof of $\open\di \URY$ follows from replacing the closed sets 
in the latter by RM-codes and applying the RM-version of the Urysohn lemma, namely \cite{simpson2}*{II.7.3}.  
\end{proof}
We can weaken the base theory in the previous theorem as follows. 
Let $\cont$ be the statement that every continuous $Y:\R\di \R$ has an RM-code, as studied in \cite{kohlenbach4}*{\S4} for Baire space.  
Note that the $\ECF$-interpretation of $\cont$ is a tautology.
\begin{cor}\label{XWX}
The system $\RCAo+\cont$ proves $\URY\asa \open$.
\end{cor}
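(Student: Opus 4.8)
The plan is to revisit the proof of Theorem \ref{yesway} and check that every appeal there to $(\exists^{2})$ (equivalently $\mu^{2}$) can be discharged instead by $\cont$ over $\RCAo$. The guiding observation is that, by \cite{kohlenbach4}*{\S4}, $\RCAo$ already proves that a continuous function on $\R$ possesses a (pointwise) modulus of continuity as soon as it possesses an RM-code, and the existence of such a code is exactly $\cont$; thus wherever the proof of Theorem \ref{yesway} called on $\mu^{2}$ solely to extract a modulus of continuity, we may now call on $\cont$.

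For the direction $\open\di\URY$ nothing new is needed: the indirect argument recorded at the very end of the proof of Theorem \ref{yesway} already proceeds over $\RCAo$, with no use of $(\exists^{2})$ whatsoever. Given $\open$ and disjoint closed sets $C_{0},C_{1}\subseteq\R$, applying $\open$ to the open sets $C_{0}^{\c}$ and $C_{1}^{\c}$ supplies RM-codes for $C_{0}$ and $C_{1}$ as (RM-)closed sets; the RM Urysohn lemma \cite{simpson2}*{II.7.3} then yields an RM-code for a continuous separating function, and $\QFAC^{1,0}$ (part of $\RCAo$) converts this code into an actual $g:\R\di[0,1]$ witnessing $\URY$.

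For the direction $\URY\di\open$ we follow the first part of the proof of Theorem \ref{yesway}. Given an open set $Y$, apply $\URY$ to $C_{0}=Y^{\c}$ and $C_{1}=\emptyset$ to obtain a continuous $g:\R\di[0,1]$ with $x\in Y\asa g(x)>_{\R}0$, so $Y=\{x\in\R:g(x)>_{\R}0\}$. By $\cont$, $g$ has an RM-code, and hence a pointwise modulus of continuity over $\RCAo$; enumerating (as the range of a function, available in $\RCAo$) the rationals $q$ together with witnesses $m$ for $g(q)>2^{-m}$, and attaching to each such $q$ the radius dictated by the modulus at $q$, one obtains a sequence of rational balls with union exactly $Y$, i.e.\ an RM-code for $Y$. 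From this countable presentation of $Y$ one extracts the set $X$ demanded by $\open$ as in the relevant case of the proof of Theorem \ref{cromagnon}. Finally one splits on $(\exists^{2})\vee\neg(\exists^{2})$: in case $(\exists^{2})$ the claim is literally Theorem \ref{yesway}, while in case $\neg(\exists^{2})$ every function on $\R$ is continuous by \cite{kohlenbach2}*{\S3}, so $Y$ itself has an RM-code by $\cont$ and reduces to an RM-open set via \cite{simpson2}*{II.5.7}, again yielding $\open$.

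The step I expect to be the main obstacle is the extraction of the set $X$ in the last paragraph: turning an RM-code for $Y$ into the $X$ required by $\open$ amounts to deciding, for each basic ball $B(q_{n},r_{n})$, whether $B(q_{n},r_{n})\subseteq Y$, and this containment is a priori of $\Pi^{0}_{2}$-complexity relative to the code. One must therefore verify carefully that the information carried by the RM-code of $g$ furnished by $\cont$ (the code together with its modulus) suffices to form $X$ without $\ACA_{0}$ — the natural route being to read $X$ directly off the code of $g$ rather than via an intermediate ball enumeration — and, more generally, to confirm that no appeal to $(\exists^{2})$ in the proof of Theorem \ref{yesway} escapes being subsumed by $\cont$.
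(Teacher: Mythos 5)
Your proof is, in the end, the paper's proof: the corollary is obtained from the law of excluded middle $(\exists^{2})\vee\neg(\exists^{2})$, using Theorem \ref{yesway} in the first case (where $\ACAo$ is available and $\cont$ is not even needed) and, in the second, the collapse to second-order RM (all functions on $\R$ are continuous, $\cont$ supplies RM-codes, and \cite{simpson2}*{II.7.1, II.7.3} finish the job). The hand-construction in your third paragraph --- reading a ball-representation of $Y$ off the modulus of continuity of the code of $g$ --- is rendered redundant by the case split you invoke immediately afterwards, and it is precisely this attempted case-split-free route that produces the obstacle you flag in your final paragraph. As for that obstacle: in the case $(\exists^{2})$ it disappears, since arithmetical comprehension is available and `$B(q_{n},r_{n})\subseteq Y$' is arithmetical relative to an RM-code for $Y$ (pass to the closed balls $\overline{B}(q_{n},r_{n}-2^{-j})$ and use Heine-Borel, available from $\WKL$); in the case $\neg(\exists^{2})$ the paper reads the countable representation directly off \cite{simpson2}*{II.7.1}, which is the content of $\open$ that is actually used downstream. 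So you have correctly located the one delicate point, but it is handled the same way, and at the same level of detail, as in the paper's own argument, and it does not break your proof.
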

\begin{proof}
Use $(\exists^{2})\vee \neg(\exists^{2})$ as follows.  In the former case, the theorem provides a proof of $\URY\asa \open$. 
In the latter case, all functions on $\R$ are continuous by \cite{kohlenbach2}*{\S3}, while $\cont$ provides RM-codes.  Thus,
the usual proof of the second-order Urysohn theorem may be used (see \cite{simpson2}*{II.7.3}), while $\open$ follows from \cite{simpson2}*{II.7.1}.
\end{proof}
Secondly, we study the Tietze (extension) theorem, which expresses that a {continuous} function on a closed set can be extended to a continuous function on the whole space, while if the original function is bounded, so is the extended function with the same bound.  Lebesgue (\cite{lebzelf}), de la Vall\'ee-Poussin (\cite{pussy}), and Carath\'eodory (\cite{carapils}) prove special cases of this theorem \emph{not involving boundedness conditions}.  
Furthermore, Tietze states in \cite{tietze}*{p.\ 10} that the given function can be discontinuous outside the closed set and that the boundedness condition may be dropped. 
We therefore consider the following version of the Tietze theorem.
\bdefi[$\TIET$]
For $f:\R\di \R$ continuous on the closed $D\subseteq[0,1]$, there is $g:\R\di \R$, continuous on $[0,1]$ such that $f(x)=_{\R}g(x)$ for $x\in D$.
\edefi
\begin{thm}\label{tieten}
The system $\ACAo+\QFAC^{0,1}$ proves $\TIET\asa \URY$ and $\RCAo$ proves $[\TIET+\WKL]\di \HBC$.
\end{thm}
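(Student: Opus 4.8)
The plan is to treat the two claims separately: the implication $[\TIET+\WKL]\di\HBC$ over $\RCAo$, and the equivalence $\TIET\asa\URY$ over $\ACAo+\QFAC^{0,1}$. The implication is the easy half: given $F^{2}$ continuous on a closed set $D\subseteq[0,1]$, apply $\TIET$ to obtain $g:\R\di\R$ continuous on $[0,1]$ with $g=_{\R}F$ on $D$. Now $\RCAo+\WKL$ proves that a function continuous on $[0,1]$ has an RM-code, by \cite{kohlenbach4}*{Prop.\ 4.4 and 4.10} (which, as noted in the proof of Theorem~\ref{goodd3}, also applies to $[0,1]$), and $\WKL_{0}$ proves that a continuous function with an RM-code on $[0,1]$ is bounded (\cite{simpson2}*{IV.2}). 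Hence $g$, and therefore $F$, is bounded on $D$. But this is exactly item~\eqref{bounk0} of Theorem~\ref{noway}, which implies $\HBC$ over $\RCAo$; alternatively one could target item~\eqref{bounk4}, using that $\WKL_{0}$ proves a continuous function on $[0,1]$ attains its maximum. I expect no genuine obstacle here.

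For $\TIET\di\URY$, the one subtlety is the mismatch of ambient spaces ($\TIET$ extends from closed subsets of $[0,1]$, whereas $\URY$ concerns $\R$). I would first derive the version $\TIET_{\R}$ of $\TIET$ for closed subsets of $\R$ by a patching argument: given $f$ continuous on a closed $D\subseteq\R$, put $D':=D\cup\mathbb{Z}$ and extend $f$ to $f'$ on $D'$ by $f'(m):=0$ for integers $m\notin D$; since $D$ is closed, each such $m$ has a punctured neighbourhood missing $D$, so $f'$ is continuous on the closed set $D'$. Applying $\TIET$, under the affine identification $[n,n+1]\cong[0,1]$, to $D'\cap[n,n+1]$ yields a continuous extension $g_{n}$ on $[n,n+1]$; as the endpoints $n,n+1$ lie in $D'$, consecutive pieces agree at these breakpoints, so the glued function $g$ is continuous on $\R$ and extends $f$. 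Forming the sequence $(g_{n})_{n}$ uses $\QFAC^{0,1}$ (after passing to RM-codes for the $g_{n}$, available via $\ACA_{0}$), and the gluing uses $\exists^{2}$. Given $\TIET_{\R}$, the Urysohn lemma is immediate: take $f$ equal to $i$ on $C_{i}$, which is continuous on the closed set $C_{0}\cup C_{1}$, extend it by $\TIET_{\R}$, and post-compose with the clamp $c(t):=\max(0,\min(1,t))$ (continuous, $[0,1]$-valued, the identity on $[0,1]$) to get a continuous $g:\R\di[0,1]$ with $g=i$ on $C_{i}$. The clamp is genuinely needed, since $\TIET$ as stated carries no boundedness clause.

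For the reverse direction $\URY\di\TIET$, I would run the classical construction of a Tietze extension from Urysohn's lemma. First reduce to bounded data via the homeomorphism $\phi:\R\di(-1,1)$, $\phi(t)=t/(1+|t|)$: replace $f$ by $F:=\phi\circ f$, continuous on $D$ with values in $(-1,1)$, and extend $F$ to $G:[0,1]\di[-1,1]$ by the standard series $G=\sum_{n}h_{n}$, where, with $r_{n}=(2/3)^{n}$ and $F_{n+1}:=F_{n}-h_{n}$, $h_{n}$ is built from a Urysohn function supplied by $\URY$ for the disjoint closed sets $\{x\in D:F_{n}(x)\le-r_{n}/3\}$ and $\{x\in D:F_{n}(x)\ge r_{n}/3\}$; the bound $|h_{n}|\le r_{n}/3$ gives uniform convergence, so $G$ is continuous on $[0,1]$ with $G=F$ on $D$ and $|G|\le1$. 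Finally, put $Z:=\{x\in[0,1]:|G(x)|=1\}$, closed and disjoint from $D$, use $\URY$ again to get $\psi:\R\di[0,1]$ with $\psi=0$ on $Z$ and $\psi=1$ on $D$, and set $g:=\phi^{-1}\circ(\psi\cdot G)$, which is well-defined and continuous on $[0,1]$ (as $\psi\cdot G$ takes values in $(-1,1)$ there) and equals $f$ on $D$. Here $\exists^{2}$ decides the real (in)equalities defining the sets in the construction, and $\QFAC^{0,1}$ extracts the sequence $(h_{n})_{n}$, again after passing to RM-codes via $\ACA_{0}$.

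The main obstacle I anticipate is bookkeeping rather than conceptual: justifying the applications of $\QFAC^{0,1}$ — checking that, modulo $\exists^{2}$ and the passage to RM-codes for the relevant continuous functions, the matrices of the choice steps are (equivalent to) quantifier-free — and carrying out the domain-patching in the first direction cleanly, in particular verifying continuity of $f'$ on $D'$ and the agreement of the glued pieces at integer breakpoints. Everything else is routine manipulation available in $\ACAo+\QFAC^{0,1}$.
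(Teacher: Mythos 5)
Your proof of $[\TIET+\WKL]\di\HBC$ is the paper's proof: extend $F$ by $\TIET$, bound the extension via $\WKL$, and conclude via item \eqref{bounk0} of Theorem \ref{noway}. Your proof of $\TIET\di\URY$ is also essentially the paper's (extend the $\{0,1\}$-valued function on $C_{0}\cup C_{1}$), and you are in fact more careful than the paper on two points it glosses over: the mismatch between $D\subseteq[0,1]$ in $\TIET$ and $C_{i}\subseteq\R$ in $\URY$ (your patching over $\mathbb{Z}$), and the need to clamp the extension into $[0,1]$. Your bare assertion that $f$ is continuous on $C_{0}\cup C_{1}$ is correct and can be justified in one line (each $x\in C_{0}$ lies in the open set $C_{1}^{c}$, so some ball around $x$ misses $C_{1}$ and $f$ is constant there), which shortcuts the separation argument \eqref{sepa} that the paper runs. (Both your argument and the paper's only deliver the implication $x\in C_{i}\di g(x)=i$ rather than the stated biconditional; since this defect is shared, I set it aside.)

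The genuine gap is in $\URY\di\TIET$, where you take a different route from the paper and the difference matters. You run the classical series construction $G=\sum_{n}h_{n}$ directly at the third-order level, with $h_{n}$ obtained by applying $\URY$ to $A_{n}=\{x\in D:F_{n}(x)\le -r_{n}/3\}$ and $B_{n}=\{x\in D:F_{n}(x)\ge r_{n}/3\}$ and $F_{n+1}=F_{n}-h_{n}$. Forming the sequence $(h_{n})_{n\in\N}$ is not an instance of $\QFAC^{0,1}$: first, the predicate ``$\alpha$ codes a continuous $u$ with $u=0$ on $A_{n}$ and $u=1$ on $B_{n}$'' contains a universal quantifier over reals, hence over type-one objects, and so is not (equivalent to) quantifier-free even modulo $\exists^{2}$, violating the restriction in Definition \ref{QFAC}; second, and more seriously, the construction is a recursion --- $A_{n+1}$ and $B_{n+1}$ depend on $h_{n}$ --- so even unrestricted countable choice would not suffice; one needs dependent choice or a canonical, choice-free selection of the Urysohn function at each stage. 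The paper's detour exists precisely to supply that canonicity: it derives $\open$ from $\URY$ via Theorem \ref{yesway}, uses $\cont$ (available from $\ACA_{0}$) to obtain RM-codes for $D$ and for the restriction of $f$ to $D$, and then invokes the second-order Tietze theorem \cite{simpson2}*{II.7.5}, where the entire series construction is carried out effectively on codes (the second-order Urysohn function is \emph{computed} from the codes of the sets by \cite{simpson2}*{II.7.3}), so no choice is needed. Your argument can be repaired along the same lines --- use $\URY\di\open$ to make each stage canonically definable from $\exists^{2}$ and the data, after which the recursor of $\RCAo$ produces the sequence --- but as written the appeal to $\QFAC^{0,1}$ does not close the recursion, and this is a conceptual issue rather than bookkeeping. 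The same (milder) objection applies to assembling the sequence $(g_{n})$ in your patching step, though there the choices are at least independent of one another.
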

\begin{proof}
For the implication $\URY\di \TIET$, we have $\URY\di \open$ by Theorem \ref{yesway} and we may therefore use the RM-proof of the Tietze extension theorem (see \cite{simpson2}*{II.7.5}).  
The latter applies to bounded functions and we can guarantee boundedness using $\WKL$ (using $\HBC_{\RM}$ in particular).  
Note that $\ACA_{0}\di\cont$ over $\RCAo$ is proved in \cite{kohlenbach4}*{\S4}, and the same proof goes through relative to closed sets using $(\exists^{2})$. 

\smallskip

For $\TIET\di \URY$, let $C_{i}$ be as in $\URY$ for $i=0,1$ and define $f$ on $C_{2}:= C_{0}\cup C_{1} $ as follows: $f(x)=0$ if $x\in C_{0}$ and $1$ otherwise.  
\emph{If} $f$ is continuous on $C_{2}$, \emph{then} its extension $g$ provided by $\TIET$ is as required for $\URY$.  To show that 
$f$ is continuous on $C_{2}$, we prove that 
\be\label{sepa}\textstyle
(\forall N^{0})(\exists n^{0})(\forall x\in C_{0}, y\in C_{1})(x, y\in [-N, N]\di |x-y|\geq  \frac{1}{2^{n}}). 
\ee
If \eqref{sepa} is false, $\QFAC^{0,1}$ yields a double sequence $\lambda n.x_{n}, \lambda n. y_{n}$ in $[-N,N]$ such that for all $n^{0}$, we have $x_{n}\in C_{0}, y_{n}\in C_{1}$, and $|x_{n}-y_{n}|<\frac{1}{2^{n}}$.  
As $C_{0}, C_{1}$ are closed and the sequences bounded, there are $x\in C_{0}, y\in C_{1}$ such that $\lim_{n}x_{h_{0}(n)}= x$ and $\lim_{n}y_{h_{1}(n)}=y$ for sub-sequences provided by $h_{0}^{1}, h_{1}^{1}$.  However,  $(\forall n^{0})(|x_{n}-y_{n}|<\frac{1}{2^{n}})$ implies that $x=_{\R}y$, a contradiction since $C_{0}\cap C_{1}=\emptyset$. 
Finally, since \eqref{sepa} provides a positive `distance' between $C_{0}$ and $C_{1}$ in every interval $[-N,N]$, we can always chose a small enough neighbourhood to exclude points from one of the parts of $C$, thus guaranteeing continuity for $f$ everywhere on $C_{2}$. 

\smallskip

For $[\TIET+\WKL]\di \HBC$, let $F$ and $D$ be as in item \eqref{bounk0} of Theorem \ref{noway} and consider the extension $g$ provided by $\TIET$.   
By $\WKL$, $g$ is bounded on $[0,1]$, and $F$ is therefore bounded on the closed set $D$.  Theorem \ref{noway} finishes this implication.
\end{proof}
We improve Theorem \ref{tieten} as follows; the proof is similar to that of Corollary~\ref{XWX}. 
\begin{cor}
The system $\RCAo+\QFAC^{0,1}+\cont$ proves $\TIET\asa \URY$.
\end{cor}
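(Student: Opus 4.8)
The plan is to mimic the proof of Corollary~\ref{XWX}, splitting on $(\exists^{2})\vee\neg(\exists^{2})$ via the law of excluded middle.

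First I would dispatch the case $(\exists^{2})$. Since $(\exists^{2})$ is equivalent to $(\mu^{2})$ over $\RCAo$, this case gives us all of $\ACAo$, and together with the hypothesis $\QFAC^{0,1}$ we are in the setting of Theorem~\ref{tieten}, which already proves $\TIET\asa\URY$. (Here $\cont$ is not needed, being derivable from $\ACA_{0}$.)

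Next I would treat the case $\neg(\exists^{2})$, where every $f:\R\di\R$ is continuous by \cite{kohlenbach2}*{\S3}. Here $\TIET$ should become trivial: a function continuous on a closed $D\subseteq[0,1]$ is in this case continuous on all of $\R$, hence on $[0,1]$, so it is its own extension. For $\URY$, the plan is to use $\cont$ to furnish an RM-code for the (continuous) function $Y:\R\di\R$ attached to each open set of Definition~\ref{openset}; by \cite{simpson2}*{II.5.7 and II.7.1} the sets of Definition~\ref{openset} then reduce to RM-open (and RM-closed) sets, so the closed sets $C_{0},C_{1}$ of $\URY$ become RM-closed and the recursively true RM Urysohn lemma \cite{simpson2}*{II.7.3} applies, producing the desired continuous $g:\R\di[0,1]$. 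Since both $\TIET$ and $\URY$ then hold outright in this case, the biconditional is immediate; combining the two cases completes the argument.

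The step I expect to be the main obstacle is exactly this last reduction: checking that $\cont$---rather than the full $\ACA_{0}$ used (through $\ACAo$) in Theorem~\ref{tieten} and Corollary~\ref{XWX}---really suffices to turn the third-order data of Definition~\ref{openset} into honest RM-codes for closed sets and for the continuous functions occurring in $\TIET$ and $\URY$. This is, however, precisely the content of $\cont$, and once those codes are in hand the second-order arguments of \cite{simpson2}*{II.7} go through unchanged, so no deeper difficulty should arise.
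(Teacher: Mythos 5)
Your proof is correct and follows essentially the same route as the paper, which simply notes that the argument is ``similar to that of Corollary~\ref{XWX}'': split on $(\exists^{2})\vee\neg(\exists^{2})$, invoke Theorem~\ref{tieten} in the former case, and in the latter case use $\cont$ to reduce everything to the second-order Urysohn and Tietze theorems of \cite{simpson2}*{II.7}, which hold outright. No discrepancies to report.
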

In conclusion, we have established that the Urysohn and Tietze theorems suffer from the Pincherle phenomenon.
Moreover, the previous corollary does not seem to go through over $\RCAo$, unless we introduce moduli of continuity in the theorems at hand.  
Another approach would be to extend the base theory; this option is studied in \cite{samph}*{\S5} for fragments of $\NFP$ from Definition \ref{FTP}.

\section{The Baire category theorem}\label{BCT}
We establish the results sketched in Section \ref{rmintro} for the Baire category theorem.
We introduce realisers for the latter in Section \ref{BCT1} and show that these functionals can be 
computed from $\exists^{2}$ and non-monotone inductive definitions in Section \ref{mainBCT}.  
\subsection{Introduction}\label{BCT1}
The Baire category theorem expresses that a sequence of dense open sets has an intersection that is also dense; this theorem can be found in Baire's 1899 doctoral thesis and Osgood's paper \cite{fosgood}.
This theorem is studied (in some variations) in the computational approaches to mathematics mentioned in Remark~\ref{opensetscoded}. 
It is therefore a natural question what the computational properties of the Baire category theorem are when using Definition \ref{openset}.  
Our main results are:
\begin{enumerate}
\renewcommand{\theenumi}{\alph{enumi}}
\item A realiser $\xi$ for the Baire category theorem (Definition \ref{jawadde2}) can be computed from $\exists^{2}$ and $\IND$, the inductive definition operator (Definition \ref{indje}).\label{more1}
\item There is no Baire-realiser $\xi$ computable in a functional of type 2.\label{more2}
\end{enumerate}
Thus, we observe that the Baire category theorem also (partially) exhibits the Pincherele phenomenon. 
This should be contrasted with the status of the Baire category theorem in e.g.\ the Weihrauch framework (\cite{brakke}).
We also note that the below proofs are quite different from the proofs in \cite{dagsamIII, dagsamV} and the above.

\smallskip

Note that by \cite{heerlijk}*{Theorem 4.102}, $\ZF$ proves that separable completely metrisable spaces have the property of Baire, i.e.\ the Axiom of Choice is not the cause 
of the (computational)  hardness of the Baire category theorem.  The aforementioned notion of realiser is defined as follows.  
\bdefi\label{jawadde2}
A \emph{realiser} for the Baire category theorem is any $\xi^{3}$ such that whenever $ (Y_n)_{n \in \N}$ is a sequence of dense open sets of reals, then 
$\xi(\lambda n.Y_n) \in \bigcap_{n \in\N } Y_n$.
\edefi
For the previous definition, we assume a standard coding of the reals in $\N^\N$, and that a set $Y \subset \R$ is given in in the form of its characteristic function as in Definition~\ref{openset}. 
So, technically we are working inside the full type-structure over $\N$ up to level 3. All our ``algorithms" are relative to $\exists^2$ and any other objects specified in the argument.

\smallskip

As noted above, the Baire category theorem for separable spaces can be proved in $\ZF$, yielding the existence of a Baire realiser $\xi$; a direct translation of that argument (however) uses computations relative to $\exists^3$. Our first result involves a construction of a Baire realiser that terminates on \emph{all} countable sequences of sets, and gives an element in the intersection when the sets are all open and dense, working within the realm of the \emph{countably based functionals}. 
By contrast, other `basic' theorems about open sets, like \emph{every open set is a union of open intervals with rational endpoints} and \emph{for every closed set $C$, the distance $d(x,C)$ is continuous}, only have realisers that are not countably based.
\bdefi\label{indje}
For $\Gamma:2^\N \rightarrow 2^\N$ satisfying the monotonicity condition $(\forall A\subseteq \N)(\Gamma(A) \supseteq A)$, define the well-ordered sequence of sets $\Gamma_\alpha$ as follows:
\be\label{IT}\textstyle
\Gamma_\alpha := \Gamma\big(\bigcup_{\beta < \alpha}\Gamma_\beta\big).
\ee
For any such $\Gamma$, there is an ordinal $\alpha_{0}$ with $\Gamma_{\alpha_{0} + 1} = \Gamma_{\alpha_{0}}$, where the latter is called the \emph{inductive closure} of $\Gamma$.
Finally, for any $F:2^{\N}\di 2^{\N}$ we define ${\rm \IND}(F)$ as the inductive closure $\Gamma_{\alpha_{0}}$ for the functional $\Gamma$ defined as $\Gamma(A):= A\cup F(A)$.
\edefi
Regarding the previous definition, $\IND$ denotes a type three functional, while $\INDD^{\omega}$ is the induction axiom for all $\L_{\omega}$-functionals. 

\smallskip

Finally, we noted above that the Baire category theorem for separable spaces can be proved in $\ZF$ and this result will be sharpened, for our special case $\BCT$ defined below, as follows. 
\begin{enumerate}
\item[(c)] The system $\RCAo+\BOOT$ proves the Baire category theorem $\BCT$. 
\item[(d)] The system $\Z_{2}^{\omega}$ cannot prove $\BCT$.
\end{enumerate}
Note that $\BOOT\di\open$ by Theorem \ref{cromagnon} and open sets thus have an RM-representation.  
However, $\BCT$ deals with \emph{sequences} of open sets and obtaining the associated sequences of RM-representations would seem to require countable choice.   

\subsection{Main results}\label{mainBCT}
We prove the results sketched in items (a)-(d) in Section \ref{BCT1}. 
\subsubsection{Computability Theory}
We prove the results sketched in items (a) and (b) in Section \ref{BCT1}.  To this end, we first fix some notation, as follows. 
\bdefi[Computational attempts]
\begin{itemize}
\item A \emph{tag} is a pair $(r,\epsilon)$ from $\Q$ where  $\epsilon > 0$. We let $(r,\epsilon)_o = (r - \epsilon , r + \epsilon)$  and $(r,\epsilon)_c$ be the corresponding closed interval $[r - \epsilon , r + \epsilon]$.
\item If $(r_1 , \epsilon_1)$, $(r_2,\epsilon_2)$ are tags, we let $(r_1,\epsilon_1) \prec (r_2 , \epsilon_2)$ if $r_1 = r_2$ and  $\epsilon_1 \geq 2\epsilon_2$.
\item An \emph{attempt} is a sequence ${\bf s} = [(r_1,\epsilon_1) , \ldots , (r_k , \epsilon_k)]$ of tags, where we for $i < k$ have that $(r_{i+1},\epsilon_{i+1})_c \subseteq (r_i,\epsilon_i)_o$ and that $2\epsilon_{i+1} \leq \epsilon_i$.
\item If ${\bf s}$ and ${\bf t}$ are attempts, we let ${\bf s} \lhd {\bf t}$ if either ${\bf s}$ is an initial sub-sequence of ${\bf t}$ or if ${\bf s}$ comes before ${\bf t}$ in the lexicographical ordering on attempts based on `$\prec$' as above for tags.
\end{itemize}
\edefi
The ordering `$\lhd$' is not the Kleene-Brouwer ordering, but a partial ordering all the same.
Intuitively, an attempt will be an attempt to find a shrinking sequence of closed intervals whose single point in the intersection will also be in $\cap_{n\in \N}Y_n$. We have limited access to information about any $Y_n$: we do not  know if it is open and dense. Even when it is, we still have no way of saying that a tag $(r,\epsilon)$ represents a subset of $Y_n$. So certain attempts may lead to failure, and we have to try again with better attempts. This is what will be captured by our inductive definition, 
which will be defined from a given sequence $(Y_n)_{n \in \N}$ in a uniform way, as follows.

\begin{definition}
Let $(Y_n)_{n\in \N}$ be given, and let $A$  be a set of attempts. We define $\Gamma(A)$ in cases as follows, where we order $\Q$ according to a standard enumeration.
\begin{itemize}
\item[o)] If $A$ is not totally ordered by $\lhd$, we let $\Gamma(A) = A$. For the rest of the cases we will assume that $A$ is totally ordered.
\item[i)] In case $A = \emptyset$ and if $Y_1 \cap \Q = \emptyset$, put $\Gamma(A) = A$. If not,  $\Gamma(A) := \{(r_1 , 1)\}$, where $r_1$ is the first rational number in $Y_1$.
\item[ii)] If the set $A$ has a $\lhd$-maximal element $[(r_1,\epsilon_1) , \ldots , (r_k , \epsilon_k)]$ and if $(r_k,\epsilon_k)_o \cap Y_{k+1} \cap \Q = \emptyset$ we let $\Gamma(A) = A$. 
If not, we define
\[
\Gamma(A) = A \cup \{[(r_1,\epsilon_1), \ldots ,(r_k,\epsilon_k),(r_{k+1},\epsilon_{k+1})]\},
\]
where $r_{k+1}$ is the first rational number in $(r_k,\epsilon_k)_o \cap Y_{k+1}$, and then $\epsilon_{k+1}$ is the first positive rational number such that $(r_{k+1},\epsilon_{k+1})_c \subseteq (r_k,\epsilon_k)_o$ and such that $2\epsilon_{k+1} \leq \epsilon_k$
\item[iii)] If there is $k\in \N$ with infinitely many attempts $[(r_1,\epsilon_1) , \ldots , (r_k , \epsilon_k)]$ in $A$, we put $\Gamma(A) = A$.
\item[iv)] If none of the above apply, we proceed as follows: for each $k$, let $[(r_1,\epsilon_1) , \ldots , (r_k , \epsilon_k)]$ be the $\lhd$-maximal attempt in $A$ of length $k$. It is easy to see that for $i \leq k$, the tag $(r_i,\epsilon_i)$ will be independent of the choice of $k$.  Let $x$ be the unique element in $\bigcap_{i \in \N}(r_,\epsilon_i)_c$.  We consider the following sub-cases:
\begin{enumerate}
\item If $x \in \bigcap_{i \in \N}Y_i$, let $\Gamma(A) = A$
\item Otherwise, let $k$ be minimal such that $x \not \in Y_k$ and we define $\Gamma(A) = A \cup \{[(r_1,\epsilon_1) , \ldots , (r_{k-1},\epsilon_{k-1}), (r_k , \frac{\epsilon_k}{2})]\}$.
\end{enumerate}
\end{itemize}
\end{definition}
By construction, in all cases where $A$ is totally ordered by $\lhd$, we either define $\Gamma(A) = A$ or we add one new element on top of $A$. 
This means that $\Gamma$, seen as an inductive definition, will generate a well-ordered set $A_\infty$ of attempts such that $\Gamma(A_\infty) = A_\infty$. 
Moreover, we define $\xi(\lambda n.Y_n) = x$ as in case  iv).(1) if this is the `stopping' case. 
Otherwise, we define $\xi(\lambda n.Y_n) = 0$. 

\smallskip

The resulting functional $\xi$ is clearly computable in $\IND$ and $\exists^2$. It remains to show that iv).(1) will be the `stopping' case when each $Y_n$ is open and dense. 
We consider the other alternatives, and show that they are impossible in this situation.
\begin{itemize}
\item Since we generate a totally ordered set, case o) will never be relevant for any input $(Y_n)_{n \in \N}$.
\item Since $Y_1$ is open and dense,  we start the recursion by adding an element in case i). Since $Y_{k+1}$ is open and dense, we will also continue the recursion when we are in case ii).
\item The remaining alternative is case iii). In this case, there will be a least $k$ for which this is possible. Since the only way to develop $A$ sideways (using $\prec$ on tags) is via case iv).2, there will be a maximal attempt $[(r_1 , \epsilon_1) , \ldots , (r_{k-1},\epsilon_{k-1})]$ of length $k-1$, and tags $(r_k, \frac{\epsilon_k}{2^n})$ such that 
\[\textstyle
[(r_1,\epsilon_1) , \ldots , (r_{k-1},\epsilon_{k-1}),(r_k,\frac{\epsilon_k}{2^n})] \in A
\]
for all $n$. By  the construction, $r_k \in Y_k$, and $Y_k$ is open, so there is an $n \in \N$ such that $(r_k,\frac{\epsilon_k}{2^n})_c \subseteq Y_k$.  But then, whenever we are employing case iv).2  after the attempt $[(r_1,\epsilon_1), \ldots , (r_{k-1},\epsilon_{k-1}),(r_k, \frac{\epsilon_k}{2^n})]$ enters $A$, we will ask if some $x \in (r_k,\frac{\epsilon_k}{2^n})_c$ is in the intersection of all $Y_m$, and if the answer is that it is not, we will not find $Y_k$ to be the guilty one. So, when all $Y_n$ are open, the tree of attempts that we are constructing will be finitely branching.
\end{itemize}
We have now proved the following theorem.
\begin{theorem}\label{thm.baire}
There is a total type 3 functional $\xi$ computable in $\IND$ and $\exists^2$ such that whenever $(Y_n)_{n \in \N}$ is a sequence of open, dense sets, then $\xi(\lambda n.Y_n) \in \bigcap_{n \in \N} Y_n$.
\end{theorem}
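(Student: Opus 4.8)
The construction of the functional $\Gamma$ on sets of attempts and of $\xi$ in terms of $\IND$ and $\exists^{2}$ is already in place, so the plan is to verify that this construction does what is claimed, in two stages: first that $\xi$ is total and computable in $\IND$ and $\exists^{2}$, and then that $\xi(\lambda n.Y_{n})\in\bigcap_{n}Y_{n}$ whenever every $Y_{n}$ is open and dense.

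For the first stage I would check that each case distinction o)--iv) in the definition of $\Gamma$ is arithmetical in the set-parameter $A$ and the sequence $(Y_{n})_{n\in\N}$, hence decidable relative to $\exists^{2}$: whether $A$ is totally ordered by $\lhd$, whether it has a $\lhd$-maximal element, whether $(r_{k},\epsilon_{k})_{o}\cap Y_{k+1}\cap\Q$ is empty, and whether some length $k$ occurs infinitely often in $A$, are all (at worst) $\Pi^{0}_{2}$-in-$A$ using the characteristic functions $Y_{n}$, while in case iv) the real $x=\bigcap_{i}(r_{i},\epsilon_{i})_{c}$ is obtained effectively from the $\lhd$-maximal attempt of each length (the requirement $2\epsilon_{i+1}\leq\epsilon_{i}$ makes the sequence of centres fast-converging), and $x\in\bigcap_{i}Y_{i}$ is again $\Pi^{0}_{2}$. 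Thus the functional $F$ feeding $\IND$, defined by $F(A):=\Gamma(A)$ (equivalently $\Gamma(A)\setminus A$, so that $\IND(F)$ reconstructs the inductive closure of $\Gamma$), is computable in $\exists^{2}$ with parameter $\lambda n.Y_{n}$, and so $A_{\infty}:=\IND(F)$ and the subsequent read-off of $\xi(\lambda n.Y_{n})$ — namely $x$ if case iv).(1) is the reason for $\Gamma(A_{\infty})=A_{\infty}$, and $0$ otherwise — are computable in $\IND$ and $\exists^{2}$. Here one uses that $\Gamma(A)\supseteq A$ and that, for $A$ totally ordered by $\lhd$, $\Gamma(A)$ either equals $A$ or adjoins a single attempt lying $\lhd$-above all of $A$; the only non-routine point is that the attempt added in case iv).(2), which shortens the current maximal attempt but replaces its final tag $(r_{k},\epsilon_{k})$ by $(r_{k},\tfrac{\epsilon_{k}}{2})\succ(r_{k},\epsilon_{k})$, is genuinely $\lhd$-maximal in the enlarged set. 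Consequently $A_{\infty}$ is well-ordered by $\lhd$ and is a genuine fixpoint of $\Gamma$.

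For the second stage, fix $(Y_{n})_{n\in\N}$ with each $Y_{n}$ open and dense. Each $Y_{n}$ is then a non-empty open set, hence meets $\Q$, and every non-empty open interval meets $Y_{n}$ in a non-empty open set; so along the recursion case o) never applies (only totally ordered sets are ever built), case i) adjoins an attempt to $\emptyset$, and whenever $A$ has a $\lhd$-maximal element of length $k$ case ii) adjoins an attempt of length $k+1$. Applied to the fixpoint $A_{\infty}$ this forces $A_{\infty}$ to be non-empty and without a $\lhd$-maximal element, i.e.\ an infinite $\lhd$-chain, so it remains to exclude case iii) as the reason for the fixpoint. Suppose some length occurred infinitely often in $A_{\infty}$ and take $k$ least; then each length $<k$ occurs only finitely often, so only finitely many attempts of length $<k$ are produced and hence only finitely many ``rebuildings'' at position $k$, each fixing a centre $r_{k}$ which (being the first rational of $(r_{k-1},\epsilon_{k-1})_{o}\cap Y_{k}$) lies in the open set $Y_{k}$, so that some halving satisfies $(r_{k},\tfrac{\epsilon_{k}}{2^{n}})_{c}\subseteq Y_{k}$, after which case iv).(2) can never again be triggered with guilty index $k$ for an $x$ inside that interval — contradicting infinitely many length-$k$ attempts. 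Hence $A_{\infty}$ lies in case iv), and since sub-case (2) would adjoin a further element, sub-case (1) holds: $\xi(\lambda n.Y_{n})$ is the unique $x\in\bigcap_{i}(r_{i},\epsilon_{i})_{c}$, which lies in $\bigcap_{i}Y_{i}$, as required.

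The main obstacle is this last step, the exclusion of case iii): it is the only place where the hypothesis that each $Y_{n}$ is \emph{open} (not merely dense) is used in an essential way, via the interplay of the minimality of $k$, the finiteness of the rebuildings, and the fact that an open set around the rational $r_{k}$ contains an entire closed tag-interval. A secondary point demanding care is the order-theoretic bookkeeping ensuring that $\lhd$ stays a total (indeed well-) order on the generated set — that every addition really sits on top of everything produced so far, including in the ``sideways'' case iv).(2).
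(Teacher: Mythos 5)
Your proposal is correct and follows essentially the same route as the paper: the same elimination of cases o)--iii) at the fixpoint, with case iii) excluded exactly as in the paper via the least offending length $k$, the stabilised prefix of length $k-1$, the fact that $r_{k}\in Y_{k}$ is chosen rational, and openness of $Y_{k}$ forcing some halved closed tag-interval into $Y_{k}$ so that iv).(2) can no longer blame $Y_{k}$. Your added care about the $\exists^{2}$-decidability of the case distinctions and the check that the attempt adjoined in iv).(2) sits $\lhd$-above everything produced so far fills in details the paper asserts without proof, but does not change the argument.
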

\noindent
By contrast, we also have the following negative result. 
\begin{theorem}\label{11.4}
There is no Baire-realiser $\xi$ computable in a functional of type 2.
\end{theorem}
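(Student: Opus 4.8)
The plan is to argue by contradiction in the spirit of the model construction behind Theorem~\ref{goodd}, while accounting for a new difficulty: by Theorem~\ref{thm.baire} a Baire realiser can be countably based, so---unlike for the $\Phi$- and $\Psi$-functionals of Theorem~\ref{jawadde} and unlike the situation in Theorem~\ref{reverand}---one cannot simply invoke the failure of the countably based property. Suppose then that some $F^{2}$ Kleene-computes a Baire realiser $\xi$. The goal is to produce a single sequence $(Y_{n})_{n\in\N}$ of dense open subsets of $\R$ with $\xi(\lambda n.Y_{n})\notin\bigcap_{n\in\N}Y_{n}$, contradicting the defining property of a realiser.

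First I would record the standard fact that a type-$3$ functional computable in a type-$2$ functional is determined, on any type-$2$ input, by countably many values of that input (\cite{hartleycountable}, as used in the corollaries to Theorem~\ref{jawadde}): the computation of $\xi(\lambda n.Y_{n})$ relative to $F$ queries $(Y_{n})$ only at reals in some countable set $Q\subseteq\R$, and hence $\xi(\lambda n.Y_{n})$ is unchanged if the $Y_{n}$ are modified away from $Q$ while each stays dense and open. Building on this, I would construct $(Y_{n})$ by a stagewise diagonalisation, essentially a relativised and strengthened version of the model construction in the proof of Theorem~\ref{goodd}: fix a nowhere-dense ``Cantor-type'' closed set $\mathbf{K}\subseteq[0,1]$ together with the computable sequence enumerating the complementary intervals, run the computations $\{e\}^{F,(Y_{n})}$ against the part of $(Y_{n})$ committed so far---always answering membership queries affirmatively, which commits small open balls into the relevant $Y_{n}$ and keeps each $Y_{n}$ dense and open---and, whenever such a computation names a real $p_{e}$, delete a tiny closed ball around a point of $\mathbf{K}$ very close to $p_{e}$ from a suitable index $Y_{m_{e}}$. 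In the limit this is meant to yield dense open $(Y_{n})$ such that $\bigcap_{n}Y_{n}$ contains no real that is Kleene-computable from $(Y_{n})$ and $F$; since $\xi(\lambda n.Y_{n})$ is such a real, we obtain the contradiction. This is exactly the step that makes transparent why $\BCT$---which quantifies over sequences of open sets---cannot have a type-$2$-computable realiser even though it has an $\IND$-computable one, and it is where the argument departs from the proofs in \cite{dagsamIII}, \cite{dagsamV} and from those of Theorems~\ref{goodd}, \ref{reverand} and \ref{jawadde}.

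The main obstacle is the choice of the index $m_{e}$: a convergent $F$-relative computation of $\xi(\lambda n.Y_{n})$, or even of a single rational approximation to it, may query $(Y_{n})$ at every index $n$, so no index is automatically free for the deletion $p_{e}\in\R\setminus Y_{m_{e}}$. I expect to get around this by using that the overall query set $Q$ is only countable while $\bigcap_{n}Y_{n}$ must be comeagre: one first arranges, generically, that the output $x_{\infty}:=\xi(\lambda n.Y_{n})$ and each intermediate $p_{e}$ avoid $Q$ (the perturbations of $(Y_{n})$ leaving the $Q$-values fixed form a comeagre family of dense-open sequences, so a Baire-category argument in the metatheory supplies such an $(Y_{n})$), and only then removes $x_{\infty}$ from one $Y_{n}$ whose values on $Q$ are left intact, which by the countably based behaviour of $\xi$ does not change the output but destroys membership in the intersection. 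Reconciling this with the requirements that each $Y_{n}$ stay dense and open and that earlier deletions not be undone is the delicate bookkeeping at the heart of the proof.
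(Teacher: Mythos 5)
Your overall strategy --- diagonalise against the assumed computation, excluding the functions it names from suitable $Y_{n}$ --- is the right one, and you correctly observe both that the countably-based obstruction is unavailable here and that the choice of the exclusion index is the crux. But the proposal has two genuine gaps. First, a local one: deleting a \emph{closed ball} from $Y_{m_e}$ destroys its density (and deleting a ball around a point of $\mathbf{K}$ merely \emph{near} $p_e$ need not exclude $p_e$ at all); the correct move is to remove the single point $p_e$, which keeps $Y_{m_e}$ open and dense. Second, and fatally, the genericity argument you propose for the index selection does not break the circularity it is meant to break. The countable query set $Q$ and the output $x_\infty=\xi(\lambda n.Y_n)$ both depend on the completed sequence $(Y_n)$, and nothing prevents the (well-founded but infinite) computation tree from containing, for \emph{every} $n$, a query ``$x_\infty\in Y_n$?''; in that case no index is free for a deletion that leaves the $Q$-values intact, and no appeal to comeagreness of perturbations can manufacture one. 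Your interleaved version has the same problem in a different guise: if you ``always answer membership queries affirmatively'' and only later delete $p_e$ from $Y_{m_e}$, the deletion contradicts an answer you may already have committed to.

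The paper's proof resolves exactly this point by building the sequence ${\bf Y}$ \emph{simultaneously} with a Moschovakis-style computation tree for $\{e_0\}(F,{\bf Y},\varepsilon)$, by transfinite recursion. The decisive device is that the first time a function $f=\lambda a.\{d\}(F,{\bf Y},a,\vec a)$ appears as an argument to $F$ or to some $Y_n$, one \emph{declares} $f\notin Y_{n(\langle d,\vec a\rangle)}$ and $f\in Y_m$ for all $m\neq n(\langle d,\vec a\rangle)$, where $n(\langle d,\vec a\rangle)$ is the code of the incomplete computation tuple generating $f$. Thus the construction moves first: every membership query is answered consistently at the moment it arises and never revised, each $Y_n$ omits at most one point (so is trivially open and dense), and every function occurring in the tree --- in particular the alleged value of the realiser --- is excluded from exactly one $Y_n$. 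The remaining work (Lemmas \ref{11.1} and \ref{11.2}) is to show the tree is well-founded, via Moschovakis witnesses. Without some device of this kind that ties the exclusion index to the syntactic position of the subcomputation, your construction does not close.
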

We will prove this theorem by contradiction. We first have to develop some machinery and associated lemmas; then we will prove the theorem by reference to the machinery and the lemmas.

\smallskip

First of all, assume that there is an index $d$ and a type 2 functional $F$ such that for all sequences $(Y_n)_{n \in \N}$ of subsets of $\N^\N$ we have that the following:
\begin{itemize}
\item[(i)]the function $ f := \lambda a \in \N .\{d\}(F,a,(Y_n)_{n \in \N})$ is total,
\item[(ii)] if each $Y_n$ is open and dense, then $f \in \bigcap_{n \in \N}Y_n$.
\end{itemize}
We will show that this assumption leads to a contradiction, by constructing, from $d$ and $F$, a sequence $(Y_n)_{n \in \N}$ for which (i) and (ii) fail. 
The construction is based on Moschovakis' definition of \emph{computation trees} from \cite{Yannis}, but we provide most details.
As to notation we write ${\bf Y}$ for the sequence $(Y_n)_{n \in \N}$.
We also use of a modified version of S8 as in Notation \ref{notak}, which we motivate as follows.  

\smallskip

A clear disadvantage of Kleene's original S1-S9 is that the concept is restricted to functionals \emph{of pure type}. Thus, if we want to compute relative to objects of mixed types, we need to code them within the pure types. Since S8 expresses application of one object to already computed objects of lower types, the coding can make an application of this scheme hard to comprehend. 
When studying computations relative to specific objects of mixed types at level 2 or 3, a reformulation of S8 adjusted to a direct application, and not an application of the coded version, makes constructions easier to comprehend. Our modification is equivalent to how S8 would work when all objects were coded into the pure types.
\begin{nota}[Complete and incomplete computation]\label{notak}~
\begin{itemize}
\item A \emph{computation tuple} is a sequence $\langle e,\vec a; c\rangle$ indicating the terminating computation  $\{e\}(F, {\bf Y},\vec a) = c,$
where $\vec a$ is a finite sequence of numbers and we modify Kleene's S8 as follows:
\begin{itemize}
\item If $e = \langle 8,0,d\rangle$, then $\{e\}(F, {\bf Y} , \vec a) := F(f)$.
\item If $e = \langle 8,n+1, d \rangle$, then
\[
\{e\}(F, {\bf Y} , \vec a) := \left \{ \begin{array}{ccc} 0 & {\rm if} & f \in Y_n \\ 1 & {\rm if} & f \not \in Y_n\end{array} \right.,   
\]
where in both cases $f = \lambda a.\{d\}(F,{\bf Y} , a , \vec a)$.
\end{itemize}
\item In an \emph{incomplete computation tuple} $\langle e,\vec a\rangle$ we leave out the final $c$, indicating that the value of the computation is unknown \(possibly forever\).
\item The set of incomplete computation tuples  $\langle e, \vec a\rangle$ is  enumerated via a standard sequence numbering, and we let $n(\langle e,\vec a\rangle)$ be the corresponding number.
\end{itemize}
\end{nota}
Let $\varepsilon$ denote the empty sequence of integers. 
We assume throughout the construction that $e_0 = \langle 8,0,d\rangle$ is such that we for all ${\bf Y}$ have that $\{e_0\}(F,{\bf Y} , \varepsilon)\!\!\downarrow$.

\smallskip

Let us first consider the well-understood case where  ${\bf Y}$ is fixed and $\{e_0\}(F,{\bf Y} , \varepsilon)\!\!\downarrow$. Then we can find the value $c$ by building  the \emph{computation tree} for the computation by transfinite induction. We start with the top node $\langle e_0,\varepsilon\rangle$, i.e.\ an incomplete computation tuple, and then in a combined top-down and bottom-up procedure construct a tree of incomplete and complete computation tuples as explained below. In the process, we may add new incomplete computation tuples and we may turn incomplete ones to complete ones. 
We now give a semi-formal description of the aforementioned inductive process as follows. 
\begin{itemize}
\item In the case of composition as follows: 
\[
\{e\}(F,{\bf Y} , \vec a) = \{e_1\}(F,{\bf Y},\{e_2\}(F,{\bf Y} , \vec a),\vec a)
\]
we first fill in $\langle e_2 , \vec a\rangle$ as an incomplete sub-computation of $\langle e,\vec a\rangle$. When we later observe that $\langle e_2,\vec a ; b\rangle$ is the proper sub-computation, we can also fill in $\langle e_1 , b , \vec a\rangle$ as an incomplete sub-computation. When we then at an even later stage realise that $\langle e_1 , b , \vec a ; c\rangle$ is the proper sub-computation, we can make $\langle e , \vec a\rangle$ complete as $\langle e, \vec a ; c\rangle$. 
\item Primitive recursion can be seen as iterated composition, and is therefore handled in a similar way. 
\item For the rest of the schemes, it is obvious what is going on: either the incomplete computation tuple at hand is one of an initial computation, and we can fill in the correct value right away, or the set of incomplete tuples for the immediate sub-computations is uniquely given, we have to wait for the process to complete these, and then we can find the right value of the one at hand.
\end{itemize}The whole process can be seen as a simultaneous inductive definition of the construction of the tree of incomplete computation tuples and the completion of these.

\smallskip

The above describes the construction when $\bf Y$ is fixed, but in order to obtain the desired contradiction we will have to construct $\bf Y$ \emph{and} the computation tree \emph{simultaneously}, which adds complications. 
The major problem is that we do not know ${\bf Y} = (Y_n)_{n \in \N}$ when we construct the tree, but we have to make a decision what to answer whenever the procedure for constructing the computation tree requests an answer to $Y_n(\lambda a.\{d\}(F, {\bf Y} , a , \vec a))$. Our solution to this problem is that the first time $f$ in the form of $\lambda a.\{d\}(F, {\bf Y} , a , \vec a)$ is needed in our computation tree, as an input to $F$ or to some $Y_n$, we define $f \not \in Y_n$ exactly when $n = n(\langle d,\vec a \rangle)$. 

\smallskip

One useful feature of this strategy is that $Y_n$ then either is all of $\N^\N$ or just $\N^\N$ with one point missing, so $Y_n$ is open and dense. Another useful feature is that $f$ is not an acceptable value of $\xi({\bf Y})$ since $f$ is left out of one $Y_n$.  One complication is that we have to convert the tree we are constructing into a well-ordering in order to talk about e.g.\ `the first occurrence'; another complication is that we have to ensure that whenever we want to give the correct value to an S8-computation tuple, we already know which functions are used in earlier S8-computations.

\smallskip

We will now give the details of the construction. First some conventions and some intuition are needed.
\begin{defi}[Computation paths]\label{coresd}\rm~
\begin{itemize}
\item A \emph{computation path} will be a finite sequence $(t_0 , \ldots , t_{k})$ of computation tuples such that $t_{i+1}$ is a sub-computation of $t_i$ as defined below. Each $t_i$ may be complete or incomplete, but if $t_i$ is complete and $j > i$, then $t_j$ must also be complete. This reflects that we cannot give a value to a computation without knowing the values of all sub-computations.
\item In a \emph{complete computation path}, all computation tuples are complete.
\item If $t$ is a computation tuple, we will order the possible sub-computations. In this ordering, we will not discriminate between an incomplete sub-computation and its possible completion. In the process we are about to define, certain incomplete computation tuples will be turned into complete ones, and we do not want to change the position in the overall ordering. 
\item When we construct our tree by transfinite recursion, we will refer to the Kleene-Brouwer ordering based on the node-wise ordering of the sub-computations, meaning that if we extend a computation path to a longer one, we move down in the ordering.
\end{itemize}
\end{defi}
Based on Definition \ref{coresd}, we now introduce the \emph{tree of computation paths}.
We establish in Lemma \ref{11.2} below that this tree must be well-founded.
\begin{definition}[Tree of computation paths]\label{selfref}{\em
By recursion on the ordinal $\alpha$, we construct a tree $T_{\alpha}$ of computation paths as follows.  

\smallskip

First of all,  if $\alpha = 0$, we let $T_\alpha$ consist of the single computation-path $(t_0)$, where $t_0$ is the incomplete computation tuple $\langle e_0,\varepsilon)$, the computation the process aims to find the value of.

\smallskip

Secondly, if $\alpha$ is a limit ordinal, define $T_\alpha :=  \lim_{\beta < \alpha}T_\beta$.  This limit makes sense since at each step described below, either we add some incomplete sub-computation tuples at the end of a computation path that has been introduced at an earlier stage, or turn one incomplete computation tuple in the tree into a complete one.

\smallskip

Thirdly,  if $\alpha = \beta + 1$, and all computation paths in $T_\beta$ are complete, we stop. 
From now on, assume that the latter is not the case, and also assume that $T_\beta$ is well-founded, and thus well-ordered by the Kleene-Brouwer ordering we introduce in the process.
Let $(t_0 , \ldots , t_k)$ be the least element of $T_\beta$ in this Kleene-Brouwer ordering consisting of entirely incomplete computation tuples.  For our next step, we need to consider the following case distinction.
\begin{itemize}
\item If $t_k$ is a computation for S1, S2 or S3, i.e. an initial computation. Turn $t_k$ into the correct complete version and let $T_\alpha$ be the resulting tree.
\item If $t_k = \langle e, \vec a\rangle$ where $e$ is an index for $$\{e\}(F, {\bf Y} , \vec a) = \{e_1\}(F,{\bf Y} , \{e_2\}(F, {\bf Y} , \vec a))$$
By the choice of $(t_0 , \ldots , t_k)$ there will be no incomplete extension in the tree $T_\beta$. Thus there will be three subcases:
\begin{enumerate}
\item There is no extension of $(t_0 , \ldots , t_k)$ in $T_\beta$ at all.  In this case, add  $(t_0 , \ldots , t_{k+1})$ to $T_\beta$, where $t_{k+1} = \langle e_2,\vec a\rangle$.
\item There is an extension $(t_0 , \ldots , t_{k+1})$ in $T_\beta$, where $t_{k+1} = \langle e_2 , \vec a ; b\rangle$, but no extension of the form $(t_0 , \ldots , t_k,t'_{k+1})$ where $t'_{k+1} = \langle e_1,b,\vec a ; c\rangle$. Then add $(t_0 , \ldots , t_k,t''_{k+1})$ to $T_\beta$, where $t''_{k+1} = \langle e_1 , b , \vec a \rangle$. In forming the ordering of $T_\alpha$, we let this sub-computation be above the first one in our ordering of sub-computations.
\item There is an  extension $(t_0 , \ldots , t_k,t_{k+1})$ of $(t_0 , \ldots , t_k)$ in $T_\beta$ where $t_{k+1} = \langle e_2,\vec a ; b\rangle$, and an extension $(t_0, \ldots , t_k , t'_{k+1})$ in $T_\beta$ where $t'_{k+1} = \langle e_1 , b , \vec a ; c \rangle$. Then obtain $T_\alpha$ by replacing $t_{k}$ with $\langle e,\vec a ; c\rangle$ in the computation path at hand.
\end{enumerate}
\item The cases where $t_k$ is a computation for one of the schemes S5 (primitive recursion), S6 (permutation) or S9 (enumeration) are left for the reader as they just will be similar to, or simpler than, the case for S4. 
Note that S6 does not play a real role, but it gives rise to an initial computation if we allow for function arguments.
\item If $t_k$ is an S8-computation, given as $t_k = \langle e,\vec a\rangle$ where
\[
\{e\}(F,{\bf Y},\vec a) = H(\lambda a. \{d\}(F,{\bf Y} , a , \vec a)),
\] 
where either $H$ equals $ F$ or $Y_n$ for some $n$. There will be two sub-cases:
\begin{enumerate}
\item If $(t_0 , \ldots , t_k)$ has no extensions in $T_\beta$, we extend $T_\beta$ to $T_\alpha$ by adding all computation paths $(t_0 , \ldots , t_k , t_{k+1,a})$ for each $a \in \N$, where $t_{k+1,a} = \langle d,a,\vec a\rangle$. We well-order these extensions  by the value  of $a$.
\item If $(t_0, \ldots ,t_k)$ has extensions in $T_\beta$, the added computation tuple in all such extensions must be complete, by choice of $(t_0 , \ldots , t_{k})$. Moreover, since item (1) is the only way we add extensions to an S8-computation, there is a function $f$ such that we have an extension with $t_{k+1,a} = \langle d , a,\vec a ; f(a)\rangle$ for each $a \in \N$.
Now, by choice of $(t_0 , \ldots , t_k)$ again, if there is any computation path $(s_0 , \ldots , s_{j})$ in $T_\beta$ below $(t_0 , \ldots , t_k)$ in the Kleene-Brouwer ordering, where $s_j$ is an S8-computation,  $s_j$ has to be complete, and some function $g$ has been introduced. If $f$ has already been introduced as some $g$ this way, we know the value of $H(f)$ from before, and use this to make $t_k$ complete. If $f$ is introduced for the first time while we replace $T_\beta$ with $T_\alpha$, we let $n = n(\langle d,\vec a\rangle)$ as defined above, we let $f \not \in Y_n$, and $f \in Y_m$ for $m \neq n$, and use this to turn $t_k$ into a complete computation tuple for all cases of $H$. 
\end{enumerate}
\end{itemize}
This ends the construction and Definition \ref{selfref}.
}\end{definition}
We have not said what to do in the case when $T_\alpha$ is not well-founded, in which case we cannot identify the least $(t_0 , \ldots , t_k)$ where all $t_i$ are incomplete. Lemma~\ref{11.2} shows that this is never the case. The argument is based on Lemma \ref{11.1}, which has an easy proof, also under the assumption that the recursion stops when the tree $T_\alpha$ is not well-founded. 

\begin{lemma} \label{11.1}
For each ordinal $\alpha$ and $(t_0 , \ldots , t_k) \in T_\alpha$, if $t_k$ is complete, then $t_k$ is the computation tuple of a terminating computation, where ${\bf Y}$ is interpreted as the  sequence of partial sets defined at stage $\alpha$.
\end{lemma}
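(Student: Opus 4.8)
The plan is to argue by transfinite induction on $\alpha$, with an auxiliary induction (on the build-up of the computation tree, i.e.\ on the Kleene--Brouwer rank of the node $(t_0,\ldots,t_k)$) inside each stage. The base case $\alpha=0$ is vacuous, since $T_0$ contains only the incomplete tuple $\langle e_0,\varepsilon\rangle$. For a limit $\alpha$, every complete tuple occurring in $T_\alpha$ already occurs as a complete tuple in some $T_\beta$ with $\beta<\alpha$, because completions are only performed at successor stages; the inductive hypothesis at $\beta$ then gives the conclusion with ${\bf Y}$ interpreted at stage $\beta$, and I would upgrade this to stage $\alpha$ using the monotonicity discussed below. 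The real content is the successor step $\alpha=\beta+1$, where a single incomplete tuple $t_k$ of the $\lhd$-least all-incomplete computation path is turned complete (or a composition/S8 node is collapsed using already-complete sub-computations). In each case one simply reads off the relevant Kleene clause: for S1--S3 (initial computations) the completed value is correct by definition; for S4 (composition), and analogously S5, S6, S9, the completed value $c$ is exactly $\{e_1\}(F,{\bf Y},b,\vec a)$ with $b=\{e_2\}(F,{\bf Y},\vec a)$, and both of these occur as complete sub-computations in $T_\beta$ to which the inductive hypothesis applies; for the modified S8 the functions $\lambda a.\{d\}(F,{\bf Y},a,\vec a)$ arise exactly from the complete sub-computations $\langle d,a,\vec a;f(a)\rangle$ for all $a\in\N$, so this function is total and correctly computed, and then $H(f)$ --- with $H$ equal to $F$ or to some $Y_n$ --- is precisely the value assigned when $t_k$ was completed.

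The one point requiring genuine care, and the main obstacle, is that the interpretation of the sequence of partial sets changes with the stage, so I must show that the value of a computation that is complete at a stage $\beta$ is unaffected when we pass to a later stage $\alpha$. For this I would first isolate the key invariant of the construction: once a function $f$ of the form $\lambda a.\{d\}(F,{\bf Y},a,\vec a)$ has been \emph{introduced} (at the first S8-node that needs it), its membership in every $Y_n$ is fixed forever, namely $f\notin Y_{n(\langle d,\vec a\rangle)}$ and $f\in Y_m$ for every other $m$, because every later S8-node with the same argument reuses the value decided earlier. Hence, for $\beta<\gamma$, the partial set $Y_n$ at stage $\gamma$ agrees with $Y_n$ at stage $\beta$ on every function introduced by stage $\beta$. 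Since completing a computation tuple at stage $\beta$ requires all S8-arguments occurring in its sub-tree to have been introduced by then, the interpretations at stages $\beta$ and $\alpha$ agree on every oracle query made during that computation, so the computation remains terminating with the same value under the stage-$\alpha$ interpretation. A short sub-induction on the computation tree of $t_k$ makes this precise, and it is the only place where the careful bookkeeping of the rule ``the first time $f$ is needed we put $f\notin Y_n$ exactly when $n=n(\langle d,\vec a\rangle)$'' is actually used.

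Finally, I would note that nothing in this argument invokes well-foundedness of $T_\alpha$: the lemma is to be proved for the recursion modified so that it halts as soon as some $T_\alpha$ fails to be well-founded, and this causes no difficulty, since the inductive step above only inspects tuples that have genuinely been completed. This is exactly the form in which the lemma will be fed into the proof of Lemma~\ref{11.2}.
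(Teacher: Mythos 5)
Your proof is correct and takes exactly the route the paper takes: the paper's entire proof of this lemma reads ``Trivial, by induction on $\alpha$'', and your argument is precisely that induction carried out in full, including the key stability observation (once a function $f$ is introduced, its membership in each $Y_n$ is fixed forever, so the partial oracles at later stages agree with earlier ones on every query made by an already-completed computation). Nothing in your write-up deviates from or goes beyond what the paper implicitly relies on.
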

\begin{proof}
Trivial, by induction on $\alpha$. 
\end{proof}
\begin{lemma}\label{11.2}
For each ordinal $\alpha$, $T_\alpha$ is a well-founded tree.
\end{lemma}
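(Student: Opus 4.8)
The plan is to argue by transfinite induction on $\alpha$, with essentially all the real work concentrated in the limit case. For $\alpha=0$ there is nothing to prove, since $T_{0}$ is a single finite computation path. For a successor $\alpha=\beta+1$ the induction hypothesis gives that $T_{\beta}$ is well-founded, and one checks that every action permitted in the recursion of Definition~\ref{selfref} keeps the underlying tree of paths well-founded: turning an incomplete tuple into its complete version, or replacing some $t_{k}$ by $\langle e,\vec a;c\rangle$ inside a computation path, leaves the tree essentially unchanged; adding a single one-step extension adds one leaf; and, in the S8-case, adding the countably many extensions $(t_{0},\dots,t_{k},t_{k+1,a})$ for $a\in\N$ adds countably many leaves below one existing node. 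In each subcase no new infinite branch is created, so $T_{\beta+1}$ is well-founded.

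Thus the main obstacle is the limit case $\alpha=\lambda$, where $T_{\lambda}=\bigcup_{\beta<\lambda}T_{\beta}$ and well-foundedness is not automatically inherited. The plan is to suppose for contradiction that $T_{\lambda}$ contains an infinite branch $B=\big((t_{0},\dots,t_{k})\big)_{k\in\N}$ and to derive a contradiction by relating $B$ to an honest Kleene computation. Write ${\bf Y}^{\lambda}=(Y^{\lambda}_{n})_{n\in\N}$ for the interpretation of the sets as fixed by stage $\lambda$; by the construction each $Y^{\lambda}_{n}$ is either all of $\N^{\N}$ or $\N^{\N}$ with one point deleted, hence open and dense. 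Since a computation path never has a complete tuple below an incomplete one (Definition~\ref{coresd}), the incomplete tuples along $B$ form an initial segment, so one of two cases occurs: (a) some tail $(t_{m},t_{m+1},\dots)$ of $B$ consists entirely of complete tuples; or (b) every $t_{k}$ on $B$ is incomplete.

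In case (a), Lemma~\ref{11.1} says that $t_{m}$ is the computation tuple of a terminating computation relative to $F$ and ${\bf Y}^{\lambda}$, and $(t_{m},t_{m+1},\dots)$ is then an infinite chain of successive immediate sub-computations of a convergent Kleene computation, which is impossible since the sub-computation tree of such a computation is well-founded. In case (b) we have $t_{0}=\langle e_{0},\varepsilon\rangle$ with $e_{0}=\langle 8,0,d\rangle$, so the only sub-computations of $t_{0}$ produced by the recursion are the tuples $\langle d,a,\varepsilon\rangle$, whence $t_{1}=\langle d,a_{*},\varepsilon\rangle$ for some $a_{*}$. By the standing assumption that $\{e_{0}\}(F,{\bf Y},\varepsilon)$ converges for every ${\bf Y}$ (which forces $\lambda a.\{d\}(F,{\bf Y},a,\varepsilon)$ to be total), the computation $\{d\}(F,{\bf Y}^{\lambda},a_{*},\varepsilon)$ converges and hence has a well-founded computation tree. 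The plan is then to show that $(t_{1},t_{2},\dots)$ is an infinite branch through that tree: each incomplete $(t_{0},\dots,t_{k})\in B$ is, at some stage below $\lambda$, the Kleene-Brouwer-least entirely incomplete path and is extended, and by the case analysis in Definition~\ref{selfref} it can only be extended by the genuinely next sub-computation called for, recorded as $t_{k+1}$; the intermediate values used in composition steps come from already-completed sub-tuples, which by Lemma~\ref{11.1} carry the answers forced by the partial sets at that earlier stage, and these agree with ${\bf Y}^{\lambda}$ since commitments are never revised. This again contradicts well-foundedness, so $T_{\lambda}$ is well-founded and the induction closes.

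I expect the delicate point to be exactly this bookkeeping in case (b): one must verify that all partial answers to the $F$- and $Y_{n}$-queries accumulated along $B$ agree with the answers the true computation relative to ${\bf Y}^{\lambda}$ would return, so that an infinite branch of the construction really does project onto an infinite branch of a genuine Moschovakis computation tree. This is where the convention from Definition~\ref{selfref} that puts $f\notin Y_{n}$ precisely for $n=n(\langle d,\vec a\rangle)$, together with the injectivity of the numbering $n(\cdot)$, is used: each $Y_{n}$ loses at most one point, and that point, once decided, is never changed, so the partial sets at stage $\lambda$ form a common refinement of all commitments made below $\lambda$ and the bookkeeping is consistent.
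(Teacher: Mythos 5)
Your proposal is correct and follows essentially the same route as the paper: the paper's proof likewise reduces everything to the limit case, takes a (leftmost) infinite branch, observes via Lemma~\ref{11.1} that it must consist entirely of incomplete computation tuples, and reads it as a Moschovakis witness that $\{e_0\}(F,{\bf Y},\varepsilon)$ diverges, contradicting the standing totality assumption --- with your composition bookkeeping in case (b) corresponding exactly to the paper's remark that Lemma~\ref{11.1} is needed when passing an instance of composition. The one small nuance is that the sets at stage $\lambda$ are still only partially determined, so one should, as the paper does, pass to a \emph{total} extension ${\bf Y}$ of the partial sets before invoking the convergence assumption; with that adjustment your argument is the paper's.
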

\begin{proof}
We obviously need a limit ordinal $\alpha$ to introduce an infinite descending sequence.  Assume that there is one, and let $(t_0,t_1, \cdots)$ be the leftmost one. Let ${\bf Y}$ be a total extension of the sequence of partial sets $\lambda n.Y_n$ constructed at level $\alpha$. By Lemma \ref{11.1}, the sequence will consist only of incomplete computation tuples, where each extension represents a sub-computation. This will be a so-called Moschovakis witness, witnessing that $\{e_0\}(F,{\bf Y} , \varepsilon)\!\!\uparrow$, which again contradicts the assumption. 

\smallskip

We need Lemma \ref{11.1} in order to verify that the sequence is indeed a Moschovakis witness when passing an instance of composition (or primitive recursion). Note that in the presence of S9, we do not need the scheme S5, primitive recursion, so for the understanding, one may ignore this case.
\end{proof}
We can now complete the proof of Theorem \ref{11.4}.
\begin{proof}
Assume that the theorem is false. Then there is an $F$ and an index $d$ such that for all ${\bf Y}$ we have $\xi({\bf Y}) = \lambda a. \{d\}(F,{\bf Y},a)$.
Let $e_0$ be an index such that 
\[
\{e_0\}(F,{\bf Y} , \varepsilon) = F(\lambda a.\{d\}(F,{\bf Y},a)).
\]
When we apply our construction above to this $e_0$, we construct a ${\bf Y}$ where each $Y_n$ is open and dense, but where every function $f$ appearing in the computation tree of $\{e_0\}(F, {\bf Y} , \varepsilon)$ is left out of exactly one $Y_n$. This will in particular be the case for $\lambda a.\{d\}(F,{\bf Y},a)$, so this is not an acceptable value for $\xi({\bf Y})$ after all.
\end{proof}
We finish this section with a remark on future research. 
\begin{remark}{\em First of all, the proof of Theorem \ref{11.4} is very different from known proofs of theorems expressing that certain type 3 functionals are not computable in any functional of type 2; we refer to \cite{dagsam,dagsamII, dagsamIII, dagsamV} for the latter kind of proofs. 

\smallskip

Secondly, Baire realisers are not unique, but as shown in Theorem \ref{thm.baire}, there is a specimen computable in $\IND$. This begs the question of the necessary complexity of Baire realisers, and how they compare to realisers for $\HBU$ and Pincherle's Theorem, as in \cites{dagsam,dagsamII, dagsamIII, dagsamV}. We have no answer to this, and offer it as a research problem. 
}\end{remark}

\subsubsection{Reverse Mathematics}
We prove the results sketched in items (c) and (d) in Section \ref{BCT1}.  
We first introduce our version of the Baire category theorem, called $\BCT$ hereafter, based on Definition \ref{openset}.
\begin{thm}[$\BCT$]
If $ (Y_n)_{n \in \N}$ is a sequence of dense open sets of reals, then 
$\emptyset \ne \bigcap_{n \in\N } Y_n$.
\end{thm}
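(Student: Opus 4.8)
The plan is to prove $\BCT$ inside $\RCAo+\BOOT$, which (by item~(c) sketched in Section~\ref{BCT1}) is the intended ambient system; recall from Theorem~\ref{cromagnon} that $\BOOT$ yields the coding principle $\open$ together with arithmetical comprehension $\ACA_{0}$. As is customary here, I would split on $(\exists^{2})\vee\neg(\exists^{2})$. The point to keep in mind throughout is the warning from Section~\ref{BCT1}: since $\BCT$ concerns a \emph{sequence} $(Y_{n})_{n\in\N}$ of open sets, one must not apply $\open$ to each $Y_{n}$ separately (that would need countable choice), but instead extract, in one stroke, a single set encoding the whole sequence of RM-codes.

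In case $\neg(\exists^{2})$, every $\R\di\R$-function is continuous, so each $Y_{n}$ is continuous and $\{x:Y_{n}(x)>_{\R}0\}$ is a genuine (RM-)open set. Using $\ACA_{0}$ — available via $\BOOT$, and crucially \emph{without} any choice — one obtains a \emph{sequence} of RM-codes $(\alpha_{n})_{n\in\N}$ for these sets: a sequence of moduli of continuity for the $Y_{n}$ is arithmetically definable from $(Y_{n})_{n\in\N}$, and the codes are then arithmetically definable from the $Y_{n}$ and their moduli. Each $\alpha_{n}$ still codes a dense open set, so $\BCT$ reduces to its second-order formulation, which is provable already in $\RCA_{0}$ by the classical nested-interval construction: at stage $n$ one searches the balls listed by $\alpha_{n}$ for one meeting the current rational interval, which halts by density of $Y_{n}$, then shrinks to a rational closed sub-ball of diameter $\le 2^{-n}$ contained in it.

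In case $(\exists^{2})$, here is the heart of the matter. The relation `$B(q_{k},r_{k})\subseteq Y_{n}$' is of universal type, but its negation `$(\exists x\in B(q_{k},r_{k}))(Y_{n}(x)\le_{\R}0)$' is existential over $\N^{\N}$ — precisely the shape $\BOOT$ handles — and, with $\exists^{2}$ present, is decidable in $x$, since $<_{\R}$ and $\le_{\R}$ are $\Sigma^{0}_{1}$ respectively $\Pi^{0}_{1}$. So I would define, using $\exists^{2}$ and $(Y_{n})_{n\in\N}$ as parameters, a functional $Y^{2}$ with $(\exists f^{1})(Y(f,\langle n,k\rangle)=0)\asa B(q_{k},r_{k})\not\subseteq Y_{n}$, apply $\BOOT$ to obtain the corresponding $W\subset\N$, and set $Z:=\{\langle n,k\rangle:\langle n,k\rangle\notin W\}$, which $\RCA_{0}$ forms from $W$. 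Then $\langle n,k\rangle\in Z\asa B(q_{k},r_{k})\subseteq Y_{n}$, and since each $Y_{n}$ is open one checks $Y_{n}=\bigcup\{B(q_{k},r_{k}):\langle n,k\rangle\in Z\}$; i.e.\ the single set $Z$ encodes the entire sequence of (maximal) RM-codes. With $Z$ in hand the classical argument goes through in $\RCA_{0}$: by primitive recursion build rational closed balls $\overline{B}(c_{n},\rho_{n})\subseteq B(q_{k},r_{k})\cap\overline{B}(c_{n-1},\rho_{n-1})$ with $\langle n,k\rangle\in Z$ and $\rho_{n}\le 2^{-n}$ — a suitable $k$ existing by density of $Y_{n}$ — and then $x:=\lim_{n}c_{n}$ lies in every $\overline{B}(c_{n},\rho_{n})\subseteq Y_{n}$, so $x\in\bigcap_{n\in\N}Y_{n}$. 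The law of excluded middle $(\exists^{2})\vee\neg(\exists^{2})$ completes the proof.

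I expect the main obstacle to be exactly the uniform coding step in the $(\exists^{2})$ case: verifying that $\BOOT$ — which only supplies `$\Sigma^{1}_{1}$-style' comprehension along an existential function quantifier — genuinely delivers the whole sequence of RM-codes at once via the complement trick, that the resulting $Z$ equals $\{\langle n,k\rangle:B(q_{k},r_{k})\subseteq Y_{n}\}$, and that openness of the $Y_{n}$ makes $Y_{n}$ the union of the coded balls. Once $Z$ (resp.\ the sequence $(\alpha_{n})_{n\in\N}$ in the other case) is available, the remainder is the standard Baire nested-interval argument and should be routine; the $\neg(\exists^{2})$ branch is likewise routine, the only slightly delicate point there being that $\ACA_{0}$ produces a \emph{sequence} of codes with no appeal to countable choice.
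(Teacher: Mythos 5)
Your proposal is correct and follows essentially the same route as the paper: split on $(\exists^{2})\vee\neg(\exists^{2})$, handle the continuous case via $\ACA_{0}$ and RM-codes, and in the $(\exists^{2})$ case apply $\BOOT$ once to the (negation of the) relation $B(q_{k},r_{k})\subseteq Y_{n}$ to obtain a single set $Z$ coding all RM-codes simultaneously without countable choice, after which the second-order Baire category theorem finishes the job. The only difference is presentational: you spell out the complement trick and the nested-interval construction that the paper delegates to \cite{simpson2}*{II.4.10}.
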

We have the following theorem, establishing that $\BCT$ does not exceed $\ACA_{0}$ in terms of first-order strength. 
\begin{thm}\label{thmke}
The system $\RCAo+\BOOT$ proves $\BCT$.
\end{thm}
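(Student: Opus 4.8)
The plan is to combine the coding principle $\open$ (which follows from $\BOOT$ by Theorem~\ref{cromagnon}) with the second-order Baire category theorem, which is provable in $\RCA_0$ (see \cite{simpson2}*{II.5.8--II.5.10}). First I would argue the easy case $\neg(\exists^2)$: then all functions on $\R$ are continuous by \cite{kohlenbach2}*{\S3}, so each open set $Y_n$ in the sense of Definition~\ref{openset} reduces, via \cite{simpson2}*{II.5.7} and \cite{kohlenbach4}*{\S4}, to an RM-open set. A subtle point here is obtaining a \emph{sequence} of RM-codes for the $Y_n$ rather than codes one at a time; as in the proof of Theorem~\ref{goodd3}, careful inspection of the relevant arguments in \cite{kohlenbach4}*{\S4} shows they apply uniformly to sequences, so $\ACA_0$ (available since $\BOOT$ trivially yields $\exists^2$, hence in either case we have $\ACA_0$ or may dispense with it) produces a sequence of RM-codes. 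The second-order $\BCT$ then furnishes a point in $\bigcap_n Y_n$.

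For the main case $(\exists^2)$, I would apply $\BOOT$ — or rather the uniform version of $\open$ it entails — to the sequence $(Y_n)_{n\in\N}$ all at once. Concretely: the predicate ``$B(q_m,r_m)\subseteq Y_n$'' is, for fixed $(m,n)$, a statement of the form $(\forall x)(\,|x-q_m|<r_m \to Y_n(x)>_\R 0\,)$, which $\exists^2$ renders as a $\Pi^0_1$ (hence arithmetical) matrix; feeding the corresponding $Y^2$ into $\BOOT$ yields a single set $Z\subseteq\N$ with $(m,n)\in Z \leftrightarrow B(q_m,r_m)\subseteq Y_n$. From $Z$ we read off, for each $n$, an RM-code for $Y_n$ (namely the set of $m$ with $(m,n)\in Z$), and these assemble into an honest sequence of RM-codes. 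Moreover each $Y_n$ is dense as a set, and density of an RM-open set given by such a code is again arithmetically expressible, so it transfers. Now invoke the second-order $\BCT$ (available in $\RCA_0$, a fortiori under $\exists^2$) to get $x\in\R$ lying in every coded $Y_n$, i.e.\ $x\in\bigcap_n Y_n$ in the sense of Definition~\ref{openset}. Finally the law of excluded middle $(\exists^2)\vee\neg(\exists^2)$ closes the argument.

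The step I expect to be the main obstacle is the uniformity: producing the sequence of RM-codes, and checking that density passes correctly through the coding, rather than any one open set in isolation. This is exactly the point flagged in Section~\ref{BCT1} — ``obtaining the associated sequences of RM-representations would seem to require countable choice'' — and $\BOOT$ is precisely the device that supplies the choice-free uniform bootstrap here. Once the sequence of codes is in hand, the reduction to classical $\BCT$ is routine.
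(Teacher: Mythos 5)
Your proposal is correct and follows essentially the same route as the paper's proof: split on $(\exists^{2})\vee\neg(\exists^{2})$, and in the main case apply $\BOOT$ (with $\exists^{2}$ absorbing the numerical quantifiers) to the predicate $B(q_{m},r_{m})\subseteq Y_{n}$ to obtain a single set $Z\subseteq \N\times\N$ from which an honest \emph{sequence} of RM-codes for the $Y_{n}$ is read off, after which the second-order Baire category theorem finishes the argument; this is exactly the paper's formula \eqref{bobsleeps} and the set $Z$ constructed there. One slip worth correcting: in the $\neg(\exists^{2})$ case you justify the availability of $\ACA_{0}$ by asserting that ``$\BOOT$ trivially yields $\exists^{2}$'' --- it does not (otherwise the case distinction would be vacuous); what is true, and what the paper uses, is that $\BOOT$ implies the second-order scheme $\ACA_{0}$ over $\RCAo$, which is what is needed to obtain the (sequence of) RM-codes for the then-continuous $Y_{n}$.
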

\begin{proof}
In case $\neg(\exists^{2})$, all functions on $\R$ are continuous by \cite{kohlenbach2}*{\S3}.  
Such functions have RM-codes by \cite{kohlenbach4}*{\S4}, as $\ACA_{0}$ follows from $\BOOT$.
In turn, all open sets now also have RM-codes by \cite{simpson2}*{II.7.1} and $\BCT$ is therefore
immediate from the second-order proof of the Baire category theorem (see \cite{simpson2}*{II.4.10}).   

\smallskip

In case $(\exists^{2})$, let $Y_{n}$ be as in the theorem and consider the following formula
\be\label{bobsleeps}
(\forall y\in \R)( y\in B(q_{m}, r_{m})\di y\in Y_{n}), 
\ee
which expresses $B(q_{m}, r_{m})\subseteq Y_{n}$.
Applying $\BOOT$ to \eqref{bobsleeps} (using $\exists^{2}$ to remove numerical quantifiers), there is $Z\subseteq \N\times \N$ such that $(m,n)\in Z$ if and only if \eqref{bobsleeps} holds. 
Define the sequence $(X_{n})_{n\in \N}$ as $\lambda n.Z$ and note that $x\in Y_{n}\asa (\exists m\in \N)(m\in X_{n}\wedge x\in B(q_{m}, r_{m}))$.  Hence, $Y_{n}$ also has an RM-code in this case and 
$\BCT$ is therefore immediate from the aforementioned second-order proof of the Baire category theorem (see \cite{simpson2}*{II.4.10}).   
\end{proof}
Let $\BOOT_{\w}$ be $\BOOT$ with the (innermost) quantifier over Baire space restricted to $2^{\N}$.  
As shown in \cite{samph}*{\S5}, $\BOOT_{\w}$ has first-order strength at most $\WKL_{0}$, as does the base theory in the following corollary; the proof of the latter is immediate. 
\begin{cor}
The system $\RCAo+\BOOT_{\w}+\cont$ proves $\BCT$.
\end{cor}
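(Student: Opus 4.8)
The plan is to transcribe the proof of Theorem~\ref{thmke}, arguing by cases on $(\exists^{2})\vee\neg(\exists^{2})$; the only two adjustments needed are that $\cont$ must do the work of $\ACA_{0}$ in the discontinuous case and that $\BOOT$ must be replaced by its weak form $\BOOT_{\w}$ in the other case.

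First I would handle $\neg(\exists^{2})$. Here every function $\R\di\R$ is continuous by \cite{kohlenbach2}*{\S3}, so $\cont$ provides an RM-code for each $Y_{n}$, uniformly in $n$; by \cite{simpson2}*{II.7.1} these yield RM-codes for the open sets $Y_{n}$, and $\BCT$ then drops out of the second-order Baire category theorem \cite{simpson2}*{II.4.10}. This is exactly the argument used in Theorem~\ref{thmke}, except that $\cont$ replaces the appeal to $\ACA_{0}$ (which was available there from $\BOOT$ but is not from $\BOOT_{\w}$).

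The remaining case, $(\exists^{2})$, is where the one genuine obstacle appears: the formula \eqref{bobsleeps} that expresses $B(q_{m},r_{m})\subseteq Y_{n}$ and to which Theorem~\ref{thmke} applies $\BOOT$ has its innermost quantifier ranging over all reals, i.e.\ over Baire space, whereas $\BOOT_{\w}$ only allows an innermost quantifier over $2^{\N}$. I would get around this by reparametrising the ball: since the enumerated balls $B(q_{m},r_{m})$ are non-trivial (so $r_{m}>0$), the map $f\mapsto q_{m}-r_{m}+2r_{m}\r(f)$ sends $2^{\N}$ onto $[q_{m}-r_{m},q_{m}+r_{m}]$, hence onto $B(q_{m},r_{m})$ after the endpoints are discarded. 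Consequently $B(q_{m},r_{m})\subseteq Y_{n}$ is equivalent to the assertion that for all $f\in 2^{\N}$, if $q_{m}-r_{m}+2r_{m}\r(f)$ lies in the open ball then it lies in $Y_{n}$; using $(\exists^{2})$ to decide the two $\Sigma^{0}_{1}$-conditions involved (membership in the open ball and `$Y_{n}(\cdot)>_{\R}0$'), the matrix of this $\forall f\in 2^{\N}$-statement becomes quantifier-free. Now $\BOOT_{\w}$ applies and delivers $Z\subseteq\N\times\N$ with $(m,n)\in Z\asa B(q_{m},r_{m})\subseteq Y_{n}$; as in Theorem~\ref{thmke} the sections of $Z$ constitute a sequence of RM-codes for the $Y_{n}$, so $\BCT$ again follows from \cite{simpson2}*{II.4.10}, and the law of excluded middle closes the argument. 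I expect the ball reparametrisation to be the only step requiring thought; everything else is a routine copy of the proof of Theorem~\ref{thmke}.
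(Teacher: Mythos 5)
Your proof is correct and is essentially the argument the paper has in mind: the paper states only that ``the proof of the latter is immediate,'' meaning exactly this adaptation of Theorem~\ref{thmke} where $\cont$ replaces the $\ACA_{0}$-based appeal to \cite{kohlenbach4}*{\S4} in the $\neg(\exists^{2})$ case and the innermost real quantifier in \eqref{bobsleeps} is pulled back to $2^{\N}$ via binary expansions (the same device used with $\r(f)$ and $\eta$ in the proof of Theorem~\ref{cromagnon}) so that $\BOOT_{\w}$ applies. Your reparametrisation of the ball is precisely that step, so the proposal matches the intended proof.
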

Another corollary of Theorem \ref{thmke} is that $(\exists^{3})$ implies $\BCT$, while $\exists^{3}$ computes the real claimed to exist by $\BCT$ from the other data.  
The proof of Theorem~\ref{thmke} however also gives rise to the following classical result, where `$\leq_{T}$' means Turing computability.
Let $J(Y)=\{n\in \N: (\exists f\in \N^{\N})(Y(f, n)=0)\}$ be the set $X$ as provided by $\BOOT$; there is a term $t$ of G\"odel's $T$ such that we have:
\[
\text{for dense open sets $(Y_{n})_{n\in \N}$ in $\R$, there is $x\in\cap_{n}Y_{n}$ with $x\leq_{T} J(t(Y_{n}, \exists^{2}))$.  }
\]
The previous statement can be viewed as the `second-order' version of the fact that $\exists^{3}$ computes a Baire realiser, in the sense of Kleene S1-S9. 
We also have the following negative result involving an interesting corollary.  
\begin{thm}
The system $\Z_{2}^{\omega} + \QFAC^{0,1}$ cannot prove $\BCT$.
\end{thm}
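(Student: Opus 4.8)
The plan is to establish unprovability by building a countable model $\mathcal{M}$ of $\Z_{2}^{\omega}+\QFAC^{0,1}$ in which $\BCT$ fails, adapting the model construction behind Theorem \ref{goodd} together with the simultaneous-construction idea from the proof of Theorem \ref{11.4}. As there, one starts with a countable $A=\bigcup_{k}A_{k}\subseteq\N^{\N}$ such that: (i) $\Pi_{n}^{1}$-formulas are absolute for $(A,\N^{\N})$ for all $n$; (ii) $A_{k}\subseteq A_{k+1}$ with some $f_{k}\in A_{k+1}$ enumerating $A_{k}$; and (iii) $A_{k}$ is the computational closure of finitely many fixed generators relative to $\SS_{k}^{2}$. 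Closing the type structure over $A$ under Kleene computability relative to all $\SS_{k}^{2}$ makes it a model of each $\SIXK$, hence of $\Z_{2}^{\omega}$. To secure $\QFAC^{0,1}$ one additionally closes under quantifier-free choice with type one values: for a true instance $(\forall x^{0})(\exists y^{1})B(x,y)$ with $B$ quantifier-free and parameters computed from some $\SS_{k}^{2}$ (and from the object $Z$ below), the relation $(\exists y^{1})B(x,y)$ is decided, and a witness produced, by the witnessing variant $\nu_{k+1}$ of $\SS_{k+1}^{2}$ noted after $(\SS_{k}^{2})$ in Section \ref{HCT}; one feeds the resulting choice function and its (countably many) values back into $A$. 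Since the reals so added are $\Sigma_{k+1}^{1}$-definable from data already present, this can be interleaved with the rest of the construction without destroying the $\Pi_{n}^{1}$-absoluteness.

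The failure of $\BCT$ in $\mathcal{M}$ is then arranged as follows. Let $(a_{m})_{m\in\N}$ enumerate $\mathcal{M}\cap\N^{\N}$ in the real world, and let $Z:=\lambda n.Y_{n}$ where $Y_{n}:=\R\setminus\{a_{n}\}$, viewed as a function $\R\di\R$ in the sense of Definition \ref{openset}. Each $Y_{n}$ is genuinely dense and open, so by $\Pi_{n}^{1}$-absoluteness $\mathcal{M}$ regards $(Y_{n})_{n\in\N}$ as a sequence of dense open sets; moreover $\mathcal{M}$ regards $\bigcap_{n}Y_{n}$ as empty, since every real $x\in\mathcal{M}$ equals some $a_{n}$, whence $x\notin Y_{n}$. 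It therefore suffices to make $Z$ a member of $\mathcal{M}$. Here the key observations are: adding $Z$ as an oracle does not enlarge $\mathcal{M}\cap\N^{\N}$ beyond $A$ (a Kleene computation can only query $Z$ at reals already produced from the other data, and those queries return numbers), yet $\mathcal{M}$ cannot recover the enumeration $(a_{m})_{m}$ from $Z$ — extracting, from the type two object $\lambda x.Z(n,x)$, the unique real on which it vanishes is a genuine type three search, not available below $\exists^{3}$. Thus one builds $\mathcal{M}$ and $Z$ by a joint transfinite recursion, exactly as $\mathbf{Y}$ and the computation tree are built simultaneously in the proof of Theorem \ref{11.4}: at stage $\xi$ one commits a finite initial segment of $(a_{m})_{m}$, extends $Z$ accordingly, closes up under the $\SS_{k}^{2}$'s, under $Z$, and under $\QFAC^{0,1}$-choice, and lets the newly admitted reals become the next $a_{m}$'s.

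Granting this, $\mathcal{M}$ is a model of $\RCAo$ (an $\omega$-model with the requisite closure under $\lambda$-abstraction, $\mathbf{R}_{0}$, extensionality and $\QFAC^{1,0}$) and of every $(\SS_{k}^{2})$, hence of $\Z_{2}^{\omega}$; it satisfies $\QFAC^{0,1}$ by the choice witnesses thrown in above; and $\mathcal{M}\models\neg\BCT$. This is the theorem. There is no clash with Theorem \ref{dich}: for $\HBC$ the presence of $\QFAC^{0,1}$ over $\WKL$ already forces $\HBC$, so no model of $\QFAC^{0,1}$ can refute it, whereas for $\BCT$ the axiom $\QFAC^{0,1}$ is useless, since it does not turn the characteristic functions $Y_{n}$ into a \emph{sequence} of RM-codes (obtaining these would need $\BOOT$-like strength, cf.\ Theorem \ref{thmke}).

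The main obstacle is precisely the joint fixed-point construction of $\mathcal{M}$ and $Z$: one must keep $\mathcal{M}$ closed enough to validate $\QFAC^{0,1}$ and all of $\Z_{2}^{\omega}$, yet \emph{loose} enough to contain no point of $\bigcap_{n}Y_{n}$ — equivalently, to fail $\BOOT$, given that $\BOOT\di\BCT$ by Theorem \ref{thmke} and $\exists^{3}$ outright computes a Baire realiser (cf.\ the `jump' bound stated just before the theorem and Theorem \ref{thm.baire}). Threading this needle — interleaving the $\SS_{k}^{2}$-closures, the $\QFAC^{0,1}$-witnesses, and the initial segments of $(a_{m})_{m}$ so that $Z$ lands in $\mathcal{M}$ while an enumeration of $\mathcal{M}\cap\N^{\N}$ does not, all the while preserving $\Pi_{n}^{1}$-absoluteness — is the delicate bookkeeping carrying the proof, just as in Theorem \ref{11.4}.
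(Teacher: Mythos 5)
Your overall strategy --- a countable type-structure closed under the $\SS^{2}_{k}$'s in which the reals form a countable set $A$, so that one can cook up a sequence of dense open sets whose intersection misses every real of the model --- is the right one, but the way you produce that sequence leaves the genuinely hard step unproved, and it is a step the paper's proof is specifically designed to avoid. You take an \emph{external} enumeration $(a_{m})_{m\in\N}$ of $\mathcal{M}\cap\N^{\N}$, set $Y_{n}=\R\setminus\{a_{n}\}$, and then must argue that the single object $Z=\lambda n.Y_{n}$ can be inserted into $\mathcal{M}$ by a ``joint transfinite recursion'' without enlarging $\mathcal{M}_{1}$ and while still arranging that \emph{every} real of $\mathcal{M}$ (including those computed from $Z$ itself, at arbitrarily late stages) is excluded from some $Y_{n}$. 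That circular bookkeeping --- committing oracle answers $Z(n,x)$ before $a_{n}$ is chosen, yet reserving for each newly admitted real an index $n$ at which $Z(n,\cdot)$ has never been queried --- is precisely the content of the theorem, and you explicitly defer it. Note also that if your $Z$ were in $\mathcal{M}$, then $G(x):=\mu n.(Z(n,x)=0)$ would be an injection of $\mathcal{M}_{1}$ into $\N$ computable from $Z$ and $\mu^{2}$ inside the model; so your construction is, in effect, a roundabout attempt to place an injection $F:\mathcal{M}_{1}\to\N$ \emph{inside} $\mathcal{M}$. The paper gets exactly this object for free: working under $\textsf{V=L}$ (to secure $\Pi^{1}_{k}$-absoluteness for $\mathcal{M}_{1}$), Gandy selection for $\SS^{2}_{\omega}$ yields $F(f)=$ a least Kleene index for $f$, an injection already lying in $\mathcal{M}_{2}$; then $O_{n}=\{f:F(f)>n\}$ is open and dense (cofinite complement), the sequence $\{(n,f):f\in O_{n}\}$ is a G\"odel $T$-term in $F$ and hence in $\mathcal{M}$, and $\bigcap_{n}O_{n}=\emptyset$. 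No external diagonalisation, no simultaneous recursion, no insertion of a new oracle.

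A second, independent gap is your verification of $\QFAC^{0,1}$. The witnessing functionals $\nu_{k+1}$ handle $\Sigma^{1}_{k+1}$-formulas with \emph{function} parameters only (the paper is explicit that functionals of type two are not allowed as parameters there), whereas a quantifier-free instance $A(x,y)$ of $\QFAC^{0,1}$ may carry arbitrary higher-type parameters of the model --- including your $Z$. Moreover $\QFAC^{0,1}$ demands the choice function $\lambda x.Y(x)$ as a single object of $\mathcal{M}$, not merely countably many individual witnesses thrown into $A$. The correct tool is Gandy selection relative to $\SS^{2}_{\omega}$ (and any other objects of the model), which is how the paper obtains $\QFAC^{0,1}$; as stated, your closure procedure does not deliver it. Finally, the connection the paper draws to $\NIN$ is worth internalising: the model refutes $\BCT$ \emph{because} it contains an injection from its reals into $\N$, and this single observation replaces all of the bookkeeping you flag as the main obstacle.
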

\begin{proof} 
We shall define a model $\mathcal{M}$ of $\Z_{2}^{\omega} + \QFAC^{0,1}+\neg\BCT$.  Incidentally, $\mathcal{M}$ has been used to establish \cite{dagsamV}*{Theorem 4.3}, as well as \cite{dagsamVIII}*{Theorem 2.2}.
The model $\mathcal{M}$ is an element of G\"odel's universe $\textsf{L}$ of constructible sets, and the properties will be verified under the assumption that \textsf{V = L}. 
Since being a model for the theory in question is absolute, the proof of existence can be formalised in \ZF.

\smallskip

Thus, assume $\textsf{V = L}$,  let $\SS^{2}_{\omega}$ be the join of all the functionals $\SS^{2}_{k}$, and let $\mathcal M$ be the least type-structure closed under computability relative to $\SS^{2}_{\omega}$. 
Let $\mathcal{M}_{n}$ be the set of elements in $\mathcal M$ of type $n$.
We assume $\textsf{V = L}$ because then there is a $\Delta^1_2$-well-ordering of the continuum; this can be used to show that all $\Pi^1_k$-formulas are absolute for $A = {\mathcal M}_1$. 
Hence, $\mathcal M$ is a model for $\Z_{2}^{\omega}$ and also satisfies $\QFAC^{0,1}$ as a consequence of Gandy selection for $\SS^2_\omega$.

\smallskip

We now show that ${\mathcal M}_2$ contains a functional $F:A \rightarrow \N$ that is \emph{injective}. This construct is obtained by Gandy selection as follows: for each $f \in A$ we let $F(f)$ be an index for computing $f$ from $\SS^2_\omega$. We will use this to show that $\BCT$ fails for $\N^\N$ in $\mathcal{M}$, but we can use the same idea for Cantor-space or for $\R$ (in $\mathcal{M}$). 

\smallskip

We now work inside $\mathcal M$.  
Define $O_n = \{f : F(f) > n\}$ and note that since the complement of each $O_n$ is finite, each $O_n$ is open and dense. Moreover, $\{(n,f) : f \in O_n\}$ is definable from $F$ by a term in G\"odel's $T$, so this countable sequence of dense, open sets will be in $\mathcal M$. The intersection is empty, so $\BCT$ fails in $\mathcal M$.
\end{proof}
The model $\mathcal{M}$ has been used in \cite{dagsamVIII}*{Theorem 2.2} to show that the statement
\be\tag{\textup{\textsf{NIN}}}
(\forall Y:[0,1]\di \N)(\exists x, y\in [0,1])(Y(x)=_{0}Y(y) \wedge x\ne_{\R}y)
\ee
cannot be proved in $\Z_{2}^{\omega}+\QFAC^{0,1}$.  Note that $\NIN$ expresses that there is no injection from $[0,1]$ to $\N$.
By contrast $\WHBU\di \textsf{NIN}$ and $\BCT\di \textsf{NIN}$ over $\RCAo$, as also proved in \cite{dagsamVIII}.

\section{A finer study of representations of open sets}\label{waycool}
We establish the results sketched in Section \ref{rmintro} regarding the $\Delta$-functional; the latter connects two natural representations of open sets and is introduced in Section~\ref{gintro}.
The main computational properties of the $\Delta$-functional are established in Sections \ref{RDRR2} and \ref{RDRR}, while some related RM results are sketched.
In a nutshell, these results show that the representation \eqref{R2} of open sets does not have much of an influence on the logical and computational properties of theorems pertaining to open sets, at least in contrast to Definition \ref{openset}.
\subsection{Introduction}\label{gintro}
In the previous, we have considered two different representations of open sets, namely the standard (RM) one as in \eqref{morg} and the approach via characteristic functions as in Definition \ref{openset}.
There are of course other possible representations, namely as part of \eqref{R1}-\eqref{R4} below; in this section, we show that \eqref{R3} and \eqref{R4} are computationally equivalent, and study the computational properties 
of the `conversion' functional $\Delta$ that converts a representation as in \eqref{R2} to a representation as in \eqref{R3}.  The $\Delta$-functional has interesting properties, as follows.
\begin{enumerate}
\item[(P1)] $\Delta$ is not computable in any type $2$ functional, but computable in any Pincherle realiser, a class weaker than $\Theta$-functionals (Theorem \ref{XNX}).
\item[(P2)] $\Delta$ is unique, genuinely type $3$, and adds no computational strength to $\exists^2$ in terms of computing functions from functions (Corollary \ref{sweet}).
\end{enumerate}
Prior to the study of \eqref{R2} and \eqref{R3}, we believed that the only way to find a functional with properties (P1) and (P2) would be through some ad hoc construction and that there would be no natural examples. 
Regarding RM, we also show that $\HBU$ suffices to show that open sets as in \eqref{R2} have a representation as in \eqref{R4}.  

\smallskip

For the sake of simplicity, we restrict the attention to $[0,1]$, though the $\sigma$-compactness of $\R$ makes it easy to extend all results to $\R$.
Thus, we consider the following four ways of representing an open set $O$ in $[0,1]$. For the sake of notational simplicity, we let $(a,b)$ denote $(a,b) \cap [0,1]$.
Finally, we could obtain the same results for $2^{\N}$ instead of $[0,1]$, i.e.\ our results do not really depend on the coding of the real numbers.  
\begin{enumerate}
\renewcommand{\theenumi}{R.\arabic{enumi}}
\item The set $O$ is $ \{x \in [0,1]\mid Y(x) >_{\R} 0\}$ for some $Y:[0,1]\di [0,1]$; we just have the extra information that $O$ is open, i.e.\ as in Definition \ref{openset}.\label{R1}
\item The set $O$ is represented by a function $Y : [0,1] \rightarrow [0,1]$ such that
\begin{itemize}
\item[(i)] we have $O = \{x \in [0,1]\mid Y(x) >_{\R} 0\}$, 
\item[(ii)] if $Y(x) > 0$ , then $(x-Y(x),x+Y(x))\cap [0,1] \subseteq O$. 
\end{itemize}\label{R2}
\item The set $O$ is represented by the \emph{continuous} function $Y$ where
\begin{itemize} 
\item[(i)] $Y(x)$ is the distance from $x$ to $[0,1]\setminus O$ if the latter is nonempty,
\item[(ii)] $Y$ is constant 1 if $O = [0,1]$.
\end{itemize}\label{R3}
\item The set $O$ is given as $\cup_{n\in \N}(a_{n}, b_{n})$ for $(a_m)_{m\in \N}, (b_{m})_{m\in\N}$ sequences in $\Q$. 
\label{R4}
\end{enumerate}
Assuming $\exists^2$, it is clear that the information given by a representation increases when going down the list.   
For completeness, we prove that \eqref{R3} and \eqref{R4} are the same from the computational point of view. 
\begin{theorem}\label{friuk}
Items \eqref{R3} and \eqref{R4} are computationally equivalent modulo $\exists^2$.
\end{theorem}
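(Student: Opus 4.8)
The plan is to establish the two directions of the computational equivalence separately, i.e.\ to show that (modulo $\exists^2$) a representation of type \eqref{R3} can be computed from one of type \eqref{R4}, and conversely. Both directions amount to unwinding the definitions, with $\exists^2$ doing the work of deciding numerical quantifiers (e.g.\ emptiness of $[0,1]\setminus O$, membership of rationals in the relevant intervals, and comparisons of reals).

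First I would treat \eqref{R4} $\to$ \eqref{R3}. Given sequences $(a_n)_{n\in\N}, (b_n)_{n\in\N}$ in $\Q$ with $O=\cup_{n}(a_n,b_n)$, one must produce the (continuous) distance function $Y(x)=d(x,[0,1]\setminus O)$, or the constant $1$ when $O=[0,1]$. Using $\exists^2$ one first decides whether $O=[0,1]$: note $[0,1]\setminus O \ne \emptyset$ iff there is a rational $q\in[0,1]$ with $(\forall n)(q\notin (a_n,b_n))$, which is a $\Sigma^0_1$-type question $\exists^2$ can answer (density of $\Q$ plus the fact that $[0,1]\setminus O$ is closed). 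If $O=[0,1]$, output the constant $1$. Otherwise, for each $x\in[0,1]$ we have $d(x,[0,1]\setminus O)=\inf\{|x-q|: q\in[0,1]\cap\Q,\ (\forall n)(q\notin(a_n,b_n))\}$, and this infimum is computable from $x$ and $\exists^2$ by the usual interval-halving / approximation technique, since for each rational tolerance $2^{-k}$ one can search through rationals $q$ and decide $q\notin O$ via $\exists^2$. The resulting $Y$ is then readily checked to be (uniformly) continuous and to satisfy (i)–(ii) of \eqref{R3}.

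Next, the direction \eqref{R3} $\to$ \eqref{R4}. Given the distance function $Y$ as in \eqref{R3}, one has $x\in O \asa Y(x)>_{\R}0$, and for a basic rational ball $B(q_m,r_m)$ one has $B(q_m,r_m)\subseteq O$ iff $(\forall x\in \overline{B}(q_m,r_m))(Y(x)>0)$, which by continuity of $Y$ and compactness is equivalent to a purely arithmetical condition on the rational approximations of $Y$ on a fixed finite rational net — decidable using $\exists^2$. Hence $\exists^2$ computes the set $X=\{m: B(q_m,r_m)\subseteq O\}$ (formally, its characteristic function), and enumerating $(q_m,r_m)$ for $m\in X$ yields sequences $(a_n)_{n\in\N},(b_n)_{n\in\N}$ in $\Q$ with $O=\cup_n (a_n,b_n)$; this is essentially the content of $\open$ in the presence of $\exists^2$ plus continuity, already used implicitly in Theorem~\ref{blind} and Corollary~\ref{XWX}. (The degenerate case $O=[0,1]$, where $Y\equiv 1$, is handled directly.)

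The main obstacle is not conceptual but bookkeeping: one must be careful that the objects produced in each direction are genuine representations of the required type — in particular that the $Y$ built in \eqref{R4} $\to$ \eqref{R3} really is the distance function (and hence continuous with the correct modulus, rather than merely some function with the right positivity set), and that membership questions like ``$q\notin O$'' and ``$B(q_m,r_m)\subseteq O$'' are correctly reduced to arithmetical statements decidable by $\exists^2$ via compactness of $[0,1]$. Once these reductions are spelled out the equivalence is immediate, and all computations are plainly Kleene-computable in $\exists^2$ together with the given representation.
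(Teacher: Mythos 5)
Your direction \eqref{R3} $\to$ \eqref{R4} is essentially sound and close in spirit to the paper's (which instead invokes the fact that the continuous $Y$ has an RM-code computable in $\exists^2$ and reads the rational intervals off that code); your use of the closed ball and of continuity is fine once one notes that the \eqref{R3}-representative is $1$-Lipschitz, so the required uniform modulus is available. The problem is the direction \eqref{R4} $\to$ \eqref{R3}, where there is a genuine gap. You claim that $[0,1]\setminus O\ne\emptyset$ iff some \emph{rational} $q\in[0,1]$ satisfies $(\forall n)(q\notin(a_n,b_n))$, and that $d(x,[0,1]\setminus O)=\inf\{|x-q|: q\in\Q\cap[0,1],\ q\notin O\}$. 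Both fail: a nonempty closed set need not contain any rational, let alone a dense set of them. Take $\alpha\in[0,1]$ irrational and let $(a_n,b_n)$ enumerate all rational intervals avoiding $\alpha$; then $O=[0,1]\setminus\{\alpha\}$ is a legitimate \eqref{R4}-represented open set whose complement is $\{\alpha\}$. Your emptiness test wrongly declares $O=[0,1]$, and your infimum ranges over the empty set. Density of $\Q$ in $[0,1]$ does not give density of $\Q$ in the closed complement, so the parenthetical justification does not repair this.

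The fix, which is exactly what the paper's proof does, is to approximate the distance \emph{from inside $O$} rather than via points of the complement: one has $d(x,[0,1]\setminus O)\geq r$ iff $[x-r,x+r]\cap[0,1]\subseteq O$, and by the Heine--Borel theorem for countable coverings of a compact interval by rational intervals (true in the metatheory, with the finite sub-covering found by an unbounded search) this is equivalent to the existence of a \emph{finite} subfamily of $\{(a_n,b_n)\}_{n\in\N}$ covering $[x-r,x+r]$. The latter is an arithmetical condition in $x$, $r$ and the data, decidable by $\exists^2$, so $Y(x)$ can be computed as $\sup\{r\in\Q : \text{some finite subfamily covers } [x-r,x+r]\}$; the same device ($[0,1]$ admits a finite sub-covering from the family) decides whether $O=[0,1]$. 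Without this compactness reduction the statement ``$[x-r,x+r]\subseteq O$'' involves a genuine quantifier over reals, which $\exists^2$ cannot decide directly, and your attempted shortcut through rationals of the complement does not circumvent this.
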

\begin{proof}
Let $Y$ be continuous as in \eqref{R3}. Then $Y$ has an RM-code computable in $\exists^{2}$ by \cite{kohlenbach4}*{\S4}. From this representation we can decide if $Y$ is constant 1 or if $Y(x) = 0$ for at least one $x$. Let $\alpha$ be this representation for $Y$. Then $x \in O$ if and only if there is some $((a,b),(c,d)) \in \alpha$ such that $c > 0$ and $x \in (a,b)$, and the set of  intervals $(a,b)$ where $((a,b),(c,d)) \in \alpha$ for some $(c,d)$ with $c > 0$ will be a representation of $O$ in the sense of \eqref{R4}.

\smallskip

Now assume that $A$ is a set of open rational intervals defining $O$ as in \eqref{R4}. 
Then $O = [0,1]$ if and only if $A$ contains a finite sub-covering of $[0,1]$. 
If it does, we let $Y$ be the constant 1. If not, let $x$ be given. If $x \not \in (a,b)$ for all $(a,b) \in A$, we let $Y(x) = 0$. 
If $x \in (a,b)$ for some $(a,b) \in A$, we let $Y(x)$ be the supremum of the set of rationals $r$ such that $A$ contains a finite sub-covering of $[x-r,x+r]$.
\end{proof}
In second-order RM, $\ACA_{0}$ is equivalent to the fact that closed sets are located (\cite{withgusto}*{Theorem~1.2}).  
The previous theorem similarly expresses that a set is open if and only if the complement is located.
Next, we study the computational relation between the representations defined by \eqref{R2} and \eqref{R3}.

\subsection{Converting between representations}\label{RDRR2}
In this section, we study the complexity of operators that produce a representation as in \eqref{R3}, or equivalently by Theorem \ref{friuk}: as in \eqref{R4}, from a representation as in \eqref{R2}. 
We (mostly) choose to study \eqref{R3} as this representation is \emph{unique} for each open set, resulting in a \emph{unique} functional, i.e.\ the output of $\Delta$ is unique for equal inputs satisfying \eqref{R3}.
\begin{definition}
Let $\Delta^{3}$ be the functional such that $\Delta(Y)$ represents an open set $O$ as in \eqref{R3} whenever $Y$ represents $O$ as in \eqref{R2}.
\end{definition}
The following proof is straightforward in light of similar proofs in \cites{dagsam, dagsamII, dagsamIII, dagsamV}, and we therefore only provide a sketch. 
\begin{lemma} 
The functional $\Delta$ is not computable in any type 2 functional.
\end{lemma}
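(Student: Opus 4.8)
The plan is to adapt the standard diagonalization argument used throughout \cites{dagsam, dagsamII, dagsamIII, dagsamV} to show that $\Delta$ is not countably based, which is stronger than not being computable in any type two functional. The key observation is that a representation $Y$ as in \eqref{R2} carries genuine third-order information: the value $Y(x)$ must certify an explicit open ball around each point $x\in O$, and the output $\Delta(Y)$ must recover the \emph{exact} distance function to the complement, which depends on the global behaviour of $O$.

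First I would fix a particularly simple open set, say $O=(0,1)$ (or $O=[0,1]$ itself, depending on which is more convenient), together with a representation $Y$ as in \eqref{R2}; for concreteness let $Y(x)=\min(x,1-x)$ for the former choice, which indeed satisfies clauses (i) and (ii) of \eqref{R2}. Now $\Delta(Y)$ represents $O$ as in \eqref{R3}, so in particular $\Delta(Y)(x_0)$ equals the genuine distance $d(x_0,[0,1]\setminus O)$ for some fixed test point $x_0$, say $x_0=\tfrac12$, giving value $\tfrac12$. Suppose toward a contradiction that $\Delta$ is countably based: then there is a countable set $X\subset \N^{\N}$ (equivalently, a countable set of reals, via the standard coding) such that any $Y'$ agreeing with $Y$ on $X$ and still representing some open set as in \eqref{R2} yields $\Delta(Y')(x_0)=\tfrac12$. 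Since $X$ is countable, I would pick a real $r'\notin X$ with, say, $\tfrac14<r'<\tfrac12$, and modify $Y$ to $Y'$ by setting $Y'(r')=0$ and leaving $Y'$ unchanged elsewhere; the associated set becomes $O'=O\setminus\{r'\}$. One checks $Y'$ still satisfies \eqref{R2}: clause (i) holds since $Y'(x)>0$ iff $x\in O'$, and clause (ii) is only weakened by removing a point from the left-hand side, so it is preserved. But now the distance from $x_0=\tfrac12$ to $[0,1]\setminus O'$ is $|\tfrac12-r'|<\tfrac14<\tfrac12$, so $\Delta(Y')(x_0)\neq\tfrac12$, contradicting the countable-basedness via the set $X$ (note $r'\notin X$ means $Y$ and $Y'$ agree on all of $X$). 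Hence $\Delta$ is not countably based, and by the remark after the definition of countably based in Section \ref{HCT} (if $\Phi^3$ is computable in a type two functional it is countably based), $\Delta$ is not computable in any type two functional.

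The main subtlety — and the only point requiring care — is verifying that the perturbed $Y'$ is still a \emph{legitimate} representation in the sense of \eqref{R2}, since clause (ii) is a universally quantified condition tying $Y'(x)$ to a ball contained in $O'$; deleting a single point from the open set and zeroing the corresponding value preserves this, but one must make sure the radius $Y'(x)$ for nearby $x\ne r'$ still gives a ball inside $O'$ rather than $O$ — which it may not, if that ball originally contained $r'$. The fix is standard: instead of leaving $Y'$ unchanged off $r'$, shrink $Y'(x)$ to $\min(Y(x),|x-r'|)$ for all $x\neq r'$; this still satisfies clause (ii) with respect to $O'$, still has $Y'(x)>0$ exactly on $O'$, and — crucially — can be arranged to agree with $Y$ on the countable set $X$ by choosing $r'$ far enough (in the countable-complement sense) from every point of $X$, or more simply by noting that the modification at a point $x\in X$ only matters if $|x-r'|<Y(x)$, and since $X$ is countable one can choose $r'$ avoiding all the finitely-or-countably-many constraints $|x-r'|\geq Y(x)$ by a further density argument, or restrict $X$ to the relevant coded reals. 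Since the paper only asks for a sketch here, I would present the $O=(0,1)$, $r'$-deletion construction and remark that the minor bookkeeping for clause (ii) proceeds exactly as in \cite{dagsamV}, deferring the sharper statement (P1) — computability in any Pincherle realiser — to Theorem \ref{XNX}.
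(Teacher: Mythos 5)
Your approach has a fatal flaw: the stronger claim you aim for, namely that $\Delta$ is not countably based, is false, and your construction breaks down at exactly the point where you appeal to a ``further density argument''. The difficulty you correctly identify --- that zeroing $Y$ at $r'$ forces you to shrink $Y(x)$ to $\min(Y(x),|x-r'|)$ at every $x$ whose certified ball contains $r'$ --- cannot be repaired by choosing $r'$ cleverly. The countable base $X$ is chosen by the adversary \emph{after} seeing $Y$, and for $Y(x)=\min(x,1-x)$ the adversary simply puts a code for $\tfrac12$ into $X$: the certified ball $B(\tfrac12,\tfrac12)=(0,1)$ then contains every candidate $r'$, so no legitimate $Y'$ representing $(0,1)\setminus\{r'\}$ can agree with $Y$ at $\tfrac12$. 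Countability of $X$ is irrelevant here, because the obstruction is the union $\bigcup_{x\in X}B(x,Y(x))$ of the certified balls, which a single well-chosen point of $X$ can make equal to all of $(0,1)$. In fact $\Delta$ \emph{is} countably based: given any legitimate $Y$ representing $O$, let $X$ consist of codes for a countable dense subset of $[0,1]\setminus O$ together with the centres of a countable (Lindel\"of) subcover of $\{B(y,Y(y))\}_{y\in O}$; any legitimate $Y'$ agreeing with $Y$ on $X$ then represents an open set containing $O$ (by clause (ii) of \eqref{R2} at the subcover centres) and disjoint from the dense subset of the complement (by clause (i)), hence equal to $O$, whence $\Delta(Y')=\Delta(Y)$. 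This is why the paper proves non-countably-basedness for the $\Phi$-, $\Psi$- and $\beta$-functionals but pointedly not for $\Delta$.

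The paper's actual proof goes the other way around: instead of perturbing a fixed natural $Y$, it constructs $Y$ adversarially \emph{from the given} $F^2$. Using Gandy selection one defines $Y$ so that $Y(x)=2^{-(e+2)}$ when $x$ is computable in $F$ with selected index $e$ (and $Y$ is positive elsewhere), so that $Y$ represents $O=[0,1]$ while its restriction to the $F$-computable reals is partially computable in $F$ and certifies balls of total measure at most $\tfrac12$. Any Kleene computation from $F$ and $Y$ only interrogates $Y$ at $F$-computable reals, where $Y$ is indistinguishable from a legitimate representation of a proper open subset of $[0,1]$, so no such computation can correctly return the distance function of the full interval. If you want to salvage a perturbation argument, this is the shape it must take: the countable set on which $Y$ is pinned down has to be fixed in advance (the $F$-computable reals), and $Y$ has to be built so that its values \emph{there} fail to determine $O$.
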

\begin{proof}
Given $F^2$, we construct  $Y^{2}$ with the following properties.
\begin{itemize}
\item The value $Y(f)$ is defined if $f$ represents a fast-converging sequence of rational numbers in $[0,1]$,
\item If the sequence represented by $f$ is equivalent to a sequence represented by some $g$ computable in $F$, we use Gandy selection for $F$ to find an index $e$ for one such $g$ as computable in $F$.  
Note that the Gandy-search is such that the resulting $g$, and index for it,  respects equivalence between representations of reals.  We then define $Y(f) = 2^{-(e+2)}$ for the aforementioned index $e$.
\end{itemize}
The crux of the previous construction is as follows: the functional $Y$ represents $O = [0,1]$ but no Kleene-algorithm relative to $F$ and $Y$ is able to recognise this.  Indeed, since $Y$ is partially computable in $F$ when restricted to functions computable in $F$, it only covers a subset of measure below $ \frac{1}{2}$ in this situation.
\end{proof}
Next, we show that $\Delta$ is computable from $\exists^{2}$ and a \emph{Pincherle realiser}, i.e.\ a realiser for Pincherle's theorem, a concept first introduced in \cite{dagsamV}.
The latter theorem expresses that a locally bounded functional on $C$ is also bounded (see \cite{tepelpinch}*{p.~67}); a \emph{Pincherle realiser} (PR for short) computes this upper bound in terms
of some of the other data.  We consider the following equivalent form of Pincherle realisers, going back to an equivalent formulation by Pincherle himself (see \cite{tepelpinch, dagsamV}).  
Note that we assume that $Y$ is extensional on the reals. 
\begin{definition}
A PR is any functional $M_u^{3}$ such that 
for any $Y:[0,1] \rightarrow \R^{+}$, the number $M_u(Y) >_{\R} 0$ is a lower bound for all $Z:[0,1] \rightarrow \R$ locally bounded away from zero by $Y$, i.e.\ satisfying $\(\forall x,y \in [0,1])(|x-y| < Y(x) \rightarrow Z(y) > Y(x))$.
\end{definition}
Note that the functional $Y$ is a realiser for `$Z$ is locally bounded from zero'.  As discussed in \cite{dagsamV}, Pincherle assumes the existence of such realisers (for local boundedness) in \cite{tepelpinch}*{p.\ 66}.
The following theorem is interesting, as PRs are 
 weaker than $\Theta$-functionals (realisers for $\HBU$), as shown in \cite{dagsamV}.
\begin{theorem}\label{XNX}
The functional $\Delta$ is computable in any PR and $\exists^2$.
\end{theorem}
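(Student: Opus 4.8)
The plan is to describe, given a PR $M_u$ and $\exists^2$, how to compute $\Delta(Y)$ for a function $Y$ representing an open set $O\subseteq[0,1]$ as in \eqref{R2}. By Theorem \ref{friuk}, it suffices to produce a representation of $O$ as in \eqref{R4}, i.e.\ a sequence of rational intervals whose union is $O$; alternatively, since the \eqref{R3}-representation is the distance function to $[0,1]\setminus O$, it suffices to compute, for each rational $q\in[0,1]$ and each $n$, whether $d(q,[0,1]\setminus O)>2^{-n}$ or $<2^{-n-1}$ (which $\exists^2$ then assembles into the continuous distance function, noting that $d$ is $1$-Lipschitz and so is determined on rationals). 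The key point is that deciding $B(q,r)\subseteq O$ for rational $q,r$ is exactly the type of local-to-global question a Pincherle realiser answers: from $Y$ one manufactures an auxiliary locally-bounded-away-from-zero situation on $[0,1]$ (or on a rescaled copy of $C$) whose global bound, delivered by $M_u$, encodes whether the ball is swallowed by $O$.

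The key steps, in order, are as follows. First, I would handle the trivial case $O=[0,1]$: using $\exists^2$ and $M_u$ one can detect this (e.g.\ $Y$ represents $[0,1]$ iff applying $M_u$ to $Y$ restricted appropriately yields a genuine positive lower bound witnessing that the canonical covering $\cup_{x\in[0,1]}(x-Y(x),x+Y(x))$ has a Lebesgue-number-style uniform radius), and output the constant $1$. Second, assuming $O\ne[0,1]$, I would fix rational $q$ and rational $r>0$ and decide `$[q-r,q+r]\cap[0,1]\subseteq O$'. To do this, restrict $Y$ to the compact interval $K=[q-r,q+r]\cap[0,1]$; if $Y$ is strictly positive on $K$ then $\{(x-Y(x),x+Y(x)):x\in K\}$ is a canonical covering of $K$ by subsets of $O$, and the PR applied to (a rescaling of) $Y\restriction K$ returns a positive lower bound $\delta$, i.e.\ a uniform modulus, from which one reads off that $K\subseteq O$; conversely if $Y(x)=0$ for some $x\in K$ then $x\in[0,1]\setminus O$ and $K\not\subseteq O$, and one detects failure because the realiser's output is not a valid positive lower bound on the relevant set. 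Here $\exists^2$ is used throughout to evaluate the $\Sigma^0_1/\Pi^0_1$ predicates `$Y(x)>_\R 0$' and to search over rationals. Third, I would record: $d(q,[0,1]\setminus O)=\sup\{r\in\Q:[q-r,q+r]\cap[0,1]\subseteq O\}$, and since each clause in the sup is now decidable relative to $M_u$ and $\exists^2$, the supremum (a real number) is computable from $M_u$ and $\exists^2$; assembling this over all rational $q$ gives the continuous function of \eqref{R3}. Finally, uniformity in $Y$ is automatic since every step is a Kleene S1--S9 subcomputation in $M_u,\exists^2,Y$.

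The main obstacle is making rigorous the reduction `$B(q,r)\subseteq O$ is decidable from a PR' — in particular converting $Y\restriction K$ into exactly the shape a PR expects (a function $Y:[0,1]\to\R^+$ realising local boundedness-away-from-zero of some $Z$) and, crucially, arguing the \emph{converse} direction: one must show that when $K\not\subseteq O$, feeding the corresponding data to $M_u$ produces an output that \emph{provably} fails to be a correct lower bound, so that $\exists^2$ (checking the defining $\forall\exists$ condition of a PR against the manufactured $Z$) can certify the failure and thereby decide the ball-containment question either way. This is where one must be careful that the PR, despite being non-unique, still lets us extract a definite yes/no answer; the idea is that a spurious positive output from $M_u$ in the $K\not\subseteq O$ case would contradict the existence of a point of $K$ on which the associated $Z$ is not bounded below as claimed, a contradiction $\exists^2$ can detect by an unbounded search. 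The remaining steps — the $O=[0,1]$ case, the Lipschitz assembly of the distance function, the appeal to Theorem \ref{friuk} — are routine.
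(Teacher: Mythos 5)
Your overall skeleton agrees with the paper's: both reduce computing $\Delta(Y)$ to deciding, for rational $q,r$, whether a closed ball $\overline{B}(q,r)\cap[0,1]$ is contained in $O$ (the paper phrases this as deciding whether the auxiliary open set $O_{x,r}=O\cup\{y\in[0,1]:|x-y|>r\}$ equals $[0,1]$), and both then assemble the $1$-Lipschitz distance function of \eqref{R3} using $\exists^2$. The gap lies in the one step that actually uses the PR. You propose to feed (a rescaling of) the restriction of $Y$ to $K$ into $M_u$ and then to \emph{verify} whether the output $\delta>0$ is a ``genuine'' lower bound, certifying failure via $\exists^2$ and an unbounded search. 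This cannot work. First, when $K\not\subseteq O$ the function $Y$ vanishes somewhere on $K$, so the manufactured input is not of type $[0,1]\to\R^{+}$ and the specification of $M_u$ places no constraint whatsoever on its output; a ``spurious'' positive value is indistinguishable, as a real number, from a correct one. Second, certifying validity or failure amounts to deciding $(\exists x\in K)(Y(x)=_{\R}0)$, or to verifying a $\forall\exists$-statement over $[0,1]$ against the arbitrary, possibly discontinuous $Y$; this is quantification over the reals relative to a type-two parameter, which $\exists^{2}$ cannot perform (an unbounded search over $\Q$ sees only the rational values of $Y$ and will miss an irrational zero of $Y$). Indeed, if such verification were available from $\exists^2$, the PR would be superfluous and $\Delta$ would be computable in $\exists^{2}$ alone, contradicting the lemma immediately preceding Theorem \ref{XNX}.

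The missing idea is the paper's perturbation trick, which extracts the answer from the \emph{guaranteed} behaviour of $M_u$ on \emph{legal} inputs rather than by inspecting and checking a single output. Define $Y_n(x):=Y(x)$ if $Y(x)>_{\R}0$ and $Y_n(x):=2^{-n}$ otherwise, so that each $Y_n$ is a legitimate argument for $M_u$. If $O=[0,1]$ then $Y_n=Y$ for every $n$, so $M_u(Y_n)$ is a fixed positive real independent of $n$; if some $x_0\notin O$, then no ball $(x-Y(x),x+Y(x))$ with $Y(x)>0$ contains $x_0$, so the only constraints on $Z(x_0)$ for $Z$ locally bounded away from zero by $Y_n$ come from the $2^{-n}$-clause, and the specification forces $0<M_u(Y_n)\leq 2^{-n}$. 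Whether the sequence $\lambda n.M_u(Y_n)$ is constant or tends to $0$ is an arithmetical property of a sequence of reals, decidable by $\exists^2$, and this decides ``$O=[0,1]$'' --- hence, applied to $O_{x,r}$, decides $d(x,O^{c})\leq r$ --- without verifying any individual output. You would need to replace your verification step by this (or an equivalent) argument for the proof to go through.
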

\begin{proof}
Let $Y$ represent the open set $O$ as in \eqref{R2}. We  first show that any PR $M_u$ and $\exists^2$ allows us to decide if $O = [0,1]$ or not.

\smallskip

Define $Y_n(x)$ as $ Y(x)$ if $Y(x) _{\R}> 0$ and $ 2^{-n}$ otherwise. If $O = [0,1]$ then $Y_n = Y$ for all $n$, and $M_u(Y_n)$ is positive and independent of $n$. If $x_0 \not \in O$, there is no $x$ such that $Y(x) > 0$ and $x_0$ is in the neighbourhood around $x$ defined by $Y(x)$.  Hence, $2^{-n}$ will be the lower bound on $Z(x_0)$ induced by $Y_n$, yielding $0 < M_u(Y_n) \leq 2^{-n}$. 
Hence, we can decide if $O = [0,1]$ or not using the sequence  $\lambda n.M_u(Y_n)$.

\smallskip

Next consider $x \in [0,1]$ and assume that there is some (unknown) $z \not \in O$. For each rational $r > 0$, we define the following set:
\[
O_{x,r} = O \cup \{y \in [0,1] : |x-y] > r\},
\]
and we let $Y_{x,r}$ be the representation of $O_{x,r}$ provided by \eqref{R2}. Now let $Z(x)$ be the distance from $x$ to the complement of $O$. 
Then $Z(x) \leq r$ if and only if $O_{x,r} \neq [0,1]$, and we can use $M_u$ as above deciding this for each $x\in [0,1]$ and $r$. 
Since we can decide if $Z(x) \leq r$ uniformly in $r$, we can use $\exists^2$ to compute $Z(x)$.
\end{proof}
We have the following corollary, establishing  the above claims concerning the $\Delta$-functional introduced in the previous section.
\begin{corollary}\label{sweet}
The functional $\Delta$ is computationally `weak' as follows.
\begin{itemize}
\item[(a)]
For each $f \in \N^\N$, all functions computable in $\Delta , \exists^2 , f$ are also hyperarithmetical in $f$ alone.
\item[(b)] There is no PR $M_u$ computable in $\Delta$ and $\exists^2$.
\end{itemize}
\end{corollary}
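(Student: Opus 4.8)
The plan is to derive both items from Theorem \ref{XNX} together with results quoted from \cite{dagsamV}. For item (a), the key point is that Theorem \ref{XNX} shows $\Delta$ is computable in $\exists^2$ and an arbitrary Pincherle realiser, so anything computable in $\Delta, \exists^2, f$ is computable in $\exists^2, f$ and a PR. I would then invoke the fact established in \cite{dagsamV} that a PR together with $\exists^2$ does \emph{not} compute any function that is not hyperarithmetical in the function parameters involved — equivalently, that every PR has a specimen whose join with $\exists^2$ is still "hyperarithmetically conservative" over its type-one inputs. Concretely: fix $f$, and choose a PR $M_u$ that is low over $f$ in the relevant sense (such a specimen exists by the analysis in \cite{dagsamV}, since PRs are strictly weaker than $\Theta$-functionals and the latter already admit hyperarithmetically-tame specimens by the results there). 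Any $g$ computable in $\Delta, \exists^2, f$ is then computable in $M_u, \exists^2, f$ via the algorithm furnished by Theorem \ref{XNX}, hence is hyperarithmetical in $f$. The only subtlety is that the statement "all functions computable in $\Delta, \exists^2, f$ are hyperarithmetical in $f$" quantifies over \emph{all} computations from $\Delta$, whereas Theorem \ref{XNX} gives \emph{one} algorithm computing $\Delta$; but since $\Delta$ is a fixed (unique) functional, substituting that algorithm into any Kleene computation from $\Delta$ yields a computation from $M_u, \exists^2, f$, so the reduction is uniform and the conclusion follows.

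For item (b), I would argue by contradiction: suppose some PR $M_u$ were computable in $\Delta$ and $\exists^2$. Combining with Theorem \ref{XNX}, which gives $\Delta$ computable in any PR and $\exists^2$, we would obtain that the class of functions computable from a PR and $\exists^2$ coincides with the class computable from $\Delta$ and $\exists^2$. Now feed this into item (a): PRs would then be hyperarithmetically conservative, i.e.\ for every $f$, any function computable in $M_u, \exists^2, f$ would be hyperarithmetical in $f$. But this contradicts the lower-bound results on Pincherle realisers from \cite{dagsamV}, where it is shown that a PR together with $\exists^2$ computes functions that are \emph{not} hyperarithmetical — indeed, the whole point of the Pincherle phenomenon is that (weak) Pincherle realisers are not computable in any type two functional and carry genuine computational strength at the level of type one outputs. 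Alternatively, and perhaps more cleanly, one invokes the explicit separation in \cite{dagsamV} showing there is a $\Theta$-functional, hence a PR, computing a non-hyperarithmetical function from a computable input; since $\Delta, \exists^2$ is hyperarithmetically tame by item (a), $\Delta$ and $\exists^2$ cannot compute such a PR.

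The main obstacle I anticipate is making precise the passage from "there exists a hyperarithmetically tame specimen of a PR" to "every function computable in $\Delta, \exists^2, f$ is hyperarithmetical in $f$", since $\Delta$ is a \emph{specific} functional and one must check that the Theorem \ref{XNX} reduction really lets us replace $\Delta$ by a convenient PR \emph{inside arbitrary computations}. This works because $\Delta$ is single-valued, so any oracle call to $\Delta$ during a computation can be expanded in place by the index from Theorem \ref{XNX}, giving a genuine Kleene computation relative to $(M_u, \exists^2, f)$; the only thing to verify is that this expansion is total, which follows from totality of $\Delta$ on its intended domain and the fact that the Theorem \ref{XNX} algorithm terminates on every valid \eqref{R2}-representation. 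The remaining ingredients — the existence of tame PR specimens and the existence of a PR computing a non-hyperarithmetical function — are both quoted directly from \cite{dagsamV} and require no new work here; I would simply cite \cite{dagsamV}*{\S4} (or the appropriate theorem numbers therein) at the two points above. This mirrors the pattern of earlier corollaries in the paper (e.g.\ Corollary \ref{hantonio}) where the hard analytic content lives in the preceding theorem and the corollary is a short combination of quoted facts.
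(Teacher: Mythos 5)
Your proof of item (a) rests on a premise that is false: there is no ``hyperarithmetically tame'' Pincherle realiser. The fact needed for item (b) --- and exactly what the paper invokes there --- is that \emph{every} PR $M_u$ computes (non-uniformly, together with $\exists^2$) some non-hyperarithmetical function; this is quoted from \cite{dagsamV} and it rules out the existence of a specimen $M_u$ whose $1$-section relative to $\exists^2$ and $f$ is contained in the functions hyperarithmetical in $f$. So you cannot prove (a) by picking a convenient ``low'' PR and running the Theorem \ref{XNX} reduction against it. The same objection applies to $\Theta$-functionals: they do not admit hyperarithmetically tame specimens, since each $\Theta$ computes a PR and hence also computes non-hyperarithmetical functions. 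As written, your proof of (a) is in direct tension with the fact you rely on for (b).

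The correct route --- and the one the paper takes --- exploits \emph{uniformity} rather than a tame specimen. Theorem \ref{XNX} gives a single Kleene index computing $\Delta$ from $\exists^2$ and an \emph{arbitrary} PR, and since every $\Theta$-functional uniformly computes a PR, any $g$ computable in $\Delta, \exists^2, f$ is computable by one fixed index from $\Theta, \exists^2, f$ \emph{for all} $\Theta$ simultaneously. The results of \cite{dagsamIII, dagsamV} then state that the functions uniformly computable in \emph{all} $\Theta$-functionals plus $\exists^2$ (relativised to $f$) are precisely the hyperarithmetical-in-$f$ ones, even though each individual $\Theta$ computes strictly more. The distinction between ``computable from some fixed oracle of the class'' and ``computable by one index from every oracle of the class'' is the entire content of (a); your in-place substitution of the Theorem \ref{XNX} algorithm into an arbitrary computation from $\Delta$ is fine, but it must be followed by this uniform-intersection result, not by an appeal to a single low oracle. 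Your first argument for (b) is essentially the paper's, provided you quote the fact that \emph{every} PR computes a non-hyperarithmetical function; your ``alternative'' version, which exhibits only one such PR, does not suffice to exclude \emph{all} PRs from being computable in $\Delta$ and $\exists^2$.
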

\begin{proof}
Since any $\Theta$-functional computes some PR, $\Delta$ is uniformly computable in any $\Theta$-functional, and the only functions that are uniformly computable in any $\Theta$ and $\exists^2$ are the hyperarithmetical ones (see \cite{dagsamIII, dagsamV}).   This readily relativises to any function $f$, i.e.\ the first item follows.

\smallskip

Each PR $M_u$ computes (non-uniformly) a non-hyperarithmetical function, while $\Delta$ only computes hyperarithmetical ones, i.e.\ no $M_u$ can be computable in $\Delta$. 
\end{proof}
For completeness, we obtain an RM result pertaining to \eqref{R2} and \eqref{R4}.
Now, it is shown in \cite{dagsamV} that a \emph{uniform} version of Pincherle's theorem (which forms the basis for PRs) and $\HBU$ are equivalent over a weak base theory.  
In light of Theorem~\ref{XNX}, one therefore expects $\HBU$ to suffice to prove the following coding principle, as shown in Theorem \ref{kore}.  Corollary~\ref{sweet} suggests no reversal can be obtained.  
\bdefi[$\open^{-}$]
An open set as in \eqref{R2} has a representation as in \eqref{R4}.
\edefi
The following theorem should be contrasted with \cite{samph}*{Theorem 4.4} where it is shown that a generalisation of $\open$ and $\open^{-}$ implies $\BOOT$ (which implies $\HBU$).
Note that the following proof makes explicit use of the extra information that \eqref{R2} includes over \eqref{R1}, i.e.\ the latter cannot be treated in the same way.  
\begin{thm}\label{kore}
The system $\RCAo+\HBU+ \QFAC^{0,1}$ proves $\open^{-}$.
\end{thm}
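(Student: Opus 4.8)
The plan is to imitate the structure of the proof of Theorem \ref{dich}, using the law of excluded middle $(\exists^{2})\vee\neg(\exists^{2})$ to split into two cases. First I would dispose of the case $\neg(\exists^2)$: here all functions $\R\di\R$ are continuous by \cite{kohlenbach2}*{\S3}, so any representation as in \eqref{R2} is in particular a continuous characteristic function, which reduces to an RM-code for an open set by \cite{simpson2}*{II.7.1} and \cite{kohlenbach4}*{\S4}; unwinding such a code immediately yields a representation as in \eqref{R4}. (Strictly, one should note $\HBU$ is not needed here, since in the absence of $\exists^2$ everything collapses to second-order RM.) So the substantive case is $(\exists^2)$, and the goal is to extract, from the extra local-witness data built into \eqref{R2}, a countable union of rational intervals equal to $O$.

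The key point — and where I would make essential use of the information that \eqref{R2} carries over \eqref{R1} — is that the function $Y$ in \eqref{R2} is itself a realiser for the open cover of $O$ by the basic intervals $(x-Y(x),x+Y(x))$ for $x\in O$. Concretely, for each $x\in O$ the open ball $B(x,Y(x))$ lies inside $O$, so $O=\bigcup_{x\in O}B(x,Y(x))$. The plan is to thin this to a \emph{canonical}-style covering and apply $\HBU$ (or rather $\HBU_{\closed}$-type reasoning via $\HBU_{\c}$) to finite sub-pieces. More precisely: for each rational interval $(q_m,r_m)$ with closure contained in $[0,1]$, consider the closed set $K_m:=\overline{(q_m,r_m)}\cap(\text{the "part of }O\text{ already covered down to level }n")$; but a cleaner route is to cover $[0,1]$ itself. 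Define $\Psi:[0,1]\di\R^+$ by $\Psi(x):=Y(x)$ if $x\in O$ (decidable via $\exists^2$, since $x\in O\asa Y(x)>_\R 0$) and $\Psi(x):=$ (distance-like default, e.g. $1$) otherwise; this $\Psi$ is total and positive, so $\HBU$ yields $y_1,\dots,y_k\in[0,1]$ with $[0,1]=\bigcup_{i\le k}I_{y_i}^{\Psi}$. The intervals $I_{y_i}^{\Psi}$ with $y_i\in O$ are genuine sub-intervals of $O$ with rational-approximable (hence, after shrinking, rational) endpoints; collecting these over a \emph{sequence} of finer and finer coverings — here I would apply $\HBU$ together with $\QFAC^{0,1}$ to obtain, for each $\eps=2^{-j}$, a finite $\eps$-fine sub-covering, and string these together — produces a countable family of rational intervals whose union is exactly $O$. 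Membership $x\in O\asa(\exists m)(x\in(a_m,b_m))$ then holds: $\subseteq$ because each $x\in O$ lies in $B(x,Y(x))$ which is approximated from inside by one of the chosen rational intervals in a sufficiently fine covering, and $\supseteq$ because every chosen interval was a sub-interval of $O$.

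The main obstacle I anticipate is \textbf{getting rational endpoints and a genuine sequence} rather than just a finite covering with real endpoints for each fixed fineness: $\HBU$ hands back finitely many reals $y_i$ and radii $\Psi(y_i)$, so one must (i) shrink each $I_{y_i}^{\Psi}$ slightly to a rational interval still inside $O$ — doable with $\exists^2$, since $B(y_i,\Psi(y_i))\subseteq O$ leaves room — while ensuring the shrunk intervals still cover the points they need to; and (ii) invoke $\QFAC^{0,1}$ to pick, uniformly in $j$, such a finite rational $2^{-j}$-fine sub-covering of $O$, then merge all of them (re-indexing by a pairing function) into a single sequence $(a_m,b_m)_{m\in\N}$. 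The verification that $\bigcup_m(a_m,b_m)=O$ is then a matter of an Archimedean/fineness argument: given $x\in O$, the ball $B(x,Y(x))\subseteq O$ has positive radius, so for $j$ large the $2^{-j}$-fine covering contains an interval hitting $x$ and contained in $B(x,Y(x))\subseteq O$. Applying the law of excluded middle $(\exists^2)\vee\neg(\exists^2)$ to combine the two cases finishes the proof.
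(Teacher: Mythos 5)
Your proposal is correct in its essentials, but it takes a genuinely different route from the paper's. The paper localises: for each closed rational ball $\overline{B}(q,\frac{1}{2^{n}})$ it applies $\HBU$ (rescaled to that ball) to show that $\overline{B}(q,\frac{1}{2^{n}})\subset O$ holds if and only if finitely many of the balls $B(y,Y(y))$ with $y\in O$ cover it; since the right-hand side is existential over reals with a matrix decidable from $\exists^{2}$ and the left-hand side is universal, $\QFAC^{0,1}$ applied to this \emph{equivalence} yields the set $X=\{(q,n): \overline{B}(q,\frac{1}{2^{n}})\subset O\}$ (an instance of what the authors call $\Delta$-comprehension), and a second application of $\QFAC^{0,1}$ extracts witnessing finite sequences; $O$ is then the union of the resulting finite unions of balls over $(q,n)\in X$. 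You instead work globally: extend $Y$ to a positive function on all of $[0,1]$, cap the radii at $2^{-j}$, apply $\HBU$ once per mesh level, keep only the intervals centred in $O$ (decidable via $\exists^{2}$), and use a single application of $\QFAC^{0,1}$ to string the finite subcovers into one sequence. Both arguments rest on the same two pillars --- the $\exists^{2}$-decidability of ``a closed interval is covered by a given finite union of open intervals'', which is what makes the choice matrix quantifier-free, and a fineness argument for the reverse inclusion --- so your use of $\QFAC^{0,1}$ is on the same footing as the paper's. Three small cautions: the capping at $2^{-j}$ is essential and not optional (your initial single covering with default radius $1$ off $O$ proves nothing, since the finite subcover may consist entirely of intervals centred outside $O$); the passage to rational endpoints is best done by enumerating \emph{all} rational intervals with closure inside some selected $I_{y}^{\Psi_{j}}$ rather than by ``shrinking slightly'', since the slack $\Psi_{j}(y)-|x-y|$ can be arbitrarily small; and your aside that $\HBU$ is not needed in the case $\neg(\exists^{2})$ is not quite right, as $\WKL$ (which $\HBU$ supplies) is still used to obtain the RM-code for the now-continuous $Y$ on $[0,1]$. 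What each approach buys: the paper's version isolates the comprehension instance $X$, which connects to the $\Delta$-comprehension theme discussed after the proof, and only ever applies $\HBU$ inside $O$ where $Y$ is genuinely positive; yours is closer to the textbook Lebesgue-number argument and needs only one application of countable choice.
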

\begin{proof}
We may assume $(\exists^{2})$ as the theorem is trivial in case $\neg(\exists^{2})$.
Indeed, all functions on $\R$ are continuous in this case by \cite{kohlenbach2}*{\S3}, while $\WKL$
suffices to provide RM-codes on $[0,1]$ by \cite{kohlenbach4}*{\S4}.    Then \cite{simpson2}*{II.7.1} yields the required RM-open set.  

\smallskip

Let $O$ and $Y$ be as in \eqref{R2} and consider the following for any $ n\in\N, q\in \Q\cap[0,1]$:
\be\label{JB}\textstyle
 \overline{B}(q, \frac{1}{2^{n}})\subset O \asa  (\exists y_{0}, \dots, y_{k}\in O)\big[ \overline{B}(q, \frac{1}{2^{n}})\subset \cup_{i\leq k} B(y_{i} , Y(y_{i}))\big],
\ee
where $\overline{B}(x, r)$ is the closed ball with center $x$ and radius $r>_{\R}0$.  The reverse implication in \eqref{JB} follows by definition.  
The forward implication in \eqref{JB} follows from applying $\HBU$ to the uncountable covering $\cup_{x\in \overline{B}(q, \frac{1}{2^{n}})}B(x, Y(x))$ of   $ \overline{B}(q, \frac{1}{2^{n}})$.

\smallskip

Now note that the formula in big square brackets in \eqref{JB} is decidable thanks to $\exists^{2}$, while `$ \overline{B}(q, \frac{1}{2^{n}})\subset O$' has the form $(\forall y\in \R)B(y, q, n)$ where $B$ is also decidable modulo $\exists^{2}$.
In other words, \eqref{JB} is equivalent to a formula of the form $(\forall m^{0})((\forall y\in \R)B(y, m)\asa (\exists x\in \R)A(x, m) )$.  Clearly, $\QFAC^{0,1}$ applies to $(\forall m^{0})((\forall y\in \R)B(y, m)\di (\exists x\in \R)A(x, m) )$, yielding $\Phi^{0\di 1}$ such that 
$(\forall m^{0})((\forall y\in \R)B(y, m)\asa (\exists x\in \R)A(x, m) )\asa A(\Phi(m), m)$.  In this way, there is a set $X\subset \Q\times \N$ such that for any $q\in\Q\cap [0,1], n\in \N$, we have
\be\label{JB2}\textstyle
(q, n)\in X  \asa  (\exists y_{0}, \dots, y_{k}\in O)\big[ \overline{B}(q, \frac{1}{2^{n}})\subset \cup_{i\leq k} B(y_{i} , Y(y_{i}))\big].
\ee
Again modulo $\exists^{2}$, apply $\QFAC^{0,1}$ to the forward implication in \eqref{JB} to find $\Psi^{0\di 1^{*}}$ such that if $(q, n)\in X$, $\Psi(q, n)=\langle y_{0}, \dots,  y_{k}\rangle$ is as in the right-hand side of \eqref{JB2}.
Now consider the following formula based on the previous:
\be\label{JB3}
x\in O\asa (\exists n\in \N)(\exists q\in \Q\cap [0,1])((q, n)\in X \wedge x\in  O_{q, n} ), 
\ee
where $O_{q,n}$ is $\cup_{i<|\Psi(q, n)|}  B(\Psi(q, n)(i), Y(\Psi(q, n)(i))  )$.  The reverse implication in \eqref{JB3} follows by \eqref{JB} and \eqref{JB2}.  
For the forward implication in \eqref{JB3}, fix $x_{0}\in O$ and let $n_{0}$ be large enough to guarantee $\overline{B}(q_{0}, \frac{1}{2^{n_{0}}}  )\subset O$ for $q_{0}:=[x_{0}](n_{0})$.
By \eqref{JB} and \eqref{JB2}, we have $(q_{0}, n_{0})\in X$ and thus $\overline{B}(q_{0}, \frac{1}{2^{n_{0}}})\subset O_{q_{0}, n_{0}}$.  By assumption, $x_{0}$ is in $\overline{B}(q_{0}, \frac{1}{2^{n_{0}}})$ and \eqref{JB3} thus provides the countable union required by \eqref{R4}.
\end{proof}
We note that \eqref{JB2} is a special case of a new comprehension axiom called `$\Delta$-comprehension' or `$\Delta\text{-}\textsf{CA}$' in \cite{samph, samFLO2, samrecount}; this axiom 
yields the well-known recursive comprehension axiom from $\RCA_{0}$ under $\ECF$.  There is no connection to the $\Delta$-functional beyond a shared symbol.  We also note that $\HBU$ 
can be replaced by the Lindel\"of lemma for $\R$, as studied in \cite{dagsamV}. 

\smallskip

As a conceptual corollary, the previous theorem shows that the RM of $\HBU$ generalises to theorems pertaining to open and closed sets as in \eqref{R2}.  

\subsection{Representations, Heine-Borel, and Baire}\label{RDRR}
We establish two theorems on the computational complexity of the countable Heine-Borel theorem and the Baire category theorem when formulated using the representation \eqref{R2}.  

\smallskip

First of all, $\Delta+\exists^{2}$ suffices to compute finite sub-coverings for coverings formulated using \eqref{R2}; no type two functional can replace $\Delta$ here. 
\begin{theorem}\label{cikometric}
Let $Y_n$ be a representation of the open set $O_n \subseteq [0,1]$ as in \eqref{R2} and assume $[0,1]\subseteq\cup_{n\in \N}O_{n}$. Then  $\Delta+ \exists^2$ computes $k\in \N$ such that
$[0,1]\subseteq \cup_{i\leq k}O_{i}$. The functional $\Delta$ cannot be replaced by any functional of type 2 here. 
\end{theorem}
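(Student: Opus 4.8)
The theorem has two halves: (i) a positive part, that $\Delta + \exists^2$ computes a bound $k$ with $[0,1] \subseteq \bigcup_{i \le k} O_i$; and (ii) a negative part, that no type-2 functional does the job. For (i), the key observation is that $\Delta$ converts each representation $Y_n$ of $O_n$ as in \eqref{R2} into the canonical distance-function representation \eqref{R3}, and then by Theorem~\ref{friuk} (whose proof is effective modulo $\exists^2$) we obtain, uniformly in $n$ and computably in $\exists^2$, an RM-code $\alpha_n$ for a sequence of rational open intervals whose union is $O_n$. Thus $\lambda n.\alpha_n$ is a (second-order) sequence of RM-open sets covering $[0,1]$, computable from $\Delta$ and $\exists^2$. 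Now $\exists^2$ is enough to run the standard unbounded search underlying $\HBC_{\RM}$: since $[0,1]$ is covered, for the classical Cantor-space/interval bookkeeping one searches for the least $k$ and least finite list of basic intervals drawn from $\alpha_0,\dots,\alpha_k$ that covers $[0,1]$ — covering of $[0,1]$ by finitely many rational intervals is a decidable (in fact $\Sigma^0_1$) condition, so $\exists^2$ finds it. This yields the desired $k$. I would cite \cite{simpson2}*{IV.1} and \cite{brownphd}*{Lemma~3.13} (and the discussion around \cite{longmann}*{\S7.3.4}) for the fact that the finite subcovering in the second-order Heine-Borel theorem is computable via such a search.

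**The negative part.** For the claim that no type-2 functional computes the bound, the natural route is to show the relevant functional is not countably based, exactly as in the proof of Theorem~\ref{reverand} and Theorem~\ref{jawadde}. Suppose $\Xi^3$ is a type-2-computable functional that, given $\lambda n. Y_n$ with $[0,1] \subseteq \bigcup_n O_n$, returns such a $k$. Take a fixed covering: let $Y_0$ represent $O_0 = [0,1]$ via \eqref{R2} (e.g. the constant $1$ function) and $Y_n$ represent $O_n = (\tfrac1n, 1]$ or similar for $n \ge 1$; then $k=0$ works, so $\Xi$ returns some value $k$. If $\Xi$ were computable in a type-2 functional it would be countably based, so there is a countable $X \subset \N^\N$ such that $\Xi$'s value depends only on the restriction of $\lambda n.Y_n$ to arguments in $X$. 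Pick a real $x_0 \in (0,\tfrac1{k+1})$ (or in $[0,1]$ generally) whose fast-converging Cauchy representation is not in $X$, and modify $Y_0$ to $Y_0'$ representing $O_0' = [0,1]\setminus\{x_0\}$ as in \eqref{R2} — this changes $Y_0$ only at (codes equivalent to) $x_0$, hence only off $X$, so $\Xi$ still outputs $k$. But now $x_0 \in [0,1]$ is covered by none of $O_0', O_1, \dots, O_k$ (it is not in $O_0'$ and $x_0 < \tfrac1{k+1}$ puts it outside $O_1,\dots,O_k$), contradicting that $k$ is a valid bound. Hence $\Xi$ is not countably based and a fortiori not computable in any type-2 functional.

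**Expected main obstacle.** The delicate point in part (i) is making sure the passage "$\Delta$, then Theorem~\ref{friuk}, then second-order search" is genuinely uniform and genuinely only uses $\exists^2$ on top of $\Delta$: one must check that extracting the RM-code from the distance function (via \cite{kohlenbach4}*{\S4}) and then deciding finite-cover relations are all $\exists^2$-computable, and that no additional choice is smuggled in — the sequence $\lambda n.\alpha_n$ is handed to us as a genuine type-1 sequence by $\exists^2$-computation from $\lambda n.\Delta(Y_n)$, so no $\QFAC$ is needed. In part (ii) the only subtlety is the bookkeeping that perturbing $Y_0$ at $x_0$ really is a perturbation off the countable base $X$ and really does preserve the hypotheses of the theorem (the modified family still covers $[0,1]$ only if we are careful — here we deliberately arrange that it does \emph{not}, which is precisely what forces the contradiction); one should also note the argument is robust to whether we phrase $O_0'$ as a single deleted point in $[0,1]$ or in $2^\N$ or $\R$, matching the flexibility already flagged for Theorem~\ref{friuk} and the $\Delta$-functional.
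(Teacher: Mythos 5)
Your positive half is essentially the paper's: apply $\Delta$ to each $Y_n$, convert to \eqref{R4} via the effective equivalence of Theorem \ref{friuk}, and let an $\exists^2$-computable unbounded search (a realiser for $\WKL$ on RM-coverings) produce the finite subcover; that part is fine.

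The negative half has a genuine gap. You transplant the countably-based argument from Theorems \ref{jawadde} and \ref{reverand}, where the input is a bare characteristic function as in \eqref{R1}, so that deleting a single point changes the input only at codes of that point. For representation \eqref{R2} this is false: condition (ii) of \eqref{R2} forces any representation $Y_0'$ of $[0,1]\setminus\{x_0\}$ to satisfy $Y_0'(x)\leq |x-x_0|$ for \emph{every} $x$, since otherwise the neighbourhood $(x-Y_0'(x),x+Y_0'(x))$ would contain the deleted point. Starting from the constant-$1$ function $Y_0$, your perturbation is therefore supported on essentially all of $[0,1]$, not just on codes of $x_0$; it unavoidably meets the countable base $X$. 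So either $Y_0'$ is not a legitimate \eqref{R2}-input (and the hypothesis of the theorem is void, giving no contradiction), or it is, but then countable-basedness tells you nothing about $\Xi(\lambda n.Y_n')$. This is precisely the point of Section \ref{waycool}: \eqref{R2} carries strictly more information than \eqref{R1}, and that extra information is what defeats the single-point-deletion trick.

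The paper's actual argument for the second claim is the Gandy-selection/measure technique used in the lemma that $\Delta$ itself is not computable in any type two functional. Given $F^2$, one takes all $Y_n$ equal to a representation of $O_n=[0,1]$ arranged so that on reals computable in $F+\exists^2$ the value $Y_n(x)$ is $2^{-(e+2)}$ for an index $e$ obtained by Gandy selection; the union of the certified neighbourhoods over such reals then has measure at most $\tfrac12$. If $k=\beta(\lambda n.Y_n)$, one picks a point $z$ \emph{outside all certified neighbourhoods} and deletes it from $Y_i$ for $i\leq k+1$ only (so the family still covers $[0,1]$). Since $z$ avoids every certified neighbourhood, the modified $Y_i'$ agree with $Y_i$ on every real the computation can query, so the output is unchanged and now wrong. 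The essential idea you are missing is that the "visible" part of the representation must be made small \emph{before} the deletion, which your choice $Y_0\equiv 1$ forecloses.
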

\begin{proof}
Using $\Delta$ on each $Y_n$, and the equivalence between \eqref{R3} and \eqref{R4}, we obtain a standard RM-covering of the unit interval, and then we only need a realiser for $\WKL$, computable in $\exists^2$,  to prove the first claim.

\smallskip

For the second claim, we  use one of our standard techniques: given $F^2$, assume that the realiser $\beta$ for $\HBC$ (formulated with \eqref{R2}) is computable in $F+\exists^2$.  
We let each $Y_n$ be the same: it represent $[0,1]$ as an open set but in such a way that $Y_n$ restricted to the reals computable in $F+\exists^2$ is partially computable in $F+\exists^2$ and only defines an open set of measure $\leq \frac{1}{2}$. 
Let $k = \beta(\lambda n.Y_n)$, and let $Y_i'$ be derived from $Y_i$ for $i \leq k+1$ by removing one, and the same,  point that is outside this set. Let $Y_i' = Y_i$ for $i > k+1$. Then $\beta(\lambda n.Y_n) = \beta(\lambda n.Y_n')$, contradicting what $\beta$ is supposed to achieve.
\end{proof}
The first part of this proof also implies that the optimal realiser for $\HBC$ (formulated using \eqref{R2}), the one selecting the least $k$ as in Theorem \ref{cikometric}, is equivalent to $\Delta$, given $\exists ^2$. We leave the proof of this to the reader.

\smallskip

Secondly, the Baire category theorem becomes a lot `tamer' from the computational point of view upon the introduction of \eqref{R2}.
\begin{theorem}
Assuming a representation as in \eqref{R2} is given for all the $Y_{n}$, we can compute a Baire realiser relative to $\exists^2$. 
If we know that each $Y_n$ represents a dense open set, we can even avoid $\exists^2$.
\end{theorem}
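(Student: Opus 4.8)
The plan is to run the classical nested-interval proof of the Baire category theorem, exploiting the crucial extra information in \eqref{R2}: once we have located a rational $q$ with $Y_{n}(q)>_{\R}0$, clause (ii) of \eqref{R2} hands us, for free, an explicit basic open interval around $q$ contained in $O_{n}$, namely $(q-Y_{n}(q),q+Y_{n}(q))\cap[0,1]$. The operation that is expensive under Definition \ref{openset}---testing whether a closed ball $\overline{B}(q,r)$ is a subset of $O_{n}$, which is not decidable by any type $2$ functional and underlies the use of $\IND$ in Theorem \ref{thm.baire}---is thereby bypassed, and this is exactly what makes a type $2$ functional, and under a density promise nothing at all, suffice.

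Concretely I would define $\xi(\lambda n.Y_{n})$ by recursion on $n$, maintaining rational closed intervals $[a_{n},b_{n}]\subseteq[0,1]$ with $b_{n}-a_{n}\leq 2^{-n}$, $[a_{n},b_{n}]\subseteq[a_{n-1},b_{n-1}]$, and $[a_{n},b_{n}]\subseteq O_{n}$ for $n\geq 1$. Start with $[a_{0},b_{0}]=[0,1]$. Given $[a_{n-1},b_{n-1}]$, search through pairs $(q,m)$ with $q\in\Q\cap(a_{n-1},b_{n-1})$ and $m\in\N$ for the first at which the approximation $[Y_{n}(q)](m)$ already certifies $Y_{n}(q)>_{\R}0$; this delivers such a $q$ together with a positive \emph{rational} lower bound $\ell$ for $Y_{n}(q)$, whence $(q-\ell,q+\ell)\cap[0,1]\subseteq O_{n}$ by clause (ii) of \eqref{R2}. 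Then pick any rational $\rho>0$ with $\rho\leq\min(\ell/2,\,2^{-n},\,(b_{n-1}-a_{n-1})/4)$ and $[q-\rho,q+\rho]\subseteq(a_{n-1},b_{n-1})$ (all comparisons here are between rationals, hence decidable), set $[a_{n},b_{n}]:=[q-\rho,q+\rho]$, and finally output $x:=\lim_{n}a_{n}$; the construction makes $\lambda n.a_{n}$ a fast-converging Cauchy sequence, and $x\in[a_{n},b_{n}]\subseteq O_{n}$ for all $n\geq1$, so $x\in\bigcap_{n}Y_{n}$.

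For the first assertion, the only non-elementary step is the search for $(q,m)$: the predicate ``there exist $q\in\Q\cap(a_{n-1},b_{n-1})$ and $m$ with $[Y_{n}(q)](m)$ certifying positivity'' is $\Sigma^{0}_{1}$ in $Y_{n}$, so $\exists^{2}$ decides it; if it holds the search terminates and we proceed, and if it fails---in particular whenever the input is not a genuine \eqref{R2}-representation of a dense open set---we output $0$ and halt. Hence $\xi$ is a total type $3$ functional computable in $\exists^{2}$ and its argument, returning a point of $\bigcap_{n}O_{n}$ whenever each $Y_{n}$ represents a dense open set as in \eqref{R2}, i.e.\ a Baire realiser in the sense of Definition \ref{jawadde2} for inputs presented via \eqref{R2}.

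For the second assertion, if each $O_{n}$ is promised dense and open, then $O_{n}$ meets every nonempty rational subinterval of $[0,1]$ and, being open, contains a rational point of it; so the search for $(q,m)$ succeeds at each stage, and likewise the rational search for $\rho$ terminates. A search known in advance to halt is a legitimate Kleene S1--S9 computation---the partial search operator is available from S1--S9, only its total form $\exists^{2}$ is not---so the whole construction of $x$ goes through using nothing beyond the oracle $\lambda n.Y_{n}$. I expect the main point needing care is precisely this bookkeeping: checking that under the density hypothesis \emph{every} invoked search halts, so that the $\exists^{2}$-powered ``failure'' branch of the first part is never entered.
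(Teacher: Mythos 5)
Your proposal is correct and follows essentially the same route as the paper's own proof: the textbook nested-interval argument, made effective by searching over $\Q$ for $q$ with $Y_{n}(q)>_{\R}0$ (a $\Sigma^{0}_{1}$ condition, so the search terminates under the density promise and $\exists^{2}$ decides termination otherwise, in which case any default output suffices). Your write-up merely fills in the bookkeeping that the paper leaves as a sketch.
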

\begin{proof} Let $Y_n$ be given for each $n$ and first assume that each $Y_n$ is an \eqref{R2} representation of the open, dense set $O_n$. Then the text-book proof of the Baire category theorem  can be transformed to an algorithm by using $Y_n$ restricted to the rational numbers to find the shrinking sequence of open-and then-closed intervals, and iterated search over $\Q$ for the sequence that converges to a point in the intersection. 

\smallskip

We need to search for $q \in \Q$ within an open interval such that $Y_n(q) > 0$, but since the latter relation is $\Sigma^0_1$, this is effective. If one $Y_n$ does not represent an open, dense set, the only thing that might  prevent this algorithm from terminating is that the search goes on for ever, and whether this will be the case can be decided using $\exists^2$, so we can output a value even then. 
In this case,  it does not matter what the value is, we are only required to have one.
\end{proof}

\begin{ack}\rm
Our research was supported by the \emph{John Templeton Foundation} via the grant \emph{a new dawn of intuitionism} with ID 60842 and by the \emph{Deutsche Forschung Gemeinschaft} (DFG) via grant SA3418/1-1.
The results in Section~\ref{limp3} were completed during the stimulating BIRS workshop (19w5111) on Reverse Mathematics at CMO, Oaxaca, Mexico in Sept.\ 2019.  
We express our gratitude towards the aforementioned institutions. 
We thank Anil Nerode and the referees for their helpful suggestions.  
Opinions expressed in this paper do not necessarily reflect those of the John Templeton Foundation.    
\end{ack}

\begin{bibdiv}
\begin{biblist}
\bib{as}{article}{
  author={Aczel, Peter},
  title={A constructive version of the Lusin separation theorem},
  conference={ title={Logicism, intuitionism, and formalism}, },
  book={ series={Synth. Libr.}, volume={341}, publisher={Springer, Dordrecht}, },
  date={2009},
  pages={129--151},
}

\bib{avi2}{article}{
  author={Avigad, Jeremy},
  author={Feferman, Solomon},
  title={G\"odel's functional \(``Dialectica''\) interpretation},
  conference={ title={Handbook of proof theory}, },
  book={ series={Stud. Logic Found. Math.}, volume={137}, },
  date={1998},
  pages={337--405},
}

\bib{brakken}{article}{
  author={Brattka, Vasco},
  author={Gherardi, Guido},
  author={H\"{o}lzl, Rupert},
  title={Probabilistic computability and choice},
  journal={Inform. and Comput.},
  volume={242},
  date={2015},
  pages={249--286},
}

\bib{brakke}{article}{
  author={Brattka, Vasco},
  author={Hendtlass, Matthew},
  author={Kreuzer, Alexander P.},
  title={On the uniform computational content of the Baire category theorem},
  journal={Notre Dame J. Form. Log.},
  volume={59},
  date={2018},
  number={4},
  pages={605--636},
}

\bib{bish1}{book}{
  author={Bishop, Errett},
  title={Foundations of constructive analysis},
  publisher={McGraw-Hill},
  date={1967},
  pages={xiii+370},
}

\bib{borelstrong}{article}{
  author={Borel, E.},
  title={Sur la classification des ensembles de mesure nulle},
  journal={Bull. Soc. Math. France},
  volume={47},
  date={1919},
  pages={97--125},
}

\bib{bourken}{book}{
  author={Bourbaki, Nicolas},
  title={Elements of the history of mathematics},
  publisher={Springer},
  date={1994},
  pages={viii+301},
}

\bib{brownphd}{book}{
  author={Brown, Douglas K.},
  title={Functional analysis in weak subsystems of second-order arithmetic},
  year={1987},
  publisher={PhD Thesis, The Pennsylvania State University, ProQuest LLC},
}

\bib{browner2}{article}{
  author={Brown, Douglas K.},
  title={Notions of closed subsets of a complete separable metric space in weak subsystems of second-order arithmetic},
  conference={ title={Logic and computation}, address={Pittsburgh, PA}, date={1987}, },
  book={ series={Contemp. Math.}, volume={106}, publisher={Amer. Math. Soc., Providence, RI}, },
  date={1990},
  pages={39--50},
}

\bib{browner}{article}{
  author={Brown, Douglas K.},
  title={Notions of compactness in weak subsystems of second order arithmetic},
  conference={ title={Reverse mathematics 2001}, },
  book={ series={Lect. Notes Log.}, volume={21}, publisher={Assoc. Symbol. Logic}, },
  date={2005},
  pages={47--66},
}

\bib{boekskeopendoen}{book}{
  author={Buchholz, Wilfried},
  author={Feferman, Solomon},
  author={Pohlers, Wolfram},
  author={Sieg, Wilfried},
  title={Iterated inductive definitions and subsystems of analysis: recent proof-theoretical studies},
  series={LNM 897},
  publisher={Springer},
  date={1981},
  pages={v+383},
}

\bib{carapils}{book}{
  author={Carath\'{e}odory, Constantin},
  title={Vorlesungen \"{u}ber reelle Funktionen},
  language={German},
  publisher={Teubner},
  date={1918},
}

\bib{cousin1}{article}{
  author={Cousin, Pierre},
  title={Sur les fonctions de $n$ variables complexes},
  journal={Acta Math.},
  volume={19},
  date={1895},
  pages={1--61},
}

\bib{didicol}{book}{
  author={Dedekind, Richard},
  title={Gesammelte mathematische Werke. B\"{a}nde II},
  language={German},
  series={Herausgegeben von Robert Fricke, Emmy Noether und \"{o}ystein Ore},
  publisher={Chelsea Publishing Co., New York},
  date={1968},
  pages={Vol. I: iii+397 pp. (1 plate); Vol. II: iv+442 pp.; Vol. III: iii+223--508 pp. (Vols. II and III bound as one)},
}

\bib{littlefef}{book}{
  author={Feferman, Solomon},
  title={How a Little Bit goes a Long Way: Predicative Foundations of Analysis},
  year={2013},
  note={\url {https://math.stanford.edu/~feferman/papers/pfa.pdf}},
}

\bib{fried}{article}{
  author={Friedman, Harvey},
  title={Some systems of second order arithmetic and their use},
  conference={ title={Proceedings of the International Congress of Mathematicians (Vancouver, B.\ C., 1974), Vol.\ 1}, },
  book={ },
  date={1975},
  pages={235--242},
}

\bib{fried2}{article}{
  author={Friedman, Harvey},
  title={ Systems of second order arithmetic with restricted induction, I \& II (Abstracts) },
  journal={Journal of Symbolic Logic},
  volume={41},
  date={1976},
  pages={557--559},
}

\bib{supergandy}{article}{
  author={Gandy, Robin},
  title={General recursive functionals of finite type and hierarchies of functions},
  journal={Ann. Fac. Sci. Univ. Clermont-Ferrand No.},
  volume={35},
  date={1967},
  pages={5--24},
}

\bib{withgusto}{article}{
  author={Giusto, Mariagnese},
  author={Simpson, Stephen G.},
  title={Located sets and reverse mathematics},
  journal={J. Symbolic Logic},
  volume={65},
  date={2000},
  number={3},
  pages={1451--1480},
}

\bib{hartleycountable}{article}{
  author={Hartley, John P.},
  title={The countably based functionals},
  journal={J. Symbolic Logic},
  volume={48},
  date={1983},
  number={2},
  pages={458--474},
}

\bib{heerlijk}{book}{
   author={Herrlich, Horst},
   title={Axiom of choice},
   series={Lecture Notes in Mathematics},
   volume={1876},
   publisher={Springer},
   date={2006},
   pages={xiv+194},
}

\bib{hillebilly2}{book}{
  author={Hilbert, David},
  author={Bernays, Paul},
  title={Grundlagen der Mathematik. II},
  series={Zweite Auflage. Die Grundlehren der mathematischen Wissenschaften, Band 50},
  publisher={Springer},
  date={1970},
}

\bib{hunterphd}{book}{
  author={Hunter, James},
  title={Higher-order reverse topology},
  note={Thesis (Ph.D.)--The University of Wisconsin - Madison},
  publisher={ProQuest LLC, Ann Arbor, MI},
  date={2008},
  pages={97},
}

\bib{kermend}{article}{
  author={Keremedis, Kyriakos},
  title={Disasters in topology without the axiom of choice},
  journal={Arch. Math. Logic},
  volume={40},
  date={2001},
  number={8},
}

\bib{kleeneS1S9}{article}{
  author={Kleene, Stephen C.},
  title={Recursive functionals and quantifiers of finite types. I},
  journal={Trans. Amer. Math. Soc.},
  volume={91},
  date={1959},
  pages={1--52},
}

\bib{kohlenbach4}{article}{
  author={Kohlenbach, Ulrich},
  title={Foundational and mathematical uses of higher types},
  conference={ title={Reflections on the foundations of mathematics}, },
  book={ series={Lect. Notes Log.}, volume={15}, publisher={ASL}, },
  date={2002},
  pages={92--116},
}

\bib{kohlenbach2}{article}{
  author={Kohlenbach, Ulrich},
  title={Higher order reverse mathematics},
  conference={ title={Reverse mathematics 2001}, },
  book={ series={Lect. Notes Log.}, volume={21}, publisher={ASL}, },
  date={2005},
  pages={281--295},
}

\bib{lebzelf}{article}{
  year={1907},
  volume={24},
  number={1},
  pages={371--402},
  author={Lebesgue, Henri},
  title={Sur le probl{\'{e}}me de Dirichlet},
  journal={Rendiconti del Circolo Matematico di Palermo},
}

\bib{longmann}{book}{
  author={Longley, John},
  author={Normann, Dag},
  title={Higher-order Computability},
  year={2015},
  publisher={Springer},
  series={Theory and Applications of Computability},
}

\bib{Yannis}{article}{
  author={Moschovakis, Y.},
  title={Hyperanalytic predicates},
  journal={Trans. Amer. Math. Soc.},
  volume={129},
  date={1967},
  pages={249--282},
}

\bib{moorethanudeserve}{article}{
  author={Moore, Gregory H.},
  title={The emergence of open sets, closed sets, and limit points in analysis and topology},
  journal={Historia Math.},
  volume={35},
  date={2008},
  number={3},
  pages={220--241},
}

\bib{mullingitover}{book}{
  author={Muldowney, P.},
  title={A general theory of integration in function spaces, including Wiener and Feynman integration},
  volume={153},
  publisher={Longman Scientific \& Technical, Harlow; John Wiley},
  date={1987},
  pages={viii+115},
}

\bib{dagsam}{article}{
  author={Normann, Dag},
  author={Sanders, Sam},
  title={Nonstandard Analysis, Computability Theory, and their connections},
  journal={Journal of Symbolic Logic},
  volume={84},
  number={4},
  pages={1422--1465},
  date={2019},
}

\bib{dagsamII}{article}{
  author={Normann, Dag},
  author={Sanders, Sam},
  title={The strength of compactness in Computability Theory and Nonstandard Analysis},
  journal={Annals of Pure and Applied Logic, Article 102710},
  volume={170},
  number={11},
  date={2019},
}

\bib{dagsamIII}{article}{
  author={Normann, Dag},
  author={Sanders, Sam},
  title={On the mathematical and foundational significance of the uncountable},
  journal={Journal of Mathematical Logic, \url {https://doi.org/10.1142/S0219061319500016}},
  date={2019},
}

\bib{dagsamVI}{article}{
  author={Normann, Dag},
  author={Sanders, Sam},
  title={Representations in measure theory},
  journal={Submitted, arXiv: \url {https://arxiv.org/abs/1902.02756}},
  date={2019},
}

\bib{dagsamV}{article}{
  author={Normann, Dag},
  author={Sanders, Sam},
  title={Pincherle's theorem in Reverse Mathematics and computability theory},
  journal={Annals of Pure and Applied Logic, doi: \url {10.1016/j.apal.2020.102788}},
  date={2020},
}

\bib{dagsamVIII}{article}{
  author={Normann, Dag},
  author={Sanders, Sam},
  title={On the uncountability of $\mathbb {R}$},
  journal={Submitted, arxiv: \url{https://arxiv.org/abs/2007.07560}},
  date={2020},
}

\bib{fosgood}{article}{
  author={Osgood, William Fog},
  title={Non-Uniform Convergence and the Integration of Series Term by Term},
  journal={Amer. J. Math.},
  volume={19},
  date={1897},
  number={2},
  pages={155--190},
}

\bib{tepelpinch}{article}{
  author={Pincherle, Salvatore},
  title={Sopra alcuni sviluppi in serie per funzioni analitiche (1882)},
  journal={Opere Scelte, I, Roma},
  date={1954},
  pages={64--91},
}

\bib{yamayamaharehare}{article}{
  author={Sakamoto, Nobuyuki},
  author={Yamazaki, Takeshi},
  title={Uniform versions of some axioms of second order arithmetic},
  journal={MLQ Math. Log. Q.},
  volume={50},
  date={2004},
  number={6},
  pages={587--593},
}

\bib{sayo}{article}{
  author={Sanders, Sam},
  author={Yokoyama, Keita},
  title={The {D}irac delta function in two settings of {R}everse {M}athematics},
  year={2012},
  journal={Archive for Mathematical Logic},
  volume={51},
  number={1},
  pages={99-121},
}

\bib{samGH}{article}{
  author={Sanders, Sam},
  title={The Gandy-Hyland functional and a computational aspect of Nonstandard Analysis},
  year={2018},
  journal={Computability},
  volume={7},
  pages={7-43},
}

\bib{sahotop}{article}{
  author={Sanders, Sam},
  title={Reverse Mathematics of topology: dimension, paracompactness, and splittings},
  year={2019},
  journal={To appear in \emph{Notre Dame Journal for Formal Logic}, arXiv: \url {https://arxiv.org/abs/1808.08785}},
  pages={pp.\ 21},
}

\bib{samnetspilot}{article}{
author={Sanders, Sam},
title={Nets and Reverse Mathematics: a pilot study},
year={2019},
journal={Computability, \url{doi: 10.3233/COM-190265}},
pages={pp.\ 34},
}

\bib{samph}{article}{
  author={Sanders, Sam},
  title={Plato and the foundations of mathematics},
  year={2019},
  journal={Submitted, arxiv: \url {https://arxiv.org/abs/1908.05676}},
  pages={pp.\ 40},
}

\bib{samrecount}{article}{
  author={Sanders, Sam},
  title={Lifting countable to uncountable mathematics},
  year={2019},
  journal={To appear in \emph{Information and Communication}, arxiv: \url {https://arxiv.org/abs/1908.05677}},
  pages={pp.\ 21},
}

\bib{samFLO2}{article}{
  author={Sanders, Sam},
  title={Lifting recursive counterexamples to higher-order arithmetic },
  year={2020},
  journal={Proceedings of LFCS2020, Lecture Notes in Computer Science, Springer},
}

\bib{simpson1}{collection}{
  title={Reverse mathematics 2001},
  series={Lecture Notes in Logic},
  volume={21},
  editor={Simpson, Stephen G.},
  publisher={ASL},
  place={La Jolla, CA},
  date={2005},
  pages={x+401},
}

\bib{simpson2}{book}{
  author={Simpson, Stephen G.},
  title={Subsystems of second order arithmetic},
  series={Perspectives in Logic},
  edition={2},
  publisher={CUP},
  date={2009},
  pages={xvi+444},
}

\bib{shohe}{article}{
  author={Shore, Richard A.},
  title={Reverse mathematics, countable and uncountable},
  conference={ title={Effective mathematics of the uncountable}, },
  book={ series={Lect. Notes Log.}, volume={41}, publisher={Assoc. Symbol. Logic, La Jolla, CA}, },
  date={2013},
  pages={150--163},
}

\bib{stillebron}{book}{
  author={Stillwell, John},
  title={Reverse mathematics, proofs from the inside out},
  pages={xiii + 182},
  year={2018},
  publisher={Princeton Univ.\ Press},
}

\bib{zwette}{book}{
  author={Swartz, Charles},
  title={Introduction to gauge integrals},
  publisher={World Scientific},
  date={2001},
  pages={x+157},
}

\bib{taokejes}{collection}{
  author={Tao, {Terence}},
  title={{Compactness and Compactification}},
  editor={Gowers, Timothy},
  pages={167--169},
  year={2008},
  publisher={The Princeton Companion to Mathematics, Princeton University Press},
}

\bib{tietze}{article}{
  author={Tietze, Heinrich},
  title={\"{U}ber Funktionen, die auf einer abgeschlossenen Menge stetig sind},
  journal={J. Reine Angew. Math.},
  volume={145},
  date={1915},
  pages={9--14},
}

\bib{troelstra1}{book}{
  author={Troelstra, Anne Sjerp},
  title={Metamathematical investigation of intuitionistic arithmetic and analysis},
  note={Lecture Notes in Mathematics, Vol.\ 344},
  publisher={Springer Berlin},
  date={1973},
  pages={xv+485},
}

\bib{troeleke1}{book}{
  author={Troelstra, Anne Sjerp},
  author={van Dalen, Dirk},
  title={Constructivism in mathematics. Vol. I},
  series={Studies in Logic and the Foundations of Mathematics},
  volume={121},
  publisher={North-Holland},
  date={1988},
  pages={xx+342+XIV},
}

\bib{pussy}{book}{
  author={de La Vall\'{e}e Poussin, Charles-Jean},
  title={Int\'{e}grales de Lebesgue},
  publisher={Guathier-Villars},
  date={1916},
  pages={vi+93},
}

\bib{wangetal}{article}{
   author={Chong, C. T.},
   author={Li, Wei},
   author={Wang, Wei},
   author={Yang, Yue},
   title={On the computability of perfect subsets of sets with positive
   measure},
   journal={Proc. Amer. Math. Soc.},
   volume={147},
   date={2019},
   number={9},
}

\bib{twiertrots}{article}{
  author={Weihrauch, Klaus},
  author={Kreitz, Christoph},
  title={Representations of the real numbers and of the open subsets of the set of real numbers},
  journal={Ann. Pure Appl. Logic},
  volume={35},
  date={1987},
  number={3},
  pages={247--260},
}

\bib{wierook}{book}{
  author={Weihrauch, Klaus},
  title={Computable analysis},
  note={An introduction},
  publisher={Springer-Verlag, Berlin},
  date={2000},
  pages={x+285},
}

\bib{yuphd}{book}{
  author={Yu, Xiaokang},
  title={Measure theory in weak subsystems of second-order arithmetic},
  note={Thesis (Ph.D.)--The Pennsylvania State University},
  publisher={ProQuest LLC, Ann Arbor, MI},
  date={1987},
  pages={83},
}

\end{biblist}
\end{bibdiv}

\bye